\newcommand{\subjclass}[2][2020]{
  \let\@oldtitle\@title
  \gdef\@title{\@oldtitle\footnotetext{#1 \emph{Mathematics subject classification.} #2}}
}
\colorlet{darkblue}{blue!90!black}
\colorlet{darkred}{red!90!black}
\colorlet{dr}{red!90!black}
\newcommand{\ep}{\epsilon}
\newcommand{\eps}{\epsilon}
\newcommand{\grad}{\nabla}
\newcommand{\Lc}{\mathcal{L}}
\newcommand{\Hc}{{\mathcal{H}}}
\newcommand{\esssup}{\operatorname{EssSup}}
\newcommand{\norm}[1]{\left|\left| #1 \right|\right|}
\newcommand{\abs}[1]{\left| #1 \right|}
\newcommand{\set}[1]{\left\{ #1 \right\}}
\newcommand{\brak}[1]{\left\langle #1 \right\rangle}
\newcommand{\R}{\mathbb{R}}
\newcommand{\Z}{\mathbb{Z}}
\newcommand{\N}{\mathbb{N}}
\newcommand{\T}{\mathbb{T}}
\renewcommand{\S}{\mathbb{S}}
\newcommand{\cM}{\mathcal{M}}
\newcommand{\dee}{\mathrm{d}}
\newcommand{\ds}{\dee s}
\newcommand{\dt}{\dee t}
\DeclareMathOperator{\Div}{\mathrm{div}}
\DeclareMathOperator{\Id}{\mathrm{Id}}
\renewcommand{\P}{\mathbf{P}}
\newcommand{\E}{\mathbf{E}}
\newcommand{\EE}{\mathbf E}
\newcommand{\PP}{\mathbf P}
\newcommand{\dist}{\operatorname{dist}}
\newtheorem{theorem}{Theorem}[section]
\newtheorem{proposition}[theorem]{Proposition}
\newtheorem{corollary}[theorem]{Corollary}
\newtheorem{lemma}[theorem]{Lemma}
\newtheorem*{lemma*}{Lemma}
\newtheorem{assumption}{Assumption}
\theoremstyle{definition}
\newtheorem{definition}[theorem]{Definition}
\newtheorem{remark}[theorem]{Remark}
\newtheorem{example}[theorem]{Example}
\numberwithin{equation}{section}
\begin{document}

\title{Non-uniqueness of stationary measures for stochastic systems with almost surely invariant manifolds}
\subjclass{Primary: 37H15, 60H10. Secondary: 37D25, 35R60, 37A50}
\author{Jacob Bedrossian\thanks{\footnotesize Department of Mathematics, University of California, Los Angeles, CA 90095, USA \href{mailto:jacob@math.ucla.edu}{\texttt{jacob@math.ucla.edu}}. J.B. was supported by National Science Foundation grant DMS-2108633} \and Alex Blumenthal\thanks{\footnotesize School of Mathematics, Georgia Institute of Technology, Atlanta, GA 30332, USA \href{mailto:ablumenthal6@gatech.edu}{\texttt{ablumenthal6@gatech.edu}}. A.B. was supported by National Science Foundation grants DMS-2009431 and DMS-2237360.}\orcidlink{0000-0002-6777-7848} \and Sam Punshon-Smith\thanks{\footnotesize Department of Mathematics, Tulane University, New Orleans, LA 70118, USA \href{mailto:spunshonsmith@tulane.edu}{\texttt{spunshonsmith@tulane.edu}}. S. P. was  supported by the National Science Foundation under Award No. DMS-1803481 and a Sloan fellowship } \orcidlink{0000-0003-1827-220X}}

\maketitle

\begin{abstract}
  We develop a general framework for establishing non-uniqueness of stationary measures for stochastically forced dynamical systems possessing an almost surely invariant submanifold. Our main abstract result provides sufficient conditions for the existence of multiple stationary measures on compact manifolds, though the underlying methodology extends to non-compact settings. The key insight is to construct additional stationary measures by exploiting the linear instability of the invariant submanifold, as quantified by a positive \emph{transverse Lyapunov exponent}.
  
  To demonstrate the practical applicability of our framework, we apply it to the Lorenz 96 model with degenerate stochastic forcing, which serves as an example of both non-compact and high-dimensional dynamics. We prove that as the damping parameter becomes sufficiently small, the unique stationary measure bifurcates, giving rise to exactly two distinct stationary measures. The proof combines our general theory with computer-assisted verification of certain Lie algebra generation properties that ensure the required hypoellipticity and irreducibility conditions.
\end{abstract}

\setcounter{tocdepth}{2}
{\small\tableofcontents}

\section{Introduction}\label{sec:Intro}

The purpose of this paper is to put forward a general methodology for evaluating the (in)stability of almost-surely invariant submanifolds in systems with random forcing, and for deducing from this instability the existence of stationary statistics supported off the invariant submanifold.

To demonstrate the method on a reasonably complicated concrete example, we study the stochastically-forced Lorenz-96 (L96) system \cite{lorenz1996predictability} with degenerate forcing.
Recall L96 consists of a periodic array of $N >0$ unknowns $u = \{u^{j}\}_{j\in \Z_N}$, solving the stochastic differential equation (SDE)
\begin{align}\label{eq:lorenz96intro}
\dee u^j_t = (u^{j+1}_t - u^{j-2}_t)u^{j-1}_t \dt- \eps u^j_t\dt + \sqrt{\eps} \sigma_j dW^j_t,
\end{align}
note that $u^j$ is indexed by the discrete torus $\Z_N = \Z /N\Z$, so that all indices are interpreted modulo $N$. This ensures terms like $u^{j+1}$ and $u^{j-2}$ are well-defined within the cyclic array. Here, $\eps > 0$\footnote{The \emph{fluctuation-dissipation} scaling in $\epsilon$ presented above is chosen so that the damping term $- \epsilon u^j_t$ and the forcing term $\sqrt{\eps} \sigma_j dW^j_t$ are balanced, ensuring statistically stationary solutions $(u_t)$ exist and are controlled as $\epsilon \to 0$. It is straightforward to check that, on rescaling in time and in $u$, one can recover our main result (Theorem \ref{thm:mainL96}) when the $\sqrt{\eps}$ term in \eqref{eq:lorenz96intro} is omitted.}  is a small parameter, $\sigma_j \in \mathbb R_{\geq 0}$, and the $\{W_t^{j}\}_{j\in \Z_N}$ are iid Brownian motions on the canonical stochastic basis $(\Omega,\mathcal{F},\mathbb P,(\mathcal{F}_t))$. Let $H = \R^{\Z_N} \simeq \R^N$ be the state space of \eqref{eq:lorenz96intro}.

It is known that if two consecutive modes are forced (i.e. $\sigma_j \neq 0$ and $\sigma_{j+1} \neq 0$ for some $j$) then the system is hypoelliptic and has a unique stationary measure for the associated Markov process. Here however we will assume the following \emph{degenerate forcing}, where the forcing is only on every third mode: for some integer $K \geq 3$, we have
\begin{gather}\label{eq:degenforcingIntro}
    N = 3 K \qquad \text{ and } \qquad \sigma_j \neq 0 \quad \text{ iff } \quad j = 3 k,\quad k \in \mathbb{Z} \, .
\end{gather}
Let $I = 3\Z/ N \Z = \{j \in \Z_N\,:\, j \mod 3 = 0\}$ denote the set of forced indices (again interpreted modulo $N$). It is easy to see that the subspace $H_I = \mathrm{span}\{e_j\,:\, j\in I\}$ is almost-surely invariant, on which the dynamics reduces to the independent Ornstein-Uhlenbeck processes
\begin{align}
    \dee u^{j}_t & = - \epsilon u^{j}_t \dt + \sqrt{\eps} \sigma_{j} \dee W_t^{k} \, , \quad j \in I \, .
\end{align}
Here and elsewhere, $\set{e_i}$ denotes the standard basis of $H$.

The subspace $H_I$ admits a unique stationary probability $\mu^I$: this measure is Gaussian on $H_I$, independent of $\eps$, and admits the density
\begin{equation}\label{eq:Gaussian_Density}
\rho_{I}(u_3, u_6, \dots, u_{3K}) =  \prod_{k = 1}^K \frac{1}{(2 \pi \sigma_{3k})^{1/2}} e^{-\frac{ u_{3k}^2}{2\sigma_{3k}}}
\end{equation}
with respect to Lebesgue on $H_I$.

When $\epsilon$ is sufficiently large, the damping is active enough that trajectories of the stochastic flow for \eqref{eq:lorenz96intro} will merge almost-surely\footnote{We say that trajectories of \eqref{eq:lorenz96intro} \emph{synchronize} if for any two initial data $u_0, v_0$ driven by the same noise realizations $(W_t^j)$, one has that $| u_t - v_t| \to 0$ as $t \to \infty$. It is not hard to check that, under mild conditions, strong synchronization implies unique existence of stationary probabilities.} as time advances, hence $\mu_{I}$ is the unique stationary measure. This is standard and can be achieved by an asymptotic coupling argument -- see, e.g., \cite{mattingly1999ergodicity}. However, as $\epsilon$ is taken smaller, the invariant subspace $H_I$ becomes unstable, leading to the emergence of a new stationary measure supported off of $H_I$.

\begin{theorem} \label{thm:mainL96}
Assume the degenerate forcing of \eqref{eq:degenforcingIntro} and assume\footnote{See Remark \ref{rmk:introCAP1} for discussion of the constraint $N \geq 9$. } $N \geq 9$. Then, there exists $\eps_C > 0$ such that for every $\eps < \eps_C$ there are \emph{exactly two} ergodic stationary measures, $\mu_{I}$ as above and a second measure $\mu$ equivalent to Lebesgue measure in $H$, with a smooth density in $H \setminus H_I$. Moreover, $\mu$ is \emph{geometrically ergodic}, in that there exists a function $\mathcal{V} : H \setminus H_I \to [1,\infty)$ and an exponent $\gamma > 0$ such that for all bounded measurable $\varphi : H \to \R$ and initial $u_0 \in H \setminus H_I$,
\begin{align} \label{eq:geomErgodicityL96intro}
    \left|\E_{u_0} \varphi(u_t) - \int \varphi \dee \mu \right| \leq \mathcal{V}(u_0) e^{- \gamma t} \| \varphi\|_{L^\infty} \, .
\end{align}
Above, $\E_{u_0}$ is the expectation conditioned on the initial data $u_0$.
\end{theorem}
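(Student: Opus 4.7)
My plan is to apply the paper's abstract non-uniqueness theorem with state space $H$, invariant submanifold $H_I$, and the L96 dynamics \eqref{eq:lorenz96intro}. This reduces the problem to verifying three ingredients: (i) a Lyapunov function controlling return to a compact set in $H$ and, in singular form, to a compact set in $H \setminus H_I$; (ii) H\"ormander's parabolic bracket condition and topological irreducibility for the Markov process on $H \setminus H_I$; and (iii) strict positivity of the transverse Lyapunov exponent $\lambda_\perp(\eps)$ of the stationary measure $\mu_I$ for all sufficiently small $\eps > 0$. Once these are in hand, the abstract theorem gives the second ergodic stationary measure $\mu$, and smoothness, equivalence to Lebesgue, and the geometric ergodicity \eqref{eq:geomErgodicityL96intro} will then follow from standard tools.

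For (i), I would exploit that the L96 nonlinearity preserves the $L^2$-energy, so that It\^o's formula yields
\begin{align}
\dee |u_t|^2 = -2\eps |u_t|^2 \dt + \eps \sum_j \sigma_j^2 \dt + 2\sqrt{\eps} \sum_j \sigma_j u^j_t \dee W^j_t,
\end{align}
from which a global drift condition on $\mathcal{V}_0(u) = \exp(\eta |u|^2)$ (small $\eta$) is routine. The singular Lyapunov function required by the abstract framework on $H \setminus H_I$ can then be taken as $\mathcal{V}_0(u) \cdot \dist(u,H_I)^{-\alpha}$ for a small $\alpha > 0$, with the singular factor controlled using (iii). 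For (ii), the forcing directions are $\{e_j\}_{j \in I}$ and the drift is $X_0(u) = \sum_j \bigl((u^{j+1} - u^{j-2}) u^{j-1} - \eps u^j\bigr) e_j$. The iterated brackets $[X_0, e_j], [X_0, [X_0, e_j]], \ldots$ are explicit polynomial vector fields in $u$, and H\"ormander's condition at each point of $H \setminus H_I$ becomes a full-rank statement on an explicit polynomial matrix of vector fields. This is the principal computer-assisted step; the constraint $N \geq 9$ is what guarantees that the bracket chain closes in $\R^N$ off $H_I$. Topological irreducibility on $H \setminus H_I$ then follows by a Stroock-Varadhan support argument from the same bracket generation.

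The main obstacle is (iii). Since $H_I$ is invariant for the linearization and, at points $u \in H_I$, the orthogonal complement $H_I^\perp$ is also invariant along the linearized flow, the transverse cocycle reduces to a linear system in the unforced coordinates $b_k = v^{3k+1}$, $c_k = v^{3k+2}$, driven by the OU coordinates $a_k(t) = u_t^{3k}$:
\begin{align}
 \dot b_k = a_k (c_k - c_{k-1}) - \eps b_k, \qquad \dot c_k = b_k (a_{k+1} - a_k) - \eps c_k.
\end{align}
Under $\mu_I$ the $a_k$ are independent stationary OUs of $O(1)$ variance and mean-reversion rate $\eps$. My plan is to first establish uniqueness of the stationary measure $\nu$ on the projective bundle $H_I \times \mathbf{P}(H_I^\perp)$ via a further H\"ormander-type computation on the linearized flow (again computer-assisted in general), yielding a Furstenberg-Khasminskii formula for $\lambda_\perp(\eps)$. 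I would then estimate this formula in the singular limit $\eps \to 0$ using the explicit Gaussian statistics of the $a_k$ together with the triadic structure of the cocycle; heuristically, in this limit the instantaneous coupling dominates the damping and generic expansion in the transverse modes should produce a strictly positive exponent. Ruling out accidental cancellations in the Furstenberg integral across all $\eps \in (0, \eps_C)$ is where most of the work will lie, and may again require a computer-assisted verification.

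With (i)--(iii) in hand, the abstract theorem produces an ergodic stationary measure $\mu$ supported in $H \setminus H_I$. Uniqueness of $\mu$ follows from the strong Feller property and topological irreducibility on $H \setminus H_I$, both consequences of the H\"ormander/support verifications in (ii); combined with the obvious uniqueness of $\mu_I$ on $H_I$, this accounts for exactly two ergodic stationary measures. The geometric ergodicity \eqref{eq:geomErgodicityL96intro} is then a direct application of a Meyn-Tweedie/Harris argument using the singular Lyapunov function from (i) and a small set bounded away from $H_I$. Finally, equivalence of $\mu$ to Lebesgue on $H$ and smoothness of its density on $H \setminus H_I$ are standard consequences of the hypoellipticity of the transition semigroup together with the controllability implicit in topological irreducibility.
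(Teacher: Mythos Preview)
Your high-level plan matches the paper's: verify a drift condition, H\"ormander/irreducibility on $H\setminus H_I$, and positivity of $\lambda^\perp$, then invoke Harris. But two of the three ingredients have genuine gaps.

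\textbf{The singular Lyapunov function.} The candidate $\mathcal{V}_0(u)\cdot\dist(u,H_I)^{-\alpha}$ will not in general satisfy a drift condition. The generator applied to $|w|^{-\alpha}$ picks up the instantaneous transverse rate $\langle DB(y)v,v\rangle$, which depends on $(y,v)$ and is \emph{not} of a fixed sign; positivity of $\lambda^\perp$ is only an average statement over the stationary measure $\nu^\perp$ of the projective process. (This is exactly the subtlety illustrated by the paper's torus example: one can have $\lambda^\perp>0$ while $H_I$ is transversally \emph{attracting} on large regions.) The paper resolves this by replacing $|w|^{-\alpha}$ with $|w|^{-p}\psi_p(y,v)$, where $\psi_p$ is the dominant eigenfunction of the twisted (Feynman--Kac) semigroup $\widehat P^{\perp,p}_t\varphi(y,v)=\E_{(y,v)}[|A^t_\perp v|^{-p}\varphi(y_t,v_t)]$. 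The point is that $\mathcal{H}_p(y,w)=|w|^{-p}\psi_p(y,w/|w|)$ is an \emph{exact} eigenfunction of the transverse linearized generator with eigenvalue $-\Lambda(p)$, and $\Lambda(p)=p\lambda^\perp+o(p)>0$ for small $p>0$. This converts the averaged positivity into a pointwise drift inequality. Producing $\psi_p$ requires a spectral-gap argument for $\widehat P^{\perp,p}_t$ on the weighted space $C_{V_\eta}$ (perturbing from $p=0$, where the gap is geometric ergodicity of $(y_t,v_t)$) and, because $H_I$ is noncompact, a separate quantitative hypoelliptic estimate to put $\psi_p\in C^1_{V_\eta}$ so that the nonlinear error $B(w,w)\cdot\nabla\mathcal H_p$ can be absorbed. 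None of this machinery is visible in your proposal.

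\textbf{Positivity of $\lambda^\perp$.} Your plan to ``estimate the Furstenberg--Khasminskii integral in the $\eps\to0$ limit'' and rule out cancellations, possibly by computer, is not what the paper does and would be very hard to carry out directly. The paper instead proves the identity $\eps\, FI(f_\eps)=|T|\lambda^\perp_\eps+\eps N$, where $FI$ is a degenerate Fisher information of the projective density $f_\eps$. If $\lambda^\perp_\eps/\eps$ stayed bounded along a subsequence, then $FI(f_{\eps_j})$ would be bounded; an $\eps$-uniform hypoelliptic $W^{s,1}$ estimate then gives $L^1$-precompactness of $(f_{\eps_j})$, producing an invariant \emph{density} $f_0$ for the deterministic $\eps=0$ projective flow. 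One then checks by an explicit eigenvalue computation (finding an open set of $y\in H_I$ where $DB(y)|_{H_I^\perp}$ has an unstable eigenvalue) that no such density can exist. This Fisher-information/compactness/contradiction mechanism is the real content of (iii), and your proposal does not contain it.

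Your treatment of (ii) is closer to the paper's, though note that the paper needs only the \emph{restricted} H\"ormander set $\{X_k,[X_0,X_k]\}$; the computer-assisted step is purely algebraic, showing $\mathrm{Lie}(\{DB(e_k)|_{H_I^\perp}:k\in I\})=\mathfrak{sl}(H_I^\perp)$, from which spanning on $H\setminus H_I$, on the projective bundle, and irreducibility (via a cancellation condition $[X_k,[X_k,X_0]]=0$ and Stroock--Varadhan) all follow.
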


By the pointwise ergodic theorem and the absolute continuity of $\mu$, it follows that the long-time statistics of Lebesgue-generic initial data is governed by $\mu$ (not $\mu_{I}$) and hence in this sense, $\mu$ is the \emph{physical} stationary measure in the dynamical sense of \cite{eckmann1985ergodic}. This is true even of initial data supported arbitrarily close to $H_I$, hinting at a strong instability of $H_I$ for the random dynamics generated by \eqref{eq:lorenz96intro}. We note however that necessarily $\mathcal{V}(u_0) \to \infty$ as $\dist(u_0, H_I) \to 0$, which in view of \eqref{eq:geomErgodicityL96intro} captures the transient time $u_t$ remains near $H_I$ when $\dist(u_0, H_I) \ll 1$.

\subsection*{A heuristic instability mechanism for Theorem \ref{thm:mainL96}}

As suggested already, the primary obstruction to the existence of $\mu$ is the asymptotic stability of $H_I$.
The basic idea we use is to assess \emph{instability} of $H_I$ by measuring the \emph{transverse Lyapunov exponent}
\begin{align}\label{eq:defnTverseLEIntro}
    \lambda^\perp = \lim_{n \to \infty} \frac1n \log \abs{\Pi^\perp D \varphi^t(u_0)}, \quad u_0\in H_I.
\end{align}
Here, $\varphi^t : H \to H$ is the stochastic flow of diffeomorphisms corresponding to solutions to \eqref{eq:lorenz96intro}, $D\varphi^t(u_0)$ is the derivative evaluated at a point $u_0$ in the invariant subspace $H_I$, and $\Pi^\perp$ is the projection onto the orthogonal complement to $H_I$, i.e., ``transverse'' to $H_I$.
Heuristically, $\lambda^\perp > 0$ suggests that a small displacement of the initial condition perpendicular to $H_I$ should grow under the nonlinear dynamics of \eqref{eq:lorenz96intro}.

To make this mechanism more precise, we will show that positivity of $\lambda^\perp$ allows us to construct a Lyapunov function $V : H \setminus H_I \to [1,\infty)$,  with $V \to \infty$ near $H_I$, satisfying a Lyapunov-Foster drift condition. Roughly speaking, this is a way of quantifying recurrence to the sublevel sets $\{ V \leq C\}$ and implies existence of stationary probability measures on $H \setminus H_I$. Uniqueness of $\mu$ and geometric ergodicity follows from standard techniques from the theory of Markov chains (see e.g. \cites{meyn2012markov, hairer2011yet}) if one can verify irreducibility and hypoellipticity conditions of the process in $H \setminus H_I$.
 
\subsection*{Plan for the paper}

The remainder of Section \ref{sec:Intro} discusses our results and their relationship to existing literature. Section \ref{sec:Abstract} establishes a general framework for random dynamical systems that admit an almost-surely invariant submanifold. Within this framework, we develop conditions connecting the positivity of the transverse Lyapunov exponent \eqref{eq:defnTverseLEIntro} to the existence of stationary statistics off the invariant submanifold. This analysis is initially conducted in the simpler setting of a compact phase space. In Section \ref{sec:OutlineL96}, we outline the application of this instability mechanism to the Lorenz 96 system described in Theorem \ref{thm:mainL96}, emphasizing additional technical steps required to overcome challenges posed by the noncompact state space.
The remainder of the paper -- Sections \ref{sec:Hypoellipticity}, \ref{sec:FI} and \ref{sec:Drift} -- implement this program. See Section \ref{subsec:agenda3} at the end of Section \ref{sec:OutlineL96} for a more detailed summary of this later material.

\subsection*{Discussion}

Classical linearization theory provides tools to assess stability or instability of relatively simple invariant structures in phase space, such as equilibria (via spectral theory of linearization) or periodic orbits (via Floquet exponents). However, stability problems become considerably more challenging for invariant sets with complicated interior dynamics. A prime example is the 3D Navier-Stokes equations on a periodic box, which admits an invariant subspace $H_I$ of velocity fields constant along the $z$-axis. Under certain degenerate forcing conditions, $H_I$ is preserved, with dynamics equivalent to those of 2D Navier-Stokes. At high Reynolds numbers, it is predicted that $H_I$ becomes strongly unstable, with generic initial velocity fields in $H$ being repelled from $H_I$.

In such situations, transverse Lyapunov exponents analogous to \eqref{eq:defnTverseLEIntro} offer a natural approach. While these limits can be shown to exist for initial conditions typical with respect to invariant probability measures on $H_I$, severe practical limitations arise. Lyapunov exponents are notoriously difficult to bound from below, even for simple models with convincing numerical evidence. Additionally, the potential presence of multiple ergodic invariant measures, each with a distinct transverse exponent, further complicates the analysis.

The random setting provides a more tractable framework for addressing these stability problems, as stochastic driving introduces a regularizing effect on asymptotic statistics. In this context, stationary measures—invariant when averaged over noise realizations—replace invariant measures, and established criteria can demonstrate uniqueness of the stationary measure. The transverse Lyapunov exponent associated with this unique stationary measure is guaranteed to converge by the multiplicative ergodic theorem, and estimating such exponents from below is significantly more feasible than in deterministic systems, as demonstrated in, e.g., \cite{BBPS20,blumenthal2017lyapunov, lian2012positive, chemnitz2023positive}.

A cornerstone of our analysis for the Lorenz-96 system is the rigorous verification of various forms of H\"ormander's condition, which is essential for establishing the hypoellipticity and irreducibility of the dynamics. These properties, in turn, are fundamental for proving the existence and uniqueness of the stationary measures and their geometric ergodicity as well as establishing quantitative estimates for showing positivity of the transverse Lyapunov exponent. Often the most challenging step in this process is verifying the algebraic bracket-spanning requirement of projective lifts (see for instance \cite{BedrossianPunshon-Smith-Chaos-2024y}). In this paper, we establish the fundamental algebraic generation condition on certain collection of traceless matrices $M_k = DB(e_k)|_{H_I^\perp}$, $k\in I$, namely that $\mathrm{Lie}(\{M_k\}) = \mathfrak{sl}(H_I^\perp)$ (Proposition \ref{prop:slT_generation} in Section \ref{sec:Hypoellipticity}). This is achieved through a computer-assisted proof, detailed in Appendix \ref{app:CAP}, which combines symbolic computation for a base case with an argument based on the system's shift-invariance to extend the result to all big enough $N$. The code for this verification is publicly available \cite{L96CAPGithub}. The novelty of our computer-assisted approach lies in its exploitation of the system's sparsity and shift-invariance to verify the bracket condition. This technique, distinct from methods like algebraic variety computations employed for systems with less sparse interaction matrices (e.g., \cite{BedrossianPunshon-Smith-Chaos-2024y}), is particularly well-suited for analyzing other high-dimensional SDEs with local-in-frequency interactions, such as certain shell models of turbulence (e.g., GOY, SABRA).

\begin{remark}\label{rmk:introCAP1}

The constraint $N \ge 9$ in Theorem \ref{thm:mainL96} arises from the algebraic bracket-spanning condition (Proposition \ref{prop:slT_generation}). While our detailed computer-assisted proof in Appendix \ref{app:CAP} focuses on $N \ge 15$ due to its reliance on a sufficiently large local block of indices for the shift-invariance argument, the result can be extended to $N=9$ and $N=12$ by direct computation. For smaller system sizes, specifically $N=3$ and $N=6$, the structure of the transverse space $H_I^\perp$ and the generating matrices $M_k$ becomes significantly more degenerate and the Lie algebra $\mathfrak{sl}(H_I^\perp)$ is in fact not generated.
\end{remark}

\subsection*{Relation to prior work}

Our methodology, using transverse Lyapunov exponents, was inspired by approaches to the instability of the diagonal in two-point motions associated with chaotic stochastic flows, where the true Lyapunov exponent plays the role of $\lambda^\perp$. {To the authors' best knowledge, this approach to analysis of the diagonal of the two-point process originates in} the works \cite{baxendale1988large,dolgopyat2004sample} (see also the excellent related survey \cite{baxendale1991statistical}) and has been extended in various ways in subsequent studies, including \cite{ayyer2007exponential} and \cite{bedrossian2022almost}. These works collectively demonstrate the power of Lyapunov exponents (and the associated Feynman-Kac semigroup) in analyzing stability properties of invariant structures in stochastic dynamical systems, providing the foundation upon which our current analysis builds.

{Closely related works include \cite{coti2021noise} and \cite{hani2025non}, both of which study SDE with almost-sure invariant subsets, using a method parallel to our approach based on the dominant eigenfunction of an appropriately-chosen  Feynman-Kac semigroup to build a Lyapunov function. The work \cite{coti2021noise} studies a degenerately-forced version of the classical Lorenz '63 ODE on $\R^3$, while \cite{hani2025non} studies a degenerately-forced system of three coupled oscillators. Our work proposes a general framework for answering these kinds of non-uniqueness questions.}

We also acknowledge the method of \emph{average Lyapunov functions} and H-exponents, notably developed for population ecology models where invariant subsets often represent species extinction \cite{benaim2018stochastic, hening2018coexistence}. This framework provides general criteria for fundamental questions of extinction or \emph{persistence} (long-term survival of species), often employing Lyapunov functions with, for example, logarithmic growth near the boundary \cite{benaim2018stochastic}. While the underlying concept of quantifying transverse growth (via H-exponents or ``invasion rates'') is related to our use of transverse Lyapunov exponents, our work focuses on the subsequent challenge of identifying and characterizing new statistical states emerging from such instabilities. Furthermore, in many ecological applications, the transverse dynamics effectively simplify to one-dimensional dynamics, alleviating the need for the systematic treatment of multi-dimensional projective cocycles, the construction of Lyapunov functions with stronger (e.g., algebraic) repulsion via Feynman-Kac theory, and non trivial use of advanced regularity tools (like H\"ormander regularity theory) on projective space that are central to our approach, particularly in complex, high-dimensional systems.

For additional related work on the use of Lyapunov exponents to study (in)stability of almost-sure fixed points, see, e.g., \cite{baxendale2006invariant}, and for more from the perspective of bifurcations for almost-sure fixed points, see, e.g., \cite{baxendale1994stochastic} and citations therein.

\section{Abstract result} \label{sec:Abstract}

Our aim in Section \ref{sec:Abstract} is to present, in a simplified setting, an abstract criterion for the existence of stationary measures off of an almost-surely invariant submanifold. This setting, that of IID random diffeomorphisms of a compact, boundaryless manifold, avoids many technical complications to be dealt with in applications to unbounded systems like Lorenz 96 (Theorem \ref{thm:mainL96}), while at the same time exhibiting some surprising subtleties to the approach of this paper.

In Section \ref{subsec:settingMainResultAbstr2} below we lay out the setting and main result, Theorem \ref{thm:absMain2}. After some discussion and a brief outline of the proof to come, Section \ref{subsec:prelimsAbs2} handles some preliminary results and Section \ref{subsec:completeAbsPf2} ties the proof together.

\subsection{Assumptions and statement of Theorem \ref{thm:absMain2}} \label{subsec:settingMainResultAbstr2}

Let $(\Omega, \mathcal{F}, \P)$ be a probability space and let $f_1, f_2, \dots$ be independent, identically distributed (IID) diffeomorphisms of a compact Riemannian manifold $M$ without boundary\footnote{In this section, the `ambient' state space $M$ plays the role of the space $H = \R^{3K}$ in other sections, while the almost-surely invariant submanifold $N$ plays the role of the invariant subspace $H_I$. These notational choices are made to reinforce that the instability implications of transverse Lyapunov exponents apply in nonlinear state spaces.}.

\begin{assumption}\label{ass:fiC2bounded}
   \[\esssup \| f_i\|_{C^2}, ~\esssup \| f_i^{-1}\|_{C^2} < \infty\]
\end{assumption}

\begin{assumption}\label{ass:invariantSubmanifold2}
    There is a nonempty, compact, boundaryless manifold $N \subset M$ for which
    \begin{align}
        f_i(N) \subset N \quad \text{ with probability 1 \, .}
    \end{align}
\end{assumption}

We will consider the dynamics of the random compositions \[f^n := f_n \circ \dots \circ f_1 \,. \]
Given a fixed initial $x_0 \in M$, let $(x_n)$ denote the Markov chain on $M$ generated by $(f^n)$ given by \[x_n = f^n(x_0) \, .\]
Theorem \ref{thm:absMain2} below provides a sufficient condition for the existence of a stationary measure $\mu$ on $M$ for the Markov chain $(x_n)$ supported off of the almost-surely invariant submanifold $N$. These conditions are stated in terms of the \emph{transverse Lyapunov exponent}, defined precisely below.

\begin{definition}\label{defn:transverseStuff2} \
    \begin{itemize}
        \item[(a)] The \emph{transverse bundle} $T^\perp N \subset TM$ is the subbundle consisting of pairs $(y, w)$ for $y \in N$ and $w \in T_{y} M$ such that $w$ is orthogonal to $T_{y} N$.
        \item[(b)] The \emph{transverse process} $(y_n, w_n)$ on $T^\perp N$ is defined, for fixed initial $(y_0, w_0) \in T^\perp N$, by
        \begin{align}
            y_n = f_n(y_{n-1}) \, , \qquad w_n = \Pi^\perp_{y_n} D_{y_{n-1}} f_n (w_{n-1})
        \end{align}
        \item[(c)] The \emph{transverse projective process} $(y_n, v_n)$ on $\S^\perp N \subset T^\perp N$, the subbundle of unit vectors in $T^\perp N$, is defined\footnote{Throughout, when it is clear from context we write $|\cdot|$ for the norm on $T_x M$ coming from the Riemannian metric at a given $x \in M$.} by
        \begin{align}
            v_n = \frac{w_n}{|w_n|}.
        \end{align}
    \end{itemize}
\end{definition}

Note that since $f_i(N) \subset N$ with probability 1, it holds that $D_y f_i(T_y N) = T_{f_i(y)} N$ holds  almost-surely for all $y \in N$, hence
\[\Pi^\perp_{f^n(y_0)} D_{y_0} f^n = \Pi^\perp_{f^n (y_0)} D_{f^{n-1} y_0} f_n \circ \dots \circ \Pi^\perp_{f^1(y_0)} D_{y_0} f_1 \, .  \]

For exponential growth rates of compositions of $(\Pi^\perp D f_i)$ we have the following.
\begin{proposition}\label{prop:LEandMET2} \
    \begin{itemize}
        \item[(a)] Assume that $(y_n)$ admits a unique stationary measure $\mu_N$ on $N$. Then, the limit \[\lambda^\perp = \lim_{n \to \infty} \frac1n \log \| \Pi^\perp_{y_n} D_{y_0} f^n\|  \] exists and is constant $\P \times \mu_N$ almost-surely.
        \item[(b)] Assume that $(y_n, v_n)$ admits a unique\footnote{Note that the marginal of $\nu^\perp$ on $N$ is a stationary measure for $(y_n)$.  In particular, it is not hard to check in this compact setting that unique existence of a stationary measure $\nu^\perp$ for $(y_n, v_n)$ implies unique existence of a stationary measure for $(y_n)$ itself.
        } stationary measure $\nu^\perp$ on $\S^\perp N$. Then, for $\mu_N$-a.e. $y_0 \in N$ and for any $w_0 \in T^\perp_{y_0} N$, we have that
        \[\lambda^\perp = \lim_{n \to \infty} \frac1n \log |w_n| \qquad \text{ with probability 1}\]
    \end{itemize}
\end{proposition}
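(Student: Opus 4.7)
The plan is to derive (a) from Kingman's subadditive ergodic theorem applied to the transverse norm cocycle over a skew-product base, derive (b) from Birkhoff's theorem applied to the projective cocycle on $\S^\perp N$, and reconcile the two limits using the Oseledets / Furstenberg-Kifer theory together with uniqueness of $\nu^\perp$.

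For part (a), I would form the IID skew product $F(y, \omega) = (f_1(\omega) y, \sigma \omega)$ on $N \times \Omega^{\N}$, where $\sigma$ denotes the shift, and set
\[ a_n(y, \omega) := \log \bigl\| \Pi^\perp_{y_n} D_{y_0} f^n \bigr\|. \]
Because $f_i(N) \subset N$, the derivative $D f_i$ preserves $TN$, which lets me insert $\Pi^\perp$ at intermediate times; submultiplicativity of operator norms then delivers $a_{m+n} \leq a_m + a_n \circ F^m$. Assumption \ref{ass:fiC2bounded} makes $a_1^+ \in L^\infty$, while uniqueness of $\mu_N$ as a stationary measure for $(y_n)$ forces $\mu_N \times \P^{\N}$ to be ergodic under $F$. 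Kingman's subadditive ergodic theorem then produces the almost-surely constant limit $\lambda^\perp \in [-\infty, \infty)$.

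For part (b), I would lift to the projective skew product $\tilde F$ on $\S^\perp N \times \Omega^{\N}$, whose fibre dynamics sends $(y, v) \mapsto (f_1 y, \Pi^\perp D f_1 v / |\Pi^\perp D f_1 v|)$, and which is ergodic under $\nu^\perp \times \P^{\N}$ by uniqueness of $\nu^\perp$. The telescoping identity
\[ \log |w_n| = \sum_{k=1}^{n} \log \bigl| \Pi^\perp_{y_k} D_{y_{k-1}} f_k \, v_{k-1} \bigr| \]
presents $\tfrac{1}{n}\log|w_n|$ as a Birkhoff average of $\psi(y, v, \omega) := \log |\Pi^\perp_{f_1 y} D_y f_1\, v|$ along $\tilde F$. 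Assumption \ref{ass:fiC2bounded} yields a two-sided $L^\infty$ bound on $\psi$: the upper bound is immediate from the $C^1$ bound on $f_i$, and the lower bound uses that $\Pi^\perp_{f_i y} D_y f_i : T^\perp_y N \to T^\perp_{f_i y} N$ is a fibrewise bijection (because $D f_i$ preserves $TN$ and is invertible) whose inverse is $\Pi^\perp D f_i^{-1}$ and is thus controlled by $\|f_i^{-1}\|_{C^1}$. Birkhoff's theorem then produces a deterministic limit $\bar \lambda := \int \psi \, d(\nu^\perp \times \P^{\N})$ valid $\nu^\perp \times \P^{\N}$-almost surely, and disintegrating $\nu^\perp$ over its $N$-marginal (which is $\mu_N$) transfers this to $\mu_N$-a.e. $y_0$ with $\nu^\perp_{y_0}$-a.e. unit direction.

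The main obstacle is the final reconciliation step: proving $\bar \lambda = \lambda^\perp$ and upgrading from ``$\nu^\perp_{y_0}$-a.e.\ $v_0$'' to \emph{every} $w_0 \in T^\perp_{y_0} N \setminus \{0\}$. My plan is to invoke Oseledets' multiplicative ergodic theorem for the transverse cocycle $\Pi^\perp D f^n$ over the ergodic base $(N, F, \mu_N \times \P^{\N})$, producing a deterministic spectrum $\lambda_1^\perp \geq \cdots \geq \lambda_r^\perp$ with $\lambda_1^\perp = \lambda^\perp$ (by (a)) and associated measurable Oseledets subbundles. Any proper subbundle in the Oseledets filtration would push forward to a stationary probability on $\S^\perp N$ supported on a proper sub-bundle and therefore distinct from $\nu^\perp$; uniqueness of $\nu^\perp$ then collapses the transverse spectrum to a single value, forcing $\bar \lambda = \lambda^\perp$ and giving $\tfrac{1}{n}\log|w_n| \to \lambda^\perp$ for every nonzero $w_0$ on a set of full $\P$-measure. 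Executing this collapse cleanly is the main technical hurdle; everything else is standard ergodic-theoretic machinery.
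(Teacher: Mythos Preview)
Your plan is essentially the paper's proof, unpacked: the paper simply cites Kifer's Theorems III.1.1 and III.1.2, and your Kingman argument for (a) together with the projective Birkhoff average for (b) are exactly what those theorems contain.

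The one point that needs tightening is the ``collapse'' step, which you correctly flag as the hurdle. As written, you invoke the \emph{Oseledets} filtration over the skew product $(N \times \Omega^{\N}, F, \mu_N \times \P^{\N})$; but those subspaces are $\omega$-measurable, and the projective Markov chain $(y_n, v_n)$ uses a fresh noise increment at each step, independent of the $\omega$ that defines the subspace. So ``push forward to a stationary probability on $\S^\perp N$'' does not go through directly from Oseledets. The correct object is the \emph{Furstenberg--Kifer deterministic filtration} $T^\perp_y N = L_1(y) \supsetneq L_2(y) \supsetneq \cdots$ (this is precisely what Kifer III.1.2 constructs): each $L_i(y)$ is independent of $\omega$, and one has $\Pi^\perp_{f_1 y} D_y f_1 \, L_i(y) \subset L_i(f_1 y)$ almost surely. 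If $r \geq 2$, run the projective chain from some $(y_0, v_0)$ with $v_0 \in \S L_2(y_0)$; by Feller and compactness of $\S^\perp N$, any weak-$*$ limit of the empirical measures is stationary, and its $\psi$-integral equals $\lim \tfrac{1}{n}\log|w_n| \leq \lambda_2^\perp < \lambda_1^\perp$. Since Furstenberg's formula gives $\lambda_1^\perp = \max_{\nu} \int \psi \, d\nu$ over stationary $\nu$, this new measure differs from $\nu^\perp$, contradicting uniqueness. Hence $r=1$, and every nonzero $w_0$ yields the limit $\lambda^\perp$. Once you replace ``Oseledets'' by ``Furstenberg--Kifer'' in that last paragraph, your argument is complete and matches the paper's citation.
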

Item (a) is a standard consequence of the multiplicative ergodic theorem applied to the random compositions $(f_i)$ (see, e.g., \cite[Theorem III.1.1]{kifer2012ergodic}), while (b) follows on realizing Lyapunov exponents as additive observables of the corresponding projective process-- see, e.g., \cite[Theorem III.1.2]{kifer2012ergodic}.

The following is our main result, containing some terms that have yet to be defined.
\begin{theorem} \label{thm:absMain2}
    Let Assumptions \ref{ass:fiC2bounded} and \ref{ass:invariantSubmanifold2} hold, and moreover, assume
    \begin{itemize}
        \item[(i)] $(y_n, v_n)$ is uniformly geometrically ergodic; and
        \item[(ii)] $\lambda^\perp > 0$.
    \end{itemize}
    Then, there exists a stationary measure $\mu$ for the original chain $(x_n)$ on $M$ for which \[\mu(N) = 0 \,.  \]
    In particular, there are at least two stationary measures for $(x_n)$.
\end{theorem}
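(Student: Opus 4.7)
The plan is to deduce the existence of an off-$N$ stationary measure from a Foster-Lyapunov drift condition on $M\setminus N$. Explicitly, I will construct $V:M\setminus N\to[1,\infty)$ with $V(x)\to\infty$ as $\dist(x,N)\to 0$ that satisfies $PV\le\gamma V+K$ globally for some $\gamma<1$ and $K<\infty$, where $P$ is the Markov operator of $(x_n)$. Given such a $V$, for any fixed $x_0\notin N$ the Krylov-Bogoliubov empirical averages $\mu_n=\tfrac{1}{n}\sum_{k=0}^{n-1}P^{*k}\delta_{x_0}$ satisfy $\sup_n\int V\,\dee\mu_n<\infty$; since $M$ is compact and $V$ is lower semicontinuous on $M$ (extend $V\equiv+\infty$ on $N$), any subsequential weak limit $\mu$ is $P$-stationary with $\int V\,\dee\mu<\infty$, forcing $\mu(N)=0$. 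Together with the stationary measure induced by the marginal of $\nu^\perp$ on $N$, this yields at least two stationary measures.

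\textbf{Construction of $V$.} The construction of $V$ uses hypothesis (i) together with $\lambda^\perp>0$ via a twisted transfer operator on continuous functions over $\S^\perp N$,
\[
    (L_\alpha\phi)(y,v)=\E\Bigl[\,|\Pi^\perp_{f(y)}D_yf\,v|^{-\alpha}\,\phi\bigl(f(y),\tfrac{\Pi^\perp_{f(y)}D_yf\,v}{|\Pi^\perp_{f(y)}D_yf\,v|}\bigr)\Bigr].
\]
In the decomposition $TM|_N=TN\oplus T^\perp N$, $Df$ is block-upper-triangular by invariance of $N$, and Assumption \ref{ass:fiC2bounded} forces the diagonal block $\Pi^\perp Df|_{T^\perp N}$ to have essentially bounded inverse; hence the weight $|\Pi^\perp Df\,v|^{-\alpha}$ is a.s.\ uniformly bounded. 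At $\alpha=0$, $L_0$ is the Markov operator of $(y_n,v_n)$, which is quasi-compact with a spectral gap by hypothesis (i). Analytic perturbation theory then supplies, for $\alpha$ in a right-neighborhood of $0$, a simple leading eigenvalue $e^{\Lambda(\alpha)}$ with continuous strictly positive eigenfunction $h_\alpha$; by Proposition \ref{prop:LEandMET2}(b), $\Lambda(0)=0$ and $\Lambda'(0)=-\lambda^\perp<0$, so $\Lambda(\alpha)<0$ for all sufficiently small $\alpha>0$. Fix such an $\alpha$. In tubular-neighborhood coordinates near $N$, write $x=\exp_{\pi x}w(x)$ with $w(x)\in T^\perp_{\pi x}N$, set $v(x)=w(x)/|w(x)|$, and define
\[
    V(x)=h_\alpha(\pi x,v(x))\,|w(x)|^{-\alpha}
\]
on a tubular neighborhood of $N$, extended smoothly and boundedly on the rest of $M\setminus N$.

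\textbf{Drift verification and conclusion.} The $C^2$ expansion $f_i(x)=\exp_{f_i(\pi x)}\bigl(\Pi^\perp Df_i\,w(x)+O(|w(x)|^2)\bigr)$ yields $w(f_i(x))=\Pi^\perp Df_i\,w(x)+O(|w(x)|^2)$. Substituting into $V$ and invoking the eigenfunction identity $L_\alpha h_\alpha=e^{\Lambda(\alpha)}h_\alpha$ gives
\[
    \E[V(f_1(x))]\le e^{\Lambda(\alpha)}V(x)\bigl(1+O(|w(x)|)\bigr),
\]
so $PV(x)\le\tfrac12(1+e^{\Lambda(\alpha)})V(x)$ for $x$ in a sufficiently small tubular neighborhood of $N$. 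Away from $N$, $V$ and $PV$ are bounded, yielding the global drift $PV\le\gamma V+K$ with $\gamma<1$, from which the desired $\mu$ with $\mu(N)=0$ follows by the Krylov-Bogoliubov argument above.

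\textbf{Main obstacle.} The most delicate step is the spectral perturbation of $L_\alpha$: one must identify a Banach space on which $L_0$ is quasi-compact with the spectral gap coming from hypothesis (i), and on which $\alpha\mapsto L_\alpha$ is operator-norm continuous so that $h_\alpha$ inherits the continuity and strict positivity needed to close the drift estimate. A H\"older or bounded-Lipschitz space on $\S^\perp N$ should work, but translating the abstract `uniform geometric ergodicity' into such a quasi-compactness/gap statement — typically via a Doeblin minorization together with a Lasota-Yorke-style bound on the chosen space — is the technical heart of the argument. A secondary constraint is that $\alpha$ must be small enough that the $(1+O(|w|))$ Taylor remainder does not overwhelm $e^{\Lambda(\alpha)}<1$; both constraints conspire to pin $\alpha$ to a small interval determined by the gap of $L_0$ and the $C^2$ bound on $f_i$.
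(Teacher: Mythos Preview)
Your approach is essentially the paper's: twisted semigroup on $\S^\perp N$, spectral perturbation off the $\alpha=0$ Markov operator, eigenfunction times $|w|^{-\alpha}$ as Lyapunov function, Taylor expansion in tubular coordinates, then Krylov--Bogoliubov. The one substantive divergence is how you propose to handle the regularity of $h_\alpha$, which you correctly flag as the main obstacle.

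You suggest passing to a H\"older or Lipschitz space and re-establishing quasi-compactness there via Doeblin plus Lasota--Yorke. The paper avoids this entirely: it works on $C^0(\S^\perp N)$ throughout, where uniform geometric ergodicity \emph{is} the spectral gap and operator-norm continuity of $q\mapsto\widehat P^\perp_q$ is immediate from Assumption~\ref{ass:fiC2bounded}. The eigenfunction $\psi_q$ then lives only in $C^0$, which is indeed not enough for your $(1+O(|w|))$ error directly. Rather than upgrading the function space, the paper uses the soft observation that $\psi_q=\lim_n r(q)^{-n}(\widehat P^\perp_q)^n\mathbf 1$ can be split, for any $\delta>0$, as a $C^1$ piece (a finite iterate, smooth since the $f_i$ are, with $C^1$ norm $\le C_\delta$) plus a $C^0$ remainder of size $<\delta$ (Lemma~\ref{lem:psiqPropertiesAPP}(b)). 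The Lipschitz comparison between $P$ and the linearized transverse semigroup (Lemma~\ref{lem:localApproxEstAPP}) is applied only to the $C^1$ piece, producing an error $C_\delta\,\dist(x,N)^{1-q}$, while the $C^0$ piece contributes $\delta\,\dist(x,N)^{-q}$ and is absorbed by taking $\delta\ll\inf\psi_q$. This closes the drift inequality with no Lasota--Yorke machinery and stays entirely within the hypotheses as stated; your proposed route would work too but requires proving a strictly stronger spectral gap than what hypothesis~(i) gives you for free.
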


\begin{definition}\label{defn:unifGeoErgodic2}
    A Markov chain $(z_n)$ on a compact metric space $Z$ is called \emph{uniformly geometrically ergodic} if it admits a unique stationary measure $\eta$ with the property that there exists $C > 0, r \in (0,1)$ such that
    \begin{align}
        \left| \E_{z_0} [\varphi(z_n)] - \int \varphi \,\dee \eta \right| \leq C r^n
    \end{align}
    for all $z_0 \in Z$ and $\varphi : Z \to \R$ continuous.
\end{definition}
Here, for a Markov chain $(z_n)$ we write $\P_{z_0}, \E_{z_0}$ for the probability and expectation conditioned on the specified value of $z_0$.
Note that uniform geometric ergodicity implies uniqueness of the stationary measure, so hypothesis (i) and Proposition \ref{prop:LEandMET2} imply the existence of $\lambda^\perp$ as in hypothesis (ii). For more on methods for checking geometric ergodicity in concrete systems, see \cite{meyn2012markov}.

\subsubsection*{Summary of the proof of Theorem \ref{thm:absMain2}}

We will show that in our setting, $\lambda^\perp > 0$ implies a \emph{drift condition} for the Markov process $(x_n)$ on $M \setminus N$.

\begin{definition}\label{defn:driftCond}
    Let $(z_n)$ be a Markov chain on a complete, separable metric space $Z$. We say that a function $\mathcal{V} : Z \to [1,\infty)$ satisfies a \emph{drift condition} for $(z_n)$ if there exists $\alpha \in (0,1), \beta > 0$ and a compact $K \subset Z$ such that
    \[\E_{z_0} \mathcal{V}(z_1) \leq \alpha \mathcal{V}(z_0) + \beta {\bf 1}_K(z_0)\]
    for all $z_0 \in Z$, where ${\bf 1}_K$ is the indicator function of $K$. The function $\mathcal{V}$ is sometimes referred to as a \emph{Lyapunov function}.
\end{definition}

The following is standard-- see, e.g., \cite{meyn2012markov}.
\begin{theorem} \label{thm:driftCondAbs2}
    Suppose that
    \begin{itemize}
        \item[(i)] $(z_n)$ has the \emph{Feller}\footnote{We say that $(z_n)$ is \emph{Feller} if $z_0 \mapsto \E_{z_0} \varphi(z_1)$ is bounded and continuous for any bounded and continuous $\varphi : Z \to \R$.} property;
        \item[(ii)] there exists a function $\mathcal{V} : Z \to [1,\infty)$ satisfying a drift condition for $(z_n)$; and
        \item[(iii)] the function $\mathcal{V}$ has compact sublevel sets $\{ \mathcal{V} \leq C\}, C \geq 1$.
    \end{itemize}
    Then, $(z_n)$ admits a stationary probability measure.
\end{theorem}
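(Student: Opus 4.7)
The plan is to follow the Krylov--Bogolyubov recipe: iterate the drift condition to control the moments $\E_{z_0}\mathcal{V}(z_n)$ uniformly in $n$, use the compactness of the sublevel sets of $\mathcal{V}$ to derive tightness of the Cesaro-averaged laws, and then exploit the Feller property to pass to the limit and conclude stationarity of a subsequential weak limit.

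\textbf{Step 1 (Moment bound).} Fix $z_0 \in Z$. Using the Markov property and the drift condition from Definition \ref{defn:driftCond}, together with $\mathbf{1}_K \le 1$, we have
\begin{equation}
\E_{z_0}\mathcal{V}(z_{n+1}) \;=\; \E_{z_0}\bigl[\E_{z_n}\mathcal{V}(z_1)\bigr] \;\le\; \alpha\,\E_{z_0}\mathcal{V}(z_n) + \beta \, .
\end{equation}
Iterating this bound yields $\E_{z_0}\mathcal{V}(z_n) \le \alpha^n \mathcal{V}(z_0) + \beta/(1-\alpha)$, which is bounded uniformly in $n$ by some constant $C(z_0)$. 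Since $\mathcal{V} \ge 1$, Markov's inequality together with the assumed compactness of the sublevel sets $\{\mathcal{V} \le R\}$ gives, for any $\delta > 0$, a compact $K_\delta := \{\mathcal{V} \le C(z_0)/\delta\}$ with $\P_{z_0}(z_n \notin K_\delta) \le \delta$ for every $n$. So the laws of $z_n$ form a tight family on $Z$.

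\textbf{Step 2 (Averaging and extraction).} Let $P$ denote the transition kernel of $(z_n)$ and set
\begin{equation}
\mu_n \;:=\; \frac{1}{n}\sum_{k=0}^{n-1} P^k(z_0,\,\cdot\,) \, .
\end{equation}
Tightness of the $P^k(z_0,\cdot)$ immediately implies tightness of the Cesaro averages $\mu_n$, so by Prokhorov's theorem there is a subsequence $\mu_{n_j}$ converging weakly to some Borel probability $\mu$ on $Z$.

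\textbf{Step 3 (Passing to the limit via Feller).} For any bounded continuous $\varphi : Z \to \R$, the Feller property guarantees that $P\varphi$ is bounded and continuous, hence both $\mu_n(\varphi)$ and $\mu_n(P\varphi) = (P\mu_n)(\varphi)$ can be tested against the weak limit. A direct computation gives the telescoping identity
\begin{equation}
(P\mu_n)(\varphi) - \mu_n(\varphi) \;=\; \frac{1}{n}\bigl(P^n\varphi(z_0) - \varphi(z_0)\bigr) \, ,
\end{equation}
whose right-hand side is bounded in absolute value by $2\|\varphi\|_\infty / n$ and therefore vanishes as $n \to \infty$. Sending $n_j \to \infty$ along the convergent subsequence yields $(P\mu)(\varphi) = \mu(\varphi)$ for every bounded continuous $\varphi$, i.e., $\mu$ is stationary.

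\textbf{Main obstacle.} The argument is essentially classical Krylov--Bogolyubov, so there is no serious difficulty; the only subtle point is that $Z$ need not be compact, which is why the compact sublevel sets of $\mathcal{V}$ are essential (without them, tightness could fail and $\mu_{n_j}$ might lose mass at infinity). The Feller hypothesis is used solely in the final step to push $P$ inside the weak limit, and the $\mathbf{1}_K$ term in the drift condition plays no role in producing a stationary measure --- it becomes important only when one wants uniqueness or geometric ergodicity, which are not claimed here.
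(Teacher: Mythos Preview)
Your proof is correct and follows essentially the same approach as the paper: the paper treats this theorem as standard, giving only a sketch that outlines the Krylov--Bogoliubov argument (drift condition plus compact sublevel sets yield tightness of the laws $\eta_n$ of $z_n$, then a weak$^*$ limit of the empirical averages $\frac{1}{n}\sum_0^{n-1}\eta_i$ is stationary by the Feller property) and refers to \cite{meyn2012markov} for details. Your three steps match this outline precisely, just with the iterations and telescoping spelled out.
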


Roughly speaking, Markov chains on noncompact spaces can drift off indefinitely, breaking the recurrence-type behavior necessary for the existence of a stationary measure. A drift condition ensures a positive asymptotic frequency of returns to sufficiently large sublevel sets of $\mathcal{V}$. If these sublevel sets are compact, then it follows that for fixed initial $z_0 \in Z$ that the sequence $(\eta_n)$ is tight, where $\eta_n$ is the law of $z_n$.
Existence now follows from the Krylov-Bogoliubov argument, which obtains a stationary measure as a weak$^*$ limit of the empirical averages
\[\frac1n \sum_0^{n-1} \eta_i \, . \]
That such a weak$^*$ limit is stationary follows from the Feller property, which we assume here. For further details see \cite[]{meyn2012markov}.

In our case, we seek to apply Theorem \ref{thm:driftCondAbs2} to the Markov chain $(x_n)$ on the noncompact space $Z := M \setminus N$. Here we view $N$ as being ``at infinity'' for the purposes of the drift condition, which will now require that $\mathcal{V}: M \setminus N \to [1,\infty)$ have the property that $\mathcal{V}(x) \to \infty$ as $x \to N$.

To this end, and in view of the repulsion mechanism indicated earlier, we will construct $\mathcal{V}$ near $N$ to be of the form
\begin{align}
    \mathcal{V}(x) = \frac{1}{\dist(x, N)^p} \psi(y(x), v(x))
\end{align}
for some $p > 0$, where $y(x) \in N$ is a suitably chosen point in $N$, and $v(x) = w(x) / |w(x)| \in \S^\perp_{y(x)} N$ where $w(x)$ is (approximately) the displacement between $x$ and $y(x)$.

{The function $\psi : \S^\perp N \to \R_{\geq 0}$ itself will be constructed as an eigenfunction of a Feynman-Kac semigroup built from $(y_n, v_n)$. This construction is an adaptation to our setting of a known technique for drift conditions for repulsion from the diagonal for two-point processes under analogous conditions-- see, e.g., \cites{baxendale1988large,dolgopyat2004sample, bedrossian2022almost, ayyer2007exponential}}.

\subsection{Subtleties of the transverse stability condition}\label{subsec:transverseStabilitySubtleties}

Before proceeding with the technical preliminaries, we clarify an important subtlety regarding the assumption that $\lambda^\perp > 0$. The following example illustrates how positive transverse Lyapunov exponents can coexist with regions of transverse attraction.

\begin{example}\label{ex:mixedStabilityTorus}
    Let $M = \T^2$ and let $N$ be an embedded circle in $M$. Let us assume that the random dynamics $(f_i)$ leave a point $p \in N$ almost-surely invariant. Relative to $N$, we will assume that the point $p$ is a sink, but transversal to $N$ we will assume that $p$ is unstable. Here, the relevant stationary measure $\mu_N$ on $N$ is merely the Dirac mass supported at $p$.
    
    Theorem \ref{thm:absMain2} implies the existence of a stationary measure supported off of $N$ (i.e., $\mu(N) = 0$). Somewhat surprisingly, \emph{this state of affairs is compatible with compression onto $N$ away from $p$}. Key here is the fact that the dynamics on $N$ is geometrically ergodic: while trajectories in $M \setminus N$ may temporarily collapse onto $N$, entrainment to the dynamics on $N$ forces them to enter a vicinity of $p$, where the trajectory now experiences repulsion from $N$. This mechanism is illustrated in Figure \ref{fig:torus-example}.

   \begin{figure}[H]
   \centering
\begingroup%
  \makeatletter%
  \providecommand\color[2][]{%
    \errmessage{(Inkscape) Color is used for the text in Inkscape, but the package 'color.sty' is not loaded}%
    \renewcommand\color[2][]{}%
  }%
  \providecommand\transparent[1]{%
    \errmessage{(Inkscape) Transparency is used (non-zero) for the text in Inkscape, but the package 'transparent.sty' is not loaded}%
    \renewcommand\transparent[1]{}%
  }%
  \providecommand\rotatebox[2]{#2}%
  \newcommand*\fsize{\dimexpr\f@size pt\relax}%
  \newcommand*\lineheight[1]{\fontsize{\fsize}{#1\fsize}\selectfont}%
  \ifx\svgwidth\undefined%
    \setlength{\unitlength}{132.14285946bp}%
    \ifx\svgscale\undefined%
      \relax%
    \else%
      \setlength{\unitlength}{\unitlength * \real{\svgscale}}%
    \fi%
  \else%
    \setlength{\unitlength}{\svgwidth}%
  \fi%
  \global\let\svgwidth\undefined%
  \global\let\svgscale\undefined%
  \makeatother%
  \begin{picture}(1,0.68840802)%
    \lineheight{1}%
    \setlength\tabcolsep{0pt}%
    \put(0,0){\includegraphics[width=\unitlength,page=1]{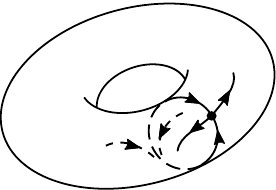}}%
    \put(0.74258745,0.03930017){\makebox(0,0)[lt]{\lineheight{1.25}\smash{\begin{tabular}[t]{l}$N$\end{tabular}}}}%
    \put(0.79797163,0.23190329){\makebox(0,0)[lt]{\lineheight{1.25}\smash{\begin{tabular}[t]{l}$p$\end{tabular}}}}%
    \put(0.21687719,0.63091236){\makebox(0,0)[lt]{\lineheight{1.25}\smash{\begin{tabular}[t]{l}$\mathbb{T}^2$\end{tabular}}}}%
  \end{picture}%
\endgroup%

   \caption{The submanifold $N$ (shown as a circle) contains a point $p$ that is simultaneously a sink along $N$ and a source in the transverse direction. Trajectories starting off $N$ may initially be attracted to $N$ away from $p$, but once near $N$, they flow toward $p$ and then are repelled transversely, preventing permanent collapse onto $N$.}
   \label{fig:torus-example}
   \end{figure}
\end{example}

Further comments on Example \ref{ex:mixedStabilityTorus} are given in Remark \ref{rmk:finalExCOmments2} at the end of Section \ref{sec:Abstract}.

\subsection{Preliminaries} \label{subsec:prelimsAbs2}

\subsubsection{Geometry}

For $\epsilon > 0$, let $N_\epsilon$ denote the tubular neighborhood
\[N_\eps = \{ x \in M : \dist(x, N) < \eps \} \,. \]
Below, let $\exp_x : T_x M \to M$ denote the exponential at $x \in M$. The following standard result yields a useful coordinate system for $N_\epsilon$.
\begin{proposition}\label{prop:tubeCoords2}
    For $\eps > 0$ sufficiently small, $N_\eps$ is diffeomorphic to
    \[T_\eps^\perp N := \{ (y, w) \in T^\perp N : |w| < \eps \}\]
    under the mapping $(y, w) \mapsto \exp_{y}(w)$.
\end{proposition}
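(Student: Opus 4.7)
The plan is to apply the standard tubular neighborhood construction, using compactness of $N$ to extract a uniform $\eps > 0$ on which the exponential map becomes a diffeomorphism onto $N_\eps$. Define $\Phi : T^\perp N \to M$ by $\Phi(y,w) = \exp_y(w)$; since $M$ is a smooth Riemannian manifold and $N$ is a smooth submanifold, $\Phi$ is smooth on a neighborhood of the zero section in $T^\perp N$.

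The first step is to compute $D\Phi$ at any point $(y,0)$ on the zero section. Using the canonical splitting $T_{(y,0)}(T^\perp N) \cong T_y N \oplus T^\perp_y N = T_y M$ together with the basic identities $D \exp_y |_0 = \mathrm{Id}$ and $\Phi(y,0) = y$, one sees that $D\Phi|_{(y,0)}$ is the identity on $T_y M$, hence invertible. By the inverse function theorem $\Phi$ is a local diffeomorphism near each $(y,0)$, and by compactness of $N$ there exists $\eps_1 > 0$ such that $\Phi$ is a local diffeomorphism at every point of $T^\perp_{\eps_1} N$.

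Next I would upgrade this to a global injectivity statement on $T^\perp_\eps N$ for some $\eps \in (0, \eps_1)$ by a standard contradiction argument. If no such $\eps$ existed, one could extract sequences $(y_n, w_n) \neq (y'_n, w'_n)$ in $T^\perp_{1/n} N$ with $\Phi(y_n, w_n) = \Phi(y'_n, w'_n)$. Compactness of $N$ lets us pass to a subsequence along which $y_n \to y_\infty$ and $y'_n \to y_\infty$ (the limits must agree because $|w_n|, |w'_n| \to 0$ forces the common image to converge to $y_\infty \in N$). For large $n$ both points lie in a neighborhood of $(y_\infty, 0)$ on which $\Phi$ is injective, a contradiction.

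Finally I would verify $\Phi(T^\perp_\eps N) = N_\eps$ after possibly shrinking $\eps$. The inclusion $\Phi(T^\perp_\eps N) \subset N_\eps$ is immediate from $\dist(\exp_y(w), N) \leq |w|$. For the reverse inclusion, for small enough $\eps$ and any $x \in N_\eps$, a nearest-point $y := \pi_N(x) \in N$ exists by compactness, and a first-variation argument at $y$ forces the minimizing geodesic from $y$ to $x$ to meet $N$ orthogonally; hence $x = \exp_y(w)$ for some $w \in T^\perp_y N$ with $|w| = \dist(x,N) < \eps$. The main obstacle is really the passage from local to global injectivity together with the well-posedness of the nearest-point projection, both of which are handled uniformly by the compactness of $N$ inside the smooth ambient manifold $M$. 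Smoothness of the inverse is then automatic from the inverse function theorem.
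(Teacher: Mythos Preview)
Your argument is a correct and standard proof of the tubular neighborhood theorem. The paper does not actually prove this proposition; it simply introduces it as ``the following standard result'' and moves on, so there is nothing to compare beyond noting that your sketch is exactly the classical argument one would cite.
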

Given $x \in N_\eps$, let\footnote{Note that for $\epsilon$ sufficiently small as in Proposition \ref{prop:tubeCoords2}, $y = y(x)$ is the unique element of $N$ such that $\dist(x, N) = \dist(x, y)$. } $y = y(x) \in N, w = w(x) \in T_{y}^\perp N$
be such that $\exp_y(w) = x$.

We use repeatedly the following basic estimates, the proofs of which are omitted. Here, $\dist_N$ refers to the distance along $N$, and $\| f \|_{C^2}$ refers to any chart-defined $C^2$ norm on mappings $M \to M$.
\begin{proposition}\label{prop:geomEstimatesAppendix}
    Assume the setting of Proposition \ref{prop:tubeCoords2} and let $f : M \to M, f(N) \subset N$. There exists a constant $C > 0$, depending only on $M, N$ and $\| f \|_{C^2}$, with the following properties.
    Let $x \in N_\eps$ with $(y, w) = (y(x), w(x))$. Then,
    \begin{itemize}
        \item[(a)] $\dist(f(x), \exp_{f(y)}(D_{y} f (w))) \leq C |w|^2$, and
        \item[(b)] $\dist_N(f(y), y(f(x))) \leq C |w|$.
    \end{itemize}
\end{proposition}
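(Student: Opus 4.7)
The plan is to reduce both estimates to Taylor expansions performed in the tubular coordinates from Proposition \ref{prop:tubeCoords2}, exploiting that all relevant maps are $C^2$ and that $N$ is compact (so the associated constants are uniform in $y \in N$).

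For part (a), I would introduce the two maps $g(w) := f(\exp_{y}(w))$ and $h(w) := \exp_{f(y)}(D_{y} f (w))$, each defined on a neighborhood of $0$ in $T_y^\perp N$ and valued in $M$. Both satisfy $g(0) = h(0) = f(y)$. Moreover, since $D_0 \exp_p = \mathrm{id}_{T_p M}$ at any $p$, both differentials at $w = 0$ equal $D_y f$ restricted to $T_y^\perp N$. Consequently, $g - h$ vanishes to first order at $w = 0$, and by second-order Taylor's theorem in a fixed finite atlas of $M$ we obtain $\dist(g(w), h(w)) \leq C |w|^2$. The constant $C$ is controlled by $\|f\|_{C^2}$ and the $C^2$ norms of $\exp_{y}$, $\exp_{f(y)}$; the latter are bounded by a constant depending only on $M$ thanks to compactness. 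Uniformity over $y \in N$ is automatic from compactness of $N$.

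For part (b), I would first use the Lipschitz bound $\dist(f(x), f(y)) \leq L |w|$ where $L$ depends on $\|f\|_{C^2}$. Since $f(y) \in N$, this gives $\dist(f(x), N) \leq L|w|$; choosing $\eps$ small enough that $L\eps$ is below the tubular-neighborhood threshold, $y(f(x))$ is well-defined as the unique nearest point of $N$ to $f(x)$ and satisfies $\dist(y(f(x)), f(x)) \leq L|w|$. The triangle inequality then yields $\dist(y(f(x)), f(y)) \leq 2L |w|$ in the ambient metric. Finally, because $N$ is a compactly embedded submanifold of $M$, the intrinsic distance $\dist_N$ is comparable to the ambient distance for pairs of points that are sufficiently close, with a comparison constant depending only on the second fundamental form of $N$ in $M$. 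Combining these gives the bound $\dist_N(f(y), y(f(x))) \leq C |w|$.

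Neither estimate requires new ideas beyond calculus in charts; the only mild obstacle is bookkeeping to confirm that every implicit constant depends only on $M$, $N$, and $\|f\|_{C^2}$, which is taken care of by the compactness of $N$ and the fact that the exponential-map constants and the intrinsic-vs-ambient distance comparison on $N$ are fixed geometric data of the pair $(M, N)$.
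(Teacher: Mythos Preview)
The paper explicitly omits the proof of this proposition, so there is no argument to compare against. Your proposal is correct and is the standard approach: part (a) is exactly the observation that $w \mapsto f(\exp_y w)$ and $w \mapsto \exp_{f(y)}(D_y f\, w)$ agree to first order at $w = 0$, and part (b) is a triangle-inequality argument combined with the intrinsic/ambient distance comparison on a compact embedded submanifold. The bookkeeping you flag (uniformity of constants via compactness of $N$, comparability of $\dist$ and chart distance, etc.) is routine, and no step requires more than the stated $C^2$ control on $f$.
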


\subsubsection{Semigroups}

Our construction of the function $V$ in the drift condition involves spectral theory for the various semigroups associated to the Markov chains we consider. Below, the \emph{semigroup} $Q$ associated to a Markov chain $(z_n)$ on a metric space $Z$ is the operator taking bounded measurable $\varphi : Z \to \R$ to
\[Q \varphi(z) = \E_z \varphi(z_1) \,. \]
Throughout, we write $P$ for the semigroup associated to $(x_n)$ on $M \setminus N$; $T P^\perp$ for the semigroup of the transverse process $(y_n, w_n)$, and $\widehat P^\perp$ for the semigroup of the projective transverse process $(y_n, v_n)$.

The following, the \emph{tilted} or \emph{Feynman-Kac semigroup}, generalizes the moment generating function of a single random variable to the setting of a Markov chain, and is a crucial ingredient in many approaches to large deviations estimates for Markov chains \cite{ellis2007entropy} and Lyapunov exponents in particular \cite{arnold1984formula}.  We apply this construction to the semigroup $\widehat P^\perp$ as follows.
\begin{definition}
    For $q \in \R$ the \emph{tilted} or \emph{Feynman-Kac Semigroup} $\widehat P^\perp_q$ is defined, for bounded measurable $\psi : \S^\perp N \to \R$, by
    \[\widehat P^\perp_q \psi (y, v)  = \E \left[|D_{y} f v|^{-q} \psi( y_1, v_1)\right]  \]
    Above, ``$f$'' refers to a random diffeomorphism distributed according to the law of the IID sequence $(f_i)$.
\end{definition}

Note that $\widehat P^\perp_0 = \widehat P^\perp$. For all purposes below, we will consider $\widehat P^\perp$ and $\widehat P^\perp_q$ as operators on $C^0 := C^0(\S^\perp N)$, the space of continuous functions with the uniform norm $\| \cdot\|_{C^0}$.

\begin{definition}\label{defn:spectralGap}
    Below, we say that a semigroup $Q$ on $C^0$ admits a \emph{spectral gap} if it admits a simple positive eigenvalue $r$, and if the spectrum $\sigma(Q) \setminus \{r\}$ away from $\{r\}$ is contained in the closed ball of radius $\leq r - \delta$ for some small $\delta > 0$.
\end{definition}
\begin{proposition}\label{prop:tilted2} \
    \begin{itemize}
        \item[(a)] Suppose $(y_n, v_n)$ is uniformly geometrically ergodic. Then, $\widehat P_q^\perp$ admits a spectral gap for all $|q|$ sufficiently small.
        \item[(b)] If in addition $\lambda^\perp > 0$, then the dominant simple eigenvalue $r(q) = e^{-\Lambda(q)}$ of $\widehat P^\perp_q$ satisfies
        \[\Lambda(q) > 0\]
        for all $q > 0$ sufficiently small.
    \end{itemize}
\end{proposition}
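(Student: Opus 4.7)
The plan is to view $q \mapsto \widehat P_q^\perp$ as an analytic perturbation of the Markov semigroup $\widehat P_0^\perp = \widehat P^\perp$ on $C^0 := C^0(\S^\perp N)$, apply Kato's analytic perturbation theory to persist the isolated simple dominant eigenvalue, and then Taylor expand $\Lambda(q) = -\log r(q)$ at $q = 0$ using the standard first-order perturbation formula together with the integral representation of $\lambda^\perp$.

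For (a), Assumption \ref{ass:fiC2bounded} yields a deterministic constant $c \in (0,1)$ with $|D_y f v| \in [c, c^{-1}]$ almost surely for every $(y,v) \in \S^\perp N$. Hence $q \mapsto |D_y f v|^{-q}$ extends holomorphically in $q \in \C$ with $|D_y f v|^{-q} \leq c^{-|\Re q|}$, and consequently $q \mapsto \widehat P_q^\perp$ is a holomorphic family of bounded operators on $C^0$. Uniform geometric ergodicity of $(y_n, v_n)$ gives
\[
  \norm{(\widehat P^\perp)^n \varphi - \int \varphi \, \dee \nu^\perp}_{C^0} \leq C r^n \norm{\varphi}_{C^0},
\]
so on the closed codimension-one subspace $\{\varphi \in C^0 : \int \varphi \, \dee \nu^\perp = 0\}$ the spectral radius of $\widehat P^\perp$ is at most $r < 1$. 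Thus $r(0) = 1$ is an algebraically simple, isolated eigenvalue of $\widehat P^\perp$ with eigenfunction $\psi_0 \equiv 1$, and Kato's theory provides an analytic family $q \mapsto (r(q), \psi_q)$ of simple eigenvalues and eigenfunctions for $|q|$ small, retaining the spectral gap.

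For (b), set $\Lambda(q) := -\log r(q)$, which is analytic near $0$ with $\Lambda(0) = 0$. Differentiating $\widehat P_q^\perp \psi_q = r(q) \psi_q$ at $q = 0$ gives
\[
  (\partial_q \widehat P_q^\perp)|_{q=0} \cdot 1 + \widehat P^\perp h = r'(0) \cdot 1 + h,
\]
where $h := \partial_q \psi_q|_{q=0}$ and $(\partial_q \widehat P_q^\perp)|_{q=0} \varphi(y,v) = -\E[\log |D_y f v| \, \varphi(y_1, v_1)]$. Integrating against $\nu^\perp$, the $h$-terms cancel by $\widehat P^\perp$-invariance of $\nu^\perp$, leaving $r'(0) = -\int \E[\log |D_y f v|] \, \dee \nu^\perp$. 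Proposition \ref{prop:LEandMET2}(b) combined with Birkhoff's ergodic theorem on the ergodic system $(\widehat P^\perp, \nu^\perp)$ identifies this integral with $\lambda^\perp$, so $\Lambda'(0) = \lambda^\perp > 0$ and $\Lambda(q) = \lambda^\perp q + O(q^2) > 0$ for all sufficiently small $q > 0$.

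The main obstacle is the first step: promoting uniform geometric ergodicity on $C^0$ to a genuine operator-norm spectral gap for $\widehat P^\perp$, and verifying that $q \mapsto \widehat P_q^\perp$ is analytic in operator norm so that Kato's theorem applies --- both hinge crucially on the deterministic two-sided bound $|D_y f v| \in [c, c^{-1}]$ from Assumption \ref{ass:fiC2bounded}, without which the tilted kernel could become unbounded and the perturbation would cease to be regular in operator norm.
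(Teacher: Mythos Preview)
Your proof is correct and follows essentially the same approach as the paper: both deduce a spectral gap for $\widehat P^\perp$ from uniform geometric ergodicity, persist it to $\widehat P_q^\perp$ via spectral perturbation theory (the paper invokes operator-norm continuity as $q\to 0$, you go a step further and verify analyticity, both relying on the two-sided bound $|D_y f v|\in[c,c^{-1}]$ from Assumption~\ref{ass:fiC2bounded}), and then use $\Lambda'(0)=\lambda^\perp$ to conclude. The only difference is that the paper outsources the identity $\Lambda'(0)=\lambda^\perp$ (and convexity/analyticity of $\Lambda$) to \cite{arnold1984formula}, whereas you carry out the first-order perturbation computation explicitly by differentiating the eigenvalue equation and pairing with $\nu^\perp$---a nice self-contained touch.
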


\begin{proof}
    That $\widehat P^\perp = \widehat P^\perp_0$ admits a spectral gap is immediate from uniform geometric ergodicity of $(y_n, v_n)$. It is not hard to check that under Assumption \ref{ass:fiC2bounded},
    \[\widehat P^\perp_q \to \widehat P^\perp \quad \text{ in operator norm}\]as $q \to 0$. Standard spectral perturbation theory now implies $\widehat P^\perp_q$ admits a spectral gap for all $|q|$ sufficiently small. This completes the proof of part (a).

    For (b), it follows from standard arguments (see, e.g., \cite{arnold1984formula}) that $q \mapsto \Lambda(q)$ is convex, analytic, and that
    \[\Lambda'(0) = \lambda^\perp \,. \]
    It now follows that if $\lambda_\perp > 0$, then $\Lambda(q) > 0$ for all $q > 0$ sufficiently small.
\end{proof}

For $|q|$ sufficiently small as in Proposition \ref{prop:tilted2}(a), let
\begin{align} \label{eq:defnPsiq2}\psi_q = \lim_{n \to \infty} r(q)^{-n} (\widehat P_q^\perp)^n {\bf 1} \, , \end{align}
where ${\bf 1}$ stands for the constant function of $\S^\perp N$ identically equal to 1. The
right-hand limit exists in $C^0$ by the spectral gap property, and the resulting function $\psi_q \in C^0$ is an eigenfunction of $\widehat P^\perp_q$ associated to the dominant eigenvalue $r(q)$.

\begin{lemma}\label{lem:psiqPropertiesAPP} Let $|q|$ be sufficiently small as in Proposition \ref{prop:tilted2}(a) and let $\psi_q$ be as in \eqref{eq:defnPsiq2}. Then,
    \begin{itemize}
        \item[(a)] $\psi_q > 0$; and
        \item[(b)] for any $\delta > 0$ there exists a constant $C_\delta > 0$ and a decomposition $\psi_q = \psi_q^{C^1} + \psi_q^{C^0}$ with the property that
        \[\| \psi^{C^1}_q\|_{C^1} \leq C_\delta \, , \quad \| \psi^{C^0}_q \|_{C^0} \leq \delta \,. \]
    \end{itemize}
\end{lemma}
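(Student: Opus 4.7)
My plan for (a) is a perturbation argument centered at $q=0$, where $\psi_0 = \mathbf{1}$ is manifestly positive. At $q=0$ the operator $\widehat P_0^\perp = \widehat P^\perp$ is the honest Markov semigroup of $(y_n,v_n)$, so $\widehat P_0^\perp \mathbf{1} = \mathbf{1}$ and hence $r(0) = 1$, and the defining limit \eqref{eq:defnPsiq2} collapses to $\psi_0 = \mathbf{1}$. From the proof of Proposition \ref{prop:tilted2}(a) we already have $\widehat P_q^\perp \to \widehat P^\perp$ in operator norm on $C^0$, so standard Kato-type spectral perturbation gives that the Riesz projector
\begin{equation}
\Pi_q \;=\; \frac{1}{2\pi i}\oint_\gamma (\lambda - \widehat P_q^\perp)^{-1}\,\dee\lambda
\end{equation}
associated to a small loop $\gamma$ enclosing $r(0)=1$ varies continuously in operator norm as $q \to 0$. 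By the spectral gap and \eqref{eq:defnPsiq2} we may identify $\psi_q = \Pi_q \mathbf{1}$, and so $\psi_q \to \mathbf{1}$ in $C^0$. Tightening the range of $|q|$ if needed so that $\|\psi_q - \mathbf{1}\|_{C^0} < \tfrac12$ yields $\psi_q \geq \tfrac12 > 0$, which gives (a).

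For (b) my plan is a direct mollification, using only that $\psi_q \in C^0(\S^\perp N)$ and that $\S^\perp N$ is a compact smooth manifold. I would fix a finite atlas of charts and a subordinate $C^\infty$ partition of unity $\{\chi_i\}$, then for each piece $\chi_i \psi_q$ convolve against a standard Euclidean bump function at scale $\rho > 0$ in the corresponding chart and push forward. The resulting function $(\psi_q)_\rho \in C^1(\S^\perp N)$ satisfies $\|(\psi_q)_\rho\|_{C^1} \lesssim \rho^{-1}\|\psi_q\|_{C^0}$, while by uniform continuity of $\psi_q$ on the compact space $\S^\perp N$ one has $\|\psi_q - (\psi_q)_\rho\|_{C^0} \to 0$ as $\rho \to 0$. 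Given $\delta > 0$ I then pick $\rho = \rho(\delta)$ so that the $C^0$ error is at most $\delta$ and set $\psi_q^{C^1} = (\psi_q)_\rho$, $\psi_q^{C^0} = \psi_q - (\psi_q)_\rho$, with $C_\delta \sim \rho(\delta)^{-1}$.

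The only substantive point, and what I would call the main obstacle (such as it is), is cleanly justifying that the iterative definition \eqref{eq:defnPsiq2} really delivers the Riesz-projected eigenfunction $\Pi_q \mathbf{1}$ and that $\Pi_q$ is norm-continuous in $q$ at $q=0$ on $C^0$. Both are standard consequences of the spectral gap for $\widehat P^\perp$ established in Proposition \ref{prop:tilted2}(a) combined with norm-continuity of the resolvent $(\lambda - \widehat P_q^\perp)^{-1}$ at non-spectral $\lambda$, but they do require the simplicity and isolation of the eigenvalue $r(0)=1$, which comes from uniform geometric ergodicity of $(y_n,v_n)$. Part (b) is essentially density of $C^1$ in $C^0$ on a compact manifold and I expect no genuine difficulty there.
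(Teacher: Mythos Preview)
Your argument for (a) is essentially identical to the paper's: both use norm-continuity of the Riesz projector $\Pi_q$ at $q=0$ (coming from $\widehat P^\perp_q \to \widehat P^\perp$ in operator norm) to conclude $\psi_q = \Pi_q \mathbf 1 \to \mathbf 1$ in $C^0$, hence strict positivity for $|q|$ small.

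For (b) you take a genuinely different route. The paper does not mollify; instead it exploits that the finite-step iterate $r(q)^{-n}(\widehat P^\perp_q)^n \mathbf 1$ is already $C^1$, because the $f_i$ are $C^2$ by Assumption \ref{ass:fiC2bounded}, and then sets $\psi_q^{C^1} = r(q)^{-n}(\widehat P^\perp_q)^n \mathbf 1$ and $\psi_q^{C^0} = \psi_q - \psi_q^{C^1}$, choosing $n$ large enough that the $C^0$ tail is below $\delta$. Your mollification argument is correct and arguably more elementary---it uses nothing but density of $C^1$ in $C^0$ on a compact manifold and works even without the $C^2$ hypothesis on the $f_i$. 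The paper's approach, on the other hand, keeps the $C^1$ piece tied to the dynamics; this is the version that generalizes to the noncompact Lorenz-96 setting later in the paper (Section \ref{sec:QuantEst}), where one again needs the semigroup itself to produce the regularity and a chart-based mollification would not interact well with the weighted $C_{V_\eta}$ norms.
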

\begin{proof}
    For (a), since $\widehat P^\perp_q \to \widehat P^\perp_0 = \widehat P^\perp$ as $q \to 0$, standard spectral theory implies that the spectral projector associated to the dominant eigenvalue $r(q)$ of $\widehat P^\perp_q$ converges to that of $\widehat P^\perp$ in norm as $q \to 0$. The latter projector has range spanned by identically constant functions, and so it follows that $\psi_q \to c {\bf 1}$ as $q \to 0$, where $c > 0$ is a constant\footnote{Indeed, by our choice of normalization \eqref{eq:defnPsiq2} for the dominant eigenfunction of $\widehat P^\perp_q$, it can be shown that in fact $c = 1$. Further details are omitted. }; it follows that $\psi_q$ is strictly positive.
    
    Part (b) follows from the proof of \cite[Corollary 4.3]{BCZG}, which we briefly recall here. By equation \eqref{eq:defnPsiq2}, one has that
    \begin{align}
        \psi_q = r(q)^{-n} (\widehat P^\perp_q)^n {\bf 1} + \mathcal{E}_n \, ,
    \end{align}
    where $\mathcal{E}_n$ is an error term converging to 0 in the $C^0$ norm as $n \to \infty$. With $\delta > 0$ fixed, let $n$ be such that $\| \mathcal{E}_n\|_{C^0} < \delta$. We now set
    \begin{align}
        \psi_q^{C^1} = r(q)^{-n} (\widehat P^\perp_q)^n {\bf 1} \, , \qquad \psi_q^{C^0} = \mathcal{E}_n \, ,
    \end{align}
    noting that $\psi_q^{C^1} \in C^1$ is automatic from the smoothness of the functions $(f_i)$.
\end{proof}

\subsection{Lyapunov function construction} \label{subsec:completeAbsPf2}

Let $\eps > 0$ be sufficiently small as in Proposition \ref{prop:tubeCoords2}, to be adjusted smaller as we go.
For the rest of this section we will freely use the coordinate representation $N_\eps \cong T^\perp_\eps N$, intentionally confusing functions defined on $T^\perp_\eps N$ with those on $N_\eps$.  We will similarly confuse functions $\psi : \S^\perp N \to \R$ with those defined on $T^\perp N$ with the assignment
\[(y,w) \mapsto \psi(y, w / |w|) \,. \]

Let $h : T^\perp N \to \R$ be given by $h(y, w) = |w|$, which as above will be confused with $h : N_\epsilon \to \R$ given by $h(x) = h(y(x), w(x))$.  Below, $q > 0$ is a fixed small parameter as in Proposition \ref{prop:tilted2}(a).

\begin{lemma}\label{lem:localApproxEstAPP}
    For $\epsilon > 0$ sufficiently small there exists $C > 0$ such that the following holds for all $x \in N_\eps$: for any Lipschitz-continuous $\psi : \S^\perp N \to \R$ we have that
    \begin{align}
        | T^\perp P [h^{-q} \psi](y, w) - P[h^{-q} \psi] (x) | \leq C [\psi]_{\rm Lip} \dist(x, N)^{1 - q}
    \end{align}
    where $y = y(x), w = w(x), v = v(x) = w / |w|$.
\end{lemma}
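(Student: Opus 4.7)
The plan is to couple the two expectations against the same realization of the random diffeomorphism $f$ and bound the integrand pointwise before taking expectation. Unpacking definitions,
\begin{align*}
T^\perp P[h^{-q}\psi](y,w) - P[h^{-q}\psi](x) = \E\Big[|w_1|^{-q}\psi(y_1,v_1) - |w(f(x))|^{-q}\psi(y(f(x)),v(f(x)))\Big],
\end{align*}
with $y_1 = f(y)$, $w_1 = \Pi^\perp_{f(y)} D_y f(w)$, and $v_1 = w_1/|w_1|$. The task is to compare the linearized transverse output $(y_1, w_1)$ with the tubular-coordinate representation $(y(f(x)), w(f(x)))$ of the true image $f(x)$.

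For this I would apply Proposition~\ref{prop:geomEstimatesAppendix}: part (a) gives $\dist(f(x),\exp_{f(y)}(D_yf(w)))\leq C|w|^2$, and part (b) gives $\dist_N(y_1, y(f(x)))\leq C|w|$. Decomposing $D_yf(w) = w_1 + \tau$ with $\tau \in T_{f(y)} N$ and reading the distance to $N$ through the tubular coordinates of Proposition~\ref{prop:tubeCoords2}, one obtains $\bigl||w_1| - |w(f(x))|\bigr| \leq C|w|^2$. Combining this with the fact that $|w_1|$ and $|w(f(x))|$ are each of order $|w|$ while the underlying transverse vectors (under the tubular identification, after parallel transport along $N$ from $y_1$ to $y(f(x))$) differ by at most $O(|w|^2)$, the elementary inequality $|a/|a| - b/|b|| \leq 2|a-b|/\max(|a|,|b|)$ yields $d_{\S^\perp N}\bigl((y_1,v_1),(y(f(x)),v(f(x)))\bigr) \leq C|w|$. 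All constants are deterministic and uniform in $f$ by Assumption~\ref{ass:fiC2bounded}.

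With these estimates in hand, add and subtract $|w_1|^{-q}\psi(y(f(x)), v(f(x)))$ and split into a ``Lipschitz-in-$\psi$'' piece and a ``mean-value-in-$h$'' piece. The first is bounded by $|w_1|^{-q} \cdot [\psi]_{\mathrm{Lip}} \cdot d_{\S^\perp N}(\cdot,\cdot) \leq C[\psi]_{\mathrm{Lip}}|w|^{1-q}$. For the second, the mean value theorem applied to $t\mapsto t^{-q}$ on an interval with endpoints of order $|w|$ gives $\bigl||w_1|^{-q} - |w(f(x))|^{-q}\bigr| \leq Cq|w|^{-q-1}\cdot|w|^2 = Cq|w|^{1-q}$, which (absorbing $\|\psi\|_\infty$ into $[\psi]_{\mathrm{Lip}}$ using compactness of $\S^\perp N$) contributes $C[\psi]_{\mathrm{Lip}}|w|^{1-q}$. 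Taking expectations and recalling $|w| = \dist(x,N)$ closes the estimate. I expect the main obstacle to be the second geometric comparison: $v_1$ and $v(f(x))$ live over distinct (but $O(|w|)$-close) base points of $N$, so obtaining a clean bound on $d_{\S^\perp N}$ requires either a local trivialization of $\S^\perp N$ near $f(y)$ or a normal-bundle connection to parallel transport one vector to the other's base point.
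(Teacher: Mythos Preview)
Your proposal is correct and follows essentially the same approach as the paper's proof: fix a realization of $f$, use Proposition~\ref{prop:geomEstimatesAppendix}(a,b) to obtain the three key comparison estimates (on $|\bar w|^{-q}$, on the base point $y(f(x))$, and on the unit vector), then difference in these arguments and apply the Lipschitz bound on $\psi$. Your remark about absorbing $\|\psi\|_\infty$ into $[\psi]_{\mathrm{Lip}}$ flags a genuine ambiguity in the statement---the paper's own proof also leaves a $\|\psi\|_\infty$ factor implicit---so $[\psi]_{\mathrm{Lip}}$ here should be read as the full Lipschitz norm rather than the seminorm.
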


\begin{proof}
    Let $f$ stand here for a typical diffeomorphism $M \to M$ distributed according to our IID law. Unwinding the definitions, our desired estimate amounts to estimating
    \begin{align}
        |\bar w |^{-q} \psi(f y, \bar w / |\bar w| ) - |\dist(f x, N)|^{-q} \psi(y(f x), w(f x) / |w(f x)| )
    \end{align}
    where $\bar w = \Pi_{y}^\perp D_{y} f (w)$. Differencing in the three arguments present, it is straightforward to compute
    \begin{gather}
        \left| |\bar w|^{-q} - |\dist (f x, N)|^{-q} \right| \lesssim |w|^{1 - q} \, , \\
        \left| \frac{\bar w}{|\bar w|} - \frac{w(f x)}{|w(f x)|} \right| \lesssim |w| \, , \\
        \dist_N(f y, y(f x)) \lesssim |w| \, ,
    \end{gather}
    where in the first and second lines we use Proposition \ref{prop:geomEstimatesAppendix}(a), and in the third we use Proposition \ref{prop:geomEstimatesAppendix}(b). Here, $\lesssim$ means less than or equal to up to a multiplicative constant independent of $\psi$ or $x$, but perhaps depending on $q$ and $\| f \|_{C^2}$. Combining these and using the Lipshitz estimate for $\psi$ completes the proof. For further details, see the parallel development in \cite[Lemma 4.4]{BCZG}.
\end{proof}

We are now ready to define the Lyapunov function. Let $\chi_\eps : M \to [0,1]$ be a $C^\infty$ bump function for which $\chi|_{N_\eps} \equiv 1$ and $\chi|_{N_{2 \eps}} \equiv 0$. Let $q > 0$ small be fixed and $\psi_q, \Lambda(q)$ the corresponding eigenpair for $\widehat P^\perp_q$. Define
\[\mathcal{V}(x) = \max\left\{\chi(x) |w(x)|^{-q} \psi_q(y(x), v(x)), 1 \right\}\,.  \]
\begin{proposition}\label{prop:driftCondHolds2}
It holds that
    \[P \mathcal{V}(x) \leq \alpha \mathcal{V}(x)\]
    for $x \in N_\eps$, where $\alpha \in (0,1)$ is a constant.
\end{proposition}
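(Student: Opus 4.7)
The plan is to use the Feynman--Kac eigenrelation $\widehat P^\perp_q \psi_q = e^{-\Lambda(q)} \psi_q$ with $\Lambda(q) > 0$ (from Proposition \ref{prop:tilted2}(b)) to extract a contraction factor for $P\mathcal V$ near $N$. Morally, on $N_\eps$ one has $\mathcal V(x) = |w|^{-q}\psi_q(y,v)$, and applying $P$ to this should be well-approximated by applying the transverse semigroup to $h^{-q}\psi_q$, which via the eigenrelation evaluates explicitly to $e^{-\Lambda(q)}\mathcal V(x)$. The main work is to control the errors in this approximation.

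First I would shrink $\eps$ so that for $x \in N_\eps$: (i) $\chi(x) = 1$ and $|w|^{-q}\min\psi_q \geq 1$, so $\mathcal V(x) = |w|^{-q} \psi_q(y,v)$; and (ii) using Proposition \ref{prop:geomEstimatesAppendix} together with Assumption \ref{ass:fiC2bounded}, almost surely $fx$ still lies deep inside the tubular neighborhood of Proposition \ref{prop:tubeCoords2}. The elementary bound $\max\{a,1\} \leq a+1$ for $a \geq 0$ then yields
\[ P\mathcal V(x) \leq P[h^{-q}\psi_q](x) + 1. \]

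Next I would handle the main term. Since Lemma \ref{lem:localApproxEstAPP} requires a Lipschitz test function but $\psi_q$ is only $C^0$, I would decompose $\psi_q = \psi_q^{C^1} + \psi_q^{C^0}$ via Lemma \ref{lem:psiqPropertiesAPP}(b), apply Lemma \ref{lem:localApproxEstAPP} to the $C^1$ piece, and use the identity $T^\perp P[h^{-q}\psi](y,w) = |w|^{-q}\widehat P^\perp_q \psi(y,v)$. Inserting $\psi_q^{C^1} = \psi_q - \psi_q^{C^0}$, invoking the eigenrelation, and using the boundedness of $\widehat P^\perp_q$ on $C^0$ (a consequence of Assumption \ref{ass:fiC2bounded}), this gives
\[ P[h^{-q}\psi_q^{C^1}](x) \leq e^{-\Lambda(q)}|w|^{-q}\psi_q(y,v) + O(\delta\, |w|^{-q}) + O(C_\delta\, |w|^{1-q}). \]
For the rough $C^0$ piece, the crude bound $|P[h^{-q}\psi_q^{C^0}](x)| \leq \delta\, P[h^{-q}](x) \lesssim \delta\, |w|^{-q}$, obtained from a second application of Lemma \ref{lem:localApproxEstAPP} with $\psi \equiv \mathbf 1$, contributes another term of the same $O(\delta |w|^{-q})$ type.

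Finally, using the uniform lower bound $\psi_q \geq c_q > 0$ from Lemma \ref{lem:psiqPropertiesAPP}(a) to convert bare factors of $|w|^{-q}$ into multiples of $\mathcal V(x)$, the estimates combine to
\[ P\mathcal V(x) \leq \bigl( e^{-\Lambda(q)} + C_1 \delta + C_2 C_\delta \eps \bigr) \mathcal V(x) + 1. \]
The conclusion then follows from a sequential choice of small parameters: first fix $\delta > 0$ so that $C_1 \delta$ consumes only a small fraction of the spectral gap $1 - e^{-\Lambda(q)}$; then, with $C_\delta$ frozen, take $\eps$ small enough that $C_2 C_\delta \eps$ is comparably small and, using $\mathcal V(x) \geq c_q \eps^{-q}$, the additive $+1$ is absorbed as a vanishing fraction of $\mathcal V(x)$. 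The main obstacle I anticipate is precisely this two-parameter balance: the $C^0$ error $\delta\, |w|^{-q}$ is genuinely of the same order as $\mathcal V(x)$ itself, so it cannot be killed by taking $|w|$ small; $\delta$ must be chosen first, and only afterward, with the possibly large constant $C_\delta$ now fixed, can $\eps$ be taken small enough to suppress the Lipschitz-error term $C_\delta |w|^{1-q}$ as well.
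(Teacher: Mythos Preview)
Your proposal is correct and follows essentially the same approach as the paper: decompose $\psi_q = \psi_q^{C^1} + \psi_q^{C^0}$ via Lemma \ref{lem:psiqPropertiesAPP}(b), apply Lemma \ref{lem:localApproxEstAPP} to the Lipschitz piece to compare $P$ with $T^\perp P$, invoke the eigenrelation $\widehat P^\perp_q \psi_q = r(q)\psi_q$, and absorb the two error terms by first fixing $\delta$ small and then $\eps$ small. Your treatment is in fact slightly more careful than the paper's terse version---you explicitly handle the $\max\{\cdot,1\}$ via the additive $+1$, and spell out the $C^0$-piece estimate via a second application of Lemma \ref{lem:localApproxEstAPP} with $\psi\equiv\mathbf 1$---but the argument is the same.
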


From here, continuity of $P\mathcal{V}(x)$ and compactness of $N_\epsilon^c \subset M$ allow to conclude the drift condition for $\mathcal{V}(x)$ (Definition \ref{defn:driftCond}), which in view of Theorem \ref{thm:driftCondAbs2} completes the proof of Theorem \ref{thm:absMain2}.

\begin{proof}[Proof of Proposition \ref{prop:driftCondHolds2}]
    Apply Lemma \ref{lem:psiqPropertiesAPP}(b) for a value $\delta > 0$ to be taken sufficiently small at the end. By Lemma \ref{lem:localApproxEstAPP}, we have that $x \in N_\eps$, we have that
    \[| P \mathcal{V}(x) - T^\perp P \mathcal{V}(y(x), w(x)) | \lesssim C_\delta d_M(x, N)^{1 -q} + \delta d_M(x, N)^{-q} \, . \]
    
    Since
    \[T^\perp P \mathcal{V}(y(x), w(x)) = r(q) |w(x)|^{-q} \psi_q(y(x), w(x) / |w(x)|) = r(q) \mathcal{V}(x) \, , \]
    it follows that
    \begin{align}
        P \mathcal{V}(x) & \leq r(q) \mathcal{V}(x) + C_\delta d_M(x, N)^{1 - q} + \delta d_M(x, N)^{-q} \\
        & \leq r(q) \mathcal{V}(x) + C_\delta \eps^{1 - q} + \delta d_M(x, N)^{-q}
    \end{align}
    Fix $\delta \ll r(q) \inf |\psi_q|$ and fix $\epsilon \ll C_\delta^{1 / (1 - q)}$. Our desired condition follows on taking $\alpha \in (r(q), 1)$ and using the lower bound on $\psi_q$ from Lemma \ref{lem:psiqPropertiesAPP}(a) to absorb the second and third terms in the above display formula.
\end{proof}

\begin{remark}\label{rmk:finalExCOmments2}
    We conclude Section \ref{sec:Abstract} with some remarks on Example \ref{ex:mixedStabilityTorus}, in view of the proof of Theorem \ref{thm:absMain2}. In this example, the unique stationary measure for the process $(y_n)$ on $N$ is the Dirac mass at the almost-sure fixed-point $p$, where the transversal dynamics are repelling. It is  counterintuitive that this stationary measure somehow governs statistics of repulsion from \emph{any} part of $N$, not just the vicinity of $p$. This underscores the importance of the assumption of uniform geometric ergodicity in Theorem \ref{thm:absMain2}, which connects the statistics of trajectories initiated \emph{away} from $p$ with the random dynamics \emph{at} $p$.
\end{remark}

\section{Preliminaries and outline for application to L96} \label{sec:OutlineL96}

In this section, we outline the proof of Theorem \ref{thm:mainL96} using the series of ideas related to transverse Lyapunov exponents laid out in Section \ref{sec:Abstract}. In addition to checking relevant hypotheses for the Lorenz 96 model, we will have to address two key technical differences between the L96 and the setting of Section \ref{sec:Abstract}: (1) that L96 is a stochastic differential equation, posed in continuous time; and (2) noncompactness of the `ambient' state space $H $, playing the role of $M$ in Section \ref{sec:Abstract}, and the noncompact invariant subspace $H_I$, playing the role of $N$. As we will see, difference (2) is substantive, and will require significant modifications from the construction in Section \ref{sec:Abstract}.
Lastly, we will have to address here the somewhat stronger statement made for L96 in Theorem \ref{thm:mainL96}, that the second stationary measure $\mu$ supported off $H_I$ is unique and geometrically ergodic.

The plan is as follows. After some setup (Section \ref{subsec:setup3}), Section \ref{subsec:3tverseLE} discusses the proofs of existence and positivity of the transverse Lyapunov exponent in the L96 setting, and Section \ref{subsec:3driftCond} treats the proof of the full drift condition. Finally, some comments on uniqueness and geometric ergodicity of the measure $\mu$ supported off $H_I$ are given in Section \ref{sec:SecondMeasure}. Comments on the organization of the rest of the paper are given in Section \ref{subsec:agenda3}.

\subsection{Setup, notation and preliminaries}\label{subsec:setup3}
We regard the Lorenz 96 process $(u_t)$ from \eqref{eq:lorenz96intro} as the solution to the stochastic differential equation
\begin{equation}\label{eq: L96-SDE}
\dee u_t = X_0(u_t)\dt + \sqrt{\ep}\sum_{j \in I} X_j \dee W_t^j,
\end{equation}
on $H = \R^{\Z_N}$, $\Z_N = \Z / N \Z$,  where\footnote{Observe that the components of $u \in H$ are indexed cyclically, identifying component $1$ with $N  + 1$, $2$ with $N + 2$, etc.} $N = 3K$ for some $K \geq 3$ fixed throughout.
Here, $I = \{j\in \Z_N\,:\, j\mod 3 = 0\}$, and the vector fields $X_j$ are given by

\[
X_0(u) = B(u,u) - \ep u, \quad X_j(u) = \sigma_{j} e_{j},\quad \text{for}\quad j\in I \,.
\]
Here, $B(u,u)$ is the bilinear nonlinearity given by
\[
 B(u,v) = \sum_{j\in \Z_N} (u_{j+1} -u_{j-2})v_{j-1} e_j,\quad \text{for}\quad u,v \in H \, .
\]
Here and throughout, $\{e_j\}_{j \in \Z_N}$ is the standard basis of $H$.
We write $(\Omega, \mathcal{F}, \P)$ for the canonical space of the Brownian motions $W_t^j$.  Lastly, we write $\mathcal{L}$ for the generator of the $(u_t)$ process \eqref{eq: L96-SDE} in H\"ormander form, and $\varphi^t = \varphi^t_\omega$ for the stochastic flow on $H$ generated by \eqref{eq: L96-SDE}.

Before continuing, we record the following enhanced \emph{Lyapunov-Foster drift condition} for the $(u_t)$ process. We define the Lyapunov function family
\begin{equation}\label{eq:LyapunovFunctionFamily}
V_\eta(u) := e^{\eta \abs{u}^2}
\end{equation}
for $\eta > 0$.

\begin{lemma}\label{lem:twisty}
For $\eta_\ast = \frac{1}{8 \max_{j\in I} |\sigma_{j}|^2}$, and $\forall \eps > 0$, $\exists \gamma_\ast > 0$ such that $\forall c>0,T>0$ and $\gamma,\eta$ such that $0 < \gamma < \gamma_\ast$ and  $0<\eta e^{\gamma T} < \eta_\ast $, the following estimate holds:
\begin{equation}\label{ineq:LVeta}
\EE_u \left[e^{c \int_0^T \abs{u_s} \dee s} \sup_{0 < t < T} V_{\eta e^{\gamma t}} (u_t)\right] \lesssim_{c,T,\gamma} V_\eta(u).
\end{equation}
Moreover, due to the fluctuation dissipation scaling, we have the uniform in $\eps$ estimate for all $0<\eta<\eta_*$.
\begin{equation}\label{eq:superLF2}
  \sup_{\eps \in (0,1]} \EE_u \sup_{0<t<T} V_{\eta}(u_t) \lesssim_{T} V_\eta(u).
\end{equation}
\end{lemma}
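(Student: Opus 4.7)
The plan rests on the cyclic conservation law $\langle B(u,u), u\rangle = 0$ for the Lorenz-96 nonlinearity, which follows from an index shift in the triple sum $\sum_j (u^{j+1} - u^{j-2})u^{j-1}u^j$. This yields the closed It\^o equation
\begin{align}
\dee |u_t|^2 = (-2\epsilon |u_t|^2 + \epsilon C_\sigma)\,\dee t + 2\sqrt{\epsilon}\sum_{j \in I} \sigma_j u_t^j\,\dee W_t^j, \qquad C_\sigma := {\textstyle\sum_{j \in I}}\sigma_j^2,
\end{align}
which is the structural input enabling all subsequent computations.

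Next, I would apply It\^o to the time-dependent Lyapunov function $V_{\eta(t)}(u_t) = e^{\eta(t)|u_t|^2}$ with $\eta(t) = \eta e^{\gamma t}$. A direct computation gives
\begin{align}
(\partial_t + \mathcal{L})V_{\eta(t)}(u) = \eta(t)V_{\eta(t)}(u)\Bigl[(\gamma - 2\epsilon)|u|^2 + 2\epsilon\eta(t){\textstyle \sum_{j \in I}}\sigma_j^2|u^j|^2 + \epsilon C_\sigma\Bigr].
\end{align}
The quadratic-variation contribution $2\epsilon\eta(t)\sum_{j\in I}\sigma_j^2|u^j|^2$ is dominated by $\epsilon|u|^2/4$ once $\eta(t)\max_{j\in I}\sigma_j^2 \leq 1/8$, i.e., $\eta(t) \leq \eta_*$, leaving an effective coefficient $\eta(t)(\gamma - 7\epsilon/4)$ on $|u|^2$. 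Setting $\gamma_\ast := 7\epsilon/4$ (or slightly smaller to leave margin for a later squaring step) and completing the square $-\eta(t)(7\epsilon/4 - \gamma)|u|^2 + c|u| \leq c^2/(4\eta(7\epsilon/4 - \gamma))$ delivers
\begin{align}
(\partial_t + \mathcal{L})V_{\eta(t)}(u) + c|u|V_{\eta(t)}(u) \leq K\,V_{\eta(t)}(u)
\end{align}
for an explicit $K = K(c, \eta, \gamma, \epsilon, \sigma)$. A standard localization argument then shows that $N_t := V_{\eta(t)}(u_t)\exp\bigl(c\int_0^t |u_s|\,\dee s - Kt\bigr)$ is a non-negative local supermartingale, giving the pointwise-in-$t$ bound $\E_u\bigl[V_{\eta(t)}(u_t)e^{c\int_0^t|u_s|\,\dee s}\bigr] \leq e^{Kt}V_\eta(u)$.

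To upgrade to the pathwise supremum required by \eqref{ineq:LVeta}, I would exploit the slack built into $\eta_* = 1/(8\max_{j\in I}\sigma_j^2)$: the same drift analysis applies verbatim to $V_{p\eta(t)}$ for $p$ up to $4$ (once $\gamma$ is taken suitably small), since $p\eta(T)\max_{j\in I}\sigma_j^2 \leq p/8 < 1/2$ remains within the good regime. Applying It\^o to $\Psi_t := V_{\eta(t)}(u_t)e^{c\int_0^t|u_s|\,\dee s}$ yields $\dee\Psi_t \leq K\Psi_t\,\dee t + \dee\widetilde M_t$ for a martingale $\widetilde M$ whose quadratic variation is bounded by $C_1\eta\int_0^t V_{2\eta(s)}(u_s)|u_s|^2 e^{2c\int_0^s|u_r|\,\dee r}\,\dee s$. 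The key absorption step: applying the same completion-of-square to $V_{2\eta(t)}(u_t)e^{2c\int_0^t|u_s|\,\dee s}$ and integrating, the negative $-\eta\beta|u|^2V_{2\eta(t)}$ contribution in its drift converts $\E_u\int_0^T V_{2\eta(s)}(u_s)|u_s|^2 e^{2c\int_0^s|u_r|\,\dee r}\,\dee s$ into a multiple of $V_\eta(u)^2$, and Burkholder--Davis--Gundy then gives $\E_u\sup_t |\widetilde M_t| \lesssim V_\eta(u)$; one further iteration through $V_{4\eta}$ handles the higher quadratic-variation contribution generated along the way. Promoting from $\int_0^t$ to $\int_0^T$ in the exponent is then handled by the Markov decomposition $\E\bigl[e^{c\int_t^T|u_s|\,\dee s}\mid\mathcal{F}_t\bigr] \leq e^{K(T-t)}V_\eta(u_t)$ combined with the pointwise bound on $\Psi_t$. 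The second estimate \eqref{eq:superLF2} follows from the same machinery specialized to $c = 0$, $\gamma = 0$; the fluctuation--dissipation scaling ensures every $\epsilon$-factor in the drift $-2\epsilon|u|^2$ is matched by an $\epsilon$-factor in the noise contribution $\epsilon\sum\sigma_j^2|u^j|^2$, producing $\epsilon$-uniform constants.

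The principal obstacle is the bookkeeping needed to pass from the straightforward pointwise-in-$t$ supermartingale bound to the pathwise supremum without leaking extra powers of $V_\eta(u)$ on the right-hand side: a naive Cauchy--Schwarz split between $e^{2c\int_0^T|u_s|\,\dee s}$ and $\sup_t V_{2\eta(t)}$ would yield $V_\eta(u)^{3/2}$, and the remedy is to keep the $|u|^2V_{2\eta}$-terms absorbed \emph{inside} the same drift estimate that produced the pointwise bound rather than treat them as an independent factor. The specific choice $\eta_* = 1/(8\max_{j\in I}\sigma_j^2)$ is calibrated precisely so that this chain of drift analyses on $V_\eta$, $V_{2\eta}$, and $V_{4\eta}$ closes without ever forcing the effective Lyapunov parameter past the threshold $\{2\tilde\eta\max_{j\in I}\sigma_j^2 = 1\}$.
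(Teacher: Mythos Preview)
Your approach is essentially correct but takes a different and considerably more laborious route than the paper. Both proofs begin identically with the conservation law $\langle B(u,u),u\rangle = 0$ and the resulting closed It\^o equation for $|u_t|^2$. From there the paths diverge.

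The paper applies It\^o to $X_t := \eta e^{\gamma t}|u_t|^2 = \log V_{\eta(t)}(u_t)$, obtaining a continuous local martingale $M_t$ with explicitly controlled quadratic variation, and then invokes the exponential martingale inequality $\E\bigl[\exp\bigl(\sup_{0\le t\le T}(M_t - \langle M\rangle_t)\bigr)\bigr] \le 2$. This single step yields the pathwise supremum estimate directly; the choice $\eta_* = 1/(8\max_j|\sigma_j|^2)$ and $\gamma_* = \epsilon$ ensure that $M_t - \langle M\rangle_t$ dominates $X_t + \tfrac{\epsilon}{2}\int_0^t X_s\,\dee s - X_0$ minus a constant. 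The $c|u|$ factor is then extracted via Young's inequality $c|u| \le \tfrac{\eta\epsilon}{2}|u|^2 + C'$, and the $\epsilon$-uniform statement falls out because the constant on the right is $\exp(\eta C_\sigma(e^{\epsilon T}-1))$, bounded for $\epsilon \in (0,1]$.

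You instead apply It\^o to $V_{\eta(t)}(u_t)$ itself, obtain a pointwise-in-$t$ supermartingale bound, and then upgrade to the supremum via Burkholder--Davis--Gundy and an energy-dissipation estimate for $V_{2\eta(t)}$ (and possibly $V_{4\eta}$). This does work, and your observation that $\eta_* = 1/(8\max_j\sigma_j^2)$ leaves enough room to run the argument at $2\eta$ and $4\eta$ is correct. However, it requires several localization/Fatou steps that you only gesture at, and the final Markov-decomposition step you propose for passing from $\int_0^t$ to $\int_0^T$ in the exponent is not quite sound as stated (the argmax of $V_{\eta(t)}(u_t)$ is not a stopping time, so conditioning on $\mathcal{F}_{t^*}$ is problematic). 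In practice what is actually proved and used downstream is the $\sup_t\{V_{\eta(t)}(u_t)e^{c\int_0^t|u_s|\,\dee s}\}$ form, which both arguments deliver cleanly; the paper's exponential-martingale route simply gets there in one line rather than three.
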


{ A proof of Lemma \ref{lem:twisty} is given in Appendix \ref{app:SuperLyap}.
Since it will be used elsewhere, we record a somewhat weaker consequence of the arguments in Appendix \ref{app:SuperLyap}. That is, $V_\eta$ satisfies a \emph{super-Lyapunov} drift estimate: for any $\gamma > 0$ there exists $C_\gamma > 0$ such that
\begin{align}\label{eq:superLF3}\mathcal{L} V_\eta \leq - \gamma V_\eta + C_\gamma \, .\end{align}
}

\subsubsection*{The invariant subspace and the base process}

Note that for an initial point $y \in H_I = \{u \in H\,:\, u_j=0, \quad j\neq I\}$, the nonlinearity $B(y,y)$ vanishes and $H_I$ is almost surely invariant-- this decouples the coordinates of $y$ from each other , reducing \eqref{eq: L96-SDE} to the Ornstein-Uhlenbeck (OU) process

\begin{equation}\label{eq:OU-process}
\dee y_t = -\ep y_t \dt + \sqrt{\eps}\sum_{j\in I} X_j \dee W_t^{j} \,.
\end{equation}

As mentioned in the introduction, this implies that there exists a unique stationary (hence ergodic) Gaussian measure $\mu^I$ for the $(y_t)$ process on $H_I$, given by \eqref{eq:Gaussian_Density}. We occasionally refer to $(y_t)$ as the \emph{base process}.

We observe that since $H_I$ is invariant, Lemma \ref{lem:twisty} and \eqref{eq:superLF3} hold equally well for the base process $(y_t)$, with the generator $\Lc^y$ of the $(y_t)$ process replacing $\Lc$ in \eqref{eq:superLF3}.

\subsubsection*{Transverse linearization}

We now analyze the dynamics linearized around the invariant manifold $H_I$. Recall that $H_I = \mathrm{span}\{e_j : j \in I\}$ where $I = \{j \in \Z_N : j \equiv 0 \pmod 3\}$. The orthogonal complement is $H_I^\perp = \mathrm{span}\{e_j : j \in T\}$, where \[T = \Z_N \setminus I = \{ j \in \Z_N : j \equiv 1, 2 \text{ mod 3}\}  \]
 is the set of \emph{transverse} modes.
Let $\Pi: H \to H_I$ and $\Pi^\perp: H \to H_I^\perp$ be the corresponding orthogonal projections. Consequently the transverse bundle is given by
\[
T^\perp H_I \simeq H_I \times H_I^\perp.
\]

The full linearized dynamics around a solution $u_t = \varphi^t_\omega(u)$ of \eqref{eq: L96-SDE} are governed by the operator $A^t_{u,\omega} = D\varphi^t_{\omega}(u) \in \mathrm{GL}(H)$, which solves the linear random ODE:
\begin{equation}\label{eq:LinearizedODE}
\frac{\dee}{\dee t} A^t_{u,\omega} = (DB(u_t) - \eps \Id) A^t_{u,\omega}, \quad A^0_{u,\omega} = \Id.
\end{equation}
Here $DB(u)$ is the derivative of the bilinear term $B(u,u)$ evaluated at $u$.

A computationally useful simplification, specific to L96, is that the full linearization $A^t_{y, \omega}$ at a point $y \in H_I$ actually preserves the splitting $H = H_I \oplus H_I^\perp$ with  probability one.

\begin{lemma}[Invariance of Transverse Subspace]\label{lem:TransverseInvariance}
Let $y \in H_I$. Then, the linearized operator $DB(y): H \to H$ maps the subspace $H_I^\perp$ into itself. Consequently, for any trajectory $y_t$ of the Ornstein-Uhlenbeck process \eqref{eq:OU-process} starting from $y \in H_I$, the solution $A^t_{y,\omega}$ to the linearized equation \eqref{eq:LinearizedODE} with $u_t = y_t$ maps $H_I^\perp$ into itself for all $t \ge 0$.
\end{lemma}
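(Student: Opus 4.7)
The plan is to reduce both claims to a direct componentwise calculation exploiting the arithmetic structure of the index set $I = 3\Z/N\Z$. First I would compute $DB(y)\cdot w$ explicitly. Since $B(u,u)$ is a quadratic form coming from the bilinear (but not symmetric) $B$, we have the polarization identity
\[
DB(y)\cdot w = B(w,y) + B(y,w) = \sum_{j\in \Z_N}\bigl[(w_{j+1}-w_{j-2})y_{j-1} + (y_{j+1}-y_{j-2})w_{j-1}\bigr] e_j.
\]
I would then check, for each $j\in I$, that the $j$-th coefficient vanishes whenever $y\in H_I$ and $w\in H_I^\perp$.

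The arithmetic is the whole point: if $j\equiv 0\pmod 3$, then $j-2,\;j-1,\;j+1$ are all $\not\equiv 0\pmod 3$ and hence all lie in $T=\Z_N\setminus I$. Consequently $y_{j\pm 1} = y_{j-2}=0$ (because $y\in H_I$ is supported on $I$), so both summands collapse and $[DB(y)w]_j=0$. This proves the first assertion $DB(y)H_I^\perp\subset H_I^\perp$. I would note in passing that the symmetric computation shows $DB(y)H_I\subset H_I^\perp$ as well (when $j\in I$ one needs $y_{j-1}=0$ for the remaining term), which is why the linearization at $y\in H_I$ acts diagonally with respect to the splitting $H=H_I\oplus H_I^\perp$.

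For the second assertion, since $y_0\in H_I$ the base process $y_t$ remains in $H_I$ almost surely by the invariance of $H_I$ under \eqref{eq:OU-process}. Thus $DB(y_t)$ preserves $H_I^\perp$ for all $t\geq 0$, and $-\eps\Id$ trivially preserves every subspace, so the time-dependent generator $DB(y_t)-\eps\Id$ preserves $H_I^\perp$. I would then argue that if $w\in H_I^\perp$ and $W(t) = A^t_{y,\omega}w$, then $\Pi W(t)$ satisfies the linear ODE
\[
\tfrac{d}{dt}\Pi W(t) = \Pi\bigl(DB(y_t)-\eps\Id\bigr)W(t) = 0,
\]
using $\Pi DB(y_t)W(t)=0$ from the first part together with the fact that $\Pi W(0) = \Pi w = 0$; by uniqueness $\Pi W(t)\equiv 0$, i.e., $A^t_{y,\omega} w\in H_I^\perp$ for every $t\geq 0$.

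I do not anticipate a substantive obstacle: the whole content is the index bookkeeping in the first step, and the second step is a standard invariant-subspace argument for a linear non-autonomous ODE. The only thing worth being careful about is the almost-sure nature of the conclusion, but this follows immediately from the almost-sure invariance of $H_I$ under the OU flow, which holds on the full-measure event on which the OU SDE has a pathwise-unique solution.
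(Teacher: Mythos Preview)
Your proposal is correct and follows essentially the same componentwise computation as the paper's proof. One small slip: in the displayed ODE for $\Pi W(t)$ you dropped the $-\eps\,\Pi W(t)$ contribution from $-\eps\,\Id$, so the equation should read $\tfrac{d}{dt}\Pi W(t) = -\eps\,\Pi W(t)$ rather than $=0$; with $\Pi W(0)=0$ the conclusion $\Pi W(t)\equiv 0$ is of course unchanged.
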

\begin{proof}
Let $y \in H_I$ and $w \in H_I^\perp$. The $j$-th component of $DB(y)w$ is \[(DB(y)w)_j = (w_{j+1} - w_{j-2})y_{j-1} + (y_{j+1} - y_{j-2})w_{j-1} \,. \] If $j \in I$ (i.e., $j \equiv 0 \pmod 3$), then $j-1 \equiv 2$, $j+1 \equiv 1$, and $j-2 \equiv 1 \pmod 3$. Since $y \in H_I$, the components $y_{j-1}$, $y_{j+1}$, and $y_{j-2}$ are all zero. Thus, $(DB(y)w)_j = 0$ for $j \in I$, which means $DB(y)w \in H_I^\perp$. As $-\eps \Id$ also preserves $H_I^\perp$, the operator $DB(y_t) - \eps \Id$ preserves $H_I^\perp$ for any $y_t \in H_I$. The invariance of $H_I^\perp$ under the flow $A^t_{y,\omega}$ follows directly from the ODE \eqref{eq:LinearizedODE} and the fact that $A^0_{y,\omega} = \Id$.
\end{proof}

By Lemma \ref{lem:TransverseInvariance}, we can conveniently define the transverse cocycle $A_{\perp, y, \omega}^t, y \in H_I$ acting on the transverse bundle $T^\perp H_I$ by the restriction
\[A_{\perp, y,\omega}^{t} = A^t_{y,\omega}|_{H_I^{\perp}} \in GL(H_I^\perp) \, , \]
with no additional projection required, in contrast with the construction of Section \ref{sec:Abstract}.

In what follows, we will often suppress the dependence on the starting point $y$ and the noise path $\omega$, writing $A^t_{\perp, y, \omega} = A_{\perp}^t$.

\begin{definition}[Transverse Linear and Projective Processes] \label{def:TransverseLinearProcess} \
\begin{enumerate}
\item The \emph{transverse linear process} $\xi_t = (y_t, w_t) \in T^\perp H$, where $w_t \in H_I^\perp$ starting from $w_0 \in H_I^\perp$ is defined by $w_t = A_{\perp, y,\omega}^{t} w_0$, where $y_t$ is the solution to \eqref{eq:OU-process} with $y_0=y$. The process $w_t$ satisfies the linear ODE (driven by $y_t$):
\[
\frac{\dee}{\dt}w_t = (DB(y_t) - \eps \Id) w_t.
\]
We denote the Markov semigroup associated with the process $\xi_t$ by $TP_t^\perp$.
\item The \emph{transverse projective process} $z_t = (y_t,v_t) \in \S^\perp H_I \simeq H_I\times \S_I^\perp$, where $v_t$ on the unit sphere $\mathbb{S}^\perp_I = \{v \in H_I^\perp : |v|=1\}$ is defined by $v_t = w_t / |w_t|$ for any initial condition $v_0 \in \S^\perp_I$. The process $v_t$ satisfies the ODE:
\[
\frac{\dee}{\dt}v_t = DB(y_t)v_t - \eps v_t - v_t\langle v_t, DB(y_t)v_t\rangle.
\]
We denote the Markov semigroup associated with the process $z_t$ by $\widehat{P}_t^\perp$.
\end{enumerate}
\end{definition}

\subsection{Transverse Lyapunov exponents} \label{subsec:3tverseLE}

As in Section \ref{sec:Abstract}, instability of the invariant space $H_I$ is measured by the \emph{transverse Lyapunov exponent} $\lambda_\eps^\perp$, which from now on will be written explicitly $\epsilon$-dependence to emphasize the role of the small parameter $\eps$.

\subsubsection*{Existence of the transverse Lyapunov exponent $\lambda^\perp_\eps$}
The following is the analogue of Proposition \ref{prop:LEandMET2} in the setting of L96.

 \begin{proposition}[Existence of Transverse Lyapunov Exponent]\label{prop:LE_L96} \
\begin{itemize}
    \item[(a)] The limit
    \[ \lambda_\eps^\perp = \lim_{t \to \infty} \frac{1}{t} \log \| A_{\perp, y, \omega}^t \| \]
    exists and is constant $\mu^I \times \P$-almost surely.
    \item[(b)]
    For any $(y_0, w_0) \in T^\perp H_I$, $w_0 \neq 0$,
    the solution $w_t = A_{\perp, y_0, \omega}^t w_0$ satisfies
    \[ \lambda_\eps^\perp = \lim_{t \to \infty} \frac{1}{t} \log |w_t| \quad \P\text{-a.s.} \]
\end{itemize}
\end{proposition}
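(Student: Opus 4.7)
The plan is to deduce both parts from the continuous-time multiplicative ergodic theorem (MET) applied to the transverse cocycle $A^t_\perp$, supplemented in (b) by the law-of-large-numbers for additive observables of the projective process $(y_t, v_t)$.

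For part (a), I would invoke the MET of Oseledets (in the form of Theorem III.1.1 of \cite{kifer2012ergodic}) applied to the cocycle $A^t_\perp \in \mathrm{GL}(H_I^\perp)$ over the ergodic driving system $(y_t, \theta_t\omega)$ on $H_I \times \Omega$. Ergodicity of the base is a direct consequence of the uniqueness of $\mu^I$ as the invariant measure of the Ornstein--Uhlenbeck process \eqref{eq:OU-process}, and Lemma \ref{lem:TransverseInvariance} ensures that the transverse cocycle is well-defined with no extra projection. The required integrability $\E^{\mu^I\times\P}\sup_{0\le t\le 1}\bigl(\log^+\|A^t_\perp\| + \log^+\|(A^t_\perp)^{-1}\|\bigr) < \infty$ follows from Gronwall applied to \eqref{eq:LinearizedODE}: since $B$ is bilinear, $\|DB(y_s)\|_{H_I^\perp} \lesssim |y_s|$, so both logarithms are bounded by $\int_0^t(|y_s|+\eps)\, ds$. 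The bound under $\mu^I\times\P$ is then trivial from Gaussianity of $\mu^I$ (or alternatively from Lemma \ref{lem:twisty} applied to the base process). The MET then produces a deterministic $\lambda^\perp_\epsilon$ realizing the $\mu^I\times\P$-a.s. limit in (a).

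For part (b), the key observation is the additive representation
\[
\log|w_t| - \log|w_0| = \int_0^t Q(y_s, v_s)\, ds, \qquad Q(y,v) := \langle v, DB(y)v\rangle - \eps,
\]
obtained by differentiating $\log|w_t|^2$ using \eqref{eq:LinearizedODE}. Thus the pointwise limit for $\tfrac{1}{t}\log|w_t|$ reduces to a time-average ergodic theorem for the projective process $(y_t, v_t)$. For $\mu^I$-a.e.\ $y_0$ and any $w_0 \neq 0$, this is already delivered by the MET (Theorem III.1.2 of \cite{kifer2012ergodic}), with limit $\lambda^\perp_\eps$. To strengthen to \emph{every} $(y_0, w_0)$ with $w_0 \neq 0$ as stated, I would use that $(y_t, v_t)$ admits a unique stationary measure $\nu^\perp$ on $H_I \times \S^\perp_I$ that attracts every initial distribution, so that $\tfrac{1}{t}\int_0^t Q(y_s, v_s)\, ds \to \int Q\, d\nu^\perp = \lambda^\perp_\eps$ $\P$-a.s.\ from every starting point, via a standard Harris-type ergodic argument combined with the drift control on $(y_t)$.

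The main obstacle is the ``for every $(y_0, w_0)$'' strengthening in (b): unlike the compact setting of Section \ref{sec:Abstract}, the base space $H_I \times \S^\perp_I$ is noncompact and one cannot simply quote uniform geometric ergodicity of the projective process. The argument rests on later results of the paper: the hypoelliptic and irreducibility analysis of Section \ref{sec:Hypoellipticity}, hinging on the bracket-generation Proposition \ref{prop:slT_generation}, to obtain uniqueness and strong Feller properties of $\nu^\perp$; the super-Lyapunov drift \eqref{eq:superLF3} together with Lemma \ref{lem:twisty} to control excursions of $y_s$ and ensure tightness of time averages from every initial condition; and integrability of $Q$ against $\nu^\perp$, which is immediate from $|Q(y,v)| \lesssim |y| + \eps$ and a Lyapunov-type moment estimate for $\nu^\perp$.
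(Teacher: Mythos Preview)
Your proposal is correct and follows essentially the same approach as the paper: for (a), the MET applied to the transverse cocycle with log-integrability obtained via Gr\"onwall on \eqref{eq:LinearizedODE} (this is precisely the content of Corollary \ref{cor:twistyFC}); for (b), the additive representation of $\log|w_t|$ combined with geometric ergodicity of the projective process $(y_t,v_t)$ (Lemma \ref{lem:GeoErgoPP}). You have in fact been more explicit than the paper's own sketch about the ``every $(y_0,w_0)$'' strengthening and the Harris-type ingredients needed to justify it.
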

\begin{proof}[Proof Sketch]
For (a), the existence of the Lyapunov exponent $\lambda_\eps^\perp$ and its representation follow from the multiplicative ergodic theorem in the continuous-time setting; see, e.g.,
\cite{arnold1998random}. A key requirement for applying the multiplicative ergodic theorem in this continuous-time setting is the log-integrability of the cocycle norm and its inverse
with respect to the stationary measure $\mu^I$:  \[\E\int_{H_I} \sup_{0<t<1} \log^+\|A_{\perp,y,\omega}^t\| \, \dee\mu^I(y) < \infty \,, \qquad
\E\int_{H_I} \sup_{0<t<1} \log^+\|(A_{\perp,y,\omega}^t)^{-1}\| \, \dee\mu^I(y) < \infty \,.
\]   This is verified in Appendix \ref{app:SuperLyap} as Corollary \ref{cor:twistyFC}, relying on the properties of the super-Lyapunov function $V_\eta$ as in Lemma \ref{lem:twisty}.
Item (b) will follow from geometric ergodicity of the projective process $(y_t, v_t)$, stated below as Lemma \ref{lem:GeoErgoPP}.
\end{proof}

As in Section \ref{sec:Abstract}, we used geometric ergodicity of the transverse projective process $z_t = (y_t, v_t)$ on $\S^\perp H_I$. Here however we confront a difference due to noncompactness of $H_I$, which prevents us from invoking \emph{uniform} geometric ergodicity as in Definition \ref{defn:unifGeoErgodic2}. The following analogue, \emph{$V$-geometric ergodicity} will be necessary.

For this we will use the weighted spaces $C_{V_\eta}$ of functions on $\mathbb{S}^\perp_I$, equipped with the norm:
\begin{align}\label{eq:C_V}
  \|\varphi\|_{C_{V_\eta}} := \sup_{(y,v) \in H_I \times \mathbb{S}_I^\perp} \frac{|\varphi(y,v)|}{V_\eta(y)}.
  \end{align}
  Here $\eta > 0$, to be taken sufficiently small in what follows.

\begin{lemma} \label{lem:GeoErgoPP}
For all $\eps > 0$, it holds that (i) the process $z_t = (y_t, v_t)$ admits a unique stationary measure $\nu^\eps$ on $\S^\perp H_I$; and (ii) for all $\eta \in (0, \eta_\ast)$, $\eta_\ast$ as in Lemma \ref{lem:twisty}, it holds that
the transverse projective process is geometrically ergodic in $C_{V_\eta}$. That is, there exist $C, r > 0$ such that
 that for all bounded measurable $\varphi:\S^\perp H_I \to \R$, there holds
\begin{align*}
\left|\widehat{P}_t^\perp \varphi(z_0) - \int \varphi \dee \nu^\eps \right| \leq C V_\eta(z_0) e^{- r t} \|\varphi\|_{C_{V_\eta}} \, .
\end{align*}
\end{lemma}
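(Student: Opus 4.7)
The plan is to apply the weighted-norm Harris theorem of Hairer--Mattingly \cite{hairer2011yet} to the projective semigroup $\widehat{P}^\perp_t$ on the space $C_{V_\eta}$. This requires verifying two standard ingredients: a Lyapunov drift condition for $V_\eta$, and a minorization (small-set) condition on sufficiently large sublevel sets of $V_\eta$. Together with the Feller property of $\widehat{P}^\perp_t$ (immediate from smoothness of the vector fields and standard SDE estimates), Harris's theorem then yields existence and uniqueness of a stationary measure $\nu^\eps$ together with the $V_\eta$-geometric convergence estimate claimed in the lemma.

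For the drift condition, the key observation is that $V_\eta(y,v) = V_\eta(y)$ depends only on the base coordinate, and the $y$-marginal of the projective process is exactly the Ornstein-Uhlenbeck base process \eqref{eq:OU-process}. Consequently
\[\widehat{P}^\perp_t V_\eta(y,v) \;=\; \E_y V_\eta(y_t),\]
so the super-Lyapunov estimate \eqref{eq:superLF3}, which holds equally well for the base generator $\Lc^y$, gives by Dynkin's formula
\[\E_y V_\eta(y_t) \;\leq\; e^{-\gamma t} V_\eta(y) + C_\gamma/\gamma\]
for every $\eta \in (0,\eta_\ast)$ and every $\gamma > 0$. Choosing $t$ large enough that $e^{-\gamma t} < 1$ yields the required drift inequality uniformly in $v$.

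For the minorization, it suffices to show that for each sufficiently large $R$ the compact set $K_R = \{y \in H_I : V_\eta(y) \le R\} \times \S^\perp_I$ is a small set for $\widehat{P}^\perp_{t^\ast}$ at some time $t^\ast > 0$: there exists a probability measure $\nu_\ast$ on $\S^\perp H_I$ and $\alpha_\ast > 0$ such that $\widehat{P}^\perp_{t^\ast}(z,\cdot) \geq \alpha_\ast \nu_\ast(\cdot)$ for all $z \in K_R$. This reduces to two properties of the projective process: \emph{hypoellipticity}, i.e.\ the transition kernels $\widehat{P}^\perp_t(z,\cdot)$ admit smooth densities (which implies the strong Feller property), established via H\"ormander's condition in Section \ref{sec:Hypoellipticity}; and \emph{topological irreducibility}, i.e.\ $\widehat{P}^\perp_t(z, U) > 0$ for every nonempty open $U \subset \S^\perp H_I$ and every $z$, established in Section \ref{sec:FI}. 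Given these two ingredients, a standard compactness argument on $K_R$ (see \cite{meyn2012markov}) upgrades pointwise positivity into a uniform minorization by a common minorant $\alpha_\ast \nu_\ast$.

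The primary obstacle is the hypoellipticity step, since the driving noise acts only on the $I$-indexed modes of $y_t$, while the transverse projective direction $v_t$ is driven only indirectly through $DB(y_t)$. Verifying H\"ormander's bracket condition on the enlarged state space $H_I \times \S^\perp_I$ requires that iterated Lie brackets of the drift and diffusion vector fields span the full tangent space at every point, and this ultimately reduces to the algebraic generation statement $\mathrm{Lie}(\{M_k\}) = \mathfrak{sl}(H_I^\perp)$ of Proposition \ref{prop:slT_generation}, whose verification via the computer-assisted proof in Appendix \ref{app:CAP} constitutes the bulk of the technical work. Irreducibility, by contrast, is expected to be relatively standard once hypoellipticity is in hand, following from a Stroock--Varadhan-type control argument exploiting the polynomial nonlinearity of $B$.
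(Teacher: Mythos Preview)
Your proposal is correct and follows essentially the same approach as the paper: Harris's theorem with the drift condition coming from the super-Lyapunov property of $V_\eta$ (which depends only on $y$), and the small-set condition coming from H\"ormander-type hypoellipticity plus topological irreducibility, both of which ultimately reduce to the algebraic generation statement of Proposition~\ref{prop:slT_generation}. One minor correction: irreducibility of the projective process is established in Section~\ref{sec:Hypoellipticity} (via the Stroock--Varadhan control argument of Appendix~\ref{app:control_theory}), not in Section~\ref{sec:FI}.
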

The proof of Lemma \ref{lem:GeoErgoPP} is given in Section \ref{sec:GeoErgodicity_consolidated}. It relies on verifying Hörmander's condition for the generator  of the projective Markov processs (using computer assistance, see Sections \ref{sec:hypo_linearization_new}, \ref{sec:hypo_projective} and Appendix \ref{app:CAP}) to establish the strong Feller property and irreducibility, which, combined with the drift condition \eqref{ineq:LVeta} via Harris' theorem (see e.g. \cite{meyn2012markov}), yields geometric ergodicity in the weighted space $C_{V_\eta}$. A corollary of these arguments is that $\nu_\eps$ is absolutely continuous with respect to Lebesgue measure on $\S^\perp H_I$, with a strictly positive $C^\infty$ density $f_\eps$.

A key consequence of geometric ergodicity in $C_{V_\eta}$ is that the semigroup ${\widehat{P}}_t^\perp$ has a spectral gap on $C_{V_\eta}$, meaning $1$ is a simple, isolated eigenvalue and the rest of the spectrum is contained in a disk of radius $< 1$.

\subsubsection*{Positivity of $\lambda_\eps^\perp$}

Theorem \ref{prop:LE_L96} above addressed \emph{existence} of the transverse Lyapunov exponent $\lambda_\eps^\perp$, the first primary requirement in the construction of the drift condition off the invariant subspace $H_I$. We now turn to the second requirement, \emph{positivity} of $\lambda_\eps^\perp$.

\begin{lemma}\label{lem:PosLyapL96}
In the setting of Section \ref{subsec:setup3}, assume $\sigma_j \neq 0$ for all $j \in I$. Then,
\begin{align*}
  \lim_{\eps \to 0} \frac{\lambda^\perp_\eps}{\eps} = +\infty.
  \end{align*}
In particular, there exists $\eps_0$ such that for all $0 < \eps < \eps_0$,
\[\lambda^\perp_\eps > 0 \,.\]\end{lemma}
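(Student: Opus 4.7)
The plan is to combine the Furstenberg--Khasminskii representation of $\lambda_\eps^\perp$ with a slow--fast averaging principle in the vanishing-damping limit, with the goal of showing that $\lambda_\eps^\perp$ converges as $\eps \to 0$ to a strictly positive constant $\Lambda_0$; this immediately yields $\lambda_\eps^\perp/\eps \to +\infty$. Differentiating $\log|w_t|$ along the transverse projective process gives
\[
\log|w_t| = \log|w_0| + \int_0^t \bigl(\langle v_s, DB(y_s) v_s\rangle - \eps\bigr)\,\ds,
\]
and combining this with the $V$-geometric ergodicity from Lemma~\ref{lem:GeoErgoPP} and Proposition~\ref{prop:LE_L96}(b) produces the Furstenberg--Khasminskii identity
\[
\lambda_\eps^\perp + \eps = \int_{H_I \times \mathbb{S}_I^\perp} \langle v, DB(y) v\rangle \, \dee\nu^\eps(y,v).
\]
Because the OU base process $y_t$ is decoupled from $v_t$, the $y$-marginal of $\nu^\eps$ is always the $\eps$-independent Gaussian $\mu^I$, and everything reduces to controlling the conditional measures $\nu^\eps_y$ as $\eps \to 0$.

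Next I would exploit the fluctuation--dissipation scaling: the OU process $y_t$ relaxes on the slow time scale $\eps^{-1}$, whereas the projective equation
\[
\dot v_t = DB(y_t) v_t - v_t\langle v_t, DB(y_t)v_t\rangle
\]
evolves on $O(1)$ time. For frozen $y$, the linear flow $\dee w/\dee t = DB(y) w$ on $H_I^\perp$ has Lyapunov exponent equal to $\mathrm{Re}(\mu_1(DB(y)|_{H_I^\perp}))$, where $\mu_1$ denotes an eigenvalue of largest real part: on its top real-part invariant real subspace $E$, the restriction takes the form $\mathrm{Re}(\mu_1)\,\Id_E + M$ with $M$ traceless and of purely imaginary spectrum, so $|e^{Mt}|$ remains bounded in $t$. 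A two-scale averaging argument---$v$ equilibrates to the frozen-$y$ attractor on $O(1)$ time while $y$ ergodically samples $\mu^I$ on the slow scale---should then identify
\[
\Lambda_0 \;:=\; \lim_{\eps\to 0} \lambda_\eps^\perp \;=\; \int_{H_I} \mathrm{Re}\bigl(\mu_1(DB(y)|_{H_I^\perp})\bigr)\, \dee\mu^I(y).
\]

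For positivity of $\Lambda_0$, a direct inspection of $B$ shows that $DB(y)|_{H_I^\perp}$ has zero diagonal in the standard basis $\{e_j\}_{j\in T}$ and hence is traceless, so $\mathrm{Re}(\mu_1(DB(y)|_{H_I^\perp})) \geq 0$ for every $y \in H_I$, with equality only if the spectrum lies on $i\mathbb{R}$. To upgrade this to strict positivity in the integral, it suffices to exhibit a single $y^* \in H_I$ at which $DB(y^*)|_{H_I^\perp}$ admits a real eigenvalue of nonzero real part; continuity of the spectrum in $y$ and positivity of the Gaussian density of $\mu^I$ near $y^*$ then yield $\Lambda_0 > 0$. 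The candidate $y^* = e_0 + 2 e_3$ works for all $N \geq 9$: a finite determinant calculation restricted to the six modes of $T$ interacting with the forced indices $\{0,3\}$ yields characteristic polynomial $\lambda^2(\lambda^2-1)(\lambda^2+4)$, whose spectrum includes $\pm 1$. Shift-invariance of $B$ makes this local block structure essentially $N$-independent.

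The principal obstacle I expect is the rigorous justification of the averaging limit in the second step: the projective component $v_t$ receives no direct stochastic forcing, so the fast mixing needed for classical Khasminskii-type averaging must be supplied indirectly through the slow $y_t$. A practical route is a second-order perturbation expansion for the projective generator on the weighted space $C_{V_\eta}$ of Lemma~\ref{lem:GeoErgoPP}, exploiting the spectral gap guaranteed there. Since we only need a positive $\liminf_{\eps \to 0} \lambda_\eps^\perp$ rather than exact equality, an arguably simpler alternative is a direct quasistationary argument on an intermediate time scale $1 \ll T_\eps \ll \eps^{-1}$, combined with Fatou's lemma, producing a positive lower bound from the frozen-$y$ attractor calculation and the ergodic theorem for $y_t \sim \mu^I$.
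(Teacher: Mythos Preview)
Your Furstenberg--Khasminskii representation and the explicit eigenvalue computation at $y^* = e_0 + 2e_3$ are both correct, and in fact the paper uses essentially the same ingredients (its Furstenberg--Khasminskii lemma and its spectrum computation for $DB(y_{a,b})|_{H_I^\perp}$ with $y_{a,b} = ae_0 + be_3$, $a(b-a)>0$). The genuine gap is the averaging step. The frozen-$y$ projective flow is not mixing: it has a global attractor (the projectivized top eigenspace), but the convergence time to it is governed by the spectral gap of $DB(y)|_{H_I^\perp}$, which is not bounded away from zero on $H_I$ --- at $y=0$ one has $DB(0)=0$ and the projective flow is trivial. Since $\mu^I$ is Gaussian and charges every neighborhood of such degenerate $y$, there is no uniform ``fast'' timescale for $v_t$, so classical slow--fast averaging does not apply. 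Your proposed fixes --- a spectral perturbation on $C_{V_\eta}$, or a quasistationary estimate on $1 \ll T_\eps \ll \eps^{-1}$ combined with Fatou --- do not by themselves handle this non-uniformity; one would need a quantitative adiabatic argument controlling how much $\nu^\eps$-mass sits near the degenerate set, and you have not supplied that analysis.

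The paper takes an entirely different route that avoids computing the limit. It argues by contradiction: if $\liminf_{\eps \to 0} \lambda^\perp_\eps/\eps < \infty$, the identity $\eps\, FI(f_\eps) = |T|\lambda^\perp_\eps + \eps N$ forces the Fisher information $FI(f_{\eps_j})$ to stay bounded along a subsequence; an $\eps$-uniform hypoelliptic estimate then yields $W^{s,1}$ precompactness of $(f_{\eps_j})$ on balls, producing an $L^1$ limit $f_0$ that must be an invariant \emph{density} for the deterministic $\eps=0$ projective flow with $y$-marginal $\mu^I$. But the very eigenvalue computation you carried out shows that on an open set of $y$ the $\eps=0$ flow contracts onto a lower-dimensional subset of $\mathbb{S}_I^\perp$, so any invariant measure there is singular with respect to Lebesgue --- a contradiction. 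This compactness-and-contradiction approach is more robust precisely because it sidesteps the non-uniform relaxation rates that obstruct a direct averaging proof.
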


The proof of Lemma \ref{lem:PosLyapL96}, largely following \cite{BBPS20} with some modifications, will be presented below in a series of additional preliminary lemmas, with proofs deferred to elsewhere or omitted where indicated.

We start with the following identity for $\lambda^\perp_\eps$ in terms of a certain \emph{Fisher Information} of the density $f_\eps$ for the stationary measure $\nu_\eps$ of the $z_t = (y_t, v_t)$ process.

\begin{lemma}\label{lem:FIIdentity3}
  For any $\eps > 0$, it holds that
\[    \eps FI(f_\eps) = |T| \lambda^\perp_\eps + \ep N \, ,
\]
where
\[FI(f_\eps) := \sum_{k \in I} \frac{1}{2}\int_{H_I \times \mathbb{S}^\perp_I} \frac{|\sigma_k\partial_{u_k} f_\eps|^2}{f_\eps} \dee y \dee v \,. \]
\end{lemma}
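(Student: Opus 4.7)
The plan is to combine a Furstenberg–Khasminskii representation of $\lambda^\perp_\eps$ with the stationary Fokker–Planck equation for $f_\eps$ tested against $\log f_\eps$; this is the same basic philosophy as the Fisher-information identities in \cite{BBPS20} and related works. Since $\tfrac{\dee}{\dt}\log|w_t| = \langle v_t, DB(y_t) v_t\rangle - \eps$ along the transverse linearized flow, and $(y_t, v_t)$ is ergodic with respect to $\nu_\eps$ by Lemma \ref{lem:GeoErgoPP}, Birkhoff's theorem yields
\begin{align}
\lambda^\perp_\eps \;=\; \int_{H_I \times \S^\perp_I} \bigl(\langle v, DB(y)v\rangle - \eps\bigr)\, f_\eps(y,v)\, \dee y\, \dee v,
\end{align}
with integrability guaranteed by the linearity of $DB(y)$ in $y$ and the moment control on $f_\eps$ coming from Lemma \ref{lem:twisty}.

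Next I would write the generator of the projective process as $\mathcal{L}^{y,v} = \mathcal{L}^y + G(y,v)\cdot \nabla_v$ with $G(y,v) = DB(y)v - v\langle v, DB(y)v\rangle$, and use stationarity $(\mathcal{L}^{y,v})^* f_\eps = 0$ in the weak form $\int \mathcal{L}^{y,v}\log f_\eps \cdot f_\eps \, \dee y\, \dee v = 0$. Three integration-by-parts computations then give, respectively: (i) the drift $-\eps y \cdot \nabla_y$ contributes $+\eps |I| = \eps K$ (from $\int u_k \partial_{u_k} f_\eps \, \dee y = -\int f_\eps \, \dee y$); (ii) via $\partial^2_{u_k}\log f = \partial^2_{u_k} f/f - (\partial_{u_k} f)^2/f^2$, the $y$-diffusion contributes $-\eps FI(f_\eps)$; and (iii) integration by parts on $\S^\perp_I$ (no boundary) contributes $-\int \Div_\S G \cdot f_\eps$, where a direct spherical-divergence calculation produces
\begin{align}
\Div_\S G(y,v) \;=\; \tr\bigl(DB(y)|_{H_I^\perp}\bigr) \;-\; |T|\,\langle v, DB(y)v\rangle.
\end{align}
The structural observation $(DB(y) e_m)_m = 0$, immediate from $(DB(y)w)_j = (y_{j+1}-y_{j-2})w_{j-1} + (w_{j+1}-w_{j-2})y_{j-1}$ together with $(e_m)_{m\pm 1} = (e_m)_{m-2} = 0$, shows that $DB(y)$ has zero diagonal in the standard basis, so $\tr(DB(y)|_{H_I^\perp}) = 0$ for every $y \in H_I$. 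Assembling the three contributions yields $0 = \eps K - \eps FI(f_\eps) + |T|\int \langle v, DB(y)v\rangle f_\eps$, and substituting the Khasminskii identity in the form $\int \langle v, DB(y)v\rangle f_\eps = \lambda^\perp_\eps + \eps$ gives $\eps FI(f_\eps) = |T|\lambda^\perp_\eps + \eps(K + |T|) = |T|\lambda^\perp_\eps + \eps N$.

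The main obstacle is the rigorous justification of using $\log f_\eps$ as a test function, since a priori $\log f_\eps$ is not bounded on the noncompact $H_I$ and $FI(f_\eps)$ is not known to be finite. I expect to handle this by a standard truncation-regularization: replace $\log f_\eps$ with $\chi_R(y)\log(f_\eps + \delta)$ for a smooth compactly supported cutoff $\chi_R$ on $H_I$, derive the identity in truncated form, and then pass to the limits $\delta \to 0$ and $R \to \infty$. The $C^\infty$ smoothness and strict positivity of $f_\eps$ on $H_I \times \S^\perp_I$ (from the hypoellipticity/strong Feller arguments cited around Lemma \ref{lem:GeoErgoPP}) give the local bounds needed for the $\delta \to 0$ limit, while the $V_\eta$-moment bounds of Lemma \ref{lem:twisty} give the decay in $|y|$ needed to send $R \to \infty$; finiteness of $FI(f_\eps)$ follows a posteriori from the resulting identity.
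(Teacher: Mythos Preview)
Your proposal is correct and follows essentially the same approach as the paper's proof (given as Lemma \ref{lem:fisherInfo} in Section \ref{subsec:FI5}): pair the stationary Kolmogorov equation with $\log f_\eps$, integrate by parts, and combine with the Furstenberg--Khasminskii formula, using that $DB(y)|_{H_I^\perp}$ is trace-free. The only notable difference is in the justification step: the paper packages the needed integrability into a weighted $f_\eps \log f_\eps$ moment estimate (Lemma \ref{lem:momentEsts}, proved via quantitative hypoelliptic regularity interpolated against the $V_\eta$ bound) and then refers to \cite{BBPS20} for the passage to the limit, whereas you propose a direct $\chi_R(y)\log(f_\eps+\delta)$ truncation---these amount to the same thing, though you should expect to need something like the weighted $f\log f$ control rather than just raw $V_\eta$ moments to close the $R\to\infty$ limit.
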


Lemma \ref{lem:FIIdentity3} is an adaptation of \cite[Proposition 3.2]{BBPS20}. A proof sketch is given in Section \ref{sec:FI}.

The next key idea from \cite{BBPS20} is the following hypoelliptic regularity estimate which relates $L^1$-type Sobolev regularity of $f_\eps$ to the degenerate Fisher information $FI$. Below, for $R > 0$ we define a smooth cut-off $\chi_R(u) = \chi(R^{-1}\abs{u})$, where we have fixed $\chi \in C^\infty_c([0,2))$ with $\chi(x) = 1$ for $x \in [0,1]$ and $x = 0$ for $x > 3/2$.

\begin{lemma}[Theorem B in \cite{BBPS20}] \label{lem:FIT}
There exists an $s > 0$ such that $\forall R \geq 1$, $\exists C_R$ such that the following holds uniformly in $\eps$:
\begin{align*}
\norm{\chi_R f_\eps}_{W^{s,1}}^2 \leq C_R(1 + FI(f_\eps)).
\end{align*}
\end{lemma}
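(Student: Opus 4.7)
The plan is to adapt the hypoelliptic regularity estimate established as Theorem B in \cite{BBPS20} to the transverse projective process on $H_I \times \S^\perp_I$. The broad strategy is to convert the $L^2$-type Fisher information control of $\sqrt{f_\eps}$ along the forced directions $\partial_{y_k}$, $k \in I$, into fractional $W^{s,1}$-type Sobolev regularity of $f_\eps$ in all directions of the phase space, by exploiting the stationary Fokker-Planck equation $\Lc^* f_\eps = 0$ together with the H\"ormander-type spanning established in Section \ref{sec:Hypoellipticity} and Appendix \ref{app:CAP}.

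First I would rewrite
\[
FI(f_\eps) = 2 \sum_{k \in I} \int_{H_I \times \S^\perp_I} |\sigma_k\, \partial_{y_k} \sqrt{f_\eps}|^2 \, \dee y\, \dee v,
\]
so the Fisher information directly bounds $\| \sigma_k\, \partial_{y_k} \sqrt{f_\eps}\|_{L^2}^2$ for $k \in I$. Combined with $\|\sqrt{f_\eps}\|_{L^2}^2 = 1$ and Cauchy-Schwarz, this yields $\|\partial_{y_k} f_\eps\|_{L^1} \lesssim FI(f_\eps)^{1/2}$, i.e., $W^{1,1}$ regularity of $f_\eps$ in the forced directions only. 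The task is then to promote this to fractional regularity in the transverse directions $\partial_{y_j}$, $j \in T$, and in the spherical $v$-directions.

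Next I would invoke the stationarity equation $\Lc^* f_\eps = 0$ in a Bouchut-style commutator argument. Schematically, differentiation in a bracket direction such as $[X_0, X_k]$ can, modulo commuting with $\Lc^*$, be reduced to differentiation along $X_k$ (already controlled by $FI^{1/2}$) plus a term involving $\Lc^* f_\eps = 0$. Iterating up to the bracket depth required by the H\"ormander condition yields $L^1$-control of $f_\eps$ along each of the spanning directions, after which a fractional interpolation (e.g.\ via a Littlewood-Paley decomposition) converts these directional estimates into the claimed isotropic $W^{s,1}$ bound for some $s > 0$ depending only on the maximum bracket depth.

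The cutoff $\chi_R$ handles the noncompactness of $H_I$ and the quadratic growth of the drift $B(u,u)$: the coefficients that appear in the commutators and in $\Lc^*$ are polynomial in $|y|$, so localizing to the ball $\{|u| \leq 2R\}$ produces constants $C_R$ depending polynomially on $R$ but independent of $\eps$. The main obstacle, and the reason one cannot simply invoke classical H\"ormander theory, is precisely the need for a \emph{quantitative} bound that is \emph{uniform in $\eps$}: qualitative hypoellipticity gives $C^\infty$ smoothness of $f_\eps$ but with constants that degenerate as $\eps \to 0$, whereas here the only $\eps$-dependence is allowed to sit inside $FI(f_\eps)$ on the right. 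Achieving this requires carefully tracking how $\eps$ enters the diffusion coefficients at each step of the commutator/interpolation scheme of \cite{BBPS20}, so that the $\eps^{-1}$ growth of $FI(f_\eps)$ as $\eps \to 0$ is the only $\eps$-dependence surviving on the right-hand side.
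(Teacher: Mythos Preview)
Your proposal is correct and matches the paper's approach: the paper does not give a self-contained proof but simply states that the result is a straightforward adaptation of \cite[Section 4]{BBPS20}, relying on the $\eps$-uniform quantitative H\"ormander condition verified in Section~\ref{sec:quant_hypo} (Corollary~\ref{cor:hormander_conditions_are_uniform} and Lemma~\ref{lem:HineqBall_hypo}). Your sketch of that adaptation---passing to $\sqrt{f_\eps}$, using the stationary Fokker--Planck equation together with iterated brackets to propagate the $L^2$ control from the forced directions to all directions, then interpolating to $W^{s,1}$, with the cutoff $\chi_R$ absorbing the polynomial growth of the coefficients---accurately captures the mechanism of \cite{BBPS20} and in fact supplies more detail than the paper itself.
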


Here $W^{s,1}$ denotes the $L^1$-based Sobolev space on $\S^\perp H_I \cong H_I \times \S^\perp_I$; for a precise definition of this Sobolev space on a manifold, see [Appendix A.1 \cite{BBPS20}] or the general reference \cite{Triebel}.
The proof of Lemma \ref{lem:FIT} is a straightforward adaptation of \cite[Section 4]{BBPS20}, and relies crucially on an $\epsilon$-uniform quantitative hypoellipticity property for the $(z_t)$ process, to be checked in Section \ref{sec:quant_hypo}.

\subsubsection*{Proof sketch of Lemma \ref{lem:PosLyapL96}: Contradiction argument}

Our contradiction hypothesis will be that
\[\liminf_{\epsilon \to 0}  {\lambda^\perp_\epsilon \over \epsilon} < \infty \, . \]
This immediately implies, in view of
Lemma \ref{lem:FIIdentity3}, that there exists a subsequence $\eps_j \to 0$ such that
\[FI(f_{\eps_j}) \leq C \]
for a constant $C > 0$ independent of $j$.
Lemma \ref{lem:FIT}, in turn, implies
\begin{align}\label{eq:posRegControl3}\| \chi_R f_{\eps_j} \|_{W^{s, 1}} \leq C'\end{align}
where $C' > 0$ is also independent of $j$.

This implies $\epsilon$-uniform $W^{s,1}$ bounds on compact sets via the hypoelliptic estimate. We now apply \cite[Lemma A.3]{BBPS20}, which in our context is a criterion for strong $L^1(\S^\perp H_I)$-precompactness of a collection of functions satisfying (i) uniform $W^{s, 1}$ control on bounded subsets of $\S^\perp H_I$; and (ii) a tightness condition. Item (i) is handled from \eqref{eq:posRegControl3}, while (ii) follows from the fact that for all $\eps > 0$, $\nu_\eps$ projects to the Gaussian measure $\mu^I$ with ($\epsilon$-independent) density given in \eqref{eq:Gaussian_Density}.

Refining to an $L^1_{\rm loc}$-convergent subsequence $f_{\eps_{j_k}}$ implies the existence of an invariant density $f_0$ for the deterministic $\eps = 0$ transversal projective ODE
\[\begin{cases}
  \dot y = 0 \\
  \dot v = DB(y) v - v \langle v, DB(y) v \rangle
\end{cases}
\]
on $\S^\perp H_I$. It is immediate that any such density $f_0$ projects to $\mu^I$ on the $H_I$ factor.

To complete the contradiction argument, we will demonstrate in Section \ref{sec:FI}
(Lemma \ref{lem:singular_limit}) that no such invariant density exists. This is a direct consequence of Lemma \ref{lem:Eigenvalue-unstable}, which checks that there is a positive Lebesgue-measure set of $y \in H_I$ for which the corresponding linearized flow
\[A_\perp^t = e^{t DB(y)|_{H^\perp_I}}\]
has unstable eigenvalues. Lemma \ref{lem:singular_limit} follows on noting that  at any such $y \in H_I$, invariant mass  for $(z_t)$ at $\eps = 0$ must collapse to the zero-volume subset of $\{ y \} \times \S^\perp_I$ corresponding to that unstable eigenspace. See Section \ref{subsec:nonexistDensity5} for further details.

{
\begin{remark}
  The foregoing contradiction argument parallels that given in \cite[Section 6]{BBPS20}. However, our job is far easier here due to the fact that at $\eps = 0$ the $(y_t)$ dynamics are completely suppressed, and every point of $H_I$ is fixed.
\end{remark}

\subsection{Drift Condition and Existence of a Second Stationary Measure} \label{subsec:3driftCond}

Having established the positivity of the transverse Lyapunov exponent $\lambda^\perp_\eps > 0$ for small $\eps$ (Lemma \ref{lem:PosLyapL96}), we turn to the construction of a Lyapunov function $\mathcal{V}$ repelling from $H_I$ as in Section \ref{sec:Abstract}, leading to the existence of a stationary measure $\mu$ distinct from the Gaussian measure $\mu^I$ supported on $H_I$. As we will see, modifications to the construction of Section \ref{sec:Abstract} are necessary to cope with the noncompactness of $H, H_I$. For the rest of Section \ref{sec:OutlineL96}, $\epsilon > 0$ is fixed to be sufficiently small so that $\lambda^\perp_\eps > 0$ as in Lemma \ref{lem:PosLyapL96}.

\subsubsection{Lyapunov Function Construction}\label{subsubsec:3LFconstruct}

Our Lyapunov function will be of the form
\begin{align}\label{eq:LyapFunc-candidate}
  \mathcal{V}(u) = \Hc_p(u) + V_\eta(u) \,.
\end{align}
Here, $V_\eta(u) = e^{\eta |u|^2}$, where $\eta > 0$ will be taken sufficiently small; as in Lemma \ref{lem:twisty}, this term reflects global confinement and controls drift of $(u_t)$ to infinity.

The other term, $\mathcal{H}_p$, is analogous to the construction of Section \ref{subsec:completeAbsPf2}, blowing up near $H_I$ and leveraging instability near $H_I$ to guarantee repulsion. It will be of the form
\[
\mathcal{H}_p(u) = \frac{1}{\abs{\Pi^\perp u}^p} \psi_p \left(\Pi u, \frac{\Pi^\perp u}{\abs{\Pi^\perp u}} \right) \,,
\]
where $\psi_p : H_I \times \S^\perp_I \to \R$ will be the dominant eigenfunction of the Feynman-Kac semigroup
\begin{align*}
  \widehat{P}_t^{\perp,p} \varphi(y,v) &= \EE_{z} \left[ \frac{1}{\abs{A_{\perp}^t v}^p} \varphi\left(y_t, v_t\right) \right] \\
  & = \EE_{z} \left[ \exp \left( -p \int_0^t \brak{v_s, DB(y_s) v_s - \eps v_s} \dee s \right) \varphi \left( y_t, v_t\right) \right]
  \end{align*}
  satisfying
  \[\widehat{P}_t^{\perp, p} \psi_p = e^{-t \Lambda(p)} \psi_p \,. \]

As usual, we will often intentionally confuse $\mathcal{H}_p : H \setminus H_I \to \R$ with the corresponding function $H_I \times (H_I^\perp \setminus \{ 0 \}) \to \R$ via the coordinate assignment $u \mapsto (y, w)$ with $u = y + w, y \in H_I, w \in H_I^\perp \setminus \{ 0 \}$. Under this parametrization, $\Hc_p$ takes the form
\[\Hc_p(y, w) = \frac{1}{|w|^{p}} \psi_p \left(y, \frac{w}{|w|}\right)\]

In parallel with Section \ref{subsec:completeAbsPf2}, the following will be needed in the proof of the drift condition. Below, a sufficiently small value of $p > 0$ is fixed.

\begin{itemize}
  \item[(i)] {\bf A spectral gap for $\widehat{P}_t^{\perp,p}$}: like before, we will realize $\widehat{P}^{\perp,p}_t$ as a norm-continuous perturbation of the Markov semigroup $\widehat{P}^\perp_t$ for the transverse projective process $z_t = (y_t, v_t)$. The main difference here from Section \ref{sec:Abstract} is that the perturbation will take place in the weighted space $C_{V_\eta}$, to account for noncompactness of the state space $\S^\perp H_I$.
  \item[(ii)] {\bf Dominant eigenvalue of $\widehat{P}^{\perp, p}_t$}: It will be shown that for $t > 0$, the dominant eigenvalue of $\widehat{P}^{\perp, p}_t$ is $e^{- t \Lambda(p)}$, where the moment Lyapunov exponent $\Lambda(p) = \lim_{t \to \infty} \frac1t \log \E_{(y_0, w_0)} |w_t|^{-p}$ will satisfy the asymptotic $\Lambda(p) = p \lambda^\perp_\eps + o(p)$ for $|p| \ll 1$.
  \item[(iii)] {\bf Properties of the dominant eigenfunction $\psi_p$}: for $p > 0$ small, we let $\psi_p = \lim_{t \to \infty} \widehat{P}^{\perp, p}_t {\bf 1}$, which by the $C_{V_\eta}$-spectral gap condition for $\widehat{P}^{\perp, p}_t$ is a dominant, nonnegative eigenfunction with eigenvalue $e^{t \Lambda(p)}$.
  In parallel with Section \ref{subsec:completeAbsPf2}, we will need (a) positivity, i.e., $\psi_p > 0$ pointwise, as well as (b) some control on the $C^1$ regularity of $\psi_p$, namely that
  \[\| \psi_p\|_{C^1_{V_\eta}} := \| \psi_p\|_{C_{V_\eta}} + \| D \psi_p\|_{C_{V_\eta}} < \infty \,. \]
  The argument from Lemma \ref{lem:psiqPropertiesAPP} no longer suffices to obtain this kind of quantitative control, and a separated argument must be used -- ours takes advantage of the continuous-time setting, adapting quantitative hypoelliptic estimates from \cite{bedrossian2021quantitative,bedrossian2022stationary, BBPS20,Hormander67}.
\end{itemize}
Items (i) -- (iii) are treated in Section \ref{sec:Drift}. }

\subsubsection{The Drift Condition and Its Consequences}

With the Lyapunov function $\mathcal{V}$ and the properties of $\psi_p$ established, we state the main result of this section:

\begin{lemma}[Drift Condition for L96]\label{lem:drift_L96_merged}
Let $p>0$ and $\eta>0$ be sufficiently small such that $\Lambda(p)>0$ and $\psi_p \in C^1_{V_\eta}$. There exist constants $\lambda>0$ and $C_{0}\ge0$ such that for all $t\ge0$ and $u \in H \setminus H_I$,
\[
\E_{u}\mathcal{V}(u_{t})\le e^{-\lambda t}\mathcal{V}(u)+C_{0}.
\]
\end{lemma}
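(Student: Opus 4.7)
The plan is to establish the infinitesimal drift inequality $\mathcal{L}\mathcal{V} \leq -\lambda \mathcal{V} + C_0$ on $H \setminus H_I$ and then pass to the integrated version via Dynkin's formula, handling the two summands of $\mathcal{V} = \mathcal{H}_p + V_\eta$ separately. The $V_\eta$ contribution is immediate: the super-Lyapunov estimate \eqref{eq:superLF3} gives $\mathcal{L}V_\eta \leq -\gamma V_\eta + C_\gamma$ for any $\gamma > 0$, provided $\eta$ is small. The substance of the argument lies in $\mathcal{H}_p$.

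For $\mathcal{H}_p$, the key observation is that the full L96 drift decomposes cleanly. Writing $u = y + w$ with $y = \Pi u$, $w = \Pi^\perp u$, bilinearity of $B$ together with Lemma \ref{lem:TransverseInvariance} yield
\[
X_0(u) = X_0(y) + (DB(y) - \eps I)w + B(w, w),
\]
where the first two terms constitute exactly the drift of the transverse linear process and $B(w,w)$ is an $O(|w|^2)$ correction. Since the diffusion of $(u_t)$ acts only in $H_I$, it matches the diffusion of the transverse linear process on $(y,w)$, so $\mathcal{L} = \mathcal{L}^{TP^\perp} + B(w,w)\cdot\nabla_u$ when acting on functions of $(y,w)$. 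A direct calculation from the identity $TP_t^\perp[|w|^{-p}\psi_p(y,v)] = |w|^{-p}\widehat{P}^{\perp,p}_t\psi_p(y,v)$ combined with the eigenfunction equation $\widehat{P}^{\perp,p}_t\psi_p = e^{-t\Lambda(p)}\psi_p$ gives $\mathcal{L}^{TP^\perp}\mathcal{H}_p = -\Lambda(p)\mathcal{H}_p$, so
\[
\mathcal{L}\mathcal{H}_p(u) = -\Lambda(p)\mathcal{H}_p(u) + B(w,w)\cdot\nabla_u\mathcal{H}_p(u).
\]

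To bound the correction, the plan is to use $\psi_p \in C^1_{V_\eta}$ and the chain rule to obtain $|\nabla_u \mathcal{H}_p| \leq C\|\psi_p\|_{C^1_{V_\eta}}(|w|^{-p-1} + |w|^{-p})V_\eta(y)$, which together with $|B(w,w)| \leq C|w|^2$ gives $|B(w,w)\cdot\nabla_u\mathcal{H}_p| \leq C(|w|^{1-p} + |w|^{2-p})V_\eta(y)$. On the tubular neighborhood $\{|w| \leq \delta\}$ for small $\delta$, this is bounded by $2C\delta^{1-p}V_\eta(u)$ (using $V_\eta(y) \leq V_\eta(u)$ since the decomposition $u = y+w$ is orthogonal), and can be absorbed into the $-\gamma V_\eta$ term by choosing $\delta$ small. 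Outside this set $|w|$ is bounded below, so $\mathcal{H}_p$ itself is bounded by $C V_\eta(u)$ and the polynomial factor $|w|^{2-p}V_\eta(y)$ is absorbed into a slightly-enlarged Gaussian envelope $V_{\eta'}(u)$, $\eta' > \eta$ small, whose drift is again controlled via \eqref{eq:superLF3}. Summing the two contributions and choosing $\lambda = \tfrac{1}{2}\min(\Lambda(p), \gamma)$ yields the infinitesimal inequality. Finally, Dynkin's formula applied along trajectories stopped at exit from $\{|w|\geq \delta_k,\ |u| \leq R_k\}$, sending $\delta_k \to 0$ and $R_k \to \infty$ with \eqref{eq:superLF2} providing dominated convergence, produces the integrated form; the $\delta_k \to 0$ limit is justified by the non-entry property, namely that since $\varphi^t$ is a random diffeomorphism preserving $H_I$, trajectories starting in $H \setminus H_I$ almost surely never reach $H_I$.

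The hard part will be controlling the correction $B(w,w)\cdot\nabla_u\mathcal{H}_p$ uniformly on the noncompact phase space. In the compact analogue (Proposition \ref{prop:driftCondHolds2}) a uniform positive lower bound on $\psi_q$ suffices; here the only a priori control on $\psi_p$ is through the weighted $C^1_{V_\eta}$ norm, whose exponentially-growing envelope in $y$ must be balanced against the global Gaussian confinement from $V_{\eta'}$. A second subtlety is the careful tracking of the interplay between $p$, $\eta$, $\eta'$, and $\delta$: one must choose $p$ small (to make $\Lambda(p) > 0$ and $\|\psi_p\|_{C^1_{V_\eta}}$ finite), then $\delta$ small enough that $\delta^{1-p}$ absorbs into $\gamma$, then $\eta$ (and $\eta' = \eta + \eta_0$) small enough that the super-Lyapunov estimate retains a sufficiently large decay rate after absorption.
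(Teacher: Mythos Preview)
Your proposal is correct and follows the same strategy as the paper: decompose the generator in $(y,w)$-coordinates as $\mathcal{L}^\xi + B(w,w)\cdot\nabla$, use the eigenfunction relation $\mathcal{L}^\xi\mathcal{H}_p = -\Lambda(p)\mathcal{H}_p$, bound the quadratic correction via the $C^1_{V_\eta}$ regularity of $\psi_p$, absorb it into the super-Lyapunov drift of $V_\eta$, and pass to the integrated form by Dynkin. The paper's execution is simpler in one respect: by taking $\psi_p \in C^1_{V_{\eta'}}$ for some $\eta' < \eta$, the bound $|B(w,w)\cdot\nabla\mathcal{H}_p| \lesssim |w|^{1-p}V_{\eta'}(y) \lesssim V_\eta(u)$ holds \emph{globally} (the polynomial factor in $|w|$ is absorbed by the $e^{\eta|w|^2}$ in $V_\eta(u) = V_\eta(y)\,e^{\eta|w|^2}$), so neither your case split on $|w|$ nor an auxiliary larger envelope $V_{\eta'}$ with $\eta' > \eta$ is needed.
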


As discussed in Section \ref{sec:Abstract} (see Theorem \ref{thm:driftCondAbs2} and the surrounding discussion), this drift condition guarantees the existence of a stationary measure distinct from $\mu^I$:

\begin{corollary}\label{cor:SecondMeasure_L96_merged}
Under the conditions of Lemma \ref{lem:drift_L96_merged}, the process $(u_{t})$ admits at least one stationary measure $\mu$ on $H$ satisfying $\int \mathcal{V} \, \mathrm{d}\mu < \infty$. This measure $\mu$ is distinct from $\mu^I$.
\end{corollary}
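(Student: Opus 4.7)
The plan is a standard Krylov--Bogoliubov tightness argument on the noncompact state space $H\setminus H_I$. The drift condition from Lemma \ref{lem:drift_L96_merged} will do double duty: it provides tightness of empirical averages (ruling out escape to infinity in $H$), and it simultaneously forces any weak-$*$ limit to charge no mass on $H_I$, since $\mathcal{V}$ blows up there.

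First, I would fix some $u_0 \in H\setminus H_I$ and let $\pi_t$ denote the law of $u_t$. The drift inequality integrates to
\[
\sup_{t\ge 0}\int \mathcal{V}\,\dee\pi_t \;\le\; \mathcal{V}(u_0)+C_0 \;=:\; M_0 \,.
\]
Because $V_\eta(u)\to\infty$ as $|u|\to\infty$ and, on any bounded subset of $H$, $\mathcal{H}_p(u)\to\infty$ as $|\Pi^\perp u|\to 0$ (using continuity and pointwise positivity of $\psi_p$ from Section \ref{subsubsec:3LFconstruct}), each sublevel set $K_R=\{\mathcal{V}\le R\}$ is a compact subset of $H$ bounded away from $H_I$. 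Markov's inequality gives $\pi_t(H\setminus K_R)\le M_0/R$ uniformly in $t\ge 0$, so the family $\{\pi_t\}$ is tight on $H$ and concentrates away from $H_I$.

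Next, form the time averages $\bar\mu_T=\frac{1}{T}\int_0^T\pi_t\,\dee t$. Tightness passes to this family, so Prokhorov's theorem delivers a sequence $T_k\to\infty$ and a probability measure $\mu$ on $H$ with $\bar\mu_{T_k}\to\mu$ weakly. The Feller property of the L96 semigroup $P_t$ follows from smooth dependence of $\varphi^t_\omega$ on initial data together with the super-Lyapunov bound \eqref{eq:superLF2} (and is in any case a consequence of the strong Feller property established in Section \ref{sec:GeoErgodicity_consolidated}). The standard Krylov--Bogoliubov estimate $|\int\varphi\,\dee(P_t^\ast\bar\mu_T)-\int\varphi\,\dee\bar\mu_T|\le 2t\|\varphi\|_\infty/T$ for bounded continuous $\varphi$ and fixed $t$, combined with Feller continuity in the weak-$*$ limit, yields $P_t^\ast\mu=\mu$ for every $t\ge 0$. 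Applying the Portmanteau lemma to the closed set $H\setminus K_R$ gives $\mu(H\setminus K_R)\le M_0/R$, so $\mu(H_I)=0$ upon letting $R\to\infty$, and a monotone truncation with Fatou's lemma delivers $\int\mathcal{V}\,\dee\mu\le M_0<\infty$. Distinctness from $\mu^I$ is then immediate since $\mu^I(H_I)=1$.

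I expect the only nonroutine item to be verifying that the Krylov--Bogoliubov limit does not silently leak mass onto $H_I$ under the weak-$*$ limit; this is precisely what the blow-up of $\mathcal{H}_p$ near $H_I$, combined with the uniform $\mathcal{V}$-moment bound from the drift condition, is designed to preclude through the Portmanteau step above.
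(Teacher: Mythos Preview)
Your proposal is correct and takes essentially the same approach as the paper's proof sketch: Krylov--Bogoliubov with tightness supplied by the drift condition on $\mathcal{V}$, and distinctness from $\mu^I$ deduced from the blow-up of $\mathcal{V}$ near $H_I$ (the paper phrases the latter as $\int\mathcal{V}\,\dee\mu^I=\infty$ versus $\int\mathcal{V}\,\dee\mu<\infty$, which is equivalent to your $\mu(H_I)=0$ versus $\mu^I(H_I)=1$). One minor slip: $H\setminus K_R$ is open (since $K_R$ is compact), so Portmanteau should be applied to the closed set $K_R$ instead---the conclusion is unaffected, and in any case the Fatou argument you give for $\int\mathcal{V}\,\dee\mu\le M_0$ already implies $\mu(H_I)=0$ directly.
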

\begin{proof}[Proof Sketch]
Existence follows from the Krylov-Bogoliubov theorem applied in the weighted space $L^1(\mathcal{V})$, where tightness is provided by the drift condition. Distinctness follows because $\mathcal{V}(u) \to \infty$ as $u \to H_I$ (due to the $\Hc_p$ term), implying $\int \mathcal{V} \, \mathrm{d}\mu^I = \infty$, whereas $\int \mathcal{V} \, \mathrm{d}\mu < \infty$.
\end{proof}

In what follows, we present the full proof of Lemma \ref{lem:drift_L96_merged} assuming properties (i) -- (iii) listed in Section \ref{subsubsec:3LFconstruct}.
\begin{proof}[Proof of Lemma \ref{lem:drift_L96_merged}]

  The goal is to show that the Lyapunov function $\mathcal{V} = \mathcal{H}_p + V_\eta$ satisfies a drift condition $\mathcal{L}\mathcal{V} \leq -\lambda_0 \mathcal{V} + C'$ for some $\lambda_0 > 0$, where $\mathcal{L}$ is the generator of the L96 process \eqref{eq: L96-SDE}. The generator is given by
  \[
  \mathcal{L}\varphi = \underbrace{\frac{\eps}{2}\sum_{j \in I} \sigma_{j}^2 \partial_{u_j u_j}^2 \varphi - \eps u \cdot \nabla \varphi}_{\mathcal{L}_{OU}\varphi} - B(u,u) \cdot \nabla \varphi.
  \]
  Here $\mathcal{L}_{OU}$ is the generator of an Ornstein-Uhlenbeck (OU) process on the full space $H$. We analyze the action of $\mathcal{L}$ on $\Hc_p$ and $V_\eta$ separately.
  
  We have $\mathcal{L} V_\eta = \mathcal{L}_{OU} V_\eta - B(u,u) \cdot \nabla V_\eta$. The gradient is $\nabla V_\eta(u) = 2\eta u e^{\eta|u|^2} = 2\eta u V_\eta(u)$. Due to the conservation property $\langle B(u,u), u \rangle = 0$, the term $B(u,u) \cdot \nabla V_\eta = 2\eta V_\eta(u) \langle B(u,u), u \rangle$ vanishes identically. Thus, $\mathcal{L} V_\eta = \mathcal{L}_{OU} V_\eta$. By a straightforward analogue of \eqref{eq:superLF3} for $V_\eta$ viewed in $H_I$, the super Lyapunov property for $V_\eta : H \to \R$ holds:
   for every $\gamma>0$ there exists a constant $C_\gamma$ such that
  \[
  \mathcal{L} V_\eta \leq -\gamma V_\eta + C_\gamma.
  \]
Let $\gamma > 0$ be fixed, its value to be specified shortly.

Next, we analyze the action on $\Hc_p$. From the eigenfunction relation $\widehat{P}_t^{\perp, p} \psi_p = e^{- t\Lambda(p)}$, it follows that, in $(y, w)$-coordinates for $u$, the function $\Hc_p = \Hc_p(y, w)$ is an eigenfunction of the Markov semigroup $T P_t$ for the transverse linearized process $\xi_t = (y_t, w_t)$. By standard semigroup arugments (see Section \ref{sec:psip} for details), it follows that
  \[\mathcal{L}^\xi \Hc_p = - \Lambda(p) \Hc_p \, , \]
  where $\mathcal{L}^\xi$ is the generator for $(\xi_t)$, given by
 \[  \mathcal{L}^\xi \varphi(y, w) := \frac{\eps}{2}\sum_{j \in I} \sigma_{j}^2 \partial_{y_j y_j}^2 \varphi - \eps y \cdot \nabla_y \varphi - (DB(y) w - \eps w) \cdot \nabla_w \varphi.
\]
  
  Using $B(u,u) = B(w,w) + DB(y)w$ and that $DB(y)$ has range $H_I^\perp$ (Lemma \ref{lem:TransverseInvariance}), we can rewrite the generator $\mathcal{L}$ in $H_I \times H_I^\perp$-coordinates $u \mapsto (y, w)$ as
  \begin{equation}
  \mathcal{L} = \frac{\eps}{2}\sum_{j \in I} \sigma_{j}^2 \partial_{y_j y_j}^2 - \eps y \cdot \nabla_y + (DB(y)w - \eps w) \cdot \nabla_w + B(w,w)\cdot\nabla
  \end{equation}
  Consequently,
  \[
  \begin{aligned}
  \mathcal{L}\mathcal{H}_p(u) &= \mathcal{L}^\xi \mathcal{H}_p(y,w)  + B(w,w)\cdot\nabla \mathcal{H}_p(u)\\
  &= - \Lambda(p)\mathcal{H}_p(u) + B(w,w)\cdot\nabla \mathcal{H}_p(u).
  \end{aligned}
  \]

 We need to bound the error term $B(w,w)\cdot\nabla \mathcal{H}_p(u)$. The crucial ingredient is the $C^1_{V_\eta}$ regularity of $\psi_p$ (Lemma \ref{lem:psipC1}), which implies
  \begin{align*}
  |\psi_p(y,v)| &\lesssim V_{\eta^\prime}(y) \\
  |\nabla \psi_p(y,v)| &\lesssim V_{\eta^\prime}(y)
  \end{align*}
  for some $\eta^\prime < \eta$, where $\nabla$ includes derivatives w.r.t both $y$ and $v$.
  Calculating $\nabla \mathcal{H}_p$, we find terms involving $\psi_p$ and $\nabla \psi_p$, multiplied by powers of $|w|$. Schematically,
  \[ |\nabla \mathcal{H}_p(u)| \lesssim |w|^{-p-1} |\psi_p| + |w|^{-p} \|\nabla \psi_p\| \lesssim |w|^{-p-1} e^{\eta^\prime|y|^2}. \]
  The error terms involve $B(w, w)$, which scales as $|w|^2$. Therefore,
  \[ |B(w,w)\cdot \nabla \mathcal{H}_p| \lesssim |w|^2 |\nabla \mathcal{H}_p| \lesssim |w|^{1-p} e^{\eta^\prime|y|^2}\lesssim V_\eta(u). \]
  Thus, there exists a constant $c>0$ such that
  \[
  \mathcal{L} \mathcal{H}_p \leq -\Lambda(p) \mathcal{H}_p + c V_\eta(u).
  \]

  Combining the estimates for $\mathcal{L}\mathcal{H}_p$ and $\mathcal{L}V_\eta$ yields:
  \begin{align*}
  \mathcal{L}\mathcal{V} = \mathcal{L}\mathcal{H}_p + \mathcal{L}V_\eta &\leq (-\Lambda(p) \mathcal{H}_p + c V_\eta) + (-\gamma V_\eta + C_\gamma) \\
  &= -\Lambda(p) \mathcal{H}_p - (\gamma - c) V_\eta + C_\gamma.
  \end{align*}
  Choose $\gamma = c + \Lambda(p)$ gives
  \[
  \mathcal{L}\mathcal{V} \leq -\Lambda(p)\mathcal{V} + C_\gamma.
  \]
  The above inequality is the required differential form of the drift condition. A standard application of Dynkin's formula yields the desired time integrated form of the drift condition.
  \end{proof}

\subsection{Uniqueness and geometric ergodicity of $\mu$}\label{sec:SecondMeasure}
 The drift condition established in Lemma \ref{lem:drift_L96_merged} guarantees the existence of at least one stationary measure $\mu$ distinct from $\mu^I$ and satisfying $\int \mathcal{V} \, \mathrm{d}\mu < \infty$, as stated in Corollary \ref{cor:SecondMeasure_L96_merged}. To complete the proof of Theorem \ref{thm:mainL96}, it remains to check that the measure $\mu$ is unique and geometrically ergodic. These properties will be deduced using Harris' Theorem, which will require stronger properties of the dynamics \eqref{eq: L96-SDE} on the state space $H \setminus H_I$. Further details are deferred to Section \ref{sec:GeoErgodicity_consolidated}.

\subsection{Agenda for the rest of the paper}\label{subsec:agenda3}

We close the outline of Section \ref{sec:OutlineL96} with a brief summary of the remainder of the paper, which will fill in the technical steps needed in the proof of Theorem \ref{thm:mainL96} presented thus far.

\begin{itemize}
  \item[(1)] {\bf Existence of the transverse Lyapunov exponent $\lambda_\eps^\perp$} (Proposition \ref{prop:LE_L96}).
  
  Integrability conditions on the transverse linearizations $A_\perp^t$ needed in the proof sketch of Proposition \ref{prop:LE_L96} will be carried out in Appendix \ref{app:SuperLyap} (Corollary \ref{cor:twistyFC})
  \item[(2)] {\bf Geometric ergodicity of the transverse projective process $z_t = (y_t, v_t)$ on $\S^\perp H_I$  }  (Lemma \ref{lem:GeoErgoPP}).
  
  This is an application of Harris' Theorem, which will require that we prove some bracket-spanning and irreducibility properties of the $(z_t)$ process. These are carried out in Section \ref{sec:Hypoellipticity}, with
  supplemental Appendix \ref{app:control_theory} -- relating hypoellipticity conditions with control theory and irreducibility -- and Appendix \ref{app:CAP} -- detailing a computer-assisted step in the bracket-spanning computation.
  \item[(3)] {\bf Positivity of the transverse Lyapunov exponent $\lambda^\perp_\eps$} (Lemma \ref{lem:PosLyapL96}).
  
  In Section \ref{sec:FI} we will check the remaining ingredients in the proof of Lemma \ref{lem:PosLyapL96} outlined in Section \ref{subsec:3tverseLE}, namely, the Fisher information identity (Lemma \ref{lem:FIIdentity3}) and the nonexistence of an invariant density for $z_t = (y_t, v_t)$ at $\eps = 0$.

  \item[(4)] {\bf Properties (i) -- (iii) from Section \ref{subsubsec:3LFconstruct} regarding dominant eigendata of the twisted semigroup $\widehat{P}^{\perp, p}_t$.}
  
  These are checked in Section \ref{sec:Drift}.

  \item[(5)] {\bf Uniqueness and geometric ergodicity of full process $(u_t)$ on $H \setminus H_I$.}
  
  It is proved in Section \ref{sec:SecondMeasure} that if a stationary measure for $(u_t)$ on $H \setminus H_I$ exists, then it is unique and geometrically ergodic, completing the proof of Theorem \ref{thm:mainL96}.
\end{itemize}

\section{Hypoellipticity, Irreducibility and Geometric Ergodicity}\label{sec:Hypoellipticity}

This section consolidates the core technical arguments regarding the hypoellipticity, irreducibility and ergodicity properties of both the full Lorenz-96 process $(u_t)$ on $H \setminus H_I$ and the associated transverse projective process $(y_t, v_t)$ on $\mathbb{S}^\perp H_I$. These properties are key for establishing the existence, uniqueness, and regularity of the stationary measures.

\subsection{Hörmander Condition and Control Theory Framework}\label{sec:hormander_framework}

Let $M$ be a smooth manifold and let $\mathfrak{X}(M)$ denote the space of smooth vector fields on $M$. The Lie bracket of $X, Y \in \mathfrak{X}(M)$ is the vector field $[X, Y] \in \mathfrak{X}(M)$ defined such that for any smooth function $f: M \to \R$, $[X, Y]f = X(Yf) - Y(Xf)$. This operation endows $\mathfrak{X}(M)$ with the structure of a Lie algebra.

Given a set of vector fields $\mathcal{Y} = \{Y_1, \dots, Y_r\} \subset \mathfrak{X}(M)$, the Lie algebra generated by $\mathcal{Y}$, denoted $\mathrm{Lie}(\mathcal{Y})$, is the smallest Lie subalgebra of $\mathfrak{X}(M)$ containing $\mathcal{Y}$. Its evaluation at a point $x \in M$, denoted $\mathrm{Lie}(\mathcal{Y})(x)$, is the subspace of the tangent space $T_x M$ spanned by the vectors $\{Z(x) : Z \in \mathrm{Lie}(\mathcal{Y})\}$.

For $X, Y \in \mathfrak{X}(M)$, the adjoint operator is $\mathrm{ad}_X Y = [X, Y]$. Iterated brackets are denoted $\mathrm{ad}_X^0 Y = Y$ and $\mathrm{ad}_X^k Y = [X, \mathrm{ad}_X^{k-1} Y]$ for $k \ge 1$.

Consider a stochastic differential equation (SDE) on $M$:
\begin{equation}\label{eq:generic_SDE_hypo}
\dee x_t = Y_0(x_t) \dt + \sum_{k=1}^r Y_k(x_t) \circ \dee W_t^k,
\end{equation}
where $Y_0, \dots, Y_r \in \mathfrak{X}(M)$ and $\circ$ denotes the Stratonovich integral.

\begin{definition}[Parabolic Hörmander Condition]\label{def:parabolic_hormander}
The SDE \eqref{eq:generic_SDE_hypo} satisfies the \emph{parabolic Hörmander condition} at $x \in M$ if the Lie algebra generated by the diffusion vector fields $Y_1, \dots, Y_r$ and all their iterated Lie brackets with the drift $Y_0$ spans the tangent space $T_x M$. Formally, let $\mathcal{S} = \{ \mathrm{ad}_{Y_0}^j Y_k : 1 \le k \le r, j \ge 0 \}$. The condition is:
\[
\mathrm{Lie}(\mathcal{S})(x) = T_x M.
\]
If this condition holds for all $x$ in an open set $U \subset M$, Hörmander's theorem \cite{Hormander67} guarantees that the generator $\mathcal{L} = Y_0 + \frac{1}{2}\sum_{k=1}^r Y_k^2$ is hypoelliptic on $U$. A commonly used sufficient condition, often called the \emph{restricted Hörmander condition}, involves only the first-order brackets. Let $\mathcal{S}_1 = \{Y_k, [Y_0, Y_k] : 1 \le k \le r \}$. The restricted condition is:
\[
\mathrm{Lie}(\mathcal{S}_1)(x) = T_x M.
\]
\end{definition}

\begin{remark}
The parabolic H\"ormander condition is sometimes stated using the Lie algebra ideal $\mathcal{I}$ generated by $\{X_1,\ldots, X_r\}$ in $\mathrm{Lie}(X_0,\ldots,X_r)$, requiring $\mathcal{I}(x) = T_x M$. Here, $\mathcal{I}$ denotes the smallest ideal containing $\{X_1,\ldots, X_r\}$, which consists of all finite linear combinations of Lie brackets $[Y, X_k]$ where $Y \in \mathrm{Lie}(X_0,\ldots,X_r)$ and $k \in \{1,\ldots,r\}$. It can be seen by a straightforward induction proof that $\mathcal{I} = \mathrm{Lie}(\mathcal{S})$, where $\mathcal{S} = \{ \mathrm{ad}_{X_0}^j X_k : 1 \le k \le r, j \ge 0 \}$. Hence, this formulation is equivalent to the condition $\mathrm{Lie}(\mathcal{S})(x) = T_x M$ stated above.
\end{remark}

\medskip

The hypoellipticity and irreducibility of stochastic processes are often established via Hörmander's condition. Appendix \ref{app:control_theory} details the connection between this condition, control theory, and topological irreducibility. Specifically, Proposition \ref{prop:irreducibility-general} shows that if the fields $Y_0, \dots, Y_r$ are analytic and satisfy the cancellation property $[Y_k, [Y_k, Y_0]] = 0$ for all $k$, then the restricted parabolic H\"ormander condition implies topological irreducibility via the Stroock-Varadhan support theorem and controllability arguments.

\subsection{Hypoellipticity of the Transverse Linearization}\label{sec:hypo_linearization_new}

We first analyze the hypoellipticity of the underlying linear process $(y_t, A^t_\perp)$, where $y_t$ is the OU process \eqref{eq:OU-process} on $H_I$ and $A^t_\perp \in \mathrm{GL}(H_I^\perp)$ solves the linear random ODE (driven by $y_t$):
\begin{equation}\label{eq:LinearizedODE_perp_recap}
\frac{\dee}{\dt} A^t_\perp = (DB(y_t) - \eps \Id) A^t_\perp, \quad A^0_\perp = \Id.
\end{equation}
The joint process $(y_t, A^t_\perp)$ evolves on the state space $H_I \times \mathrm{GL}(H_I^\perp)$. However, from the standpoint of hypoellipticity it is natural to consider the {\em volume normalized linearization}
\[
\bar{A}_t := \frac{A^t_\perp}{\det(A^t_\perp)^{1/|T|}} \in \mathrm{SL}(H_I^\perp) \quad \text{for } t\geq 0.
\]
Its dynamics can be described by an SDE
\[
\dee (y_t, \bar{A}_t) = Z_0(y_t, \bar{A}_t) \dt + \sum_{k \in I} Z_k(y_t, \bar{A}_t) \circ \dee W^k_t,
\]
where the vector fields $Z_k$ (noise) and $Z_0$ (drift) are defined on this product space as:
\begin{align*}
 Z_k(y, A) &= (\sigma_k e_k, 0), \\
 Z_0(y, A) &= (-\eps y, DB(y)A).
\end{align*}
Here, the second component of $Z_0$ represents a right-invariant vector field on $\mathrm{SL}(H_I^\perp)$.

As it turns out, since the right invariant vector field $A \mapsto DB(y)A$ is linear in $y$, we can analyze the hypoellipticity of the process $(y_t, \bar{A}_t)$ by analyzing the matrix Lie algebra associated to the following collection of matrices $\{M_k\}_{k\in I}$ in $\mathfrak{sl}(H_I^\perp)$:
\begin{equation}\label{eq:Mk}
M_k := DB(e_k)|_{H_I^\perp}, \quad (M_k)_{\ell, m} = B_\ell(e_k, e_m) + B_\ell(e_m, e_k), \quad \ell, m \in T.
\end{equation}

Crucially, we are able to show via computer assisted proof (CAP) that the Lie algebra generated by the matrices $\{M_k\}_{k \in I}$ is $\mathfrak{sl}(H_I^\perp)$ (the Lie algebra of traceless matrices on $H_I^\perp$) which is a sufficient condition for the hypoellipticity of the process $(y_t, A^t_\perp)$.
\begin{proposition}[Algebraic Generation, CAP Result]\label{prop:slT_generation}
    For $N=3K$ with $K \ge 3$ (i.e., $N \ge 9$), let $M_k$ be defined by \eqref{eq:Mk}, the following holds:
    \[
    \mathrm{Lie}(\{M_k : k \in I\}) = \mathfrak{sl}(H_I^\perp).
    \]
\end{proposition}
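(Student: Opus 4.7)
The proof combines two ingredients: an exact symbolic base-case verification for a small value of $N$, and a shift-invariance argument to extend the conclusion to arbitrary $N \geq 9$. The plan rests on two structural features of the generators. First, \emph{sparsity}: expanding $B_\ell(e_a, e_b) = (\delta_{a,\ell+1} - \delta_{a,\ell-2}) \delta_{b,\ell-1}$ in the definition of $M_k$ shows that each $M_k$ has exactly four nonzero entries $(k-1,k-2), (k+1,k+2), (k+1,k-1), (k+2,k+1)$, all lying within the window $[k-2,k+2] \cap T$, and $(M_k)_{\ell\ell} = 0$, confirming $M_k \in \mathfrak{sl}(H_I^\perp)$. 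Second, \emph{shift-covariance}: because $B$ commutes with the cyclic shift $S: e_j \mapsto e_{j+3}$, which preserves $I$ and so restricts to $\tilde S \in \mathrm{GL}(H_I^\perp)$, one obtains $M_{k+3} = \tilde S M_k \tilde S^{-1}$. Hence $\mathfrak g := \operatorname{Lie}(\{M_k\}_{k\in I})$ is $\operatorname{Ad}(\tilde S)$-invariant.

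\textbf{Base case.} For $N_0 = 15$, I would carry out an exact rational computation: initialize a working set with $\{M_k\}_{k \in I}$; iteratively adjoin brackets $[M_k, b]$ for $b$ already in the set, keeping only those that enlarge the linear span; terminate when stability is reached. Checking that the resulting basis has dimension $|T|^2 - 1 = 99$ proves $\mathfrak g = \mathfrak{sl}(H_I^\perp)$ for $N_0 = 15$. The real output of this step, however, is not the dimension count alone but an explicit \emph{certificate}: a list of bracket words $\{w_\alpha(M_{k_{\alpha,1}}, \ldots, M_{k_{\alpha, n_\alpha}})\}_\alpha$ whose values span $\mathfrak{sl}(H_I^\perp)$, with each word using only generators $M_k$ for $k$ in a fixed window $I \cap [-L, L]$ of diameter strictly less than $N_0$. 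This locality constraint is essential for the next step and is the content of Appendix \ref{app:CAP}.

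\textbf{Extension to larger $N$.} For any $N = 3K \geq N_0$, the same bracket words $w_\alpha$, evaluated with the large-$N$ matrices, produce elements of $\mathfrak{sl}(H_I^\perp)$ supported in the same local window; the computation is insensitive to the cyclic wraparound precisely because the window fits inside $\mathbb Z_N$ without overlap. Conjugating by powers $\tilde S^s$, $s = 0, \ldots, K-1$, shifts each such element by $3s$ in both row and column indices, producing cyclic translates that still lie in $\mathfrak g$ by shift-covariance. A suitable sufficient condition is to arrange the base certificate so that the off-diagonal positions it realizes, together with their cyclic shifts by multiples of $3$, cover every off-diagonal pair in $T \times T$, and to include at least one Cartan-type element $E_{ii} - E_{jj}$; the standard fact that such elementary matrices generate $\mathfrak{sl}$ as a Lie algebra then closes the argument. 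The cases $N = 9, 12$ fall outside the range of the shift argument's base window and are handled by rerunning the same symbolic computation directly.

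\textbf{Main obstacle.} The principal difficulty is the refinement step bridging the base case and the extension: the naive output of the greedy generation loop at $N_0 = 15$ may involve bracket words whose generators straddle all of $\mathbb Z_{N_0}$, in which case those brackets cannot be transplanted to larger $N$ without risk of wraparound, and the shift-covariance argument breaks down. The technical work is to recompute (or reorganize) the base-case certificate to use only \emph{local} bracket words while still spanning $\mathfrak{sl}(H_I^\perp)$, and to verify that the resulting collection of shifts genuinely covers all orbits of $\operatorname{Ad}(\tilde S)$ on off-diagonal positions. This is the novel and delicate content of the computer-assisted portion of the proof.
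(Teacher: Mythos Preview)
Your high-level strategy---sparsity plus shift-covariance, a base-case symbolic computation at $N_0 = 15$, locality of bracket words to transplant to larger $N$, and direct verification for $N = 9, 12$---is exactly the paper's. The structural observations (four nonzero entries, $M_{k+3} = \tilde S M_k \tilde S^{-1}$) are correct and match.

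The gap is in your extension step. Your stated sufficient condition---that the off-diagonal positions realized by the certificate, together with their cyclic shifts, \emph{cover every off-diagonal pair} in $T \times T$---cannot be met. Conjugation by $\tilde S$ shifts both indices by the same amount, so the offset $i - j$ (mod $2K$) is invariant; but any bracket word built from generators in a fixed window produces matrices supported in a window of bounded diameter, hence with $|i-j|$ bounded independently of $N$. For large $K$ there are off-diagonal pairs with offset of order $K$ that no shift of a local word can reach. So your proposed certificate condition is vacuous for $N \gg N_0$, and the ``standard fact'' you invoke at the end is doing no work (if you already had every off-diagonal $E_{ij}$, their brackets give the Cartan for free).

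The paper's resolution is to aim not for \emph{all} off-diagonal elementary matrices but for a specific, small generating set of $\mathfrak{sl}_{2K}$ consisting entirely of near-diagonal elements. Concretely: the computer verification at $N = 15$ extracts from the span of iterated brackets of $M_3, M_6, M_9$ (depth $\leq 5$, via row reduction rather than a dimension count) the three elementary matrices $E_{3,2}, E_{4,3}, E_{5,4}$ in the re-indexed basis. These all have offset $1$, which \emph{is} shift-invariant, and conjugating by powers of $\tilde S$ (shift by $2$ in the re-indexed basis) propagates them to the full subdiagonal $\{E_{j+1,j} : 1 \le j \le 2K-1\}$ together with the wraparound $E_{1,2K}$. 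That collection is a classical generating set for $\mathfrak{sl}_{2K}$, and \emph{this} is where the Lie-algebra closure does the real work of reaching far-off-diagonal positions. Your ``main obstacle'' paragraph correctly flags the locality issue but misidentifies the cure: the point is not to make the certificate span more, but to make it hit three well-chosen subdiagonal entries and then let the $\mathfrak{sl}_n$ generation theorem do the rest.
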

The proof, detailed in Appendix \ref{app:CAP}, involves analyzing the sparse structure and a shift-invariance structure of the matrices $M_k$. Using rigorous computer assistance we verify the generation of $\mathfrak{sl}(H_I^\perp)$ from key elementary matrices derived from Lie brackets.

A key consequence of this is:

\begin{proposition}[Hörmander Condition for Linearization]\label{prop:hormander_linear}
Assume $N=3K$ with $K \ge 3$. The volume normalized process $(y_t, \bar{A}_t)$ satisfies the restricted H\"ormander condition on $H_I \times \mathrm{SL}(H_I^\perp)$. That is,
\[
\mathrm{Lie}(\mathcal{S}_{1, \mathrm{lin}})(y,A) = T_{(y,A)}(H_I \times \mathrm{SL}(H_I^\perp))\quad \text{for all}\quad (y,A) \in H_I \times \mathrm{SL}(H_I^\perp),
\]
where $\mathcal{S}_{1, \mathrm{lin}} = \{Z_k, [Z_0, Z_k] : k \in I \}$.
\end{proposition}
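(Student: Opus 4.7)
The plan is to lift the algebraic result of Proposition \ref{prop:slT_generation} from the matrix Lie algebra $\mathfrak{sl}(H_I^\perp)$ to vector fields on $H_I \times \mathrm{SL}(H_I^\perp)$. The noise fields $Z_k(y,A) = (\sigma_k e_k, 0)$ already span the $H_I$ factor of $T_{(y,A)}(H_I \times \mathrm{SL}(H_I^\perp))$ at every point, because $\sigma_k \neq 0$ for all $k \in I$ and $\{e_k\}_{k \in I}$ is a basis of $H_I$. The entire content of the proposition is therefore to produce, from $\mathcal{S}_{1,\mathrm{lin}}$, enough vector fields whose evaluations span the fiber $T_A\mathrm{SL}(H_I^\perp)$ at every $A$.

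The first step is to compute $[Z_0, Z_k]$ explicitly. Since $Z_k$ is constant in $(y,A)$, one has $[Z_0, Z_k] = -DZ_0 \cdot Z_k$; using linearity of $y \mapsto DB(y)$ and Lemma \ref{lem:TransverseInvariance} this gives
\[
[Z_0, Z_k](y, A) = (\eps \sigma_k e_k,\ -\sigma_k M_k A),
\]
with $M_k = DB(e_k)|_{H_I^\perp}$. Forming $V_k := [Z_0, Z_k] - \eps Z_k \in \mathrm{Lie}(\mathcal{S}_{1,\mathrm{lin}})$ cancels the $H_I$ component and yields the right-invariant field $V_k(y,A) = (0,\, -\sigma_k M_k A)$. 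Because each $V_k$ has vanishing $y$-component and depends only on $A$, a short direct computation gives
\[
[V_k, V_\ell](y, A) = \bigl(0,\ -\sigma_k \sigma_\ell [M_k, M_\ell] A\bigr),
\]
so iterated brackets of the $V_k$ remain within the ``pure-$A$'' class of right-invariant fields $(0, MA)$.

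By induction on bracket depth, the evaluation at $(y,A)$ of $\mathrm{Lie}(\mathcal{S}_{1,\mathrm{lin}})$ contains $(0, MA)$ for every $M$ in the matrix Lie algebra generated by $\{M_k : k \in I\}$ (scaling by the nonzero $\sigma_k$ is irrelevant to Lie-algebraic closure, and the sign flip in $[V_k, V_\ell]$ vs.\ $[M_k, M_\ell]$ is absorbed by the same closure). Proposition \ref{prop:slT_generation} identifies this Lie algebra with all of $\mathfrak{sl}(H_I^\perp)$, and right-translation by $A$ identifies $\mathfrak{sl}(H_I^\perp)$ with $T_A \mathrm{SL}(H_I^\perp)$ at every $A \in \mathrm{SL}(H_I^\perp)$. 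Combined with Step~1, this exhausts the tangent space $H_I \oplus T_A \mathrm{SL}(H_I^\perp)$ uniformly in $(y,A)$.

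The main substantive obstacle is Proposition \ref{prop:slT_generation} itself, dealt with by the computer-assisted proof of Appendix \ref{app:CAP}; the steps here are essentially bookkeeping. The only subtlety worth flagging is that the second component of $Z_0$ must land in $\mathfrak{sl}(H_I^\perp)$ rather than just $\mathfrak{gl}(H_I^\perp)$ for the vector fields to live on $\mathrm{SL}(H_I^\perp)$. This reduces to verifying $\tr M_k = 0$ for each $k \in I$, which is immediate from $B_\ell(u,v) = (u_{\ell+1} - u_{\ell-2})v_{\ell-1}$: for each $\ell \in T$ one has $B_\ell(e_k, e_\ell) = 0$ (since $(e_\ell)_{\ell-1} = 0$) and $B_\ell(e_\ell, e_k) = 0$ (since $(e_\ell)_{\ell+1} = (e_\ell)_{\ell-2} = 0$).
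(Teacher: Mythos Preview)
Your proof is correct and follows essentially the same approach as the paper: compute $[Z_0,Z_k]$, subtract the $H_I$-component to obtain the right-invariant fields $(0,M_kA)$, then invoke Proposition~\ref{prop:slT_generation} to span $T_A\mathrm{SL}(H_I^\perp)$. You add a bit more detail---the explicit bracket $[V_k,V_\ell]$, the induction on bracket depth, and the verification that $\tr M_k = 0$---but the structure and key idea are identical to the paper's argument.
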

\begin{proof}
A direct calculation easily shows that
\[
[Z_k,Z_0](y,A) = (-\eps \sigma_k e_k, \sigma_k M_k A) \,.
\]
The set $\mathcal{S}_{1, \mathrm{lin}}(y,A)$ contains $\{(\sigma_k e_k, 0)\}_{k \in I}$ and $\{(-\eps \sigma_k e_k, \sigma_k M_k A)\}_{k \in I}$.
Since $\sigma_k \neq 0$ for $k\in I$, it follows that the span of $\mathcal{S}_{1, \mathrm{lin}}(y, A)$ contains $(e_k, 0), k \in I$, spanning $T_y H_I \times \{0\}$, as well as the vectors $(0, M_k A)$ for all $k \in I$.

We need to show that the Lie algebra generated by the right-invariant vector fields $A \mapsto M_k A$ on $\mathrm{SL}(H_I^\perp)$ spans the full tangent space $T_A \mathrm{SL}(H_I^\perp) \simeq \mathfrak{sl}(H_I^\perp)$. The Lie algebra of these vector fields is isomorphic to the matrix Lie algebra generated by $\{M_k : k \in I\}$.
By Proposition \ref{prop:slT_generation}, $\mathrm{Lie}(\{M_k : k \in I\}) = \mathfrak{sl}(H_I^\perp)$. Therefore, the generated vector fields span $T_A \mathrm{SL}(H_I^\perp)$.
Combining the spans, we conclude $\mathrm{Lie}(\mathcal{S}_{1, \mathrm{lin}})(y,A)$ spans $T_y H_I \times T_A \mathrm{SL}(H_I^\perp)$.
\end{proof}

Proposition \ref{prop:hormander_linear} establishes that the generator $\mathcal{L}^{\mathrm{lin}}$ of the $(y_t, \bar{A}^t_\perp)$ process is hypoelliptic. While not strictly necessary for the proofs that follow, the algebraic generation result underpinning this proposition is also key to analyzing the transverse projective process and full process on $H\backslash H_I$.

\subsection{H\"ormander Condition for the Projective Process}\label{sec:hypo_projective}

We now apply the H\"ormander framework to the transverse projective process $z_t = (y_t, v_t)$ introduced in Definition \ref{def:TransverseLinearProcess}.
The state space is the product manifold $\S^\perp H_I = H_I \times \mathbb{S}_I^\perp$, where $\mathbb{S}_I^\perp = \{v \in H_I^\perp : |v|=1\}$ is the unit sphere in the transverse space. The process evolves according to the Stratonovich SDE:
\begin{equation}\label{eq:proj_SDE}
\dee z_t = \tilde{X}_0^\perp(z_t) \dt + \sum_{k \in I} \tilde{X}_k(z_t) \circ \dee W_t^k,
\end{equation}
where the vector fields $\tilde{X}_k \in \mathfrak{X}(\S^\perp H_I)$ are constant lifts from $H_I$:
\[
\tilde{X}_k(y,v) = (\sigma_k e_k, 0), \quad k \in I.
\]
The drift vector field $\tilde{X}_0^\perp \in \mathfrak{X}(\S^\perp H_I)$ is given by
\[
\tilde{X}_0^\perp(y,v) = \left( -\eps y, DB(y)v - \eps v - \langle v, DB(y)v \rangle v \right).
\]
Since the diffusion vector fields $\tilde{X}_k$ are constant, the Itô-to-Stratonovich correction term is zero, and the Itô and Stratonovich forms of the SDE coincide.

\begin{proposition}[Hörmander Condition for the Projective Process]\label{prop:hormander_projective}
The transverse projective process $z_t = (y_t, v_t)$ defined by the SDE \eqref{eq:proj_SDE} satisfies the restricted Hörmander condition
\[
\mathrm{Lie}(\tilde{\mathcal{S}}_1)(y,v) = T_{(y,v)} \S^\perp H_I \quad \text{for all } (y,v) \in \S^\perp H_I,
\]
where $\tilde{\mathcal{S}}_1 = \{\tilde{X}_k, [\tilde{X}_0^\perp, \tilde{X}_k] : k \in I \}$. Consequently, it also satisfies the full parabolic Hörmander condition.
\end{proposition}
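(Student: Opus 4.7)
The plan is to exploit the product structure of $\S^\perp H_I = H_I \times \mathbb{S}_I^\perp$ together with Proposition \ref{prop:slT_generation} as the essential algebraic input. The diffusion fields $\tilde X_k$ are constant vectors in the $H_I$ factor, so the real work concentrates on generating the sphere directions in the tangent space via Lie brackets.

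First I would compute the first-order brackets $[\tilde X_0^\perp, \tilde X_k]$. Since $\tilde X_k = (\sigma_k e_k, 0)$ is constant, this bracket equals $-\sigma_k \partial_{y_k} \tilde X_0^\perp$; using that $\partial_{y_k} DB(y)|_{H_I^\perp} = DB(e_k)|_{H_I^\perp} = M_k$, it evaluates to
\[
[\tilde X_0^\perp, \tilde X_k](y,v) = (\eps \sigma_k e_k,\ -\sigma_k \hat M_k v), \qquad \hat M v := Mv - \langle v, Mv\rangle v,
\]
with $\hat M v \in T_v \mathbb{S}_I^\perp$ the orthogonal projection of $Mv$ onto the sphere. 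Subtracting $\eps \tilde X_k$ and rescaling by $-\sigma_k^{-1}$ (recall $\sigma_k \neq 0$) shows that the purely vertical field $\tilde W_k(y,v) := (0, \hat M_k v)$ lies in $\mathrm{span}(\tilde{\mathcal S}_1)$. The $\tilde X_k$'s themselves span the horizontal factor $T_y H_I \oplus \{0\}$ at every point.

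It thus remains to generate $\{0\} \oplus T_v \mathbb{S}_I^\perp$ from iterated Lie brackets of the $\tilde W_k$'s. Since each $\tilde W_k$ has vanishing $H_I$-component and is independent of $y$, all such iterated brackets remain purely vertical, so one can work intrinsically on $\mathbb{S}_I^\perp$ with the vector fields $\hat M_k \in \mathfrak{X}(\mathbb{S}_I^\perp)$. The key observation is that $M \mapsto \hat M$ is the infinitesimal generator of the standard action $g \cdot v = gv/|gv|$ of $SL(H_I^\perp)$ on $\mathbb{S}_I^\perp$, and therefore a Lie algebra (anti-)homomorphism from $\mathfrak{sl}(H_I^\perp)$ into $\mathfrak{X}(\mathbb{S}_I^\perp)$. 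Combining this with Proposition \ref{prop:slT_generation} yields
\[
\mathrm{Lie}(\{\hat M_k : k \in I\}) \supseteq \{\hat M : M \in \mathfrak{sl}(H_I^\perp)\}.
\]

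Finally I would check pointwise surjectivity at the sphere: for any $v \in \mathbb{S}_I^\perp$ and any $w \in T_v \mathbb{S}_I^\perp$ (so $w \perp v$), the rank-two matrix $M = wv^T - vw^T \in \mathfrak{sl}(H_I^\perp)$ satisfies $\hat M v = Mv = w$. Hence $\{\hat M v : M \in \mathfrak{sl}(H_I^\perp)\}$ exhausts $T_v \mathbb{S}_I^\perp$, which together with the horizontal directions furnished by the $\tilde X_k$'s gives the restricted H\"ormander condition at every $(y,v)$; the full parabolic condition follows a fortiori. The main obstacle is verifying cleanly that brackets of the projective vector fields $\hat M_k$ reproduce $\widehat{[M_k, M_j]}$ up to sign, so that the algebraic output of Proposition \ref{prop:slT_generation} transfers correctly to the geometry of $\mathbb{S}_I^\perp$ --- routine but sensitive to the right-invariance sign convention and cleanly phrased via the $SL(H_I^\perp)$-action above.
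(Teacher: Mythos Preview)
Your proof is correct and follows essentially the same approach as the paper: both compute the first-order brackets to extract the projective fields $v \mapsto M_k v - \langle v, M_k v\rangle v$, invoke the Lie algebra (anti-)homomorphism $M \mapsto \hat M$ together with Proposition~\ref{prop:slT_generation}, and then verify that $\{\hat M v : M \in \mathfrak{sl}(H_I^\perp)\}$ fills $T_v\mathbb{S}_I^\perp$. The only tactical difference is in that last step: the paper argues via transitivity of the $SL(H_I^\perp)$-action on the sphere and the general fact that infinitesimal generators of a transitive action span the tangent space, whereas you give the direct one-line construction $M = wv^T - vw^T$, which is more elementary and equally effective.
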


\begin{proof}
The proof requires showing that $\mathrm{Lie}(\tilde{\mathcal{S}}_1)(y,v)$ spans $T_{(y,v)}(\S^\perp H_I)$, where $\tilde{\mathcal{S}}_1 = \{\tilde{X}_k, [\tilde{X}_0^\perp, \tilde{X}_k] : k \in I\}$. Clearly the $\tilde{X}_k$ vector fields span the $T_y H_I$ component. Moreover, the Lie algebra contains the vector fields $[\tilde{X_k},\tilde{X}_0^\perp]$ whose projection onto $T_v \mathbb{S}^\perp_I$ are of the form
\[
V_{M_k}(v) = M_k v - \langle v, M_k v \rangle v,
\]
where $M_k$ is defined in \eqref{eq:Mk}. The vector field $V_{M_k}(v)$ is the infinitesimal generator of the action of the Lie group $\mathrm{SL}(H_I^\perp)$ on $\mathbb{S}^\perp_I$ induced by the linearization of the bilinear form $B$ at $v$. Thus, spanning the full space reduces to showing that the Lie algebra generated by the vector fields $\{V_{M_k}(v) : k \in I\}$ spans the tangent space $T_v \mathbb{S}^\perp$ for all $v \in \mathbb{S}_I^\perp$.

To establish that $\mathrm{Lie}(\{V_{M_k}(v) : k \in I\})$ spans $T_v \mathbb{S}^\perp_I$, we first recall the standard action of the Lie group $G = \mathrm{SL}(H_I^\perp)$ on the manifold $M' = H_I^\perp \setminus \{0\}$. This action is known to be transitive\footnote{An action of a group $G$ on a set $M$ is transitive if for any two points $x, y \in M$, there exists an element $g \in G$ such that $g \cdot x = y$.}. The projection $\pi: M' \to \mathbb{S}^\perp_I$, defined by $\pi(w) = w/|w|$, is a surjective submersion. Consequently, the transitive action of $G$ on $M'$ induces a transitive action on the sphere $\mathbb{S}^\perp_I$.
A fundamental result from the theory of Lie groups (see, e.g., \cite{Warner-Foundations-2010o} Chapter 3) states that if a Lie group $G$ acts transitively on a manifold $\mathcal{M}$, then the Lie algebra formed by the infinitesimal generators (which are vector fields) of this action spans the tangent space $T_x \mathcal{M}$ at every point $x \in \mathcal{M}$.
In our context, the infinitesimal generators of the $G = \mathrm{SL}(H_I^\perp)$ action, when projected onto $\mathbb{S}_I^\perp$, are precisely the vector fields $V_M(v) = Mv - \langle v, Mv \rangle v$, where $M \in \mathfrak{sl}(H_I^\perp)$. Since the action of $G$ on $\mathbb{S}^\perp_I$ is transitive, it follows that the Lie algebra $\mathrm{Lie}(\{ V_M(v) : M \in \mathfrak{sl}(H_I^\perp) \})$ spans the tangent space $T_v \mathbb{S}_I^\perp$ for any $v \in \mathbb{S}_I^\perp$.

The next step is to connect this spanning property to the specific generators $V_{M_k}(v)$. The mapping $M \mapsto -V_M$ defines a Lie algebra homomorphism from $\mathfrak{sl}(H_I^\perp)$ to $\mathfrak{X}(\mathbb{S}_I^\perp)$. This means that if a set of matrices $\{M_k\}$ generates $\mathfrak{sl}(H_I^\perp)$, then the corresponding vector fields $\{V_{M_k}\}$ will generate the Lie algebra $\mathrm{Lie}(\{V_M(v) : M \in \mathfrak{sl}(H_I^\perp)\})$. By Proposition \ref{prop:slT_generation}, we know that $\mathfrak{sl}(H_I^\perp)$ is indeed generated by the set $\{M_k : k \in I\}$. Therefore, the Lie algebra $\mathrm{Lie}(\{V_{M_k}(v) : k \in I\})$ is precisely $\mathrm{Lie}(\{V_M(v) : M \in \mathfrak{sl}(H_I^\perp)\})$, which we have just shown spans $T_v \mathbb{S}^\perp_I$. This completes the proof.
\end{proof}

\begin{corollary}[Properties of Projective Process]\label{cor:projective_properties}
The transverse projective process $z_t = (y_t, v_t)$ on $\mathbb{S}^\perp H_I$ has the following properties:
\begin{enumerate}
    \item It is topologically irreducible. That is, for any open set $U \subset \mathbb{S}^\perp H_I$, there exists $t > 0$ such that $\widehat{P}_t^\perp(\mathbf{1}_U)(y_0,v_0) > 0$ for all initial conditions $(y_0, v_0) \in \S^\perp H_I$ (see Appendix \ref{app:control_theory}).
    \item Its transition semigroup $\widehat{P}_t^\perp$ is strong Feller (maps bounded measurable functions to continuous functions).
    \item It admits a unique stationary measure $\nu_\eps$, which has a smooth, strictly positive density $f_\eps$ with respect to the volume measure on $\mathbb{S}^\perp H_I$.
\end{enumerate}
\end{corollary}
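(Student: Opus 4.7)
The plan is to derive all three properties as standard consequences of Proposition \ref{prop:hormander_projective} (the restricted H\"ormander condition for $z_t$) together with the drift estimates from Lemma \ref{lem:twisty}. I would handle the three items not in the stated order, but rather by first extracting hypoellipticity, then irreducibility, and finally using both for existence/uniqueness/regularity of $\nu_\eps$.

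First I would establish item (2). By H\"ormander's theorem applied to the generator of $z_t$, the restricted H\"ormander condition of Proposition \ref{prop:hormander_projective} implies hypoellipticity, which in turn gives that the transition kernel $\widehat{P}_t^\perp(z_0, \cdot)$ admits a smooth density with respect to the volume measure on $\S^\perp H_I$ for every $t>0$; this yields the strong Feller property in the standard way. Next, for item (1) I would invoke Proposition \ref{prop:irreducibility-general} from Appendix \ref{app:control_theory}. The vector fields $\tilde{X}_0^\perp, \tilde{X}_k$ are polynomial, hence analytic, and because the diffusion fields $\tilde{X}_k(y,v) = (\sigma_k e_k, 0)$ are constant on the product manifold while $\tilde{X}_0^\perp$ is at most linear in $y$, a direct calculation shows that $[\tilde{X}_k,\tilde{X}_0^\perp]$ is independent of $y$, and hence the cancellation property $[\tilde{X}_k,[\tilde{X}_k,\tilde{X}_0^\perp]] = 0$ holds for every $k \in I$. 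Combined with the restricted H\"ormander condition this verifies the hypotheses of Proposition \ref{prop:irreducibility-general}, yielding topological irreducibility via the Stroock-Varadhan support theorem and a controllability argument.

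Finally, for item (3), I would argue as follows. Existence of a stationary measure $\nu_\eps$ will follow from a Krylov-Bogoliubov argument, with tightness provided by Lemma \ref{lem:twisty} applied to the $y_t$-factor, noting that the $v_t$ component lives on the compact sphere $\S_I^\perp$. Uniqueness then follows from strong Feller together with topological irreducibility via Doob's classical theorem. Smoothness of the density $f_\eps$ follows from hypoellipticity applied to the stationary Kolmogorov equation, and strict positivity follows from topological irreducibility combined with the strong Feller property and the Chapman-Kolmogorov semigroup identity $\widehat{P}_{2t}^\perp = \widehat{P}_t^\perp \widehat{P}_t^\perp$, which propagates positivity of the transition density on open sets to positivity of $f_\eps$ on all of $\S^\perp H_I$.

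The main obstacle was really the algebraic bracket-spanning content, which has already been absorbed into Proposition \ref{prop:slT_generation} and propagated to Proposition \ref{prop:hormander_projective}. With the H\"ormander condition in hand, the remainder of the corollary is routine application of hypoelliptic PDE theory and standard Markov process arguments.
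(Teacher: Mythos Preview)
Your proposal is correct and follows essentially the same route as the paper: H\"ormander's condition $\Rightarrow$ strong Feller, then Proposition \ref{prop:irreducibility-general} for irreducibility, then standard Markov/hypoelliptic arguments for existence, uniqueness, smoothness and positivity of $\nu_\eps$. The one noteworthy difference is your verification of the cancellation hypothesis $[\tilde{X}_k,[\tilde{X}_k,\tilde{X}_0^\perp]]=0$: you argue directly that $\tilde{X}_0^\perp$ is affine in $y$, so $[\tilde{X}_k,\tilde{X}_0^\perp]$ is $y$-independent and hence killed by the constant field $\tilde{X}_k = \sigma_k \partial_{y_k}$, whereas the paper invokes the Lie-algebra anti-homomorphism property of projective lifts to reduce to $[X_k,[X_k,X_0]] = B(e_k,e_k)=0$ on $H$. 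Your computation is more elementary for this particular system; the paper's argument is more conceptual and transfers directly to other lifted processes.
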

\begin{proof}
Property (1) follows from the Proposition \ref{prop:hormander_projective} via Proposition \ref{prop:irreducibility-general} if we verify the cancellation condition
\[
[\tilde{X}_k,[\tilde{X}_k,\tilde{X}_0^\perp]]=0.
\]
The above cancellation property follows from the analogous one on $H$, namely $B(e_k,e_k)=0$. Indeed, since the full projective lift $X_0 \mapsto \tilde{X}_0$ of the vector field $X_0$ to the full projective bundle $\S H$
\[
	\tilde{X}_0(u,v) = (X_0(u),\nabla X_0(u)v - \langle v, \nabla X_0(u)v\rangle)
\]
is tangent to the sub-bundle $\S^\perp H_I \subseteq \S H$ and therefore the vector field $\tilde{X}_0^\perp$ on $\S^\perp H_I$ is just given by the restriction of $\tilde{X}_0$ to $\S^\perp H$. The cancellation condition now easily follows by fact that the projective lift is a Lie algebra homomorphism (e.g. Lemma C2 in \cite{BBPS20}) and therefore
\[
	[\tilde{X}_k,[\tilde{X}_k,\tilde{X}_0]] = [X_k,[X_k,X_0]]\,\widetilde{\,\,} = B(e_k,e_k)\,\widetilde{\,} = 0
\]
due to the properties of the bilinearity $B$. (2) follows from the Hörmander condition (standard result, see e.g., \cite{Hormander67}). Existence of a stationary measure $\nu^\eps$ is evident from compactness of $\S^\perp_I$, while the rest of item (3) follows from the fact that time-$t$ transition kernels of $z_t$ have smooth, strictly positive densities -- properties which are consequences of topological irreducibility and H\"ormander's theorem.
\end{proof}

\subsection{Application to the Full Process \texorpdfstring{$u_t$ on $H \setminus H_I$}{ut on H HI}}\label{sec:hypo_full}

We now leverage the algebraic condition (Proposition \ref{prop:slT_generation}) to establish hypoellipticity and irreducibility for the full Lorenz-96 process $u_t$ on the state space $H \setminus H_I$.

\begin{proposition}[Bracket Spanning for Full Process]\label{prop:bracket_spanning_full} Let $u_t$ be the full Lorenz-96 process defined by \eqref{eq: L96-SDE}, then the restricted H\"ormander condition holds on $H\setminus H_I$. That is, for any $u \in H \setminus H_I$,
    \[
    \mathrm{Lie}(\mathcal{S}_{1})(u) = T_u H,
    \]
    where $\mathcal{S}_{1} = \{X_k, [X_0, X_k] : k \in I \}$.
\end{proposition}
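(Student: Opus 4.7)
The plan is to decompose the tangent space $T_u H = H_I \oplus H_I^\perp$ and produce each summand separately from $\mathrm{Lie}(\mathcal{S}_1)(u)$ at a fixed $u \in H \setminus H_I$. The $H_I$ direction is trivial, since the diffusion fields $X_k = \sigma_k e_k$ with $k \in I$, $\sigma_k \neq 0$, already span $H_I$ at every point. All the content of the proof lies in producing $H_I^\perp$, which will use the assumption $w := \Pi^\perp u \neq 0$.

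First I would record the key structural identity: for $k \in I$,
\[
DB(u) e_k = M_k w,
\]
where $M_k = DB(e_k)|_{H_I^\perp}$ as in \eqref{eq:Mk}. This rests on the symmetry $DB(u)[h] = DB(h)[u]$ (both equal $B(u,h)+B(h,u)$) together with the combinatorial fact $DB(e_k) y = 0$ for $y \in H_I$, $k \in I$ -- the same cancellation already driving Lemma \ref{lem:TransverseInvariance}, since the only components of $y$ appearing in $DB(e_k)y$ are $y_{k\pm 1}$ and $y_{k \pm 2}$, all lying in $T$. Consequently the first bracket takes the affine form
\[
\widetilde Z_k(u) := [X_0, X_k](u) = -\sigma_k \widetilde M_k u + \sigma_k \eps e_k,
\]
where $\widetilde M_k := M_k \Pi^\perp$ extends $M_k$ to $H$ by zero on $H_I$. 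The essential features to notice are that the linear part factors through $\Pi^\perp$ and that the constant part lies in $H_I$.

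The main inductive step will be to propagate this ``linear-plus-constant-in-$H_I$'' form through iterated brackets. Brackets against the constant fields $X_{k'}$ vanish, since for any affine field $\widetilde Q \Pi^\perp u + c$ one has $[X_{k'}, \cdot](u) = \sigma_{k'} \widetilde Q \Pi^\perp e_{k'} = 0$ using $e_{k'} \in H_I$, so the $X_{k'}$'s can be dropped from the iteration. For a bracket of two affine fields $Q_i \Pi^\perp u + c_i$ with $Q_i \in \mathfrak{gl}(H_I^\perp)$ and $c_i \in H_I$, the constant cross-terms die because $\Pi^\perp c_i = 0$, leaving a purely linear output $u \mapsto -[Q_1, Q_2] \Pi^\perp u$ whose matrix lies in $\mathrm{Lie}(\{M_k\})$. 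By induction, every nonzero iterated bracket in $\mathrm{Lie}(\mathcal{S}_1)$ evaluates at $u$ to $Qw$ for some $Q \in \mathrm{Lie}(\{M_k : k \in I\})$.

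To finish, I would invoke Proposition \ref{prop:slT_generation} to identify $\mathrm{Lie}(\{M_k\}) = \mathfrak{sl}(H_I^\perp)$ and combine it with the elementary fact that $\mathrm{SL}(V)$ acts transitively on $V \setminus \{0\}$ when $\dim V \geq 2$, equivalently that the evaluation $\mathfrak{sl}(V) \ni Q \mapsto Qw$ is surjective for any $w \neq 0$. Since $N \geq 9$ guarantees $\dim H_I^\perp \geq 6$, this produces all of $H_I^\perp$ from $\mathrm{Lie}(\mathcal{S}_1)(u)$, and together with the $H_I$-span from the $X_k$'s gives $\mathrm{Lie}(\mathcal{S}_1)(u) = T_u H$. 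The most delicate point -- and the only place the specific L96 structure enters beyond Proposition \ref{prop:slT_generation} -- is the inductive propagation of the affine form: the killing of constants under $\Pi^\perp$ is again the sparsity feature $DB(e_k)|_{H_I}=0$ for $k \in I$ already used in Lemma \ref{lem:TransverseInvariance}.
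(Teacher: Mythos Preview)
Your proposal is correct and follows essentially the same route as the paper: both reduce to showing that $\mathrm{Lie}(\mathcal{S}_1)(u)$ contains $\{Mw : M \in \mathfrak{sl}(H_I^\perp)\}$ via the identification $\Pi^\perp[X_0,X_k](u) = -\sigma_k M_k w$ and Proposition~\ref{prop:slT_generation}, then observe this is all of $H_I^\perp$ for $w \neq 0$. The only cosmetic differences are that the paper removes the $H_I$ constant from $[X_0,X_k]$ by subtracting $\eps X_k$ (rather than tracking the affine form through brackets) and spans $H_I^\perp$ using explicit elementary matrices $E^{\ell,j}$ and $E^{j'j'}-E^{jj}$ (rather than appealing to surjectivity of $\mathfrak{sl}(V)\ni Q \mapsto Qw$).
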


\begin{proof}
Note $\mathrm{Lie}(\mathcal{S}_1)(u)$ contains $X_k = \sigma_k e_k$ for $k \in I$, these span the $H_I$ directions by our assumption that $\sigma_k \neq 0$ for all $k \in I$. Thus, it remains to show that $\operatorname{Lie}(\mathcal{S}_1)(u) \supset H_I^\perp$ for all $u \in H \setminus H_I$.

Fix $u \in H \setminus H_I$ and write $u = y + w$ for $y = \Pi u \in H_I$ and $w = \Pi^\perp u \in H_I^\perp$. For $k \in I$, consider the projection
\[
\Pi^\perp [X_0, X_k](u) = -\Pi^\perp( \sigma_kDB(e_k)u - \ep\sigma_ke_k) = -\sigma_k M_k w \,.
\]
Here, $M_k$ is defined by \eqref{eq:Mk}, and we have used that $\mathrm{Ker}(DB(e_k)) = H_I$ and $\mathrm{Ran}(DB(e_k)) = H_I^\perp$ (Lemma \ref{lem:TransverseInvariance}). Since $H_I \subset \mathrm{Lie}(\mathcal{S}_1)(u)$ for all $u$, it follows that $\Pi^\perp[X_0, X_k](u) \in \operatorname{Lie}(\mathcal{S}_1)(u)$ for all $k \in I$.

Writing
\[
Y_{M}(u) := M w,
\quad M \in \mathfrak{sl}(H_I^\perp) \, ,
\]
it follows that $Y_{M_k} \in \operatorname{Lie}(\mathcal{S}_1)$ for all $k \in I$.
Since the mapping $M \mapsto -Y_M$ is Lie algebra homomorphism
and since $\mathrm{Lie}(\{M_k : k \in I\}) = \mathfrak{sl}(H_I^\perp)$ by Proposition \ref{prop:slT_generation}, it follows that $\mathrm{Lie}(\mathcal{S}_{1})$ contains all  vector fields of the form $Y_M$ for $M \in \mathfrak{sl}(H_I^\perp)$.

To complete the proof, fix $j \in T$ such that $u_j \neq 0$, using that $u \in H \setminus H_I$. Fix $\ell \in T \setminus \{ j \}$ and let $E^{\ell, j} \in \mathfrak{sl}(H_I^\perp)$ be the
elementary matrix at row $\ell$ and column $j$. Then $Y_{E^{\ell, j}}(u) = u_j e_\ell$, and since $u_j \neq 0$, it follows that $e_\ell \in \mathrm{Lie}(\mathcal{S}_1)(u)$ for all $\ell \neq j$.

Spanning along $e_j$ follows similarly if $u_{j'} \neq 0$ for some $j' \in T \setminus \{ j \}$. If no other index is available, i.e., $u_{j'} = 0$ for all $j' \in T \setminus \{ j\}$, then spanning along $e_j$ follows on noting that, for any $j' \in T \setminus \{ j\} $, for $M = E^{j'j'} - E^{jj} \in \mathfrak{sl}(H^\perp_I)$, one has in this case that
\[Y_M(u) = u_{j'} e_{j'} - u_j e_j = - u_j e_j \, .  \]
This completes the proof in all cases.

\end{proof}

\begin{corollary}[Properties of Full Process]\label{cor:full_process_properties}
The full Lorenz-96 process $(u_t)$ on $H \setminus H_I$ satisfies the following properties.
\begin{enumerate}
    \item The process $(u_t)$ is topologically irreducible on $H \setminus H_I$.
    \item The Markov semigroup $P_t$ for $(u_t)$ has the strong Feller property.
    \item Any stationary measure $\mu$ on $H \setminus H_I$ must have a smooth, strictly positive density with respect to the volume measure on $H \setminus H_I$. In particular, any such stationary $\mu$ is unique.
\end{enumerate}
\end{corollary}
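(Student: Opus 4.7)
The plan is to parallel the proof of Corollary \ref{cor:projective_properties}, using the restricted parabolic Hörmander condition on $H \setminus H_I$ supplied by Proposition \ref{prop:bracket_spanning_full} together with standard machinery: the Stroock--Varadhan support theorem and the control-theoretic argument of Appendix \ref{app:control_theory} for irreducibility, Hörmander's hypoellipticity theorem for smoothness of the transition kernels (which yields strong Feller), and Doob's theorem for uniqueness of the stationary measure. The main new point, compared to Corollary \ref{cor:projective_properties}, is that $H \setminus H_I$ is merely an open subset of $H$, so one must confirm that neither the stochastic flow nor the controlled trajectories used in the support theorem ever reach $H_I$.

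I would first establish \textbf{forward invariance} of $H \setminus H_I$. Writing $u_t = y_t + w_t$ with $y_t = \Pi u_t \in H_I$ and $w_t = \Pi^\perp u_t \in H_I^\perp$, and using that $X_k \in H_I$ for $k \in I$ so the noise is tangent to $H_I$, the transverse component satisfies a pathwise ODE of the form $\dot w_t = DB(y_t) w_t + \Pi^\perp B(w_t, w_t) - \eps w_t$, for which $w \equiv 0$ is a locally Lipschitz equilibrium. Pathwise uniqueness then forces $w_t \neq 0$ for all $t \geq 0$ whenever $w_0 \neq 0$. The same reasoning applies verbatim to the controlled system $\dot u = X_0(u) + \sum_{k \in I} h_k(t) X_k$, so controlled trajectories starting off $H_I$ remain off $H_I$.

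Next, I would verify the \textbf{cancellation identity} $[X_k, [X_k, X_0]] \equiv 0$ for $k \in I$, which is the hypothesis of Proposition \ref{prop:irreducibility-general}. Since $X_k = \sigma_k e_k$ is constant, a short computation gives $[X_k, X_0](u) = \sigma_k(B(e_k, u) + B(u, e_k)) - \eps \sigma_k e_k$ and then $[X_k,[X_k,X_0]](u) = 2\sigma_k^2 B(e_k, e_k)$, so it suffices to check $B(e_k, e_k) = 0$. From the definition $B(u,v)_j = (u_{j+1} - u_{j-2})v_{j-1}$, the only candidate nonzero coordinate of $B(e_k, e_k)$ occurs at $j = k+1$, and both factors $(e_k)_{k+2}$ and $(e_k)_{k-1}$ there vanish. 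This is the same structural identity used, via the projective lift, in the proof of Corollary \ref{cor:projective_properties}.

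With these two preparatory steps in hand, the three conclusions should follow as in Corollary \ref{cor:projective_properties}. The vector fields $X_0, X_k$ are polynomial hence analytic, the open set $H \setminus H_I$ is path-connected since $H_I$ has codimension $2K \geq 6$, and the restricted Hörmander condition together with the cancellation identity are both in place; so Proposition \ref{prop:irreducibility-general}, combined with forward invariance, gives topological irreducibility on $H \setminus H_I$, yielding (1). For (2), Hörmander's theorem applied to the generator $\mathcal{L}$ on $H \setminus H_I$ furnishes $C^\infty$ transition densities $p_t(u, \cdot)$, from which the strong Feller property is immediate. For (3), stationarity $\mu = \mu P_t$ transfers the smoothness of $p_t$ to a $C^\infty$ density for $\mu$; irreducibility ensures strict positivity; and Doob's theorem then yields uniqueness. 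The only step that I anticipate requiring genuine care is ensuring that the controls used in the support theorem can always be chosen to keep the trajectory inside $H \setminus H_I$ --- but this is exactly what the forward-invariance argument above supplies pathwise.
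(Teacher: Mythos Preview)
Your proposal is correct and follows essentially the same route as the paper: invoke Proposition~\ref{prop:bracket_spanning_full} and the cancellation identity $B(e_k,e_k)=0$ to feed Proposition~\ref{prop:irreducibility-general} for (1), H\"ormander's theorem for (2), and then hypoellipticity plus irreducibility for (3). You are more careful than the paper in spelling out forward invariance of $H\setminus H_I$ (which the paper tacitly assumes) and you close with Doob's theorem rather than the paper's one-line appeal to disjointness of supports of ergodic stationary measures under the strong Feller property---both are standard and equivalent here.
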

\begin{proof}
(1) follows from Proposition \ref{prop:bracket_spanning_full} and the cancellation property $B(e_k,e_k)=0$ via Proposition \ref{prop:irreducibility-general}. (2) follows from the Hörmander condition. For (3), the smoothness and positivity of the density follow from hypoellipticity (Hörmander's theorem) and irreducibility. Uniqueness of the stationary measure follows on recalling that the topological supports of distinct ergodic stationary measures of strong Feller processes are disjoint.
\end{proof}

\begin{remark}[Failure of Hörmander Condition on $H_I$]
    It is instructive to contrast Proposition \ref{prop:bracket_spanning_full} with bracket-spanning along the invariant subspace $H_I$. The drift on $H_I$ restricts to $X_0(y) = -\eps y$, with noise fields are $X_k = \sigma_k e_k$, hence $[X_0, X_k] = \eps X_k$. Since all $X_k$ are constant vector fields within $H_I$, the Lie algebra generated by $\{X_k, [X_0, X_k], \dots \}_{k \in I}$ evaluated at any point $y \in H_I$ can only span the subspace $H_I$ itself, not the full tangent space $T_y H \simeq H$.
    \end{remark}

\subsection{Geometric Ergodicity Results}\label{sec:GeoErgodicity_consolidated}

We collect here the key geometric ergodicity results for both the projective and the full processes, which rely on the hypoellipticity/irreducibility properties established above and suitable drift conditions. A central tool is Harris's Ergodic Theorem.

\subsubsection*{Harris's Theorem}

We state a version of Harris's Ergodic Theorem adapted for establishing geometric convergence in weighted norms (see e.g., \cite{meyn2012markov, hairer2011yet}).

Let $(X_t)$ be a Markov process on a Polish space $Z$ with transition semigroup $Q_t$. Let $V: Z \to [1, \infty)$ be a given weight function, and define the weighted supremum norm $\|\varphi\|_{C_V} := \sup_{z \in Z} |\varphi(z)|/V(z)$.

\begin{definition}[Irreducibility and Small Sets] \
\begin{itemize}
    \item The process $(X_t)$ is \emph{$\psi$-irreducible} if there exists a measure $\psi$ on $Z$ such that for any set $A$ with $\psi(A) > 0$, and any $z \in Z$, there exists $t > 0$ such that $Q_t(z, A) > 0$. (For processes satisfying the Hörmander condition, topological irreducibility typically implies $\psi$-irreducibility for $\psi$ being the volume measure).
    \item A set $C \subset Z$ is \emph{small} if there exist $T>0$, $\delta>0$, and a probability measure $\nu$ such that $P_T(z, \cdot) \ge \delta \nu(\cdot)$ for all $z \in C$. (For strong Feller processes, compact sets often satisfy the small set condition).
\end{itemize}
\end{definition}

\begin{theorem}[Harris's Theorem - Geometric Convergence in $C_V$] \label{thm:Harris_consolidated}
Assume the following conditions hold:
\begin{enumerate}
    \item $(X_t)$ is $\psi$-irreducible for some measure $\psi$.
    \item The transition semigroup $Q_t$ is strong Feller (maps bounded measurable functions to continuous functions).
    \item There exists a function $V: S \to [1, \infty)$ such that (i) the sublevel sets $\{ V \leq R\}$ are small sets for any $R \geq 1$; and (ii) there are constants $\lambda > 0$, $b < \infty$, such that
    \begin{equation}
            Q^t V(x) \leq e^{- \lambda t} V(x) + b \quad \text{for all } x \in S.
    \end{equation}
\end{enumerate}
Then there exists a unique stationary measure $\mu$ satisfying $\int V d\mu < \infty$. Furthermore, the process converges geometrically fast to $\mu$ in the weighted norm $\|\cdot\|_{C_V}$: there exist $r > 0$ such that for any function $\varphi$ with $\|\varphi\|_{C_V} < \infty$,
\[
\|P_t \varphi - \mu(\varphi) \mathbf{1}\|_{C_V} \le e^{-rt} \|\varphi\|_{C_V},
\]
where $\mu(\varphi) = \int \varphi d\mu$. This implies that $P_t$ has a spectral gap on the space $C_V = \{\varphi : \|\varphi\|_{C_V} < \infty\}$.
\end{theorem}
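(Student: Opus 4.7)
The plan is to follow the contraction-principle proof of Harris's theorem due to Hairer--Mattingly \cite{hairer2011yet}, which converts hypotheses (1)--(3) into a strict contraction of the dual semigroup $Q_T^*$, for some fixed $T > 0$, in a weighted distance on probability measures. For a small parameter $\beta > 0$ to be calibrated, I define on signed measures of finite $V$-moment the norm
\[
\|\mu\|_\beta := \int_Z (1 + \beta V(z)) \, d|\mu|(z),
\]
and set $d_\beta(\mu_1, \mu_2) = \|\mu_1 - \mu_2\|_\beta$. This distance is, up to constants, dual to $\|\cdot\|_{C_V}$ acting on mean-zero observables, so a contraction $d_\beta(Q_T^* \mu_1, Q_T^* \mu_2) \leq \alpha \, d_\beta(\mu_1, \mu_2)$ with $\alpha < 1$ yields both existence and uniqueness of $\mu$ via the Banach fixed-point theorem on the complete metric space of probability measures with $\int V \, d\mu < \infty$, and the advertised geometric decay in $C_V$ after transferring through duality.

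The execution proceeds in three steps. First, iterating the drift condition gives a uniform bound $Q_t V(x) \leq e^{-\lambda t} V(x) + b'$, from which I choose $R$ large enough that the sublevel set $C_R = \{V \leq R\}$ satisfies $Q_T V(x) \leq \tfrac{1}{2} V(x)$ whenever $x \notin C_R$; consequently, the $V$-weighted mass off $C_R$ contracts geometrically under $Q_T^*$. Second, hypothesis (3) provides a Doeblin-type minorization on $C_R$: there exist $\delta > 0$ and a probability measure $\nu$ with $Q_T(x, \cdot) \geq \delta \, \nu(\cdot)$ for all $x \in C_R$, which by a standard coupling yields the $(1-\delta)$-contraction in total variation between $Q_T(x, \cdot)$ and $Q_T(x', \cdot)$ for any $x, x' \in C_R$. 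Third, I combine these ingredients through a maximal coupling: synchronously couple the two trajectories using the Doeblin overlap whenever both copies lie in $C_R$, and otherwise bound the contribution using the drift. Tracking the mass assigned weights $1$ and $\beta V$, the Doeblin step contributes a factor like $(1 - \delta)$ to the unweighted part and the drift contributes $e^{-\lambda T}$ to the $V$-weighted part, and a judicious choice of $\beta > 0$ (depending on $\delta$, $\lambda$, $R$, $T$) produces a single contraction constant $\alpha < 1$ on $d_\beta$.

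The main technical obstacle is the calibration of $\beta$. Choosing $\beta$ too large causes the $V$-weighted mass sitting inside $C_R$ to swamp the Doeblin contraction (since the overlap is in total variation, not $V$-weighted); choosing $\beta$ too small renders the drift-based contraction outside $C_R$ invisible in $\|\cdot\|_\beta$. The precise balance one arrives at is that the effective contraction from inside $C_R$ scales like $(1 - \delta) + C \beta R$, while the contribution from outside scales like $e^{-\lambda T} + C / \beta$; to make both strictly less than $1$ simultaneously requires choosing $T$ and $R$ large enough that $e^{-\lambda T}$ and $b'/R$ are sufficiently small, and then tuning $\beta$ within the resulting window. Once the calibration is settled, the remaining steps are routine: extract $\mu$ as the unique $d_\beta$-fixed point of $Q_T^*$, verify $Q_t^* \mu = \mu$ for all $t > 0$ via the commutation $Q_T Q_t = Q_t Q_T$ and uniqueness of the fixed point, finiteness of $\mu(V)$ from the drift condition applied to the stationary measure, and finally translate the measure-side contraction into the $C_V$ convergence on observables via the duality between $\|\cdot\|_\beta$ and $\|\cdot\|_{C_V}$ on mean-zero functions.
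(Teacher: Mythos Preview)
Your proposal is correct and follows precisely the Hairer--Mattingly contraction argument from \cite{hairer2011yet}. Note that the paper does not actually prove this theorem: it is stated as a standard result with references to \cite{meyn2012markov, hairer2011yet}, so your approach is exactly one of the proofs the paper has in mind.
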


\subsubsection*{Geometric Ergodicity of the Projective Process}

The geometric ergodicity of the projective process $z_t = (y_t, v_t)$ is a key ingredient for constructing the Lyapunov function $\psi_p$ used in Section \ref{sec:Drift}. Recall the weighted space continuous functions $C_{V_\eta}$ with the associated weighted norm $\|\cdot\|_{C_{V_\eta}}$ defined in \eqref{eq:C_V}.

\begin{lemma}[Geometric Ergodicity of Projective Process]\label{lem:GeoErgoPP_hypo_consolidated}
For all $\eps > 0$, the transverse projective process $z_t = (y_t, v_t)$ is geometrically ergodic in the weighted space $C_{V_\eta}$ for $0 < \eta < \eta_\ast$. That is, there exists a unique stationary measure $\nu_\eps \in \mathcal{P}(\S^\perp H_I)$ and constants $r > 0, C \ge 1$ such that for all $\varphi \in C_{V_\eta}$,
\[
\|\widehat{P}_t^\perp \varphi - \nu_\eps(\varphi) \mathbf{1}\|_{C_{V_\eta}} \le e^{-rt} \|\varphi\|_{C_{V_\eta}},
\]
where $\nu_\eps(\varphi) = \int \varphi \, \mathrm{d}\nu_\eps$ is the expectation with respect to the stationary measure $\nu_\eps$. Equivalently, $\widehat{P}_t^\perp$ has a spectral gap in the weighted space $C_{V_\eta}$.
\end{lemma}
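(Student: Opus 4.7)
The plan is to apply Harris's Theorem (Theorem \ref{thm:Harris_consolidated}) to the projective semigroup $\widehat{P}_t^\perp$ on $\S^\perp H_I = H_I \times \S^\perp_I$ with Lyapunov weight $V_\eta(y,v) := e^{\eta|y|^2}$, which depends only on the base coordinate $y$. Three ingredients must be checked: $\psi$-irreducibility, the strong Feller property, and a drift estimate whose sublevel sets are small.

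The strong Feller property of $\widehat{P}_t^\perp$ is exactly Corollary \ref{cor:projective_properties}(2), and topological irreducibility combined with the smooth strictly positive transition density from Corollary \ref{cor:projective_properties}(1),(3) immediately yields $\psi$-irreducibility with respect to the volume measure on $\S^\perp H_I$. For the drift estimate I would exploit that the noise in \eqref{eq:proj_SDE} acts only on the $y$-component and that $V_\eta$ is $v$-independent, so the generator $\widehat{\mathcal{L}}^\perp$ of $\widehat{P}_t^\perp$ acts on $V_\eta$ identically to the OU generator $\mathcal{L}^y$ on $H_I$ given by \eqref{eq:OU-process}. The super-Lyapunov inequality \eqref{eq:superLF3}, which holds for $(y_t)$, then gives, for any prescribed $\gamma > 0$, a constant $C_\gamma$ with $\widehat{\mathcal{L}}^\perp V_\eta \leq -\gamma V_\eta + C_\gamma$ on all of $\S^\perp H_I$, uniformly over $\eta \in (0, \eta_\ast)$. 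A direct application of Dynkin's formula, followed by a Gronwall step, upgrades this to the integrated semigroup drift inequality $\widehat{P}_t^\perp V_\eta \leq e^{-\gamma t} V_\eta + b_\gamma$.

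It remains to check smallness of the sublevel sets. The set $\{V_\eta \leq R\}$ is the product of the Euclidean ball $\{|y|^2 \leq \eta^{-1}\log R\}$ in $H_I$ with the compact sphere $\S^\perp_I$, hence compact in $\S^\perp H_I$. The standard Meyn--Tweedie argument now makes compact sets small: by Hörmander's theorem applied through Proposition \ref{prop:hormander_projective}, together with topological irreducibility, the transition kernel $\widehat{P}^\perp_{t^\ast}(z,\cdot)$ at any fixed $t^\ast > 0$ admits a jointly continuous, strictly positive density. On a compact set $K$ of base points this density is bounded below on some fixed open ball $B$, producing the minorization $\widehat{P}^\perp_{t^\ast}(z, \cdot) \geq \delta \, \mathbf{1}_B(\cdot) \, \dee z/|B|$ uniformly for $z \in K$, which is the small-set condition. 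All three hypotheses of Theorem \ref{thm:Harris_consolidated} are then in place, and the lemma follows, including uniqueness of $\nu_\eps$ and the integrability $\int V_\eta \, \dee \nu_\eps < \infty$.

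The main obstacle is this last step, verifying joint continuity and strict positivity of the transition density and extracting the uniform minorization on compact sets. While routine in the classical Meyn--Tweedie framework, it depends crucially on the bracket-spanning property of Proposition \ref{prop:hormander_projective}, which in turn rests on the computer-assisted algebraic generation in Proposition \ref{prop:slT_generation}; so the ergodicity statement here is ultimately driven by the Lie-algebra computations developed earlier in the section, with Harris's theorem providing only the final packaging.
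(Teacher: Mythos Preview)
Your proposal is correct and follows essentially the same approach as the paper: both apply Harris's Theorem \ref{thm:Harris_consolidated} using strong Feller and irreducibility from Corollary \ref{cor:projective_properties}, the super-Lyapunov drift on the $y$-component (since $V_\eta$ is $v$-independent and $\S^\perp_I$ is compact), and smallness of the compact sublevel sets $\{|y|\le R\}\times\S^\perp_I$. Your write-up is in fact slightly more explicit than the paper's on the small-set minorization step, but the underlying argument is the same.
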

\begin{proof}
This follows from standard application of Harris's Ergodic Theorem \ref{thm:Harris_consolidated}. We need:
\begin{enumerate}
    \item Irreducibility: Established in Corollary \ref{cor:projective_properties}(2). We need $\psi$-irreducibility for Harris' theorem, which follows from topological irreducibility and the strong Feller property.
    \item Strong Feller property: Established in Corollary \ref{cor:projective_properties}(3).
    \item A drift condition: The super-Lyapunov property for $V_\eta(y)$ (Lemma \ref{lem:twisty} or \eqref{ineq:LVeta}) provides the necessary drift for the $y_t$ component. Since $\mathbb{S}^\perp_I$ is compact, $V_\eta(y)$ serves as a Lyapunov function for the joint process $z_t = (y_t, v_t)$ on $\mathbb{S}^\perp H_I = H_I \times \S_I^\perp$, satisfying $\mathcal{L}^z V_\eta \le -c V_\eta + C$ for some $c, C > 0$. This implies the drift condition required by Harris' theorem towards the compact sets $\{y : |y| \le R\} \times \mathbb{S}^\perp_I$, which are small sets due to the strong Feller property.
\end{enumerate}
\end{proof}

Since it is used elsewhere, we record the following consequence of the argument for Lemma \ref{lem:GeoErgoPP_hypo_consolidated}.

\begin{corollary}\label{cor:smoothPosDensityProj4}
    For any $\eps > 0$, the density $f_\eps$ of $\nu_\eps$ with respect to Lebesgue measure on $\S^\perp H_I$ is $C^\infty$ and strictly positive.
\end{corollary}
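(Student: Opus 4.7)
The plan is to derive both assertions directly from the Hörmander condition for the projective process (Proposition \ref{prop:hormander_projective}) together with topological irreducibility (Corollary \ref{cor:projective_properties}(1)). Neither claim requires new dynamical input; the two steps essentially repackage regularity consequences of hypoellipticity.

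For smoothness, I would argue as follows. Let $\widehat{\Lc}^\perp$ denote the generator of the transverse projective process $(y_t, v_t)$, acting on smooth functions on $\S^\perp H_I$. The stationary equation for $\nu_\eps$ is the distributional identity $(\widehat{\Lc}^\perp)^* \nu_\eps = 0$, where $(\widehat{\Lc}^\perp)^*$ is the formal $L^2$-adjoint. Since the Hörmander bracket condition is stable under passage to the formal adjoint (the principal second-order part is self-adjoint, and the adjoint only differs by lower-order smooth-coefficient terms that do not affect the bracket generation), Proposition \ref{prop:hormander_projective} yields that $(\widehat{\Lc}^\perp)^*$ is hypoelliptic at every point of $\S^\perp H_I$. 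Hörmander's theorem \cite{Hormander67} then implies that any distributional solution of $(\widehat{\Lc}^\perp)^* \nu = 0$ is $C^\infty$. Applying this to $\nu_\eps$ yields that $\nu_\eps$ has a $C^\infty$ density $f_\eps$ with respect to the natural volume measure on $\S^\perp H_I$.

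For positivity, I would combine hypoellipticity of the forward Kolmogorov equation with topological irreducibility. The parabolic version of Hörmander's theorem, applied to the forward equation associated with $\widehat{\Lc}^\perp$, implies that for each $t > 0$ and each $(y_0, v_0) \in \S^\perp H_I$, the transition kernel $\widehat{P}^\perp_t((y_0, v_0), \cdot)$ admits a smooth density $p_t((y_0,v_0), \cdot)$. Together with topological irreducibility (Corollary \ref{cor:projective_properties}(1)) and an appeal to the support theorem -- whose hypotheses were already verified in the proof of Corollary \ref{cor:projective_properties} -- one obtains that for any $(y,v) \in \S^\perp H_I$ there exists $t = t(y,v,y_0,v_0) > 0$ with $p_t((y_0,v_0), (y,v)) > 0$. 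Using stationarity $\nu_\eps = \int \widehat{P}^\perp_t((y_0,v_0), \cdot)\, \dee \nu_\eps(y_0,v_0)$, followed by smoothness in the target variable and a compactness/continuity argument to integrate, yields $f_\eps(y,v) > 0$ pointwise.

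I do not anticipate a substantial obstacle: both steps are standard consequences of already-established hypoellipticity and irreducibility of $(y_t, v_t)$, and the noncompactness in the $y$-direction is harmless since the statement is pointwise rather than quantitative. The only mildly delicate point is ensuring that the support theorem is used with the cancellation property $[\tilde X_k,[\tilde X_k, \tilde X_0^\perp]] = 0$ invoked in the proof of Corollary \ref{cor:projective_properties}, so that reachability through the control system translates into positivity of the diffusion's transition densities.
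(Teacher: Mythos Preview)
Your proposal is correct and follows essentially the same approach as the paper: smoothness from H\"ormander's condition applied to the stationary equation, and strict positivity from topological irreducibility combined with hypoellipticity of the transition kernels. The paper's own proof is a two-line sketch invoking exactly these two ingredients, so your write-up simply fleshes out what the paper leaves implicit; the only cosmetic difference is that you argue smoothness via hypoellipticity of $(\widehat{\Lc}^\perp)^*$ on the stationary equation, whereas the paper (via Corollary \ref{cor:projective_properties}(3)) phrases it through smoothness of the time-$t$ transition densities---both are standard and equivalent.
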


\begin{proof}[Proof sketch] H\"ormander's condition immediately implies $\nu_\eps$ admits a $C^\infty$ density $f_\eps$, while topological irreducibility implies $f_\eps > 0$ pointwise.
\end{proof}

\subsubsection*{Geometric Ergodicity of the Full Process}

Having established the existence and uniqueness of the second stationary measure $\mu$ supported on $H \setminus H_I$ (see Corollary \ref{cor:SecondMeasure_L96_merged}) by constructing an appropriate Lyapunov function $\mathcal{V}$ (see Section \ref{sec:Drift}), we now show that the convergence towards this measure is geometrically fast in the weighted norm defined by the Lyapunov function $\mathcal{V}$.

\begin{theorem}[Geometric Ergodicity of Full Process] \label{thm:GeoErgoFull_consolidated}
Let $\mu$ be the unique stationary measure on $H \setminus H_I$ satisfying $\int \mathcal{V} d\mu < \infty$ (existence established in Corollary \ref{cor:SecondMeasure_L96_merged}), where $\mathcal{V}$ is the Lyapunov function from \eqref{eq:LyapFunc-candidate}. Then the process $(u_t)$ is geometrically ergodic with respect to $\mu$ in the weighted norm $\|\cdot\|_{C_{\mathcal{V}}}$. That is, there exist $\gamma > 0$ such that for any $\varphi \in C_{\mathcal{V}}(H \setminus H_I)$ with $\int \varphi d\mu = 0$,
\begin{align*}
|\mathbb{E}_u \varphi(u_t)| \le K e^{-\gamma t} \mathcal{V}(u) \|\varphi\|_{C_{\mathcal{V}}}.
\end{align*}
\end{theorem}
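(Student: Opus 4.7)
The plan is to verify the three hypotheses of Harris' Ergodic Theorem (Theorem \ref{thm:Harris_consolidated}) for the Markov process $(u_t)$ on the Polish state space $Z = H \setminus H_I$ with weight $V = \mathcal{V}$, where $\mathcal{V} = \mathcal{H}_p + V_\eta$ is the Lyapunov function constructed in Section \ref{subsec:3driftCond}. Uniqueness of $\mu$ as a stationary probability on $H \setminus H_I$ (with $\int \mathcal{V} \,\dee\mu < \infty$) has already been obtained in Corollary \ref{cor:SecondMeasure_L96_merged} via the drift condition plus the Krylov--Bogoliubov argument, so the remaining content is exactly the geometric convergence estimate.

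First, I would record the drift condition: Lemma \ref{lem:drift_L96_merged} gives constants $\lambda > 0$ and $C_0 \ge 0$ such that $\E_u \mathcal{V}(u_t) \le e^{-\lambda t} \mathcal{V}(u) + C_0$ for all $u \in H \setminus H_I$ and all $t \ge 0$, which is precisely hypothesis (3)(ii) of Theorem \ref{thm:Harris_consolidated}. Next, I would verify that the sublevel sets $K_R = \{u \in H\setminus H_I : \mathcal{V}(u) \le R\}$ are small. The crucial geometric observation is that since $V_\eta(u) \to \infty$ as $|u| \to \infty$ while $\mathcal{H}_p(u) \to \infty$ as $\dist(u, H_I) \to 0$ (the latter because $\psi_p$ is bounded below by a positive constant on each set $\{|y| \le R'\}$ and $|w|^{-p}\to \infty$), each $K_R$ is bounded in $H$ \emph{and} bounded away from $H_I$. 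Hence $K_R$ is compact as a subset of $H\setminus H_I$. The strong Feller property (Corollary \ref{cor:full_process_properties}(2)) together with topological irreducibility on $H\setminus H_I$ (Corollary \ref{cor:full_process_properties}(1)) then implies that every compact subset of $H\setminus H_I$ is a small set: for any fixed $u_\star \in K_R$, the transition kernel $P_T(u, \cdot)$ has a continuous, strictly positive density on $H\setminus H_I$, so by continuity of $u \mapsto P_T(u, A)$ for Borel $A$ and compactness of $K_R$, one obtains a uniform minorization $P_T(u, \cdot) \ge \delta \,\nu(\cdot)$ for $u \in K_R$ and some probability $\nu$ supported in a small ball around, say, $\varphi^T_\omega(u_\star)$ for a judiciously chosen $\omega$. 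This provides hypothesis (3)(i), and it also upgrades the topological irreducibility to $\psi$-irreducibility (with $\psi$ equivalent to Lebesgue on $H\setminus H_I$), giving hypothesis (1).

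With hypothesis (2) (strong Feller) already in hand from Corollary \ref{cor:full_process_properties}(2), all conditions of Theorem \ref{thm:Harris_consolidated} are verified. Its conclusion yields $r > 0$ such that
\begin{equation}
\| P_t \varphi - \mu(\varphi) {\bf 1}\|_{C_\mathcal{V}} \le e^{-r t} \|\varphi\|_{C_\mathcal{V}}
\end{equation}
for every $\varphi \in C_\mathcal{V}(H\setminus H_I)$. Evaluating the left-hand side at a point $u \in H\setminus H_I$, multiplying through by $\mathcal{V}(u)$, and specializing to centered $\varphi$ (with $\mu(\varphi) = 0$) gives the pointwise bound $|\E_u \varphi(u_t)| \le e^{-r t} \mathcal{V}(u) \|\varphi\|_{C_\mathcal{V}}$, which is the claimed estimate with $\gamma = r$ and $K = 1$. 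In particular, taking $\varphi$ bounded measurable (so that $\|\varphi\|_{C_\mathcal{V}} \le \|\varphi\|_{L^\infty}$) recovers \eqref{eq:geomErgodicityL96intro} from Theorem \ref{thm:mainL96}.

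The main obstacle in this argument is the smallness verification for the sublevel sets $K_R$, because the state space $H\setminus H_I$ is noncompact at \emph{two} ends (infinity in $H$ and the invariant manifold $H_I$), and one must confirm that $\mathcal{V}$ correctly sees both ends so that $K_R$ is genuinely precompact in $H\setminus H_I$. Once this compactness is established, the transition from strong Feller plus topological irreducibility to uniform minorization on compact sets is a standard argument (see e.g.\ \cite{meyn2012markov}), so there are no further substantive difficulties beyond bookkeeping.
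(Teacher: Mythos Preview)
Your proposal is correct and follows essentially the same approach as the paper: both invoke Harris's Theorem (Theorem \ref{thm:Harris_consolidated}) by checking irreducibility and the strong Feller property via Corollary \ref{cor:full_process_properties}, and the drift condition via Lemma \ref{lem:drift_L96_merged}. Your write-up is in fact more complete than the paper's sketch, since you spell out why the sublevel sets $\{\mathcal{V} \le R\}$ are compact in $H\setminus H_I$ (bounded in $H$ via $V_\eta$ and bounded away from $H_I$ via $\mathcal{H}_p$), which the paper leaves implicit.
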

\begin{proof}[Proof Sketch]
This again follows from Harris's Theorem \ref{thm:Harris_consolidated}.
\begin{enumerate}
    \item Irreducibility: Established for $H \setminus H_I$ in Corollary \ref{cor:full_process_properties}(2).
    \item Strong Feller property: Established for $H \setminus H_I$ in Corollary \ref{cor:full_process_properties}(3).
    \item Drift condition: The Lyapunov function $\mathcal{V}$ constructed in Section \ref{sec:Drift} satisfies the drift condition $\mathcal{L}\mathcal{V} \le -\lambda \mathcal{V} + C_0$ (Lemma \ref{lem:drift_L96_merged}). This provides the required drift towards level sets of $\mathcal{V}$, which serve as small sets.
\end{enumerate}
\end{proof}

\subsection{Quantitative Hypoellipticity}\label{sec:quant_hypo}

For certain arguments, particularly those involving the convergence regularity of $\psi_p$ (Lemma \ref{lem:psipC1} in Section \ref{sec:psip}) and the analysis of the Fisher information as $\eps \to 0$ (Lemma \ref{lem:FIT}), quantitative versions of the hypoelliptic estimates are needed. These estimates provide bounds on Sobolev norms that are uniform in certain parameters or depend polynomially on the location in the state space.

To do this, we need to define a uniform version of the H\"ormander condition.
\begin{definition}[Uniform Parabolic Hörmander Condition {\cite[Definition 2.1]{BBPS20}}] \label{def:UniformHormander}
Let $\cM$ be a smooth manifold, and let $\set{Z_0^\eps, Z_1^\eps, \dots, Z_r^\eps} \subset \mathfrak{X}(\cM)$ be a set of smooth vector fields parameterized by $\eps \in (0,1]$. For each $k \in \N$, define
\[
\mathcal{X}_k = \set{\mathrm{ad}_{Z_{i_1}^\eps}\mathrm{ad}_{Z_{i_2}^\eps} \cdots \mathrm{ad}_{Z_{i_{k}}^\eps} Z_j^\eps \mid 0 \le i_1, i_2, \dots, i_k \le r, 1 \le j \le r}.
\]
We say the family $\set{Z_0^\eps, \dots, Z_r^\eps}$ satisfies the \emph{uniform parabolic H\"ormander condition} on $\cM$ if there exists $k \in \N$ such that for any open, bounded set $U \subseteq \cM$, there exist constants $\set{K_n}_{n=0}^\infty$, such that for all $\eps \in (0,1]$ and all $x \in U$, there is a finite subset $V(x) \subset \mathcal{X}_k$ such that for all $\xi \in T_x \cM$, the following two conditions hold:
\begin{align}
\label{eq:unif_horm_span} &\abs{\xi} \leq K_0 \sum_{Z \in V(x)} \abs{Z(x) \cdot \xi}, \\
\label{eq:unif_horm_bound} &\sum_{Z \in V(x)} \norm{Z}_{C^n(U)} \leq K_n \quad \text{for all } n \ge 0.
\end{align}
Here $\norm{\cdot}_{C^n(U)}$ denotes a suitable $C^n$ norm on the set $U$.
\end{definition}
\begin{corollary}[Hörmander Conditions are Uniform]\label{cor:hormander_conditions_are_uniform}
The vector fields for the volume normalized linear process $(y_t, \bar{A}_t)$ (Proposition \ref{prop:hormander_linear}), the transverse projective process $(y_t, v_t)$ (Proposition \ref{prop:hormander_projective}), and the full process $u_t$ on $H \setminus H_I$ (Proposition \ref{prop:bracket_spanning_full}) all satisfy the Uniform Parabolic Hörmander Condition (Definition \ref{def:UniformHormander}).
\end{corollary}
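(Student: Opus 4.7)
The plan is to verify, for each of the three processes in turn, the three requirements of Definition \ref{def:UniformHormander}: a fixed bracket order $k$, the uniform $C^n$ bound \eqref{eq:unif_horm_bound}, and the uniform quantitative spanning \eqref{eq:unif_horm_span}. In each case I would revisit the pointwise H\"ormander result (Propositions \ref{prop:hormander_linear}, \ref{prop:hormander_projective}, \ref{prop:bracket_spanning_full}) and extract the $\eps$-uniformity from the algebraic and polynomial structure of the vector fields.

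The first two conditions are essentially free. For the bracket order, the spanning in all three cases is ultimately driven by the finite-dimensional algebraic identity $\mathrm{Lie}(\{M_k\}_{k \in I}) = \mathfrak{sl}(H_I^\perp)$ of Proposition \ref{prop:slT_generation}, which is generated by a finite collection of iterated matrix commutators; this fixes a uniform maximal bracket length $k$ simultaneously for all three propositions. For the uniform $C^n$ bound, in each case the diffusion fields are constant and $\eps$-independent, while the drift is polynomial in the state variable (bilinearly in $u$ for $X_0$, linear in the fiber variable for $Z_0$ and $\tilde X_0^\perp$) with at most linear dependence on $\eps$. By multilinearity of the Lie bracket, all iterated brackets of order $\leq k$ are polynomial in the state variable with polynomial $\eps$-dependence, yielding constants $K_n$ that depend only on the open bounded set $U$ and not on $\eps \in (0,1]$.

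The main obstacle will be the uniform quantitative spanning \eqref{eq:unif_horm_span}, and the strategy is to produce $\eps$-independent vector fields already inside $\mathcal{X}_k$. As the representative calculation, for the linear process one has $[Z_0, Z_k](y,A) = (\eps \sigma_k e_k, -\sigma_k M_k A)$, so the combination $[Z_0, Z_k] + \eps Z_k \in \mathcal{X}_k$ reduces to the $\eps$-independent field $(0, -\sigma_k M_k A)$; iterating brackets of such $\eps$-independent combinations and using Proposition \ref{prop:slT_generation} realizes a full basis of $\mathfrak{sl}(H_I^\perp)$ as $\eps$-independent right-invariant fields on $\mathrm{SL}(H_I^\perp)$. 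Together with the noise directions $Z_k$ (also $\eps$-independent), this produces a uniformly spanning family on $H_I \times \mathrm{SL}(H_I^\perp)$ whose quantitative lower bound \eqref{eq:unif_horm_span} on a bounded open $U$ follows from a standard Gram-determinant continuity argument. The same scheme handles the projective process, with transitivity of the $\mathrm{SL}(H_I^\perp)$-action on $\mathbb{S}^\perp_I$ (as exploited in the proof of Proposition \ref{prop:hormander_projective}) converting the algebraic spanning into the required quantitative spanning of $T_v \mathbb{S}^\perp_I$.

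The full process on $H \setminus H_I$ requires one additional observation. Spanning along $H_I^\perp$ in Proposition \ref{prop:bracket_spanning_full} proceeds through vector fields of the form $Y_M(u) = M \Pi^\perp u$, whose evaluations degenerate as $|\Pi^\perp u| \to 0$. The uniform spanning can therefore not hold on sets accumulating on $H_I$, and it is essential that ``open bounded set $U \subseteq H \setminus H_I$'' be interpreted as one with compact closure in $H \setminus H_I$, so that $\inf_{u \in U} |\Pi^\perp u| > 0$. Under this convention, the same $\eps$-independent isolation trick produces a uniformly spanning subfamily of $\mathcal{X}_k$ on $U$, and the quantitative bound \eqref{eq:unif_horm_span} follows from the explicit choice of elementary matrices $E^{\ell,j}$ in the proof of Proposition \ref{prop:bracket_spanning_full}.
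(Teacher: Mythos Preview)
Your approach is correct and essentially the same as the paper's: the paper's proof is a one-paragraph observation that the spanning sets $\mathcal{S}_{1,\mathrm{lin}}$, $\tilde{\mathcal{S}}_1$, $\mathcal{S}_1$ are built from the $\eps$-independent noise fields $X_k$ (or $\tilde X_k$) and brackets involving $M_k = DB(e_k)|_{H_I^\perp}$, so both the spanning constant and the $C^n$ bounds can be taken uniform in $\eps \in (0,1]$. Two small remarks: the $\eps$-free combination should read $[Z_0,Z_k] - \eps Z_k = (0,-\sigma_k M_k A)$ rather than $+\eps Z_k$; and your explicit discussion of the degeneracy of $Y_M(u) = M\Pi^\perp u$ as $u \to H_I$, and the attendant need for $U$ to have compact closure in $H\setminus H_I$, is a useful clarification that the paper's terse argument leaves implicit.
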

\begin{proof}
    This is because the vector fields used to generate the tangent space in each case (specifically, the sets $\mathcal{S}_{1, \mathrm{lin}}$, $\tilde{\mathcal{S}}_1$, and $\mathcal{S}_1$ respectively) rely on the noise vector fields $X_k$ (or $\tilde{X}_k$) and brackets involving $M_k = DB(e_k)|_{H_I^\perp}$, which are independent of $\eps$. Consequently, the spanning condition \eqref{eq:unif_horm_span} and the $C^n$ bounds \eqref{eq:unif_horm_bound} can be satisfied uniformly for $\eps \in (0, 1]$.
\end{proof}

We define the H\"ormander norm pair (see discussions in e.g. \cite{Hormander67,Albritton2024-eh,bedrossian2021quantitative} for motivations). For a function $w \in C^\infty_c(S^\perp H_I)$ we define
\begin{align*}
\norm{w}_{\mathcal{H}} & := \norm{w}_{L^2} + \sum_{k \in I} \norm{\tilde{X}_k w}_{L^2}, \\
\norm{w}_{\mathcal{H}^\ast} & := \sup_{\varphi: \norm{\varphi}_{\mathcal{H}} \leq 1} \abs{ \int_{\S^\perp H_I} (\tilde{X}^\perp_0 \varphi) w\, \dee y \dee v}.
\end{align*}

The following quantitative H\"ormander estimate is essentially the same as [Lemma B.2; \cite{BBPS20}], adapted to our context.

\begin{lemma}[Quantitative Hörmander inequality for projective process]\label{lem:HineqBall_hypo}
Suppose the lifted vector fields $\{\tilde{X}_0^\perp, \tilde{X}_k\}_{k \in I}$ satisfy the uniform parabolic H\"ormander condition on $B(0,2R) \times \mathbb{S}^\perp_I$ for some $R \ge 1$. Then there exist $s > 0$ and $q > 0$, independent of $R$ and $\eps$, such that for any $w \in C_c^\infty(B(0,R) \times \mathbb{S}^\perp_I)$ and all $\eps \in (0,1)$,
\[
\|w\|_{H^s} \lesssim R^{q} (\norm{w}_{\mathcal{H}} + \norm{w}_{\mathcal{H}^\ast}),
\]
where the implicit constant is independent of $\eps$ and $R$. The fractional Sobolev norm $H^s$ on the product manifold $H_I \times \mathbb{S}_I^\perp$ (with dimension $m = |I| + |T| - 1$) is defined for $w \in C_c^\infty(B(0,R) \times \mathbb{S}_I^\perp)$ as:
\begin{align}
\norm{w}_{H^s}^2 & := \norm{w}_{L^2}^2 + \int_{\mathbb{S}^\perp H_I} \int_{h \in T_z(\S^\perp H_I ), |h| \le 1} \frac{|w(\exp_z h) - w(z)|^2}{|h|^{m + 2s}} \dee \sigma(h) \dee z, \label{def:Hs}
\end{align}
where $\exp_z$ is the exponential map at $z=(y,v)$ and $\dee \sigma(h)$ is a measure on the tangent space.
\end{lemma}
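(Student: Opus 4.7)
The plan is to adapt the proof of \cite[Lemma B.2]{BBPS20} to the present setting, carefully tracking the $R$-dependence. The argument is a quantitative implementation of H\"ormander's iteration scheme, using the duality built into $\mathcal{H}^\ast$ to convert control of $\tilde{X}_0^\perp$ into negative-Sobolev bounds, combined with interpolation against $\|w\|_{L^2} \le \|w\|_{\mathcal{H}}$ to trade negative regularity of $Zw$ for positive regularity of $w$.

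First, I would invoke the uniform parabolic H\"ormander condition (Corollary \ref{cor:hormander_conditions_are_uniform}) to select at each $z_0 \in B(0,2R) \times \mathbb{S}^\perp_I$ a finite family $V(z_0) \subset \mathcal{X}_k$ of iterated brackets spanning $T_{z_0}(\S^\perp H_I)$. Their $C^n$-norms on $B(0, 2R)\times \mathbb{S}^\perp_I$ grow at most polynomially in $R$, since $\tilde{X}_0^\perp$ depends only linearly on $y$ (through $DB(y)v$) while the $\tilde{X}_j$ are constant; this is the source of the final $R^q$ factor. The $\eps$-uniformity follows from the fact that the $\eps$-dependent part of $\tilde{X}_0^\perp$ (namely $-\eps y$ and $-\eps v$) contributes only $O(\eps)$ terms to every bracket, while the algebraic generating structure from Proposition \ref{prop:slT_generation} is $\eps$-independent.

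Next, I would establish by induction on bracket depth the iterated-bracket inequality
\[
\|Zw\|_{H^{-\tau_Z}} \lesssim R^{q_Z}\bigl(\|w\|_{\mathcal{H}} + \|w\|_{\mathcal{H}^\ast}\bigr)
\]
for all $Z$ arising from iterated brackets of depth $\leq k$ of the fields $\{\tilde{X}_0^\perp, \tilde{X}_j\}_{j \in I}$, with $\tau_Z, q_Z \ge 0$ depending only on $k$. The base cases $\|\tilde{X}_j w\|_{L^2} \leq \|w\|_{\mathcal{H}}$ and $\|\tilde{X}_0^\perp w\|_{H^{-1}} \lesssim \|w\|_{\mathcal{H}^\ast}$ follow immediately from the definitions (the latter via testing against $H^1$ functions, whose $\mathcal{H}$-norm is controlled by their $H^1$ norm). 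The inductive step uses the Kohn-type commutator identity $[Y, Z]w = Y(Zw) - Z(Yw)$, estimating each side via the inductive hypothesis, interpolation with $\|w\|_{L^2}$, and a paradifferential handling of products with the smooth coefficients of $Y, Z$. Each bracket with $\tilde{X}_0^\perp$ raises $\tau_Z$ by a controlled amount, but after $k$ steps $\tau_Z$ remains bounded by a fixed constant.

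Third, I would combine these bracket bounds with the uniform spanning condition \eqref{eq:unif_horm_span}. For a smooth cutoff $\chi$ localizing to a small neighborhood of $z_0$ on which $V(z_0)$ spans uniformly, interpolation yields
\[
\|\chi w\|_{H^s} \lesssim \|\chi w\|_{L^2}^{1-\theta}\Bigl(\sum_{Z \in V(z_0)}\|Z(\chi w)\|_{H^{-\tau}}\Bigr)^{\theta}
\]
for suitable $s, \theta > 0$ depending only on $k$; the commutator terms $[Z, \chi]w$ generated by moving $Z$ past $\chi$ are of lower order and absorbed into $\|w\|_{\mathcal{H}}$. Globalization covers $B(0,R) \times \mathbb{S}^\perp_I$ by $O(R^{\dim H_I})$ unit-scale neighborhoods and sums via a subordinate partition of unity, contributing one final polynomial factor in $R$ absorbed into $q$.

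The main obstacle is the careful bookkeeping in the Kohn-type iteration: one must ensure that the polynomial $R$-growth of $\|\mathrm{ad}_{\tilde{X}_0^\perp}^m \tilde{X}_j\|_{C^n(B(0,2R)\times \mathbb{S}^\perp_I)}$ accumulates only to a fixed power $q$ after all inductive steps, and that no factor of $\eps^{-1}$ ever appears in the intermediate estimates. Both are structural in nature: the $\eps$-dependence enters only as $O(\eps)$ perturbations that improve rather than spoil the estimates, and the polynomial $R$-growth is introduced only once per bracket and iterated to the fixed depth $k$.
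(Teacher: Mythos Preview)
Your proposal is correct and aligns exactly with the paper's treatment: the paper does not give an independent proof of this lemma but simply states that it ``is essentially the same as [Lemma B.2; \cite{BBPS20}], adapted to our context,'' which is precisely the adaptation you outline. Your sketch of the Kohn-type iteration with explicit $R$- and $\eps$-tracking is an accurate summary of what that adaptation entails.
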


\section{Positivity of transverse Lyapunov exponents}\label{sec:FI}

We turn now to the remaining ingredients in the proof of Lemma \ref{lem:PosLyapL96} concerning positivity of $\lambda^\perp_\eps$. After some preliminaries (Section \ref{subsec:prelims5}), we sketch the proof of the Fisher information identity (Lemma \ref{lem:FIIdentity3}, restated below as Lemma \ref{lem:fisherInfo}) in Section \ref{subsec:FI5}, and finish in Section \ref{subsec:nonexistDensity5} with a proof of nonexistence of an invariant density for $z_t = (y_t, v_t)$ at $\eps = 0$ (Lemma \ref{lem:singular_limit}).

\subsection{Preliminaries}\label{subsec:prelims5}

Below, $\eps > 0$ is fixed. For $y \in H_I$ and $\omega \in \Omega$, recall the notation $A^t_\perp = A^t_{\perp, y, \omega} \in GL(H_I^\perp)$ for the full linearization of the transverse linear process $(w_t)$ on $H_I^\perp$ as in \eqref{eq:LinearizedODE_perp_recap}. Recall that  $\varphi^t = \varphi^t_\omega$ denotes the stochastic flow of $(u_t)$, which restricts to that of $(y_t)$ on $H_I$ as in \eqref{eq:OU-process} due to almost-sure invariance of $H_I$.

\begin{proposition}\label{prop:existLE5}
    For any $y \in H_I$, the following hold.
    \begin{itemize}
        \item[(a)] There is a constant $\lambda^\perp_\eps \in \R$ with the property that for any $v \in S^\perp_I$, it holds that
        \begin{align}
            \lambda^\perp_\eps = \lim_{t \to \infty} \frac1t \log \abs{A^t_{\perp} v}\quad \P-\text{a.s.}
        \end{align}
        \item[(b)]
        It holds that
        \begin{align}
            \lim_{t \to \infty} \frac1t \log \abs{\det A^t_\perp} = - \eps |T| \quad \P-\text{a.s.}
        \end{align}
        \item[(c)]
        It holds that
        \begin{align}
            \lim_{t \to \infty} \frac1t \log \abs{\det D_y \varphi^t_\omega}= - \eps |I|\quad \P-\text{a.s.}
        \end{align}
    \end{itemize}
\end{proposition}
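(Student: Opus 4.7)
The three parts have different character: (b) and (c) are deterministic consequences of Liouville's formula combined with direct trace calculations, while (a) is the substantive probabilistic statement requiring a pointwise ergodic theorem for the geometrically ergodic projective process.

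For part (a), I would apply the Furstenberg-type identity: differentiating $\log|w_t|$ along the trajectory $w_t = A^t_\perp v$ using $\dot w_t = (DB(y_t)-\eps)w_t$ and setting $v_t = w_t/|w_t|$ yields
\[
\log|A^t_\perp v| = \int_0^t f(y_s, v_s)\,\dee s, \qquad f(y,v) := \langle v, DB(y) v\rangle - \eps.
\]
Since $DB(y)$ is linear in $y$, one has $|f(y,v)| \lesssim 1+|y|$, placing $f$ in the weighted space $C_{V_\eta}$ for every $\eta>0$. The $V_\eta$-geometric ergodicity of $(y_t, v_t)$ from Lemma \ref{lem:GeoErgoPP_hypo_consolidated}, together with the standard pointwise ergodic theorem for $V$-geometrically ergodic Markov processes (see, e.g., \cite{meyn2012markov}), then yields $\frac{1}{t}\int_0^t f(y_s, v_s)\,\dee s \to \nu_\eps(f) =: \lambda^\perp_\eps$ almost surely for \emph{every} initial $(y_0,v_0) \in \S^\perp H_I$.

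For part (b), Liouville's formula applied to the linear ODE $\dot A^t_\perp = (DB(y_t)|_{H_I^\perp} - \eps I_{H_I^\perp})A^t_\perp$ reduces the claim to showing $\tr(DB(y)|_{H_I^\perp}) = 0$ for $y \in H_I$. A direct check of the diagonal entries gives $(DB(y)e_j)_j = 0$ for every $j \in \Z_N$ and every $y \in H$: the entry involves only the Kronecker deltas $\delta_{j+1,j}, \delta_{j-2,j}, \delta_{j-1,j}$, all of which vanish for $N \geq 3$. Hence $|\det A^t_\perp| = e^{-\eps|T|t}$ deterministically, not merely almost surely. For part (c), since $H_I$ is almost-surely invariant, the restricted flow $\varphi^t|_{H_I}$ solves the OU equation \eqref{eq:OU-process}, whose additive noise structure makes the variational equation state-independent: $\dot J_t = -\eps J_t$ with $J_0 = I_{H_I}$, giving $D_y(\varphi^t|_{H_I}) = e^{-\eps t} I_{H_I}$ and $|\det D_y \varphi^t| = e^{-\eps|I|t}$ deterministically.

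The main technical point is the pointwise ergodic theorem invoked in (a). The multiplicative ergodic theorem (as in Proposition \ref{prop:LE_L96}) delivers convergence only $\mu^I$-almost surely in the base point, whereas (a) requires it at \emph{every} $y \in H_I$. This sharpening is afforded by the $V_\eta$-geometric ergodicity established in Lemma \ref{lem:GeoErgoPP_hypo_consolidated}, which via standard Meyn--Tweedie-type arguments extends Birkhoff-type convergence for $V$-dominated observables from the stationary measure to arbitrary starting points.
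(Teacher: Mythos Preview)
Your proposal is correct and follows essentially the same approach as the paper. For (b) and (c) both you and the paper use Liouville's formula together with the trace-free property of $DB(y)$; for (a) the paper cites Kifer's ``nonrandom'' multiplicative ergodic theorem together with uniqueness of the stationary measure for $(y_t,v_t)$, whereas you unpack this via the Furstenberg--Khasminskii integral representation and the pointwise ergodic theorem for $V_\eta$-geometrically ergodic processes---these are the same argument, with your version making the mechanism slightly more explicit.
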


\begin{proof}
    Item (a) follows from the ``nonrandom'' version of the Multiplicative Ergodic Theorem \cite[Theorem III.1.2]{kifer2012ergodic} and uniqueness of the stationary measure $\nu_\eps$ for the $(y_t, v_t)$ process (Lemma \ref{lem:GeoErgoPP_hypo_consolidated}). This step uses log-integrability of $\| A^t_\perp\|$ and $\| (A^t_\perp)^{-1}\|$ as in Corollary \ref{cor:twistyFC}.
    
    For (b), since $DB(y)$ is trace-free and leaves $H_I^\perp$ invariant (Lemma \ref{lem:TransverseInvariance}), it holds that
    \[\frac{\dee}{\dt} \log \abs{\det A^t_\perp} = - \eps \operatorname{Tr}(\operatorname{Id}|_{H_I^\perp} ) =  - \eps |T| \, . \]
    That $\lim_t \frac1t \log \abs{\det A^t_\perp} = - \eps |T|$ is now immediate. Similarly, for (c) one computes
    \[\frac{\dee}{\dt} \log \abs{\det D_y \varphi^t_\omega|_{H_I}} = - \eps \operatorname{Tr}(\operatorname{Id}|_{H_I} ) = - \eps |I| \, . \qedhere \]

\begin{remark}
    It is standard that the Birkhoff averages of time-$t$ determinants appearing in Proposition \ref{prop:existLE5}(b) and (c) coincide with the sum of the corresponding Lyapunov exponents counted with multiplicity. Reflecting this, we will write
    \[\lambda^\perp_{\Sigma, \eps} := - \eps |T| \, , \qquad \lambda^I_{\Sigma, \eps} := - \eps |I| \, \]
    for these quantities.
\end{remark}

\end{proof}

{Next, we record the following form of the Furstenberg-Khasminskii formula in our setting, c.f. \cite[Section 6.2.2]{arnold1998random}.

\begin{lemma}\label{lem:furstKhasmAPP}
The transverse Lyapunov exponent $\lambda^\perp_\eps$ is given by
    \[\lambda^\perp_\eps = \int \langle DB(y) v , v \rangle \dee \nu_\eps (y, v) - \eps \, .\]
\end{lemma}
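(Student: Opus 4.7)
The plan is to derive the formula by writing $\log |w_t|$ as a time integral of an additive observable along the transverse projective process $(y_t, v_t)$, and then apply the Birkhoff ergodic theorem using the uniqueness of the stationary measure $\nu_\eps$ established in Lemma \ref{lem:GeoErgoPP_hypo_consolidated}.

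First, for $(y_0, w_0) \in T^\perp H_I$ with $w_0 \neq 0$, I will differentiate $t \mapsto \log |w_t|$ using the linear ODE
\[
\frac{d}{dt} w_t = (DB(y_t) - \eps \operatorname{Id}) w_t
\]
to obtain, with $v_t = w_t/|w_t|$,
\[
\frac{d}{dt} \log |w_t| \;=\; \frac{\langle w_t, \dot w_t\rangle}{|w_t|^2} \;=\; \langle v_t, DB(y_t) v_t\rangle - \eps.
\]
Integrating and dividing by $t$ yields
\[
\frac{1}{t} \log |w_t| - \frac{1}{t}\log|w_0| \;=\; \frac{1}{t}\int_0^t \Big(\langle v_s, DB(y_s) v_s\rangle - \eps\Big)\, ds.
\]

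Next, I will apply the Birkhoff ergodic theorem to the right-hand side. Since $(y_t, v_t)$ admits the unique stationary measure $\nu_\eps$ on $\S^\perp H_I$ (Corollary \ref{cor:projective_properties}, Lemma \ref{lem:GeoErgoPP_hypo_consolidated}), the process is ergodic with respect to $\nu_\eps$. To invoke Birkhoff's theorem for the non-stationary initial condition $(y_0, v_0)$, I will use that the $\nu_\eps$-integrability of the observable $\Phi(y,v) := \langle v, DB(y) v\rangle$ follows from $|\Phi(y,v)| \lesssim |y|$ (as $DB$ is linear in $y$ and $|v|=1$) together with $\int |y| \, d\nu_\eps < \infty$, which holds since $\nu_\eps$ projects to the Gaussian $\mu^I$ on $H_I$ (and indeed $V_\eta$ is $\nu_\eps$-integrable from the drift construction in Lemma \ref{lem:GeoErgoPP_hypo_consolidated}). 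Together with the smooth positive density of $\nu_\eps$ (Corollary \ref{cor:smoothPosDensityProj4}), standard Markovian arguments transfer the a.s.\ limit to arbitrary initial conditions, giving
\[
\lim_{t\to\infty} \frac{1}{t}\int_0^t \Phi(y_s, v_s) \, ds \;=\; \int \Phi \, d\nu_\eps \quad\text{almost surely.}
\]

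Finally, I will match this with the Lyapunov exponent. By Proposition \ref{prop:existLE5}(a), for any $w_0 \neq 0$,
\[
\lim_{t\to\infty} \frac{1}{t} \log |w_t| \;=\; \lambda^\perp_\eps \quad \P\text{-a.s.},
\]
and $\frac{1}{t}\log|w_0| \to 0$. Combining with the previous display yields
\[
\lambda^\perp_\eps \;=\; \int \langle DB(y) v, v\rangle \, d\nu_\eps(y,v) \;-\; \eps,
\]
which is the desired formula.

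The main obstacle is the integrability and transfer-from-stationary-law step: we must rule out that the linear-in-$y$ observable $\Phi$ creates ergodic-average issues from non-stationary initial data. This is handled by the $V_\eta$-integrability of $\nu_\eps$ (a consequence of the Harris-type drift condition for $(y_t,v_t)$), together with the strong Feller/irreducibility properties that let us pass from a.s.\ convergence under stationary initial law to convergence from every fixed $(y_0,v_0)$.
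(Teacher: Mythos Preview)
Your proposal is correct and follows essentially the same approach as the paper: differentiate $\log|w_t|$ to obtain the additive observable $\langle DB(y_t)v_t,v_t\rangle - \eps$, then apply the Birkhoff ergodic theorem using ergodicity of $\nu_\eps$. The paper's proof is a one-line sketch of precisely this argument, while you have filled in the integrability and initial-condition-transfer details more carefully.
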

\begin{proof}
    This is immediate from the Birkhoff ergodic theorem and ergodicity of $\nu_\eps$, on observing that for the process $(w_t)$ on $H_I^\perp$ one has
    \[\frac{\dee}{\dt} \log |w_t| = \langle DB(y_t) v_t, v_t\rangle - \eps \, . \qedhere \]
\end{proof}
  }

Finally, we record an additional regularity estimate on $f_\eps$, the density of $\nu_\eps$ with respect to Lebesgue measure.
\begin{lemma}\label{lem:momentEsts}
For all $\eps > 0$ and $J > 0$, there holds
 $$\int_{\S^\perp H_I} \brak{y}^J f_\eps \log f_\eps \, \dee y \dee v < \infty.$$
\end{lemma}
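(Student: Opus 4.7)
The plan is to combine a uniform pointwise upper bound on $f_\eps$ with the exponential moment estimate $\int V_\eta f_\eps \, \dee y\, \dee v < \infty$ for $\eta \in (0, \eta_\ast)$, which is a consequence of the super-Lyapunov property \eqref{ineq:LVeta} and the geometric ergodicity of the projective process in $C_{V_\eta}$ (Lemma \ref{lem:GeoErgoPP_hypo_consolidated}). Writing $\log f_\eps = \log^+ f_\eps - \log^- f_\eps$, I would then control the two pieces separately.

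The main obstacle, which I would tackle first, is establishing $\|f_\eps\|_{L^\infty(\S^\perp H_I)} < \infty$. Smoothness and strict positivity of $f_\eps$ (Corollary \ref{cor:smoothPosDensityProj4}) already give boundedness on compact subsets, so the issue is the tail as $|y|\to\infty$. I would exploit the stationary equation for $f_\eps$ together with the quantitative H\"ormander estimate of Lemma \ref{lem:HineqBall_hypo}, bootstrapped to higher regularity (by iterated application to differentiated versions of the equation) and combined with Sobolev embedding, to obtain local $L^\infty$-to-$L^1$ estimates of the form $\sup_{B((y,v),1)} f_\eps \lesssim \brak{y}^M \|f_\eps\|_{L^1(B((y,v), 2))}$ for some finite $M$. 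The marginal identity $\int_{\S_I^\perp} f_\eps(y,v)\, \dee v = \rho_I(y)$ forces the right-hand side to decay Gaussian-ly in $|y|$, ultimately yielding $f_\eps(y,v) \lesssim e^{-c|y|^2}$ uniformly in $v$ at large $|y|$ for some $c > 0$.

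With the $L^\infty$ bound in hand, the positive part is immediate:
\[
\int_{\S^\perp H_I} \brak{y}^J f_\eps \log^+ f_\eps \, \dee y \, \dee v \leq \log\|f_\eps\|_{L^\infty} \cdot \int \brak{y}^J f_\eps \, \dee y \, \dee v,
\]
the last integral being finite by polynomial moments coming from $\int V_\eta f_\eps < \infty$.

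For the negative part I would use the elementary inequality $t \log(1/t) \leq \alpha^{-1} t^{1-\alpha}$ valid for $t \in (0,1]$ and any small $\alpha > 0$, reducing the problem to bounding $\int \brak{y}^J f_\eps^{1-\alpha}$. Factoring $f_\eps^{1-\alpha} = (V_\eta f_\eps)^{1-\alpha} V_\eta^{-(1-\alpha)}$ and applying H\"older with conjugate exponents $\tfrac{1}{1-\alpha}$ and $\tfrac{1}{\alpha}$ yields
\[
\int \brak{y}^J f_\eps^{1-\alpha} \, \dee y \, \dee v \leq \left(\int V_\eta f_\eps \, \dee y \, \dee v\right)^{1-\alpha} \left( \int \brak{y}^{J/\alpha} e^{-\frac{\eta(1-\alpha)}{\alpha}|y|^2} \, \dee y \, \dee v \right)^{\alpha},
\]
with the first factor finite by the super-Lyapunov bound (for $\eta \in (0, \eta_\ast)$) and the second a polynomial times Gaussian integral. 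Combining the positive and negative parts yields the claim.
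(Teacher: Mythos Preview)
Your proposal is correct and follows essentially the same approach as the paper, which simply defers to \cite[Theorem B.1]{BBPS20}: the quantitative hypoelliptic regularity estimate (yielding local $L^\infty$-to-$L^1$ control on $f_\eps$ with polynomial loss in $|y|$) interpolated against the moment bound $V_\eta \in L^1(\nu_\eps)$ coming from the Lyapunov function. Your splitting into $\log^+$ and $\log^-$, together with the H\"older argument for the latter and the marginal identity $\int_{\mathbb S_I^\perp} f_\eps\,\dee v = \rho_I$ to extract Gaussian decay of the local $L^1$ norm, is a faithful and more explicit rendering of that strategy.
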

We caution that the above estimate is not necessarily uniform in $\eps$.

\begin{proof}
This proceeds exactly as in in \cite[Theorem B.1]{BBPS20} using the quantitative hypoelliptic regularity estimate \cite[Lemma B.2]{bedrossian2021quantitative} interpolated against the moment bound from the Lyapunov function, $V_\eta \in L^1(\nu_\eps)$
\end{proof}

\subsection{Fisher information identity for $\lambda_\eps^\perp$}\label{subsec:FI5}

We now turn to the proof of the Fisher information identity (Lemma \ref{lem:FIIdentity3}), restated below for convenience.
Below, given a smooth  $\varphi : \S^\perp H_I \to (0,\infty)$ we write
\[FI(\varphi) :=  \sum_{j\in I} \frac{1}{2}\int_{\mathbb S^\perp H_I } \frac{\abs{ \sigma_{j} \partial_{y_{j}} \varphi}^2}{\varphi} \dee y \dee v\]
for its \emph{Fisher information}.

\begin{lemma}\label{lem:fisherInfo}
For all $\eps > 0$, the following formula holds:
\begin{align*}
\eps FI(f_\eps) =  |T| \lambda^\perp_\eps - \lambda_{\Sigma,\eps}^\perp - \lambda_{\Sigma,\eps}^I = \frac{2 N}{3} \lambda^\perp_\eps + \eps N
\end{align*}
\end{lemma}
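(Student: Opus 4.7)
The starting point is the identity
\begin{align}
    0 = \int_{\S^\perp H_I} \mathcal{L}^z \Phi \, f_\eps \, \dee y \, \dee \sigma(v),
\end{align}
valid for suitably integrable $\Phi$, which expresses stationarity of $\nu_\eps = f_\eps \, \dee y \, \dee \sigma(v)$ under the generator $\mathcal{L}^z$ of the transverse projective process $z_t = (y_t, v_t)$. Our plan is to take $\Phi = \log f_\eps$, which is legitimate thanks to the strict positivity and smoothness of $f_\eps$ (Corollary \ref{cor:smoothPosDensityProj4}) and the entropy-moment bound of Lemma \ref{lem:momentEsts}. Distributing $\mathcal{L}^z$, one obtains three contributions:
\begin{align}
    0 & = \tfrac{\eps}{2} \sum_{j \in I} \sigma_j^2 \int \partial_{y_j}^2 \log f_\eps \cdot f_\eps \, \dee y \dee \sigma  - \eps \int y \cdot \nabla_y \log f_\eps \cdot f_\eps \, \dee y \dee \sigma \\
    & \quad + \int \bigl(DB(y) v - \langle v, DB(y) v \rangle v\bigr) \cdot \nabla_v \log f_\eps \cdot f_\eps \, \dee y \dee \sigma.
\end{align}

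The heart of the calculation is to evaluate each of these three pieces. For the diffusive piece, using $(\partial_{y_j}^2 \log f_\eps)\, f_\eps = \partial_{y_j}^2 f_\eps - (\partial_{y_j} f_\eps)^2 / f_\eps$ and integrating by parts in $y_j$ kills the first summand and produces precisely $-\eps \cdot \text{FI}(f_\eps)$. The $y$-drift piece, integrated by parts against $\nabla_y f_\eps$, yields $\eps |I|$ using $\int y_j \partial_{y_j} f_\eps \, \dee y = -1$ for each $j \in I$. For the $v$-drift piece, I would apply the divergence theorem on $\S^\perp_I$, noting that since $DB(y)|_{H_I^\perp}$ is traceless (each diagonal entry of $DB(y)$ vanishes, as $(DB(y))_{jj} = 0$ from the index shifts), the spherical divergence of the tangent field $V(v) = DB(y) v - \langle v, DB(y) v\rangle v$ on $\S^\perp_I \subset H_I^\perp$ reduces to
\begin{align}
    \operatorname{div}_{\S^\perp_I} V(v) = \operatorname{Tr}\bigl(DB(y)|_{H_I^\perp}\bigr) - |T|\langle v, DB(y) v\rangle = -|T| \langle v, DB(y) v\rangle.
\end{align}
Collecting these three contributions gives
\begin{align}
    0 = -\eps \, \text{FI}(f_\eps) + \eps |I| + |T| \int \langle v, DB(y) v \rangle \, \dee\nu_\eps.
\end{align}

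Substituting the Furstenberg--Khasminskii formula from Lemma \ref{lem:furstKhasmAPP}, namely $\int \langle v, DB(y) v\rangle \, \dee\nu_\eps = \lambda^\perp_\eps + \eps$, and rearranging yields $\eps \, \text{FI}(f_\eps) = |T| \lambda^\perp_\eps + \eps(|T| + |I|) = \frac{2N}{3} \lambda^\perp_\eps + \eps N$. The equality with $|T|\lambda^\perp_\eps - \lambda_{\Sigma,\eps}^\perp - \lambda_{\Sigma,\eps}^I$ is then immediate from Proposition \ref{prop:existLE5}(b)--(c).

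The main obstacle I anticipate is not the algebra but the justification of the integrations by parts, since $H_I$ is noncompact and $\log f_\eps$ is unbounded above and below. I would handle this by cutting off with $\chi_R(y)$, performing all computations on $B(0,R) \times \S^\perp_I$ where boundary terms are explicit, and then taking $R \to \infty$. Passage to the limit on the Fisher information term uses Fatou's lemma; the $y$-drift term uses the Gaussian tails inherited from the fact that the $y$-marginal of $\nu_\eps$ is $\mu^I$; and the $v$-drift term, which contributes the polynomial factor $\langle v, DB(y) v\rangle$ of size $\lesssim \langle y\rangle$, uses the polynomially-weighted entropy bound $\int \langle y\rangle^J f_\eps |\log f_\eps| < \infty$ provided by Lemma \ref{lem:momentEsts}. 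These are standard but technical, and following \cite[Proposition 3.2]{BBPS20} they can be carried out cleanly.
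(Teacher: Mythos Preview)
Your proposal is correct and takes essentially the same approach as the paper: both pair $\log f_\eps$ against the stationarity relation (you write it as $\int \mathcal{L}^z(\log f_\eps)\,f_\eps = 0$, the paper equivalently multiplies $(\mathcal{L}^z)^* f_\eps = 0$ by $\log f_\eps$ and integrates), then integrate by parts in the diffusion, $y$-drift, and $v$-drift pieces to extract $-\eps FI(f_\eps)$, $\eps|I|$, and $|T|\int\langle v,DB(y)v\rangle\,d\nu_\eps$ respectively, and finish via Lemma~\ref{lem:furstKhasmAPP}. Your justification sketch (cutoff $\chi_R$, Gaussian $y$-marginal, Lemma~\ref{lem:momentEsts}, \cite[Proposition~3.2]{BBPS20}) matches the paper's as well.
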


\begin{proof}[Proof sketch of Lemma \ref{lem:fisherInfo}]
    Recall that the density $f_\eps$ is a solution to the forward Kolmogorov equation

\begin{align}\label{eq:FKapp}
    (\Lc^z)^* f_\eps = 0 \, ,
\end{align}
 where
\[\Lc^z = \tilde X_0^\perp + \frac{\eps}{2} \sum_{k \in I} \tilde{X}_k^2\]
is the generator in H\"ormander form, notation as in Section \ref{sec:hypo_projective},
and $(\Lc^z)^*$ is the formal $L^2$ dual, given here by
\[(\Lc^z)^* = (\tilde X_0^\perp)^* + \frac{\eps}{2} \sum_{k \in I} \tilde X_k^2\]
with
\begin{align} (\tilde X_0^\perp)^* &= - \tilde X_0^\perp - \Div \tilde{X}_0^\perp  \\
& = - \tilde X_0^\perp + \eps |I| + |T| \langle DB(y) v, v \rangle \,.
\end{align}

    Multiplying the left- and right-hand sides of \eqref{eq:FKapp} by $\log f_\eps$ and integrating jointly in $y$ and $v$ gives
    \begin{align}\label{eq:FKlogAPP}
        \int_{\S^\perp H_I} \log f_\eps \,\, (\tilde X_0^\perp)^* f_\eps \, \dee y \dee v =
        - \frac{\eps}{2} \sum_{k \in I} \int_{\S^\perp H_I} \log f_\eps \, \, (\tilde X_k)^2 f_\eps \, \dee y \dee v \,.
    \end{align}
    In what follows, we apply integration by parts to both sides of \eqref{eq:FKlogAPP}. That these manipulations are valid follows from Lemma \ref{lem:momentEsts} and a straightforward adaptation of the proof of \cite[Proposition 3.2]{BBPS20}, to which we refer the reader for further details.

    Proceeding with the formal computation, we have that the LHS of \eqref{eq:FKlogAPP} develops as
\begin{align}
    \int_{\S^\perp H_I} f_\eps \tilde X_0^\perp (\log f_\eps ) \; \dee y \dee v &  =  \int_{\S^\perp H_I} \tilde X_0^\perp f_\eps \, \dee y \dee v \\
    & = -\int_{\S^\perp H_I} \Div \tilde X_0^\perp \, d \nu_\eps \, ,
\end{align}
using that $\int_{\S^\perp H_I} (\tilde X_0^\perp)^* f \dee y \dee v = 0$. Plugging in the form of $\Div \tilde X_0^\perp$, we conclude that the LHS of \eqref{eq:FKlogAPP} is given by
\[ \eps |I| + |T| \int \langle DB(y) v, v \rangle d \nu_\eps(y, v) = \eps |I| + |T| (\lambda^\perp_\eps + \eps)  = |T| \lambda^\perp_\eps - \lambda^I_{\Sigma, \eps} - \lambda^\perp_{\Sigma, \eps}\,. \]
Meanwhile, integrating by parts in the RHS of \eqref{eq:FKlogAPP} gives
\[
    \frac{\eps}{2} \sum_{k \in I} \int_{\S^\perp H_I} \frac{|\tilde X_k f_\eps|^2 }{f_\eps} \dee y \dee v = \eps FI(f_\eps) \, . \qedhere
\]
\end{proof}

\subsection{Nonexistence of invariant density for $z_t$ at $\eps = 0$}\label{subsec:nonexistDensity5}

The argument presented in Section \ref{subsec:3tverseLE} demonstrates that, in pursuit of a contradiction, $\liminf_{\eps \to 0} \eps^{-1} \lambda^\perp_\eps < \infty$ implies the existence of an invariant density $f_0$ for the \emph{deterministic} ($\eps = 0$) process $z_t = (y_t, w_t)$ determined by the ordinary differential equation
\[\begin{cases}
  \dot y = 0 \, , \\
  \dot v = DB(y) v - v \langle v, DB(y) v \rangle \, ,
\end{cases}
\]
solved for fixed initial $(y_0, v_0)$ by
\[y_t = y_0 \, , \quad v_t = \frac{A_\perp^t v_0}{|A^t_\perp v_0|}\]
with
\[A_\perp^t = e^{t DB(y)|_{H^\perp_I}} \,. \]
Since $f_0$ is an $L^1$-limit of densities $f_\eps$, we note that $f_0$ on $\S^\perp H_I \cong H_I \times \S^\perp_I$ projects to the Gaussian density $\rho$ on $H_I$ from \eqref{eq:Gaussian_Density}.

It remains to prove that no such density can exist, as we show below.
\begin{lemma}\label{lem:singular_limit}
Let $\nu$ be any invariant probability measure for the $\eps = 0$ (deterministic) transverse projective process  with the property that $\nu(A \times \mathbb S^\perp_I) = \mu^I(A)$. Then, there is a nonempty open set $U \subset H_I$ with the property that $\nu|_{U \times \S^\perp_I}$  is singular with respect to Lebesgue measure on $U \times \mathbb S^\perp_I$.
\end{lemma}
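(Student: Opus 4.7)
The strategy is to disintegrate $\nu$ over the $y$-variable, exploit that the $\eps = 0$ dynamics reduce to the projective linear flow $\Phi^t_y$ of $M(y) := DB(y)|_{H_I^\perp}$ on $\S^\perp_I$, and show that the fiber measures are supported on a proper (Lebesgue-null) subvariety of $\S^\perp_I$ for $y$ in a nonempty open set $U \subset H_I$. By Rokhlin's disintegration theorem applied to the projection $\S^\perp H_I \to H_I$, write $\nu = \int_{H_I} \nu_y \, \dee\mu^I(y)$ with each $\nu_y$ a probability on $\S^\perp_I$. Since the $\eps = 0$ flow preserves $y$ and acts on $v$ by $\Phi^t_y$, invariance of $\nu$ forces $\nu_y$ to be $\Phi^t_y$-invariant for $\mu^I$-a.e.\ $y$.

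Next, produce an open set $U \subset H_I$ on which $M(y)$ has eigenvalues with at least two distinct real parts. A direct inspection of the bilinearity $B$ shows that the diagonal entries of $DB(y)$ vanish (each diagonal entry would require one of $j \pm 1, j \pm 2$ to equal $j$), so $M(y)$ is traceless for every $y \in H_I$, and any positive-real-part eigenvalue is compensated by a negative-real-part one. By Lemma~\ref{lem:Eigenvalue-unstable}, the set of $y \in H_I$ for which $M(y)$ admits an eigenvalue of strictly positive real part has positive Lebesgue measure; since this condition is open in $y$, it defines a nonempty open set $U \subset H_I$ with $\mu^I(U) > 0$.

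For $y \in U$, let $H_I^\perp = V_1(y) \oplus \cdots \oplus V_r(y)$ be the decomposition into generalized eigenspaces of $M(y)$ grouped by real part, ordered so that the real parts strictly decrease; by the previous step $r \geq 2$ and each $V_j(y)$ is a proper subspace. The key classical fact is that any $\Phi^t_y$-invariant probability measure on $\S^\perp_I$ must be supported on $\bigcup_j \mathbb{P}(V_j(y)) \cap \S^\perp_I$. To see this, take a continuous cutoff $\phi \geq 0$ that vanishes exactly on $\mathbb{P}(V_1(y))$. The set $A := \S^\perp_I \setminus \mathbb{P}(V_2 \oplus \cdots \oplus V_r)$ and its complement are $\Phi^t_y$-invariant, so $\nu_y$-invariance on $A$ gives $\int_A \phi \, \dee\nu_y = \int_A \phi \circ \Phi^t_y \, \dee\nu_y$; since $\Phi^t_y(v) \to \mathbb{P}(V_1(y))$ as $t \to \infty$ for every $v \in A$, dominated convergence forces the right side to $0$, so $\nu_y|_A$ is supported on $\mathbb{P}(V_1(y))$. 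Iterating the same argument on the invariant subsphere $\mathbb{P}(V_2 \oplus \cdots \oplus V_r) \cap \S^\perp_I$ identifies the full support, which is a finite union of proper projective submanifolds of the unit sphere, hence Lebesgue-null in $\S^\perp_I$.

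Combining these pieces, for $\mu^I$-a.e.\ $y \in U$ (equivalently Lebesgue-a.e., since $\mu^I$ is Gaussian and thus equivalent to Lebesgue), $\nu_y$ is supported on a Lebesgue-null subset of $\S^\perp_I$. Fubini then yields that $\nu|_{U \times \S^\perp_I}$ is concentrated on a set of zero Lebesgue measure in $U \times \S^\perp_I$, giving the claimed singularity. The main obstacle is the classification step above; while ultimately classical (Morse decomposition for linear flows on projective space), its execution requires care for complex eigenvalues and nontrivial Jordan blocks, and the Fubini step also requires some attention to measurability of the disintegration $y \mapsto \nu_y$.
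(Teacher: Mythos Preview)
Your argument is correct and establishes the same singularity conclusion, but it differs meaningfully from the paper's proof. The paper avoids disintegration entirely: it works directly with $\nu$ on the product space, introduces only the coarse splitting $H_I^\perp = E(y) \oplus F(y)$ into positive versus nonpositive real-part generalized eigenspaces, and then applies the Poincar\'e recurrence theorem to the full-measure set $\mathcal{S} = \{(y,v) : y \in U,\, v \notin F(y)\}$ to derive a contradiction with the angular convergence $\angle(\Phi^t_y v, E(y)) \to 0$. This sidesteps all measurability and a.e.\ bookkeeping for the fiberwise invariance of $\nu_y$, and it does not need the finer Morse decomposition $V_1(y) \oplus \cdots \oplus V_r(y)$. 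Your route, by contrast, is more constructive and yields sharper information---you actually identify the support of each $\nu_y$ as a finite union of projective subspaces---at the price of the disintegration machinery, the iterated classification argument, and the measurability checks you flag at the end. Both proofs draw on Lemma~\ref{lem:Eigenvalue-unstable} to produce the open set $U$; the paper additionally shrinks $U$ so that $\dim E(y)$ is constant and $y \mapsto E(y)$ is continuous, which you would likewise want to do (for your $r$ and $V_j(y)$) to make the Fubini step clean.
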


Before proving Lemma \ref{lem:singular_limit}, we first identify a (zero Lebesgue-measure) set of $y \in H_I$ for which $DB(y)|_{H^\perp_I}$ is linearly unstable.

\begin{lemma}\label{lem:Eigenvalue-unstable}
Fix $a, b \in \R$ such that $a (b - a) > 0$, and let
\begin{align*}
y = y_{a,b} := a e_0 + b e_3 \, .
\end{align*}
Then, $DB(y)|_{H^\perp_I}$ admits an eigenvalue with positive real part.
\end{lemma}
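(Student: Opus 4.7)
The plan is to exploit the extreme sparsity of $y = a e_0 + b e_3$ to reduce the spectrum of $DB(y)|_{H_I^\perp}$ to an explicit low-dimensional invariant block. First I would apply the formula $(DB(y)w)_j = (w_{j+1} - w_{j-2})y_{j-1} + (y_{j+1} - y_{j-2})w_{j-1}$ to each basis vector $e_k$, $k \in T$. Because $y_k$ is nonzero only for $k \in \{0,3\}$, a direct case analysis should show that $DB(y)e_k = 0$ for every $k \in T$ outside the finite set $\{1,2,4,5,N-2,N-1\}$, and that $DB(y)e_k$ for $k$ in this set is a linear combination (with coefficients drawn from $\{\pm a, \pm b, b-a\}$) of at most two basis vectors again drawn from the same set. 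The hypothesis $N \ge 9$ enters here to ensure these six indices are pairwise distinct in $\Z_N$ and that no stray contributions arise for other values of $k$.

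Consequently, the six-dimensional subspace $V := \mathrm{span}\{e_1,e_2,e_4,e_5,e_{N-2},e_{N-1}\}$ is invariant under $DB(y)|_{H_I^\perp}$, and the operator vanishes on the orthogonal complement of $V$ in $H_I^\perp$. Within $V$, I would search for a real eigenvector supported on the first four basis vectors only, i.e., of the form $v = c_1 e_1 + c_2 e_2 + c_4 e_4 + c_5 e_5$. The eigenvalue equation $DB(y) v = \lambda v$ then reduces to the small system
\[
\lambda c_1 = a c_2, \quad \lambda c_2 = (b-a) c_1, \quad \lambda c_4 = b(c_5 - c_2), \quad \lambda c_5 = -b c_4.
\]
The first two equations force $\lambda^2 = a(b-a)$, which is strictly positive by hypothesis, so taking $\lambda := +\sqrt{a(b-a)} > 0$ gives a positive real candidate eigenvalue. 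The last two equations then determine $c_4, c_5$ uniquely in terms of $c_2$, since the relevant $2\times 2$ coefficient matrix has determinant $\lambda^2 + b^2 > 0$, which cannot vanish as $\lambda^2$ is itself strictly positive. Choosing for instance $c_1 = 1$ produces an explicit real eigenvector and completes the proof.

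The only mildly delicate point is the modular-arithmetic bookkeeping in the first step -- verifying that $\{1,2,4,5,N-2,N-1\}$ are pairwise distinct in $\Z_N$ and that for every other $k \in T$, none of $k-1, k+1, k-2 \pmod N$ lies in $\{0,3\}$. Both are routine for $N \ge 9$ (equivalently $K \ge 3$), which is already the standing hypothesis of the paper; the rest of the argument is purely algebraic and does not otherwise depend on $N$.
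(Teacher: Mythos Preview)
Your proposal is correct and follows essentially the same route as the paper: the paper directly identifies the four-dimensional invariant subspace $\operatorname{span}\{e_1,e_2,e_4,e_5\}$ and factors the characteristic polynomial of the restricted matrix as $(t^2+b^2)(t^2-a(b-a))$, while you first pass through the slightly larger six-dimensional carrier $\operatorname{span}\{e_1,e_2,e_4,e_5,e_{N-2},e_{N-1}\}$ and then recover the same $2\times 2$ block via your eigenvector ansatz, arriving at the identical condition $\lambda^2=a(b-a)>0$. The only extra work in your version is the (harmless) inclusion of $e_{N-2},e_{N-1}$; since $DB(y)e_k$ for $k\in\{1,2,4,5\}$ already lands in $\operatorname{span}\{e_1,e_2,e_4,e_5\}$, the paper skips that enlargement.
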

\begin{proof}[Proof of Lemma \ref{lem:Eigenvalue-unstable}]

For $y = y_{a,b}$, the space
\[V = \operatorname{Span}\{ e_1, e_2, e_4, e_5\} \subset H_I^\perp\]
is left invariant by $DB(y)$. With respect to the basis $\{ e_1, e_2, e_4, e_5\}$, the matrix of $DB(y) = DB(y_{a,b})$ restricted to $V$ takes the form
\[\mathfrak{M} = \begin{pmatrix}
0     & a & 0  & 0 \\
b - a & 0 & 0  & 0 \\
0    & -b & 0  & b \\
0     & 0 & -b & 0
\end{pmatrix}\]

The characteristic polynomial $p(t) = \det(\mathfrak{M} - t \operatorname{Id} )$ is given by
\[p(t) = a^2 b^2+a^2 t^2-a b^3-a b t^2+b^2 t^2+t^4 = \left(b^2+t^2\right) \left(a^2-a b+t^2\right)\, , \]
and has roots
\[t = \pm i b \, , \pm \sqrt{a (b - a)} \,.   \]
Since $V$ is invariant and since $DB(y)|_V$ is linearly unstable, it follows that $DB(y)|_{H_I^\perp}$ is likewise linearly unstable for all such $y = y_{a,b}$.
\end{proof}

\begin{proof}[Proof of Lemma \ref{lem:singular_limit}]
    Lemma \ref{lem:Eigenvalue-unstable} and standard facts about continuity of spectra of finite matrices imply the existence of an open set $U \subset H_I$ for which $DB(y)|_{H^\perp_I}$ admits an eigenvalue $\lambda$ with positive real part. For each $y \in U$, let $E(y)$ denote the direct sum of all generalized eigenspaces corresponding to eigenvalues with positive real part, and let $F(y)$ denote the complementary direct sum of generalized eigenspaces corresponding to eigenvalues with nonpositive real part. Again by spectral continuity and on shrinking $U$, we can assume (i) $\dim E(y)$ is constant along $U$, and (ii) $y \mapsto E(y)$ varies continuously.
    
    Let
    \[\mathcal{S} = \{ (y, v) : y \in U, v \in \S^\perp_I \setminus F(y) \} \, ,\]
    which is measurable, and observe that for all $(y, v) \in \mathcal{S}$,
    \begin{align} \label{eq:angleConverge5}\angle(A^t_\perp v, E(y)) \to 0 \quad \text{ as } t \to \infty \, , \end{align}
    where $\angle(\cdot, \cdot)$ denotes the minimal angle between a vector and a subspace.

    Let now $\nu$ be an invariant measure with $\nu(A \times \S^\perp_I) = \mu^I(A)$ for all Borel $A \subset H_I$.
    Note that $\nu(U \times \S^\perp_I) > 0$. We will show that $\nu(\mathcal{S}) = 0$, which implies singularity w.r.t. Lebesgue on $U \times \S^\perp_I$ since $\mathcal{S}$ has full Lebesgue measure in $U \times \S^\perp_I$.
    
    To show this, observe that if $\nu(\mathcal{S}) > 0$, then $\angle(v, E(y)) > 0$ for a positive $\nu$-measure set of $(v, y) \in U \times \S^\perp_I$.
    The Poincar\'e Recurrence Theorem implies that for $\nu$-almost every $(v, y) \in U \times \S^\perp_I$, it holds that $\angle(v_{t_k}, E(y)) \geq \frac{99}{100} \angle (v, E(y))$ for a sequence $t_k \to \infty$. This is incompatible with \eqref{eq:angleConverge5}. We conclude that $\nu(\mathcal{S}) = 0$, as desired.
\end{proof}

\section{Construction of the Lyapunov Function} \label{sec:Drift}


Throughout, $\eps > 0$ is fixed\footnote{In particular, we are no longer concerned in Section \ref{sec:Drift} with $\epsilon$-uniform estimates.} once and for all so that $\lambda^\perp_\eps > 0$. Let $\eta \in (0, \eta_*)$ be fixed, with $\eta_*$ as in Lemma \ref{lem:twisty}, and  write $V = V_\eta$ for short.

For $p \in \R$, recall that the \emph{twisted semigroup} $\widehat{P}^{\perp, p}_t$ acts on an observable $\varphi : \S^\perp H_I \to \R$ by
\[\widehat{P}^{\perp, p}_t \varphi (y, v) = \EE_z \left[ \frac{1}{|A^t_\perp v|^p} \varphi(y_t, v_t) \right] \]
for $z = (y, v)$, whenever the RHS expectation is defined. As indicated in Section \ref{subsubsec:3LFconstruct}, the  Lyapunov function $\mathcal{V}$ for the $(u_t)$ process on $H \setminus H_I$ is constructed from the dominant eigenfunction $\psi_p$ for $\widehat{P}^{\perp, p}_t$, which will satisfy the eigenfunction relation
\[\widehat{P}^{\perp, p}_t \psi_p = e^{- \Lambda(p) t} \psi_p\]
where $\Lambda(p) \in \R$ is the associated moment Lyapunov exponent.

In this section we will flesh out the following necessary ingredients originally presented in Section \ref{subsubsec:3LFconstruct}:

\begin{itemize}
  \item[(i)] The semigroup $\widehat{P}^{\perp, p}_t$ admits a spectral gap on the weighted space $C_{V}$.
  \item[(ii)] The asymptotic
  \[\Lambda(p) = p \lambda^\perp_\eps + o(p)\]
  holds for $|p| \ll 1$. In particular, in view of positivity of $\lambda^\perp_\eps$ it holds that $\Lambda(p) > 0$ for $0 < p \ll 1$.
  \item[(iii)] For $p > 0$ small and fixed, it holds that $\psi_p > 0$ pointwise and belongs to the higher regularity space $C^1_{V_\eta}$ (notation as in Section \ref{subsubsec:3LFconstruct}).
  
\end{itemize}

Items (i), (ii) and positivity of $\psi_p$ are treated below in Section \ref{sec:psip}. Higher regularity of $\psi_p$ as in item (iii) will be treated in Section \ref{sec:QuantEst}.

\subsection{Spectral picture of $\widehat{P}^{\perp, p}_t$}\label{sec:psip}

In this section, we construct the dominant eigenfunction $\psi_p$ of the twisted semigroup $\widehat{P}^{\perp, p}_t$ via spectral perturbation theory. The key idea is that $\widehat{P}^{\perp, p}_t$ is a norm-continuous perturbation of the semigroup $\widehat{P}^{\perp}_t$ associated to the transverse projective process, allowing us to apply standard spectral perturbation results.

We begin by defining the generator of the twisted semigroup $\widehat{P}^{\perp, p}_t$. Let $\mathcal{L}^z$ denote the generator of the transverse projective process $z_t = (y_t, v_t)$ on $\S^\perp H_I$. By the Feynman-Kac formula, the twisted semigroup $\widehat{P}^{\perp, p}_t$ has generator
\begin{align}\label{eq:Lp_definition}
\mathcal{L}_p  := \mathcal{L}^z - p\widetilde{H},
\end{align}
where the perturbation potential $\widetilde{H}(y,v) = \langle v, (DB(y)-\eps\Id) v \rangle$ corresponds to the multiplicative factor in the Feynman-Kac representation.

We begin by confirming the spectral gap of $\widehat{P}^{\perp, p}_t$ for $p$ sufficiently small. This will be derived from the following norm-continuity estimate.

\begin{lemma}\label{lem:convergTwisty6}
For all $p \in \mathbb R$, the operator $\hat{P}_t^{\perp,p}$ is a $C^0$-semigroup $C_V \to C_V$ for all $ 0 < \eta < \eta_\ast$.
Moreover,  $\forall T > 0$ there holds
\begin{align*}
\lim_{p \to 0} \sup_{t \in [0,T]} \norm{\widehat{P}^{\perp,p}_t - \widehat{P}^{\perp}_t}_{C_V} = 0.
\end{align*}
\end{lemma}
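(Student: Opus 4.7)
The strategy is to use the Feynman--Kac representation
\[\widehat{P}^{\perp,p}_t \varphi(z) = \EE_z\!\left[ e^{-p \int_0^t \widetilde{H}(y_s,v_s)\, \dee s}\,\varphi(y_t,v_t)\right],\qquad \widetilde{H}(y,v) := \brak{v, (DB(y) - \eps \Id)v},\]
together with the super-Lyapunov estimate of Lemma \ref{lem:twisty}. The crucial point is the linear growth bound
\[|\widetilde{H}(y,v)| \leq C(1 + |y|),\]
a consequence of bilinearity of $B$ and $|v|=1$. All estimates will reduce to controlling exponential moments of $\int_0^t |y_s|\,\dee s$, which is precisely what Lemma \ref{lem:twisty} delivers so long as $|p|$ and $\eta$ are small enough relative to $\gamma_\ast$ and $\eta_\ast$.

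Boundedness $\widehat{P}^{\perp,p}_t : C_V \to C_V$ is immediate from $|\varphi(y_t,v_t)| \leq \|\varphi\|_{C_V} V(y_t)$ and the bound on $\widetilde{H}$:
\[|\widehat{P}^{\perp,p}_t \varphi(z)| \leq \|\varphi\|_{C_V}\, \EE_z\!\left[ e^{|p| C \int_0^t (1 + |y_s|)\, \dee s}\, V(y_t)\right].\]
Fixing $T$ and choosing an admissible $\gamma>0$ together with $|p|$ small so that $|p|C < \gamma_\ast$ and $\eta e^{\gamma T} < \eta_\ast$, Lemma \ref{lem:twisty} bounds the right-hand side by $C_{p,T}\,V(z)$, uniformly in $t\in[0,T]$.

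For the main content, the norm convergence at $p=0$, I would write the difference
\[(\widehat{P}^{\perp,p}_t - \widehat{P}^{\perp}_t)\varphi(z) = \EE_z\!\left[\bigl(e^{-p\int_0^t \widetilde{H}\, \dee s} - 1\bigr)\,\varphi(y_t,v_t)\right]\]
and apply the elementary inequality $|e^{-x}-1|\leq |x|e^{|x|}$ to obtain
\[|(\widehat{P}^{\perp,p}_t - \widehat{P}^{\perp}_t)\varphi(z)| \leq |p|\,\|\varphi\|_{C_V}\, C\, \EE_z\!\left[ \widetilde{Y}_t\, e^{|p|C \widetilde{Y}_t}\, V(y_t)\right],\qquad \widetilde{Y}_t := t + \int_0^t |y_s|\,\dee s.\]
To absorb the polynomial prefactor into the exponential, use the crude bound $x \leq C_\delta e^{\delta x}$ (valid for any $\delta>0$, $x\geq 0$) to get $\widetilde{Y}_t\, e^{|p|C\widetilde{Y}_t} \lesssim_{\delta} e^{(|p|C + \delta)\widetilde{Y}_t}$. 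Fixing $\delta$ small and then $|p|$ small so that $|p|C + \delta < \gamma_\ast$, Lemma \ref{lem:twisty} yields $\EE_z[\,\cdots\,] \lesssim_T V(z)$ uniformly in $t\in[0,T]$. The explicit $|p|$ prefactor then gives $\sup_{t\in[0,T]} \|\widehat{P}^{\perp,p}_t - \widehat{P}^{\perp}_t\|_{C_V \to C_V} \lesssim_T |p| \to 0$ as $p\to 0$.

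The $C^0$-semigroup property for fixed small $|p|$ reduces, via the semigroup law and uniform boundedness, to strong continuity at $t=0$: $\|\widehat{P}^{\perp,p}_t\varphi - \varphi\|_{C_V} \to 0$ as $t\to 0^+$. The same difference estimate above gives $\|\widehat{P}^{\perp,p}_t\varphi - \widehat{P}^{\perp}_t\varphi\|_{C_V} \to 0$ as $t\to 0$, since $\widetilde{Y}_t \to 0$ and dominated convergence applies with the $V$-weighted dominating function constructed above. It therefore suffices to verify strong continuity of the unperturbed semigroup $\widehat{P}^{\perp}_t$ on $C_V$, which follows by a standard cut-off argument: on compact sublevel sets $\{V\leq R\}$, $\varphi/V$ is uniformly continuous and the Feller property of $(y_t,v_t)$ handles the bulk, while the tail $\{V > R\}$ is controlled by the super-Lyapunov bound $\widehat{P}^{\perp}_t V \lesssim V$ from \eqref{eq:superLF3}. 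The main obstacle throughout is the delicate interaction between the linearly growing twist potential $\widetilde{H}$ and the Gaussian weight $V_\eta$: every application of Lemma \ref{lem:twisty} consumes a bit of the margin $\eta<\eta_\ast$, which dictates how small $|p|$ must be taken at each stage.
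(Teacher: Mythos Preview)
Your approach is essentially the same as the paper's: reduce everything to the super-Lyapunov estimate Lemma~\ref{lem:twisty} (and its corollary on moments of $|A^t_\perp|^{\pm p}$). The paper's own proof is just the one-line remark that both claims follow from these, so your write-up is a faithful expansion of what the authors had in mind.

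One small correction: in your boundedness step you impose ``$|p|$ small so that $|p|C < \gamma_\ast$,'' but Lemma~\ref{lem:twisty} is stated \emph{for all} $c>0$ (the constant $\gamma_\ast$ constrains only the auxiliary parameter $\gamma$, not $c$). So the map $\widehat{P}^{\perp,p}_t : C_V \to C_V$ is bounded for every $p\in\R$, as the lemma asserts, and no smallness of $|p|$ is needed there. Your norm-convergence argument is fine as written; the explicit factor of $|p|$ and Lemma~\ref{lem:twisty} with $c = \delta + |p|C$ do the job.
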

\begin{proof}
Both statements follow quickly from Lemma \ref{lem:twisty} and, in particular, Corollary \ref{cor:twistyFC}.
\end{proof}

\begin{corollary} \
There exists $p_0 > 0$ such that for all $p \in [-p_0, p_0]$, the operator $\widehat{P}^{\perp, p}_1$ admits a spectral gap with real dominant eigenvalue $> 0$.
\end{corollary}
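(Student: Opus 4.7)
The plan is to transfer the spectral gap of $\widehat{P}^{\perp}_1 = \widehat{P}^{\perp,0}_1$ on $C_V$, already established via Harris's theorem in Lemma \ref{lem:GeoErgoPP_hypo_consolidated}, to the perturbed operators $\widehat{P}^{\perp,p}_1$ for $|p|$ small. At $p=0$, the dominant eigenvalue is $1$, simple and isolated, with right eigenfunction the constant $\mathbf{1} \in C_V$ and left eigenfunctional $\nu_\eps$; the rest of $\sigma(\widehat{P}^{\perp}_1)$ lies in a closed disk of radius $r_0 < 1$ centered at the origin.

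First, I would apply standard spectral perturbation theory for isolated parts of the spectrum (see, e.g., Kato, \emph{Perturbation Theory for Linear Operators}, Ch.~IV). Fix $\delta>0$ so small that the disk $D_\delta(1) = \{\zeta : |\zeta - 1| \le \delta\}$ is disjoint from the closed disk of radius $r_0+\delta$ about the origin. Lemma \ref{lem:convergTwisty6} gives $\widehat{P}^{\perp,p}_1 \to \widehat{P}^{\perp}_1$ in operator norm on $C_V$ as $p \to 0$, so for $|p| \le p_0$ with $p_0$ sufficiently small the resolvent $(\zeta - \widehat{P}^{\perp,p}_1)^{-1}$ exists and is norm-continuous in $p$ on the boundary $\partial D_\delta(1)$, and the Riesz spectral projector
\[
\Pi_p \;=\; \frac{1}{2\pi i}\oint_{\partial D_\delta(1)} (\zeta - \widehat{P}^{\perp,p}_1)^{-1}\, \dee\zeta
\]
depends continuously on $p$. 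Since $\Pi_0$ has rank one, so does $\Pi_p$ for $|p| \le p_0$ (possibly shrinking $p_0$), which means $\sigma(\widehat{P}^{\perp,p}_1) \cap D_\delta(1)$ is a single simple eigenvalue $r(p)$. A parallel contour argument around the remainder shows $\sigma(\widehat{P}^{\perp,p}_1) \setminus \{r(p)\}$ is contained in a disk of radius $r_0 + 2\delta < 1 - \delta/2$, yielding the spectral gap.

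Second, I would verify that $r(p)$ is real and positive. For reality: $\widehat{P}^{\perp,p}_1$ is a real operator, in the sense that it maps real-valued functions in $C_V$ to real-valued functions, so $\sigma(\widehat{P}^{\perp,p}_1)$ is invariant under complex conjugation. Since $r(p)$ is the unique eigenvalue of $\widehat{P}^{\perp,p}_1$ lying in $D_\delta(1)$ and $\overline{r(p)}$ also lies in $D_\delta(1)$, we must have $r(p) = \overline{r(p)} \in \R$. For positivity: $r(p)$ is continuous in $p$ with $r(0) = 1 > 0$, hence $r(p) > 0$ for $|p|$ small after shrinking $p_0$ once more.

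The argument is almost entirely routine once Lemma \ref{lem:convergTwisty6} is in hand; the only subtle point is making sure that norm convergence (as opposed to merely strong convergence) is available, since spectral separation of an eigenvalue from the rest of the spectrum on Banach spaces requires norm continuity of the resolvent, which in turn requires norm continuity of the underlying operator family. This is precisely the content of Lemma \ref{lem:convergTwisty6}, so there is no real obstacle. A Krein-Rutman-style argument using positivity-preservation of $\widehat{P}^{\perp,p}_t$ and the strong Feller/irreducibility properties from Corollary \ref{cor:projective_properties} could also be used to identify $r(p)$ as the Perron eigenvalue and to conclude that the associated eigenfunction $\psi_p$ is strictly positive, though positivity of $\psi_p$ is deferred to the subsequent discussion in the paper.
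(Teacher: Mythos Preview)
Your proof is correct and follows exactly the approach the paper intends: the paper's own proof is the single sentence ``This is immediate from standard spectral perturbation theory for discrete spectrum and the spectral gap for $\widehat{P}^\perp_1$ established in Lemma~\ref{lem:GeoErgoPP_hypo_consolidated},'' and you have simply unpacked that sentence with the Riesz-projector/Kato argument, using Lemma~\ref{lem:convergTwisty6} for the required norm convergence. Your additional remarks on reality (via conjugation symmetry of the spectrum) and positivity (via continuity from $r(0)=1$) are correct elaborations that the paper leaves implicit.
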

\begin{proof}
  This is immediate from standard spectral perturbation theory for discrete spectrum and the spectral gap for $\widehat{P}^\perp_1$ established in Lemma \ref{lem:GeoErgoPP_hypo_consolidated}.
\end{proof}

Write $e^{- \Lambda(p)}, \Lambda(p) \in \R$ for the dominant eigenvalue of $\widehat{P}^{\perp, p}_1$, and $\pi_p$ for the spectral projector to the (one-dimensional) dominant eigenspace.
\begin{align*}
& \lim_{p \to 0}\norm{\pi_p - \pi_0}_{C_V \to C_V} = 0.
\end{align*}
It now follows that the limit
\begin{align}\label{eq:limDefinPsiP6}
\psi_p := \lim_{n \to \infty} e^{n \Lambda(p)} \widehat{P}^{\perp, p}_n {\bf 1}
\end{align}
exists in the $C_V$ norm, where ${\bf 1}$ is the constant function identically equal to 1, and $\psi_p$ satisfies
\begin{align}\label{eq:eigenRelationPsip6}
  \widehat{P}^{\perp, p}_t \psi_p = e^{- t\Lambda(p)} \psi_p \qquad \text{ for } t = 1, 2, \dots \,.  \end{align}
The spectral mapping theorem for point spectrum \cite[Section A-III]{arendt1986one} now implies \eqref{eq:eigenRelationPsip6} for all real $t \geq 0$.

Moving on, we check now basic properties of $\psi_p$ and $\Lambda(p)$.

\begin{lemma} \
  \begin{itemize}
  \item[(a)] $\psi_p$ is (i) $C^\infty$ smooth and (ii) strictly positive pointwise on $\S^\perp H_I$.
  \item[(b)] The function $p \mapsto \Lambda(p)$ is differentiable at $p = 0$, and satisfies
  \[\frac{\dee}{\dee p}\bigg|_{p = 0} \Lambda(p)  =\lambda^\perp_\eps \,. \]
  In particular, $\Lambda(p) > 0$ for all $p > 0$ sufficiently small.
  \end{itemize}
  
\end{lemma}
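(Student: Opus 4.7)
The plan is to deduce (a) from the hypoellipticity and irreducibility of the projective process already established, and (b) from a first-order perturbation computation in which the generator eigenrelation for $\psi_p$ is paired against the invariant measure $\nu_\eps$ of the unperturbed process.

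For (a), note that the generator $\mathcal{L}_p = \mathcal{L}^z - p\widetilde{H}$ has the same principal symbol as $\mathcal{L}^z$, so Hörmander's condition (Proposition \ref{prop:hormander_projective}) is inherited by $\mathcal{L}_p$. The semigroup eigenrelation \eqref{eq:eigenRelationPsip6}, tested against any $\varphi \in C_c^\infty(\S^\perp H_I)$ and differentiated in $t$ at $t=0$, exhibits $\psi_p$ as a distributional solution of $\mathcal{L}_p \psi_p = -\Lambda(p)\psi_p$; hypoellipticity then lifts $\psi_p \in C_V$ to $\psi_p \in C^\infty(\S^\perp H_I)$. Nonnegativity of $\psi_p$ is immediate from the limit \eqref{eq:limDefinPsiP6} together with the fact that $\widehat{P}^{\perp,p}_t$ is positivity-preserving by its Feynman–Kac representation. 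Since $\psi_p \to \mathbf{1}$ in $C_V$ as $p\to 0$ and $V(0,v)=1$, we obtain $\psi_p(0,v) \geq 1/2$ for all $v \in \mathbb{S}_I^\perp$ once $p$ is small, so $\psi_p \not\equiv 0$. Finally, if $\psi_p(z_0)=0$ for some $z_0 \in \S^\perp H_I$, the eigenrelation
\[
0 = \psi_p(z_0) = e^{t\Lambda(p)}\, \E_{z_0}\!\left[|A^t_\perp v|^{-p}\psi_p(z_t)\right]
\]
and almost-sure positivity of the weight force $\psi_p(z_t) = 0$ $\P_{z_0}$-a.s. Combining hypoellipticity with topological irreducibility (Corollary \ref{cor:projective_properties}) yields that $\widehat{P}^\perp_t(z_0,\cdot)$ has a smooth, strictly positive density on all of $\S^\perp H_I$ for every $t>0$, so $\psi_p$ vanishes on a dense set; by continuity $\psi_p \equiv 0$, contradicting the previous step.

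For (b), pair $\mathcal{L}_p \psi_p = -\Lambda(p)\psi_p$ against $\nu_\eps$. Using the stationarity identity $\int \mathcal{L}^z \psi_p \, \dee \nu_\eps = 0$, we arrive at
\[
\Lambda(p) \;=\; p \cdot \frac{\displaystyle\int \widetilde{H} \psi_p \, \dee \nu_\eps}{\displaystyle\int \psi_p \, \dee \nu_\eps},
\]
from which $\Lambda(0)=0$ is manifest. Because $\widetilde{H}(y,v) = \langle v, (DB(y)-\eps \Id)v\rangle$ grows only polynomially in $y$, the super-Lyapunov bound (Lemma \ref{lem:twisty}) gives $\widetilde{H}\cdot V \in L^1(\nu_\eps)$. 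Combined with $C_V$-convergence $\psi_p \to \mathbf{1}$ and dominated convergence, we conclude
\[
\Lambda'(0) \;=\; \lim_{p\to 0}\frac{\Lambda(p)}{p} \;=\; \int \widetilde{H} \, \dee \nu_\eps \;=\; \lambda^\perp_\eps,
\]
the last identity being the Furstenberg–Khasminskii formula (Lemma \ref{lem:furstKhasmAPP}). Since $\lambda^\perp_\eps > 0$ by Lemma \ref{lem:PosLyapL96}, $\Lambda(p) > 0$ for all $p>0$ sufficiently small.

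The main technical obstacle is justifying the stationarity identity $\int \mathcal{L}^z \psi_p \, \dee \nu_\eps = 0$ and the analogous pairing for $\mathcal{L}_p$, since a priori $\psi_p \in C_V$ need not lie in the natural smooth core of $\mathcal{L}^z$ and $\widetilde{H}$ is unbounded. The cleanest route is to work with the integrated form $\int \psi_p \, \dee \nu_\eps = \int \widehat{P}^\perp_t \psi_p \, \dee \nu_\eps$ and its $p$-twisted analogue, then differentiate in $t$ at $t=0^+$ using the smoothness of $\psi_p$ from part (a) together with the $\nu_\eps$-integrability of $\widetilde{H}\psi_p$, thereby avoiding any formal adjoint computation on an ill-defined domain.
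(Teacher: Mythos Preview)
Your proof of (a) follows essentially the same reasoning as the paper's: smoothness from hypoellipticity, nonnegativity from the Feynman--Kac representation, and strict positivity from irreducibility. The paper phrases the strict positivity step directly rather than by contradiction (fixing $U_p = \{\psi_p > 0\}$ and conditioning on $\{z_t \in U_p\}$ to show $\psi_p(z) > 0$ for every $z$), but this is logically equivalent to your argument.

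For (b), you take a genuinely different route. The paper simply records that $\Lambda(p) = -\lim_{t\to\infty} t^{-1} \log \widehat{P}^{\perp,p}_t \mathbf{1}$ holds pointwise (a consequence of the spectral gap and positivity of $\psi_p$) and then cites \cite[Lemma 5.10]{bedrossian2022almost} for the passage from this moment-Lyapunov-exponent formula to differentiability at $p=0$ with $\Lambda'(0)=\lambda^\perp_\eps$. Your approach is more self-contained: pairing the generator eigenrelation against $\nu_\eps$ yields the exact identity $\Lambda(p) = p\cdot \nu_\eps(\widetilde{H}\psi_p)/\nu_\eps(\psi_p)$, and the derivative at zero then drops out of the $C_V$-convergence $\psi_p \to \mathbf{1}$ together with Furstenberg--Khasminskii. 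This buys you an explicit perturbation formula valid for all small $p$ and avoids appealing to an external reference, at the cost of having to justify $\int \mathcal{L}^z \psi_p \, d\nu_\eps = 0$ for the unbounded $\psi_p$---a point you correctly flag and for which your proposed workaround (Dynkin's formula plus the $L^1(\nu_\eps)$ bound on $\mathcal{L}^z\psi_p = (p\widetilde{H}-\Lambda(p))\psi_p$) is adequate. The paper's route sidesteps this domain issue by staying at the semigroup level, but is less transparent about the mechanism behind $\Lambda'(0)=\lambda^\perp_\eps$.
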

\begin{proof}
  For (a)(i): Since $\psi_p$ belongs to the range of $\widehat{P}^{\perp, p}_t$ for all $t > 0$, hypoellipticity of the $z_t = (y_t, v_t)$ process (Proposition \ref{prop:hormander_projective}) implies smoothness of $\psi_p$.

  For positivity, observe that $\widehat{P}^{\perp, p}_t$ sends nonnegative functions to nonnegative functions, and so that $\psi_p \geq 0$ is immediate from \eqref{eq:limDefinPsiP6}.
  Moreover, since $\psi_p$ is continuous and not identically zero, $U_p := \set{z \in \S^\perp H_I : \psi_p > 0}$ is non-empty and open.
  By topological irreducibility (Corollary \ref{cor:projective_properties}), it follows that $\PP_z (z_t \in U_p) > 0$ for all $z \in \S^\perp H_I$. Therefore,
  \[\psi_p(z) \geq \P_z(z_t \in U_p) \EE_z \left(|A^t_\perp v|^{-p} \psi_p(z_t) | z_t \in U_p\right) > 0 \,.\]

Item (b) is standard and follows on checking \begin{align}\label{eq:limitBold16}
    \Lambda(p) = - \lim_{t \to \infty} \frac{1}{t} \log \widehat{P}^{\perp, p}_t {\bf 1}
  \end{align}
  pointwise in $z = (y, v) \in \S^\perp H_I$; see, e.g., \cite[Lemma 5.10]{bedrossian2022almost} for a review of how \eqref{eq:limitBold16} impiles differentiability of $\Lambda(p)$ at $p = 0$. In turn, \eqref{eq:limitBold16} follows from positivity of $\psi_p$ and the spectral gap from Lemma \ref{lem:convergTwisty6}.
\end{proof}

The following lemma establishes that $\psi_p$ is an eigenfunction of both the semigroup and its generator.
\begin{lemma} \label{lem:psipeigen}
The function $\psi_p$ is a smooth eigenfunction of $\mathcal{L}_p$ with eigenvalue $-\Lambda(p)$. Specifically, $\psi_p$ solves the eigenfunction PDE
\begin{align}
\mathcal{L}_p\psi_p = -\Lambda(p)\psi_p \label{eq:psip_eigenPDE}
\end{align}
in the classical sense.
\end{lemma}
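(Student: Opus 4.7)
The plan is to derive the pointwise eigenvalue PDE from the semigroup eigenvalue identity \eqref{eq:eigenRelationPsip6} by differentiating at $t=0$, using It\^o's formula to relate the abstract generator of $\widehat{P}^{\perp,p}_t$ to the classical differential operator $\mathcal{L}_p = \mathcal{L}^z - p\widetilde{H}$. Smoothness of $\psi_p$ is already established, so both sides of \eqref{eq:psip_eigenPDE} are well-defined classically; the issue is solely to certify equality.

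First I would apply It\^o's formula to the process
\[
M_t := \exp\!\left(-p\int_0^t \widetilde{H}(z_s)\,\dee s\right)\psi_p(z_t),
\]
where $z_t=(y_t,v_t)$ starts from a fixed $z_0\in \S^\perp H_I$. Since $\psi_p$ is $C^\infty$ and the projective coordinate $v_t$ lives on the compact sphere $\S^\perp_I$, the stochastic integral appearing in the It\^o expansion is a local martingale, and standard localization combined with the moment bounds \eqref{ineq:LVeta} and Corollary \ref{cor:twistyFC} (applied to both $\psi_p\in C_V$ and the smooth function $\mathcal{L}_p\psi_p$, whose polynomial-in-$y$ growth is inherited from that of $\psi_p$ via the explicit form of $\mathcal{L}_p$) will upgrade it to a genuine martingale. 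Taking expectations yields the Dynkin-type identity
\[
\widehat{P}^{\perp,p}_t\psi_p(z_0)-\psi_p(z_0)=\int_0^t \widehat{P}^{\perp,p}_s(\mathcal{L}_p\psi_p)(z_0)\,\dee s.
\]

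Next, I would substitute the semigroup eigenvalue relation \eqref{eq:eigenRelationPsip6} on the left-hand side, divide by $t$, and let $t\to 0$. The left-hand side yields $-\Lambda(p)\psi_p(z_0)$. For the right-hand side, the integrand at $s=0$ is $\mathcal{L}_p\psi_p(z_0)$, and strong continuity of $s\mapsto \widehat{P}^{\perp,p}_s(\mathcal{L}_p\psi_p)(z_0)$ at $s=0$ -- a consequence of Lemma \ref{lem:convergTwisty6} together with the $C_V$-moment control on $\mathcal{L}_p\psi_p$ -- gives that the time-averaged integral converges pointwise to $\mathcal{L}_p\psi_p(z_0)$. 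Equating the two limits yields $\mathcal{L}_p\psi_p=-\Lambda(p)\psi_p$ pointwise on $\S^\perp H_I$, which is \eqref{eq:psip_eigenPDE}.

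The main obstacle I anticipate is the integrability required to turn the local martingale in the It\^o expansion into an honest martingale, since $\psi_p$ can grow like $V_\eta(y)=e^{\eta|y|^2}$ in the unbounded $y$-direction and $\mathcal{L}_p\psi_p$ will inherit additional polynomial weights from derivatives of $DB(y)v$. This is precisely the type of super-Lyapunov control provided by Lemma \ref{lem:twisty} for $\eta<\eta_*$, so the resolution is to choose a slightly smaller index $\eta'<\eta$ in a preliminary step, controlling both $\sup_{s\le t}|\psi_p(z_s)|$ and the quadratic variation of the stochastic integral up to a well-chosen sequence of stopping times, and passing to the limit with dominated convergence. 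Once this integrability step is carried out, the remainder of the argument is routine.
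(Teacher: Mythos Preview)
Your approach has a genuine gap: the claim that $\mathcal{L}_p\psi_p$ has ``polynomial-in-$y$ growth inherited from that of $\psi_p$'' is unjustified, and in fact the Dynkin identity you write down requires a priori weighted control on $\mathcal{L}_p\psi_p$ (equivalently, on $\nabla\psi_p$ and $\nabla^2\psi_p$) that is not available at this point in the argument. You know $\psi_p\in C_V$ and that $\psi_p$ is smooth, but smoothness is purely local and says nothing about how $|\nabla\psi_p(y,v)|$ grows as $|y|\to\infty$. The quadratic variation of your It\^o stochastic integral involves exactly these derivatives, and the integrand on the right of your Dynkin identity is $\mathcal{L}_p\psi_p$; neither can be placed in $C_V$ from what has been proved so far. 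Indeed, weighted control on $\nabla\psi_p$ is precisely the content of Lemma~\ref{lem:psipC1}, whose proof \emph{uses} the eigenfunction PDE you are trying to establish --- so invoking such bounds here would be circular.

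The paper sidesteps this by a cutoff argument that never touches derivatives of $\psi_p$. One applies Feynman--Kac to the compactly supported initial datum $\chi\psi_p$, for which the Kolmogorov equation $\partial_t u=\mathcal{L}_p u$ holds without any integrability issues; then one shows $u(t)=\widehat P^{\perp,p}_t(\chi\psi_p)\to e^{-t\Lambda(p)}\psi_p$ in $C_{V_{\eta'}}$ as the cutoff radius tends to infinity, using only $\psi_p\in C_V$ and Lemma~\ref{lem:twisty}. This gives \eqref{eq:psip_eigenPDE} in the sense of distributions, and smoothness of $\psi_p$ upgrades it to a classical identity. Your route could likely be repaired by working with the localized Dynkin identity on $[0,t\wedge\tau_R]$ and taking $t\to 0$ \emph{before} sending $R\to\infty$ (so that only local smoothness is used), together with a short-time estimate on $\P(\tau_R\le t)$; but as written, the appeal to global $C_V$ bounds on $\mathcal{L}_p\psi_p$ does not go through.
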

\begin{proof}
We establish the eigenfunction property by showing that $\psi_p$ solves the PDE \eqref{eq:psip_eigenPDE} in the classical sense.

Choose a cut-off function $\chi \in C^\infty_c(H_I \times \mathbb S^{\abs{T}-1})$ such that $\chi$ depends only on the $H_I$ variables, that $\chi \equiv 1$ on a ball of radius $M$ and that $\abs{\grad^\ell \chi} \lesssim M^{-\ell}$.
By the Feynman-Kac formula, we have that $u = \widehat{P}_t^{\perp,p} \chi \psi_p$ is smooth for positive times (by H\"ormander's theorem) and solves the Kolmogorov equation
\begin{align}
\partial_t u & = \mathcal{L}_p u \label{eq:kolmogorov} \\
u(0,x,v) & = \chi(x) \psi_p(x,v). \nonumber
\end{align}
Let $\eta_0 > \eta' > \eta$ be arbitrary.
It is straightforward to check from the definition of $\widehat{P}^t_{\perp,p}$ and Lemma \ref{lem:twisty} that
\begin{align}
\lim_{M \to \infty} \sup_{t \in (0,1)} \norm{u(t) - \psi_p}_{C_{V_{\eta'}}} = 0. \label{eq:cutoff_convergence}
\end{align}
It follows that $\psi_p$ solves \eqref{eq:psip_eigenPDE} in the sense of distributions.
Since $\psi_p \in C^\infty$, it is hence also a classical solution, establishing \eqref{eq:psip_eigenPDE}.

\end{proof}

\subsection{Higher regularity for $\psi_p$}\label{sec:QuantEst}
Now we are ready to upgrade the regularity of $\psi_p$ to $C^1_V$ by iterating local hypoelliptic regularizations. By Lemma \ref{lem:psipeigen}, $\psi_p$ satisfies the eigenfunction equation \eqref{eq:psip_eigenPDE}
where $\mathcal{L}_p$ is the generator \eqref{eq:Lp_definition} of the twisted semigroup. Expanding this using the definition of $\mathcal{L}_p$, we have
\begin{align}
\mathcal{L}^z\psi_p = (p\widetilde{H} -\Lambda(p))\psi_p.  \label{def:psipPDE_expanded}
\end{align}

\begin{lemma}\label{lem:psipC1}
For all $0 < \eta < \eta_0$, there holds $\psi_p \in C^1_{V_\eta}$.
\end{lemma}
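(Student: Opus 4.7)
The plan is to upgrade the known $C_{V_{\eta'}}$ control on $\psi_p$ to a $C^1_{V_\eta}$ bound by a localized bootstrap of the quantitative hypoelliptic inequality of Lemma~\ref{lem:HineqBall_hypo}, applied to the eigenfunction PDE \eqref{def:psipPDE_expanded}. Given $\eta \in (0, \eta_0)$, fix some $\eta' \in (0, \eta)$; by Lemma~\ref{lem:convergTwisty6} and the convergent construction \eqref{eq:limDefinPsiP6} we already have $\psi_p \in C_{V_{\eta'}}$, so $|\psi_p(y,v)| \leq C e^{\eta'|y|^2}$ pointwise. The target bound $|\nabla \psi_p(y,v)| \lesssim V_\eta(y)$ will follow by establishing a local $C^1$ estimate on a unit ball around any $y_0 \in H_I$ that grows at most polynomially in $R := \max(|y_0|,1)$, and then absorbing the polynomial factor into the Gaussian slack $e^{(\eta-\eta')|y_0|^2}$ opened by $\eta > \eta'$.

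For the local step, fix $y_0 \in H_I$ and a smooth cutoff $\chi$ supported in $B(y_0,1) \times \S^\perp_I$ with $\chi \equiv 1$ on $B(y_0, 1/2) \times \S^\perp_I$. Translating by $y_0$ and applying Lemma~\ref{lem:HineqBall_hypo} to $\chi \psi_p$ yields
\[ \|\chi \psi_p\|_{H^s} \lesssim R^q \bigl( \|\chi \psi_p\|_{\mathcal{H}} + \|\chi \psi_p\|_{\mathcal{H}^\ast} \bigr). \]
The $\mathcal{H}$-norm is controlled from the $L^2$ norm together with $\sum_k \|\tilde{X}_k(\chi \psi_p)\|_{L^2}$: when $\tilde{X}_k$ hits $\chi$ one picks up a bounded derivative of the cutoff, and the term $\chi \tilde{X}_k \psi_p$ is handled by duality against test functions so that no a priori derivative bounds on $\psi_p$ are required. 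The $\mathcal{H}^\ast$-norm is controlled by pairing $(\tilde{X}_0^\perp \varphi) \chi \psi_p$ against a test $\varphi$ with $\|\varphi\|_{\mathcal{H}} \leq 1$, integrating by parts, and using the PDE \eqref{def:psipPDE_expanded} to replace $\tilde{X}_0^\perp \psi_p$ by $-\frac{\eps}{2} \sum_k \tilde{X}_k^2 \psi_p + (p\widetilde{H} - \Lambda(p))\psi_p$; the second-order $\tilde{X}_k^2 \psi_p$ terms are then absorbed by moving a $\tilde{X}_k$ onto $\varphi$ (controlled in $\mathcal{H}$) or onto $\chi$ (bounded). Combined with $\|\chi \psi_p\|_{L^2} \lesssim e^{\eta'|y_0|^2}$ from the $C_{V_{\eta'}}$ bound, this yields a first hypoelliptic gain $\|\chi \psi_p\|_{H^s} \lesssim R^{\alpha_1} e^{\eta'|y_0|^2}$.

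One now bootstraps: on a nested family of slightly smaller cutoffs, and after successively differentiating the PDE in the $\tilde{X}_k$-directions, each round of the Hörmander inequality upgrades $H^{(k-1)s}$ control to $H^{ks}$ with new constants of the form $R^{\alpha_k} e^{\eta'|y_0|^2}$. Once $ks > \frac{1}{2} \dim(\S^\perp H_I) + 1$, Sobolev embedding produces
\[ \sup_{B(y_0, 1/4) \times \S^\perp_I} |\nabla \psi_p| \lesssim R^\beta e^{\eta' |y_0|^2} \lesssim_{\eta,\eta'} e^{\eta |y_0|^2}, \]
uniformly in $y_0 \in H_I$, where the last step uses $\eta - \eta' > 0$ to absorb the polynomial factor $R^\beta$. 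Passing to the supremum over $y_0$ delivers $\|\psi_p\|_{C^1_{V_\eta}} < \infty$.

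The main technical obstacle lies in the bookkeeping for the iterated bootstrap, particularly the commutators of the cutoff with the drift $\tilde{X}_0^\perp$, whose coefficients grow polynomially in $y$. Each iteration spawns additional commutator terms, and one must verify that these compound only to a polynomial power of $R$ (controlled by $\alpha_k$) rather than an exponential one, so that the final estimate fits inside the exponential tolerance $e^{(\eta-\eta')|y_0|^2}$. This analysis is modeled on the quantitative hypoelliptic estimates in \cite{bedrossian2021quantitative} and \cite[Appendix B]{BBPS20}, with the weight $V_{\eta'}$ tracked through the localized estimates and absorbed at the end via the gap between $\eta'$ and $\eta$; no $\eps$-uniformity is needed here since $\eps > 0$ is fixed throughout Section~\ref{sec:Drift}.
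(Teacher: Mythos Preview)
The proposal is correct and follows essentially the same route as the paper: localize around $y_0$ with $R=|y_0|$, bound the $\mathcal{H}$ and $\mathcal{H}^\ast$ norms of the cutoff of $\psi_p$ by pairing the eigenfunction PDE and integrating by parts, iterate the quantitative H\"ormander inequality on nested cutoffs until Sobolev embedding yields a local $C^1$ bound with a polynomial-in-$R$ constant, and then absorb that polynomial into the Gaussian slack $e^{(\eta-\eta')|y_0|^2}$. The paper executes the bootstrap by passing to flat charts via a translation-invariant atlas and applying $\chi_k\langle\nabla\rangle^s$ to the localized equation (H\"ormander's original iteration) rather than by ``differentiating in the $\tilde X_k$-directions'' as you phrase it, but this is a presentational detail within the same scheme.
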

\begin{proof}
We will use direct PDE estimates to prove that $\exists q' > 0$ such that $\forall y \in H_I$ with $\abs{y} \geq 1/2$, there holds
\begin{align}
\norm{\grad \psi_p}_{L^\infty(B(y,1) \times \mathbb S^\perp_I)} \lesssim \abs{y}^{q'} \norm{\psi_p}_{L^2(B(y,2) \times \mathbb S^\perp_I)}. \label{ineq:Gradpsip}
\end{align}
Notice that this implies for any $0 < \eta' < \eta_0$,
\begin{align*}
\norm{\grad \psi_p}_{L^\infty(B(y,1) \times \S^\perp_I )} \lesssim_{\eta'} V_{\eta'}(y) \norm{\psi_p}_{C_{V_{\eta'}}},
\end{align*}
which implies the desired $C_{V_\eta}^1$ bound for all $\eta \leq \eta'$. Hence, we need only prove \eqref{ineq:Gradpsip}.

For $y = 0$, we can fix a finite atlas for the set $B(0,2) \times \mathbb S^{\abs{T}-1}$ which provides local diffeomorphisms that map the PDE \eqref{eq:psip_eigenPDE} into a similar Kolmogorov equation in $\R^{m}$ with smooth coefficients. Moreover, these transformed Kolmogorov equations also satisfy the parabolic H\"ormander condition uniformly in $\eps$.
By translation invariance of the geometry, this finite atlas induces a corresponding finite atlas on all sets of the form $B(y,2) \times \mathbb S^{\abs{T}-1}$ and local diffeomorphisms which only depend on $y$ through a translation.
For definiteness, we set $J$ to be the number of charts.
Denote $\set{\chi_{j,y}}_{j=1}^J$ a set of smooth cutoff functions adapted to the charts and associated diffeomorphisms $\Phi_{j,u}:\textup{supp} \chi_{j,u} \to \R^{m}$.
These charts reduce \eqref{ineq:Gradpsip} to problems posed locally on $\R^{m}$. However, while the geometry is bounded, the coefficients of the Kolmogorov equations and the conditioning of the parabolic H\"ormander condition (specifically the constants in Definition \ref{def:UniformHormander}) are \emph{not} translation invariant. Indeed, the conditioning degenerates as an inverse power of $y$ and the coefficients of the Kolmogorov equation grow polynomially as $y \to \infty$. This is the source of the $q' > 0$ in \eqref{ineq:Gradpsip}.

For any $R,y_0$ fixed with $R = \abs{y_0}$, we consider one of the $J$ transformed Kolmogorov equations.
That is, if we let $\psi^j = \psi_p \circ \Phi_{j,y_0}^{-1}$ we obtain a transformed Kolmogorov equation
\begin{align*}
\mathcal{L}^{(y_0,j)}_p \psi^{j} := \mathcal{L}^{z,(y_0,j)} \psi^{j} - p \tilde{H}^{(j)} \psi^j = -\Lambda(p) \psi^j,
\end{align*}
where $\mathcal{L}^{z,(y_0,j)} = \tilde{X}_0^{(y_0,j)} + \frac{\eps}{2} \sum_{k=1}^r (\tilde{X_k}^{(y_0,j)})^2$ is the transformed generator of the transverse projective process and $\tilde{H}^{(j)}$ is the transformed perturbation term. Rearranging,
\begin{align*}
\mathcal{L}^{z,(y_0,j)} \psi^{j} = -\Lambda(p) \psi^j + p \tilde{H}^{(j)} \psi^j,
\end{align*}
which holds only in some open ball near the origin, which we denote $B(0,\delta) \subset \mathbb R^m$.
Due to the open overlap from one chart to another, there is some other $0 < \delta' < \delta $ on which it suffices to prove the further localized and transformed version of \eqref{ineq:Gradpsip},
\begin{align}
\norm{\grad \psi^j}_{L^\infty(B(0,\delta') \times \S^\perp_I )} \lesssim R^{q'}\norm{\psi^j}_{L^2(B(0,\delta) \times \S^\perp_I )}. \label{ineq:LocGoal}
\end{align}
Due to the bounded geometry, Lemma \ref{lem:HineqBall_hypo} applies to the family $\{\tilde{X}_k^{(y_0,j)}\}_{k=0}^r$ (where the constants depend only on $R$, not on $y_0$ or $j$), i.e. if we define the norms $\mathcal{H}$ and $\mathcal{H}^\ast$, Lemma \ref{lem:HineqBall_hypo} holds with constants independent independent of $y_0,j$ except through $R = \abs{y_0}$.
This is analogous to the observations used in [Lemma B.2; \cite{BBPS20}] and [Lemma 2.3; \cite{bedrossian2021quantitative}].

We will use Sobolev embedding to obtain $C^1$ regularity estimates.
Let $n_\ast$ be an integer with $n_\ast > \frac{1}{2s}m + \frac{1}{s}$.
Let $\set{r_k}_{k=0}^{n_\ast+1}$ be such that $\delta' < r_{n_\ast + 1} < ... < r_k < r_{k-1} < ... < r_0 < \delta$.
Next, consider the collection of concentric balls in $\mathbb R^{m}$, $B_k := B(0, r_k)$ with adapted cutoff functions $\chi_k \in C^\infty_c(B_{k})$ with $\chi_k \equiv 1$ on $B_{k+1}$ and $\abs{\grad^\ell \chi_k} \lesssim K^{\ell}$ for some constant $K$ depending on $\delta,\delta'$ and $n_\ast$.

Applying the cutoff to the localized Kolmogorov equation gives
\begin{align}
\mathcal{L}^{z,(y_0,j)} (\chi_1\psi^j) & = -[\mathcal{L}^{z,(y_0,j)},\chi_1]\psi^j - \Lambda(p) \chi_1 \psi^j + p \tilde{H}^{(j)} \chi_1 \psi^j. \label{eq:LocKol}
\end{align}
Note that
\begin{align*}
[\mathcal{L}^{z,(y_0,j)},\chi_1]\psi^j =2\sum_{k =1}^r \tilde{X}_k \chi_1 \tilde{X}_k \psi^j + (\sum_{k =1}^r \tilde{X}_k^2 \chi_1) \psi^j + (X_0\chi_1) \psi^j.
\end{align*}
Pairing the equation with $\chi_1\psi^j$ and integrating by parts gives
\begin{align*}
\norm{\chi_1\psi^j}_{\mathcal{H}} \lesssim R^2 \norm{\chi_0\psi^j}_{L^2(B(0,\delta))}.
\end{align*}
Pairing the equation with a test function $\varphi$ and integrating by parts again, gives the matching hypoelliptic estimate
\begin{align*}
\norm{\chi_1 \psi^j}_{\mathcal{H}^\ast} \lesssim R^2 \norm{\chi_1\psi^j}_{\mathcal{H}} \lesssim R^4 \norm{\chi_0\psi^j}_{L^2(B(0,\delta))}.
\end{align*}
Therefore, by Lemma \ref{lem:HineqBall_hypo}, we obtain
\begin{align*}
\norm{\chi_1 \psi^j}_{H^s} \lesssim R^{4+q} \norm{\chi_0\psi^j}_{L^2(B(0,\delta))}.
\end{align*}
Denote by $\brak{\grad}^s$ the Fourier multiplier given by
\begin{align*}
\widehat{\brak{\grad}^s f}(\xi) = (1 + \abs{\xi}^2)^{s/2} \hat{f}(\xi),
\end{align*}
where $\hat{g}(\xi) = \frac{1}{(2\pi)^{m/2}} \int_{\R^m} e^{-i y \cdot \xi} g(y) \dee y$ denotes the Fourier transform.
Note that it is classical that for any Schwartz class function $g$ on $\mathbb R^m$
\begin{align*}
\norm{g}_{H^s} \approx \norm{\brak{\grad}^s g}_{L^2}.
\end{align*}
To iterate, we will follow H\"ormander's approach in \cite{Hormander67} and apply the $\chi_{2}\brak{\grad}^{s}$ operator to both sides of \eqref{eq:LocKol} to obtain
\begin{align*}
\mathcal{L}^{z,(y_0,j)} \chi_2 \brak{\grad}^s (\chi_1\psi^j) & = - [\mathcal{L}^{z,(y_0,j)},\chi_2 \brak{\grad}^s] \chi_1\psi^j +  \chi_2 \brak{\grad}^s \left(pH^{(j)} \chi_1 \psi^j\right) \\ & \quad -\chi_2 \brak{\grad}^s [\mathcal{L}^{z,(y_0,j)},\chi_1]\psi^j + \chi_2 \brak{\grad}^s \left(pH^{(j)} \chi_1 \psi^j\right).
\end{align*}
Using standard commutator estimates for fractional derivatives and the quantitative hypoelliptic estimate Lemma \ref{lem:HineqBall_hypo}, we have
\begin{align*}
\norm{\chi_2 \brak{\grad}^s \chi_1 \psi^j}_{H^s} \lesssim R^{2+q+2s} \left(\norm{\chi_1\psi^j}_{H^s} + \norm{\chi_0\psi^j}_{L^2(B(0,\delta))} \right).
\end{align*}
By iterating this argument further, we have for some $q' > 0$,
\begin{align*}
\norm{\chi_{n_\ast} \brak{\grad}^s .... \brak{\grad}^s \chi_1 \psi^j}_{L^2} \lesssim_{n_\ast} R^{q'}\norm{\chi_0\psi^j}_{L^2(B(0,\delta))}.
\end{align*}
Next, recall the standard Sobolev space interpolation $\norm{\brak{\grad}^{sm} f}_{L^2} \lesssim \norm{\brak{\grad}^{s n_\ast} f}^\theta \norm{f}_{L^2}^{1-\theta}$ for any $m < n_\ast$ and some $\theta \in (0,1)$ depending on $s$, $m$, and $n_\ast$.
Applying this interpolation estimate and the standard commutator estimates for fractional derivatives, we therefore deduce
\begin{align*}
\norm{\brak{\grad}^{sn_\ast} (\chi_{n_\ast +1}\psi^j)}_{L^2} \lesssim_{n_\ast} R^{q'}\norm{\chi_0\psi^j}_{L^2(B(0,\delta))}.
\end{align*}
Therefore, by Sobolev embedding (using again that the geometry is uniformly bounded) and the choice of $n_\ast$, the desired estimate \eqref{ineq:LocGoal} holds.
\end{proof}

\appendix

\section{Super-Lyapunov estimate} \label{app:SuperLyap}

A key ingredient in establishing geometric ergodicity is controlling the behavior of the process $u_t$. This is achieved using the function $V_\eta(u) = e^{\eta |u|^2}$ for sufficiently small $\eta > 0$. This function acts as a Lyapunov function, ensuring the process does not escape to infinity. Specifically, it satisfies a \emph{drift condition} (sometimes referred to as a super-Lyapunov property due to its exponential form): there exists $\eta_0 > 0$ such that for all $\eta \in (0, \eta_0)$ and for all $\kappa > 0$, there exists a constant $C_\kappa \ge 0$ such that
\begin{align}
\mathcal{L} V_\eta \leq -\kappa V_\eta + C_\kappa, \label{ineq:LVeta_u}
\end{align}
where $\mathcal{L}$ is the generator of the Markov process $(u_t)$ given in \eqref{eq: L96-SDE}.
This property essentially guarantees that the process $u_t$ is positive recurrent and possesses moments related to $V_\eta$. Lemma \ref{lem:twisty} leverages this property, combined with It\^o calculus, to obtain strong estimates (related to large deviations) on the growth of quantities related to $V_\eta(u_t)$.

\begin{proof}[Proof of Lemma \ref{lem:twisty}]
The generator $\mathcal{L}$ for $u_t$ is
\[ \mathcal{L} \phi(u) = (B(u,u) - \eps u) \cdot \nabla \phi(u) + \frac{\eps}{2} \sum_{j \in I} \sigma_j^2 \partial^2_{u_j u_j} \phi(u). \]
Using the property $\langle B(u,u), u \rangle = 0$, a direct calculation yields
\begin{align*}
\mathcal{L} |u|^2 &= 2\langle B(u,u) - \eps u, u\rangle + \eps\sum_{j \in I} \sigma_j^2 \\
&= -2\eps |u|^2 + \eps C_\sigma,
\end{align*}
where $C_\sigma = \sum_{j \in I} |\sigma_j|^2$.

Let $\eta, \gamma \geq 0$, to be chosen later. By It\^o's formula applied to $X_t := \eta e^{\gamma t}|u|^2 = e^{\gamma t}\log V_\eta(u_t)$, the process
\begin{equation} \label{eq:Mt_def_L96_app}
	\begin{aligned}
M_t &:= X_t - X_0 - \int_0^t \eta e^{\gamma s}\left(\gamma|u_s|^2  + \mathcal{L}|u_s|^2\right)\ds\\
&= X_t - X_0 - (\gamma - 2\eps) \int_0^t X_s\ds - \eps \eta C_\sigma \frac{e^{\gamma t}-1}{\gamma}
	\end{aligned}
\end{equation}
is a continuous local martingale with $M_0=0$ and quadratic variation
\begin{align*}
\brak{M}_t
&= 4 \eta^2 \eps \int_0^t e^{2\gamma s} \sum_{j \in I} \sigma_j^2 (u_s)_j^2 \ds \\
&\leq 4 \eta^2 \eps \left(\max_{k \in I} |\sigma_k|^2\right) \int_0^t e^{2\gamma s} |u_s|^2 \ds = \mathcal{Q}_* \eta \eps \int_0^t e^{\gamma s} X_s \ds,
\end{align*}
where $\mathcal{Q}_* = 4 (\max_{k \in I} |\sigma_k|^2)$.

We use the exponential martingale inequality (see, e.g., \cite[Chapter IV, Corollary 3.4]{RevuzYor-Continuous-2010a}): for a continuous local martingale $M_t$ with $M_0=0$,
\begin{equation}\label{eq:exp_mart_ineq_L96_app}
\EE\left[\exp\left( \sup_{0 \le t \le T} (M_t - \brak{M}_t)\right)\right] \le 2.
\end{equation}
From \eqref{eq:Mt_def_L96_app}, using the bound on $\brak{M}_t$, we have for $t\in [0,T]$
\begin{align*}
M_t - \brak{M}_t &\geq X_t - X_0 - (\gamma - 2\eps) \int_0^t X_s\ds - \eps \eta C_\sigma \frac{e^{\gamma t}-1}{\gamma} - \mathcal{Q}_* \eta \eps \int_0^t e^{\gamma s} X_s \ds \\
&\geq X_t - X_0 + (2\eps- \gamma - \mathcal{Q}_* \eta \eps e^{\gamma T}) \int_0^t X_s\ds - \eps \eta C_\sigma \frac{e^{\gamma T}-1}{\gamma}.
\end{align*}
Choosing $\gamma_* = \eps$ and $\eta_* = \frac{1}{2\mathcal{Q}_*}$ ensures that $2\eps - \gamma - \mathcal{Q}_* \eta \eps e^{\gamma T} > \eps/2$ for $\eta$ and $\gamma$ satisfying $0\leq \gamma <\gamma_*$ and $0 \leq \eta e^{\gamma T} \leq \eta_*$.

Applying the exponential martingale inequality \eqref{eq:exp_mart_ineq_L96_app}, we have
\begin{equation}\label{eq:Martingale-estimate}
\EE\exp\left( \sup_{0 < t < T} \left(X_t + \frac{\eps}{2}\int_0^t X_s\ds\right)\right) \lesssim_{T,\gamma,\eta} \exp(X_0),
\end{equation}
or equivalently,
\[
\EE\left[\exp\left(\frac{\eta\eps}{2}\int_0^Te^{\gamma s} |u_s|^2 \ds\right) \sup_{0<t<T}V_{e^{\gamma t}\eta}(u_t)\right] \lesssim_{T,\gamma, \eta} V_\eta(u_0).
\]
The above estimate is in fact stronger than what is stated in Lemma~\ref{lem:twisty}. For any $c>0$, using Young's inequality, there exists a $C' = C'(c,\eta,\eps) >0$ such that
\[
c|u_t| \leq \frac{\eta\eps}{2}|u_t|^2 + C',
\]
which allows us to extract the desired $e^{c \int_0^T |u_s| \ds}$ factor in the lemma statement.

The uniform in $\epsilon$ estimate follows from the observation that the constant on the right hand side of \eqref{eq:Martingale-estimate} is of the form $C = \exp(\eta C_\sigma (e^{\eps T}- 1)) \leq \exp(\eta C_\sigma (e^T - 1))$, when $\epsilon \in (0,1]$.
\end{proof}
The above estimates provide moment estimates on the transverse matrix process.
\begin{corollary} \label{cor:twistyFC}
For any $T, p > 0$ and $\eta \in [0,\eta_0)$ there holds
\begin{align*}
\EE \sup_{t \in [0,T]} \left(\abs{A^t_\perp}^{-p} + \abs{A^{t}_\perp}^p\right) V(y_t) \lesssim_{p, T,\eta} V(y).
\end{align*}
and in particular
\begin{align}
\EE \sup_{t \in (0,1)} \int_{H_{I}} \log \abs{A^{t}_\perp} \dee \mu^I(u) + \EE \sup_{t \in (0,1)} \int_{H_{I}} \log \abs{(A^{t}_\perp)^{-1}} \dee \mu^I(u) <\infty. \label{ineq:Aintegrable}
\end{align}
\end{corollary}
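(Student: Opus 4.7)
The plan is to bound $\|A^t_\perp\|^{\pm p}$ pathwise via Grönwall (exploiting the bilinearity of $B$) and then invoke Lemma \ref{lem:twisty} directly with $c = pC$. The structure of the SDE on $H_I$ does most of the work: restricted to the invariant subspace, the only source of cocycle growth is the linear term $DB(y_t)$, whose operator norm is bounded by $C|y_t|$ for a structural constant $C$ depending only on $B$.

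First, from \eqref{eq:LinearizedODE_perp_recap} the cocycle satisfies $\frac{\dee}{\dt}A^t_\perp = L_t A^t_\perp$ with $L_t := DB(y_t)|_{H_I^\perp} - \eps \operatorname{Id}$. Since $\|L_t\| \le C|y_t| + \eps$, differentiating $|A^t_\perp v|^2$ for unit $v \in H_I^\perp$ and applying Grönwall in both directions yields $e^{-\int_0^T(C|y_s|+\eps)\dee s} \le |A^T_\perp v| \le e^{\int_0^T(C|y_s|+\eps)\dee s}$. Taking suitable suprema over $v$ then gives the pathwise bound
\[
\sup_{t\in[0,T]}\left(\|A^t_\perp\|^p + \|(A^t_\perp)^{-1}\|^p\right) \lesssim_{p,T,\eps} \exp\!\left(pC\int_0^T|y_s|\dee s\right).
\]
Multiplying by $\sup_{t\in[0,T]}V_\eta(y_t)$, observing $V_\eta(y_t) \le V_{\eta e^{\gamma t}}(y_t)$ for $\gamma, t \ge 0$, and taking expectation, Lemma \ref{lem:twisty} applied with $c = pC$ delivers the bound by $V_\eta(y)$, provided $\gamma < \gamma_\ast$ is fixed and $\eta < \eta_0 := \eta_\ast e^{-\gamma T}$. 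This is the first claim.

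For \eqref{ineq:Aintegrable}, the elementary inequality $|\log x| \le \tfrac{1}{p}(x^p + x^{-p})$ for $x, p > 0$, combined with $\|A^t_\perp\|^{-1} \le \|(A^t_\perp)^{-1}\|$ (since $\|A\|\|A^{-1}\| \ge 1$), reduces $\sup_{t\in(0,1)}|\log\|A^t_\perp\||$ to a quantity controlled by the first claim (with $T = 1$ and any fixed $p > 0$). Tonelli's theorem applied to the nonnegative integrand $|\log\|A^t_\perp\||$ then gives
\[
\E\sup_{t\in(0,1)}\int_{H_I}|\log\|A^t_\perp\||\,\dee\mu^I(y) \le \int_{H_I}\E\sup_{t\in(0,1)}|\log\|A^t_\perp\||\,\dee\mu^I(y) \lesssim \int_{H_I}V_\eta\,\dee\mu^I < \infty,
\]
the last integral being finite because $\mu^I$ is a Gaussian on $H_I$ and $\eta$ can be taken below any prescribed Gaussian moment threshold. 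The same argument controls $\log\|(A^t_\perp)^{-1}\|$.

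The main (minor) subtlety is the lower Grönwall estimate on $|A^t_\perp v|$ required for the $\|(A^t_\perp)^{-1}\|$ bound; this follows from $\frac{\dee}{\dt}|A^t_\perp v|^2 = 2\langle A^t_\perp v, L_t A^t_\perp v\rangle \ge -2\|L_t\||A^t_\perp v|^2$ and Grönwall. Everything else is bookkeeping: the bilinearity of $B$ is the decisive structural ingredient, and Lemma \ref{lem:twisty} is tailor-made to absorb the resulting $\exp(c\int_0^T|y_s|\dee s)$ factor alongside the supremum of $V_\eta$.
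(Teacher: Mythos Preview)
Your proof is correct and follows essentially the same approach as the paper: a Gr\"onwall argument gives the pathwise two-sided exponential bound $e^{-c\int_0^t|y_s|\,\dee s} \le \|A^t_\perp\| \le e^{c\int_0^t|y_s|\,\dee s}$, and Lemma~\ref{lem:twisty} is then invoked to absorb the $\exp(c\int_0^T|y_s|\,\dee s)$ factor together with $\sup_t V_\eta(y_t)$. Your write-up is in fact more detailed than the paper's terse sketch, making explicit the lower Gr\"onwall step for $\|(A^t_\perp)^{-1}\|$ and spelling out the $|\log x|\le p^{-1}(x^p+x^{-p})$ reduction for \eqref{ineq:Aintegrable}.
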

\begin{proof}
It is easy to show by a Gr\"{o}nwall argument that the norm of the linearization $A_y^t$ can be bounded crudely by an exponential involving the integral of $|y_s|$:
\begin{align*}
	e^{-c \int_0^t \abs{y_s} \dee s}\leq \abs{A_\perp^{t}} \leq e^{c \int_0^t \abs{y_s} \dee s},
\end{align*}
where $c> 0$ is some constant. The estimate in Lemma \ref{lem:twisty} then yields the desired moment bounds on $\sup_{t \in [0,T]} (\abs{A^t}^{-p} + \abs{A^{t}}^p) V_\eta(y_t)$. The integrability condition \eqref{ineq:Aintegrable} follows similarly considering $\log |A^t|$ and integrating against $\mu^I$, using Lemma \ref{lem:twisty} leveraging the moment bounds provided by $V_\eta$ for $\mu^I$.
\end{proof}

\section{Appendix: Control Theory and Irreducibility}\label{app:control_theory}

This appendix establishes a sufficient condition for the topological irreducibility of the Stratonovich SDE
\begin{align}\label{eq:sde-appendix}
  \mathrm{d} x_t = X_0(x_t)\,\mathrm{d}t + \sum_{k=1}^r X_k(x_t)\circ \mathrm{d} W_t^k
\end{align}
on an analytic, connected manifold $M$. We assume $\{X_0, \dots, X_r\} \subset \mathfrak{X}(M)$ are analytic and complete vector fields, and that the SDE \eqref{eq:sde-appendix} admits a global flow. Specifically the goal of this section is to prove Proposition~\ref{prop:irreducibility-general}, which states that if the vector fields satisfy the restricted parabolic Hörmander condition and a certain cancellation condition, then the SDE is topologically irreducible.

This result is likely known among experts in SDEs and geometric control theory, and shares a lot of similarities to the setting of polynomial drifts (see e.g.~\cite{Jurdjevic1985-cr},~\cite{HerzogMattingly-Practical-2015y}). Nonetheless, we could not find a proof in the literature and so we provide a proof here for completeness.

The proof relies on connecting the SDE's properties to the controllability of an associated deterministic control system via the Stroock-Varadhan support theorem~\cite{Stroock1972-nc}. We follow the framework of geometric control theory, primarily based on the monograph by Jurdjevic~\cite{JurdGCT}.

\subsection{Control System and Controllability}

Consider the affine control system associated with \eqref{eq:sde-appendix}{}:
\begin{align}\label{eq:control-sys-appendix}
    \dot{x}_t = X_0(x_t) + \sum_{k=1}^r X_k(x_t) u^k_t,
\end{align}
where $u = (u^1, \dots, u^r)$ is a control function. The dynamics can be viewed as being generated by the specific family of vector fields
\[
\mathcal{F}_0 := \{X_0 + X \mid X \in \mathcal{X}\}, \quad \text{where } \mathcal{X} = \mathrm{span}\{X_1,\ldots,X_r\}.
\]
A trajectory of this system using piecewise constant controls is a curve obtained by concatenating integral curves $t \mapsto e^{tY}x$ for fields $Y \in \mathcal{F}_0$. The Lie algebra generated by $\mathcal{F}_0$ is denoted $\mathrm{Lie}(\mathcal{F}_0)$.

\begin{definition}[Accessibility and Controllability]
Let $x \in M$ and $t > 0$.
\begin{enumerate}
    \item The \textit{time-$t$ accessible set} from $x$ for the system $\mathcal{F}_0$ is
    \[
    \mathcal{A}_x^t(\mathcal{F}_0) := \left\{e^{t_nY_n}\cdots e^{t_1Y_1}x \mid Y_i \in \mathcal{F}_0, t_i>0, \sum t_i=t, n \ge 1\right\}.
    \]
    \item The \textit{set reachable by time $t$} from $x$ for $\mathcal{F}_0$ is $\mathcal{A}_x^{\leq t}(\mathcal{F}_0) := \bigcup_{0 < s \leq t} \mathcal{A}_x^s(\mathcal{F}_0)$.
    \item The system $\mathcal{F}_0$ is \textit{strongly controllable} if $\mathcal{A}_x^{\leq t}(\mathcal{F}_0) = M$ for all $x \in M, t > 0$.
    \item The system $\mathcal{F}_0$ is \textit{exactly controllable} if $\mathcal{A}_x^t(\mathcal{F}_0) = M$ for all $x \in M, t>0$.
\end{enumerate}
\end{definition}

The link between the SDE's support and the control system's reachability is given by the Support Theorem:

\begin{theorem}[Support Theorem~\cite{Stroock1972-nc}]\label{thm:support}
If the family $\mathcal{F}_0$ associated with the SDE \eqref{eq:sde-appendix}{} (with analytic vector fields and global flow) is exactly controllable, then the process $(x_t)$ is topologically irreducible, meaning $P(x_t \in O \mid x_0 = x) > 0$ for all $x \in M$, $t>0$, and any non-empty open set $O \subset M$.
\end{theorem}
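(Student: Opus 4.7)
The plan is to invoke the classical Stroock-Varadhan support theorem in its path-space form and combine it with the exact controllability hypothesis. The key probabilistic input, which I would cite from \cite{Stroock1972-nc}, is that for any $T > 0$ and $x \in M$, the topological support of the law of the path $(x_s)_{s\in[0,T]}$ in $C([0,T], M)$ (uniform topology) coincides with the closure of
\[
\mathcal{T}_x^T := \{\varphi \in C([0,T], M) : \dot{\varphi} = X_0(\varphi) + \sum_{k=1}^r u^k(s) X_k(\varphi),\ u \in C^\infty,\ \varphi(0) = x\}.
\]
The proof of this input rests on Wong-Zakai-type approximation of Brownian motion by piecewise-linear paths, with the Stratonovich interpretation being precisely what allows the approximation to pass to the limit without a correction term; I would not reproduce it here.

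Given this input, the remaining argument is purely control-theoretic. Fix $x \in M$, $t > 0$, and a non-empty open set $O \subset M$. By exact controllability, $\mathcal{A}_x^t(\mathcal{F}_0) = M$, so there exists $y \in O$ together with a piecewise-constant control whose associated trajectory $\varphi$ satisfies $\varphi(0)=x$ and $\varphi(t)=y$. I next argue that $\varphi$ lies in the closure of $\mathcal{T}_x^t$. This is a standard approximation step: piecewise-constant controls may be approximated in $L^1([0,t], \mathbb{R}^r)$ by smooth ones, and under the standing analyticity and completeness assumptions on $\{X_0, \dots, X_r\}$ the control-to-trajectory map is continuous from $L^1$ into $C([0,t], M)$ (via Gronwall applied in local charts along $\varphi$, using the boundedness of the approximating controls). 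Hence there exist smooth-control trajectories $\varphi^{(n)} \in \mathcal{T}_x^t$ with $\varphi^{(n)} \to \varphi$ uniformly on $[0,t]$.

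Continuity of the time-$t$ evaluation map $\pi_t : C([0,t], M) \to M$ then gives $\varphi^{(n)}(t) \to y \in O$, so $\varphi^{(n)} \in \pi_t^{-1}(O)$ for all $n$ sufficiently large, and $\pi_t^{-1}(O)$ is open in $C([0,t], M)$. Since each $\varphi^{(n)}$ belongs to the support of the law of $(x_s)_{s \in [0,t]}$, that law assigns positive mass to $\pi_t^{-1}(O)$, which is precisely $\P_x(x_t \in O) > 0$. The main obstacle is really the support theorem itself, which is the substantive probabilistic input and is invoked as a black box; the remainder is a soft controllability-plus-approximation argument. The one genuine subtlety worth flagging is that completeness of the $X_k$ and the global existence of the stochastic flow are exactly what ensures the approximating trajectories $\varphi^{(n)}$ are defined on all of $[0,t]$, so that uniform convergence on $[0,t]$ is well-posed.
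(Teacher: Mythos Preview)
Your argument is correct and is the standard derivation of topological irreducibility from the path-space support theorem plus exact controllability. Note, however, that the paper does not give its own proof of this statement: Theorem~\ref{thm:support} is stated as a citation of the Stroock--Varadhan result and used as a black box, so there is no in-paper proof to compare against. Your write-up essentially supplies the missing bridge (support characterization $\Rightarrow$ positive hitting probability via the evaluation map), which is exactly what one would expect a reader to fill in.
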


Our strategy is thus to find conditions ensuring exact controllability of $\mathcal{F}_0$.

\subsection{Lie Saturate and Strong Controllability}

The concept of the Lie saturate is crucial for analyzing strong controllability. Below, $\mathrm{cl}(K)$ denotes the topological closure of a set $K \subset M$.

\begin{definition}[Lie Saturate]
Two families $\mathcal{F}, \mathcal{G} \subseteq \mathfrak{X}(M)$ are equivalent, denoted $\mathcal{F} \sim \mathcal{G}$, if $\mathrm{cl}(\mathcal{A}_x^{\leq t}(\mathcal{F})) = \mathrm{cl}(\mathcal{A}_x^{\leq t}(\mathcal{G}))$ for all $x \in M, t > 0$. The (strong) \textit{Lie saturate} of a family $\mathcal{F}$, denoted $\mathrm{LS}(\mathcal{F})$, is the largest subset of $\mathrm{Lie}(\mathcal{F})$ equivalent to $\mathcal{F}$.
\end{definition}

\begin{theorem}[\cite{JurdGCT}, Ch~3, Thm~12]\label{thm:LS-strong-control}
The family $\mathcal{F}_0$ is strongly controllable if and only if its Lie saturate spans the tangent space everywhere: $\mathrm{LS}(\mathcal{F}_0)(x) := \{Y(x) \mid Y\in \mathrm{LS}(\mathcal{F}_0)\} = T_xM$ for all $x \in M$.
\end{theorem}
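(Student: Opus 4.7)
The plan is to prove the two implications separately, drawing on Sussmann's orbit theorem and the closure properties that define the Lie saturate.

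For the forward direction $(\Rightarrow)$, suppose $\mathcal{F}_0$ is strongly controllable, so that $\mathcal{A}_x^{\le t}(\mathcal{F}_0) = M$ for every $x \in M$ and $t>0$. By the equivalence $\mathrm{LS}(\mathcal{F}_0) \sim \mathcal{F}_0$, the closure of $\mathcal{A}_x^{\le t}(\mathrm{LS}(\mathcal{F}_0))$ likewise equals $M$. Applying Sussmann's orbit theorem in the analytic setting, the tangent space to the orbit of $\mathrm{LS}(\mathcal{F}_0)$ through $x$ equals $\mathrm{Lie}(\mathrm{LS}(\mathcal{F}_0))(x)$; since this orbit is all of $M$, the Lie algebra evaluates to $T_x M$. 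One then exploits the closure of $\mathrm{LS}$ under convex combinations, normalizer action, and positive scaling to upgrade the Lie-algebra span to the span of $\mathrm{LS}(\mathcal{F}_0)(x)$ itself.

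For the backward direction $(\Leftarrow)$, which is the main content, assume $\mathrm{LS}(\mathcal{F}_0)(x) = T_x M$ at every $x$. The strategy is to first establish local strong controllability near an arbitrary point and then extend globally using connectedness of $M$. Given $x_0 \in M$ and $t > 0$, choose vector fields $Y_1, \dots, Y_n \in \mathrm{LS}(\mathcal{F}_0)$ with $n = \dim M$ whose evaluations at $x_0$ span $T_{x_0} M$. The smooth map $(t_1, \dots, t_n) \mapsto e^{t_n Y_n} \circ \cdots \circ e^{t_1 Y_1}(x_0)$ has full-rank Jacobian at the origin, so by the inverse function theorem its image contains a neighborhood of $x_0$ when $t_i \ge 0$ are small with $\sum t_i < t$. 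This yields open reachable sets in arbitrarily short time for $\mathrm{LS}(\mathcal{F}_0)$, and hence (passing to closures and invoking the equivalence) for $\mathcal{F}_0$ itself. A standard chaining argument along paths in the connected manifold $M$ then promotes local reachability to global strong controllability.

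The main obstacle is the careful justification of the closure properties of $\mathrm{LS}$ used in both directions: specifically, one must verify that positive-time reachability in the $Y_i$ flows suffices, which typically requires showing that $\mathrm{LS}(\mathcal{F}_0)$ contains generators in both the $+Y$ and $-Y$ directions whenever a Lie bracket would call for reversing time. This is where the $\mathrm{Ad}$-invariance and convex-cone structure of the strong Lie saturate become essential, and it is the technical heart of the argument (as developed in detail in Jurdjevic's monograph).
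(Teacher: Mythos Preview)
The paper does not prove this theorem; it is quoted directly from Jurdjevic's monograph (Chapter~3, Theorem~12) and used as a black box. There is therefore no in-paper proof to compare your proposal against.

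That said, your backward-direction sketch contains a genuine gap. The map $(t_1,\dots,t_n)\mapsto e^{t_nY_n}\cdots e^{t_1Y_1}(x_0)$ is indeed a local diffeomorphism near the origin, but restricting to the positive orthant $\{t_i\ge 0\}$ you obtain only the image of a corner, not a full neighborhood of $x_0$. So the inverse function theorem alone does not give you that the forward reachable set contains an open set around $x_0$. The standard remedy is not to arrange $\pm Y_i\in\mathrm{LS}(\mathcal{F}_0)$ (the Lie saturate is a convex cone, not a subspace, so this need not hold), but rather to invoke Krener's accessibility theorem: under the Lie algebra rank condition the reachable set $\mathcal{A}_x^{\le t}$ has nonempty interior which is moreover dense in $\mathcal{A}_x^{\le t}$. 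One then combines this interior/density statement with the equivalence $\mathrm{cl}(\mathcal{A}_x^{\le t}(\mathcal{F}_0))=\mathrm{cl}(\mathcal{A}_x^{\le t}(\mathrm{LS}(\mathcal{F}_0)))$ and a back-and-forth argument using the time-reversed family to conclude $\mathcal{A}_x^{\le t}(\mathcal{F}_0)=M$. You flag this difficulty at the end and defer to Jurdjevic for the resolution, which is reasonable given that the paper does exactly the same thing --- but your outline, as written, misidentifies the mechanism (it is not about having both signs of the $Y_i$ in $\mathrm{LS}$).
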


For analytic vector fields, the Lie saturate has important structural properties:
\begin{proposition}[Properties of LS~\cite{JurdGCT}, Ch~3]\label{prop:LS-properties}
Let $\mathcal{F} \subseteq \mathfrak{X}(M)$ be a family generated by analytic vector fields. Then $\mathrm{LS}(\mathcal{F})$ satisfies:
\begin{enumerate}
    \item {\em Convexity and Closure:} $\mathrm{LS}(\mathcal{F})$ is a closed convex cone in the $C^\infty(M)$ topology.
    \item {\em Lie Subalgebra Generation:} If $\mathcal{V} \subseteq \mathrm{LS}(\mathcal{F})$ is a vector subspace, then $\mathrm{Lie}(\mathcal{V}) \subseteq \mathrm{LS}(\mathcal{F})$.
    \item {\em Invariance under Flows:} If $\pm X \in \mathrm{LS}(\mathcal{F})$ and $Y \in \mathrm{LS}(\mathcal{F})$, then the pushforward $(e^{\alpha X})_\sharp Y \in \mathrm{LS}(\mathcal{F})$ for all $\alpha \in \mathbb{R}$.
\end{enumerate}
\end{proposition}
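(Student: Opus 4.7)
Each of the three parts reduces to showing that a specific vector field $Z$ belongs to $\mathrm{LS}(\mathcal{F})$. By the definition of $\mathrm{LS}$ as the largest subset of $\mathrm{Lie}(\mathcal{F})$ equivalent to $\mathcal{F}$, this amounts to checking that adjoining $Z$ to $\mathrm{LS}(\mathcal{F})$ does not enlarge the closure of accessible sets, i.e., that every trajectory generated by $Z$ lies in $\mathrm{cl}(\mathcal{A}_x^{\leq t}(\mathrm{LS}(\mathcal{F})))$. The plan, uniform across all three parts, is to express $e^{tZ}$ as a uniform-on-compact-sets limit of compositions of flows generated by fields already in $\mathrm{LS}(\mathcal{F})$; each iterate in such a composition is accessible from $\mathrm{LS}(\mathcal{F})$, so by the closure part the limit lies in the closed accessible set as required.

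\textbf{Part (1):} The $C^\infty$-closure follows from continuous dependence of flows on their generating vector fields, and the positive cone property from the time rescaling $e^{t(\alpha Y)} = e^{(\alpha t) Y}$ for $\alpha > 0$, which merely reparametrizes time inside the union $\bigcup_{s > 0} \mathcal{A}_x^s$. The substantive content is convexity, proved via the Trotter product formula
\[ e^{t(\alpha X + (1-\alpha) Y)}(x) = \lim_{n \to \infty} \bigl( e^{(t\alpha/n) X} \circ e^{(t(1-\alpha)/n) Y} \bigr)^n (x), \]
valid uniformly on compacts, so that each iterate is accessible from $\{X,Y\} \subset \mathrm{LS}(\mathcal{F})$ and the limit belongs to the closure just established. \textbf{Part (2):} The analogous ingredient is the commutator flow formula
\[ e^{t [X,Y]}(x) = \lim_{n \to \infty} \bigl( e^{-\sqrt{t/n}\, Y} \circ e^{-\sqrt{t/n}\, X} \circ e^{\sqrt{t/n}\, Y} \circ e^{\sqrt{t/n}\, X} \bigr)^n (x). \]
Here the approximating sequence uses the backward flows $e^{-sX}, e^{-sY}$, which is exactly why $\mathcal{V}$ must be a vector subspace: the hypothesis $\pm X, \pm Y \in \mathcal{V} \subseteq \mathrm{LS}(\mathcal{F})$ makes these backward trajectories accessible. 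Combined with part (1), this puts $[X,Y] \in \mathrm{LS}(\mathcal{F})$ for all $X,Y \in \mathcal{V}$; inductively (using convexity and closure at each stage to keep each new bracket space a vector subspace of $\mathrm{LS}(\mathcal{F})$), every iterated bracket lies in $\mathrm{LS}(\mathcal{F})$, yielding $\mathrm{Lie}(\mathcal{V}) \subseteq \mathrm{LS}(\mathcal{F})$. \textbf{Part (3):} No limit is required; the identity $e^{t (e^{\alpha X})_\sharp Y} = e^{\alpha X} \circ e^{tY} \circ e^{-\alpha X}$ expresses the flow of $(e^{\alpha X})_\sharp Y$ exactly as a concatenation of flows generated by $\pm X$ and $Y$, all of which lie in $\mathrm{LS}(\mathcal{F})$ by hypothesis.

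\textbf{Main obstacle.} The principal technical burden is justifying the Trotter and commutator approximations with uniform convergence on compacts, with the commutator formula being the more delicate, since its per-step error is of order $(t/n)^{3/2}$ (rather than $(t/n)^2$ as for Trotter), so the telescoping error is $O(t^{3/2}/\sqrt{n})$ and must be controlled carefully along the accessibility trajectory in the manifold $M$. Analyticity and completeness of $X_0,\ldots, X_r$ ensure all flows involved are global and the approximation schemes converge uniformly on compact subsets; once these estimates are in hand, the remaining structural deductions are formal consequences of the maximality characterization of $\mathrm{LS}(\mathcal{F})$, and the full details follow \cite{JurdGCT}, Ch.~3.
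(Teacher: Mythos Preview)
The paper does not give its own proof of this proposition; it is stated with a citation to Jurdjevic's monograph and used as a black box. Your sketch is the standard argument one finds there---Trotter products for convexity, the commutator approximation for bracket generation, and the conjugation identity $e^{t(e^{\alpha X})_\sharp Y}=e^{\alpha X}\circ e^{tY}\circ e^{-\alpha X}$ for flow invariance---and is correct in outline, so there is nothing substantive to compare.
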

\textit{Note:} Above, for a diffeomorphism $\phi : M \to M$ we have written $\phi_\sharp$ for the \emph{pushforward} by $\phi$, given at $y \in M$ by $(\phi_\sharp Y)(y) := d\phi_{\phi^{-1}(y)}(Y(\phi^{-1}(y)))$.

\subsection{Zero-Time Ideal}

While the Lie saturate helps characterize strong controllability (Theorem~\ref{thm:LS-strong-control}). Theorem~\ref{thm:support} requires exact controllability to establish topological irreducibility. The concept of the zero-time ideal provides a link between these two notions of controllability. For analytic vector fields, the structure of the tangent space directions reachable in arbitrarily small time is captured by the zero-time ideal. In essence, while the Lie saturate captures the directions reachable in finite time, the zero-time ideal captures the directions reachable in infinitesimal time. Heuristically, this allows us first reach a certain point before time $t$ and then to ``dither'' in place until time $t$.

\begin{definition}[Derived Algebra and Zero-Time Ideal,~\cite{JurdGCT} Ch~2, Def~11--12]\label{def:zero-time-ideal}
Let $\mathcal{F} \subseteq \mathfrak{X}(M)$ be a family of analytic vector fields.
\begin{enumerate}
    \item The \textit{derived algebra} of $\mathcal{F}$, denoted $D(\mathcal{F})$, is the ideal of $\mathrm{Lie}(\mathcal{F})$ generated by all iterated brackets of elements of $\mathcal{F}$ (but not the elements of $\mathcal{F}$ themselves). That is,
    \[
	D(\mathcal{F}) = \mathrm{span}\left\{\mathrm{ad}_{Z_{1}}\mathrm{ad}_{Z_2}\ldots \mathrm{ad}_{Z_{k-1}}Z_k \mid k \ge 2, Z_i \in \mathcal{F}\right\}.
	\]
    \item The \textit{zero-time ideal} of $\mathcal{F}$, denoted $I(\mathcal{F})$, is the linear span of $D(\mathcal{F})$ and all differences $X-Y$ where $X, Y \in \mathcal{F}$. That is,
    \[ I(\mathcal{F}) = \mathrm{span}\{D(\mathcal{F}) \cup \{X-Y \mid X, Y \in \mathcal{F}\}\}. \]
\end{enumerate}
 We note that the zero-time ideal $I(\mathcal{F})$ is a Lie ideal\footnote{Let $\mathfrak{g}$ be a real Lie algebra and let $\mathfrak{h} \subset \mathfrak{g}$ be a sub-Lie algebra. We say that $\mathfrak{h}$ is a \emph{Lie algebra ideal}, or \emph{Lie ideal}  for short, if $[\mathfrak{h}, \mathfrak{g}] \subset \mathfrak{h}$.  } of $\mathrm{Lie}(\mathcal{F})$. We denote its evaluation at $x$ by $I(\mathcal{F})(x) = \{Y(x) \mid Y \in I(\mathcal{F})\}$.
\end{definition}

For our specific system $\mathcal{F}_0 = X_0+\mathrm{span}\{X_i \mid i=1,\ldots r\}$, the differences span $\mathcal{X} = \mathrm{span}\{X_i \mid i=1,\ldots r\}$. This provides a convenient identification of $I(\mathcal{F}_0)$ with the Lie algebra generated by $\mathcal{S} = \{ \mathrm{ad}_{X_0}^k X_j | 1 \leq j \leq r, k \geq 0\}$ appearing in the parabolic H\"ormander condition (Definition \ref{def:parabolic_hormander}).
\begin{lemma}\label{lem:zero-ideal-relation} Let $\mathcal{F}_0 = X_0+\mathrm{span}\{X_i \mid i=1,\ldots r\}$.
Then,
\[
I(\mathcal{F}_0) = \mathrm{Lie}(\mathcal{S}) \, .
\]
\end{lemma}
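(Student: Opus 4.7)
The plan is to leverage the characterization recalled in the Remark preceding Definition~\ref{def:parabolic_hormander}, namely that $\mathrm{Lie}(\mathcal{S})$ coincides with the Lie ideal $\mathcal{I}$ generated by $\mathcal{X} := \mathrm{span}\{X_1, \ldots, X_r\}$ inside $\mathrm{Lie}(X_0, X_1, \ldots, X_r)$. Since $X_0 = X_0 + 0 \in \mathcal{F}_0$ and each $X_j = (X_0 + X_j) - X_0$ expresses $X_j$ as a difference of elements of $\mathcal{F}_0$, we have $\mathrm{Lie}(\mathcal{F}_0) = \mathrm{Lie}(X_0, X_1, \ldots, X_r)$. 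The task therefore reduces to establishing $I(\mathcal{F}_0) = \mathcal{I}$ as Lie ideals of this common ambient algebra.

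For the inclusion $\mathcal{I} \subseteq I(\mathcal{F}_0)$, the first step is to observe $\mathcal{X} \subseteq I(\mathcal{F}_0)$ directly from the difference identity above. By minimality of $\mathcal{I}$, it then suffices to verify that $I(\mathcal{F}_0) = D(\mathcal{F}_0) + \mathcal{X}$ is itself an ideal of $\mathrm{Lie}(\mathcal{F}_0)$. The $D(\mathcal{F}_0)$ summand is an ideal by its defining property, while for the $\mathcal{X}$ summand and any $Z \in \mathrm{Lie}(\mathcal{F}_0)$, the identity $[Z, X_j] = [Z, X_0 + X_j] - [Z, X_0]$ writes $[Z, X_j]$ as a difference of brackets of $Z$ with $\mathcal{F}_0$-elements. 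Decomposing $Z$ into its linear component in $\mathrm{span}(\mathcal{F}_0)$ (whose bracket with an $\mathcal{F}_0$-element lies in $D(\mathcal{F}_0)$ directly, as an $\mathrm{ad}_{Z_1}Z_2$-type bracket with $Z_i \in \mathcal{F}_0$) and its $D(\mathcal{F}_0)$ component (which absorbs brackets with anything in $\mathrm{Lie}(\mathcal{F}_0)$) closes the argument.

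For the reverse inclusion $I(\mathcal{F}_0) \subseteq \mathcal{I}$, the $\mathcal{X}$ summand is contained in $\mathcal{I}$ by definition, so the content reduces to showing $D(\mathcal{F}_0) \subseteq \mathcal{I}$. A generic generator of $D(\mathcal{F}_0)$ takes the form $\mathrm{ad}_{Z_1} \cdots \mathrm{ad}_{Z_{k-1}} Z_k$ with $Z_i = X_0 + Y_i$ and $Y_i \in \mathcal{X}$. Expanding the innermost bracket bilinearly yields $[Z_{k-1}, Z_k] = [X_0, Y_k] - [X_0, Y_{k-1}] + [Y_{k-1}, Y_k]$, which lies in $\mathcal{I}$ because $Y_{k-1}, Y_k \in \mathcal{X} \subseteq \mathcal{I}$ and $\mathcal{I}$ is an ideal of $\mathrm{Lie}(\mathcal{F}_0)$. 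Successive adjoint actions by the remaining $Z_j \in \mathrm{Lie}(\mathcal{F}_0)$ preserve membership in $\mathcal{I}$, again by the ideal property.

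The main obstacle is essentially bookkeeping rather than any deep idea: the only nontrivial point is the ideal-verification step for $I(\mathcal{F}_0)$, which requires splitting $Z \in \mathrm{Lie}(\mathcal{F}_0)$ into components that bracket into $D(\mathcal{F}_0)$ either directly (when $Z \in \mathrm{span}(\mathcal{F}_0)$) or via the ideal property (when $Z \in D(\mathcal{F}_0)$). Everything else follows by careful unwinding of the definitions of $D(\mathcal{F}_0)$, $I(\mathcal{F}_0)$, and $\mathcal{I}$.
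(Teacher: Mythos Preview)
Your proposal is correct and follows essentially the same approach as the paper: both directions hinge on identifying $\mathrm{Lie}(\mathcal{S})$ with the ideal generated by $\mathcal{X}$ in $\mathrm{Lie}(\mathcal{F}_0)$, showing $\mathcal{X}\subseteq I(\mathcal{F}_0)$ and that $I(\mathcal{F}_0)$ is an ideal for one inclusion, and expanding the innermost bracket $[Z_{k-1},Z_k]=[X_0,Y_k]-[X_0,Y_{k-1}]+[Y_{k-1},Y_k]\in\mathcal{I}$ then propagating via the ideal property for the other. The only cosmetic difference is that the paper invokes the note in Definition~\ref{def:zero-time-ideal} that $I(\mathcal{F}_0)$ is a Lie ideal, whereas you supply a short verification of this fact; the arguments are otherwise identical.
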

\begin{proof}
We show the equality $I(\mathcal{F}_0) = \mathrm{Lie}(\mathcal{S})$ by demonstrating both inclusions. Note that $I(\mathcal{F}_0) = \mathrm{span}(D(\mathcal{F}_0) \cup \mathcal{X})$ and $\mathrm{Lie}(\mathcal{S})$ is the ideal generated by $\mathcal{X}$ in $\mathrm{Lie}(\mathcal{F}_0) = \mathrm{Lie}(X_0, \mathcal{X})$.

First we show $\mathrm{Lie}(\mathcal{S}) \subseteq I(\mathcal{F}_0)$: Since $I(\mathcal{F}_0)$ is an ideal containing $\mathcal{X}$, it must contain the smallest ideal containing $\mathcal{X}$, which is $\mathrm{Lie}(\mathcal{S})$.

Next we show $I(\mathcal{F}_0) \subseteq \mathrm{Lie}(\mathcal{S})$: We need to show $\mathcal{X} \subseteq \mathrm{Lie}(\mathcal{S})$ and $D(\mathcal{F}_0) \subseteq \mathrm{Lie}(\mathcal{S})$. Clearly, $\mathcal{X} \subseteq \mathrm{Lie}(\mathcal{S})$,  while, $D(\mathcal{F}_0)$ is generated by brackets $[Z_1, Z_2]$ where $Z_i = X_0+Y_i$ with $Y_i \in \mathcal{X}$. We have
\[
[Z_1, Z_2] = [X_0+Y_1, X_0+Y_2] = [X_0, Y_2] - [X_0, Y_1] + [Y_1, Y_2].
\]
Since $Y_1, Y_2 \in \mathcal{X} \subseteq \mathrm{Lie}(\mathcal{S})$ and $\mathrm{Lie}(\mathcal{S})$ is an ideal in $\mathrm{Lie}(X_0, \mathcal{X})$, all three terms $[X_0, Y_2]$, $[X_0, Y_1]$, and $[Y_1, Y_2]$ belong to $\mathrm{Lie}(\mathcal{S})$. Thus, $[Z_1, Z_2] \in \mathrm{Lie}(\mathcal{S})$. Since $D(\mathcal{F}_0)$ is the ideal generated by such brackets, $D(\mathcal{F}_0) \subseteq \mathrm{Lie}(\mathcal{S})$. Therefore, $I(\mathcal{F}_0) = \mathrm{span}(\mathcal{X} \cup D(\mathcal{F}_0)) \subseteq \mathrm{Lie}(\mathcal{S})$.
\end{proof}

The zero-time ideal plays a key role in connecting strong and exact controllability for analytic systems.

\begin{theorem}[\cite{JurdGCT} Ch~3, Thm~13b]\label{thm:strong-implies-exact-via-ideal}
Let $\mathcal{F}$ be a family of analytic vector fields on $M$. If $\mathcal{F}$ is strongly controllable and $I(\mathcal{F})(x) = \mathrm{Lie}(\mathcal{F})(x)$ for all $x \in M$, then $\mathcal{F}$ is exactly controllable.
\end{theorem}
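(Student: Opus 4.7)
The plan is to augment the manifold with a time coordinate and reduce the exact-controllability claim to an accessibility statement, using the zero-time ideal hypothesis to supply the fine control over elapsed time that distinguishes exact from strong controllability. Specifically, I would work with $\tilde M := M \times \mathbb R$ and the augmented family $\tilde{\mathcal F} := \{ Y + \partial_\tau : Y \in \mathcal F\}$, where $\tau$ is the $\mathbb R$-coordinate. A piecewise-constant $\tilde{\mathcal F}$-trajectory from $(x,0)$ arrives at $(y,s)$ if and only if there is an $\mathcal F$-trajectory from $x$ to $y$ in total time $s$, so exact controllability of $\mathcal F$ at time $t$ is equivalent to $M \times \{t\}$ being contained in the forward accessible set of $\tilde{\mathcal F}$ from $(x,0)$.

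Next I would unpack the Lie-algebraic data of $\tilde{\mathcal F}$. Differences $(Y_1+\partial_\tau)-(Y_2+\partial_\tau)=Y_1-Y_2$ and brackets $[Y_1+\partial_\tau,Y_2+\partial_\tau]=[Y_1,Y_2]$ are both purely horizontal, so the horizontal (zero $\partial_\tau$-component) part of $\mathrm{Lie}(\tilde{\mathcal F})(x,\tau)$ is exactly $I(\mathcal F)(x)$ (Definition~\ref{def:zero-time-ideal}). By hypothesis $I(\mathcal F)(x)=\mathrm{Lie}(\mathcal F)(x)$, and by strong controllability combined with Theorem~\ref{thm:LS-strong-control} this equals $T_xM$. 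Augmenting with the vertical $\partial_\tau$-component present in any element of $\tilde{\mathcal F}$, one sees $\mathrm{Lie}(\tilde{\mathcal F})(x,\tau)=T_{(x,\tau)}\tilde M$. Hence $\tilde{\mathcal F}$ satisfies the accessibility rank condition everywhere.

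To promote accessibility of $\tilde{\mathcal F}$ to the statement that every slice $M\times\{t\}$ is forward-reachable from $(x,0)$, I would run the Lie-saturate machinery of Proposition~\ref{prop:LS-properties} for $\tilde{\mathcal F}$. The key claim is that $\mathrm{LS}(\tilde{\mathcal F})$ contains an element of the form $Y$ with $Y \in \mathrm{Lie}(\mathcal F)$ arbitrary (i.e.\ with vanishing $\partial_\tau$-component), accompanied by arbitrarily small $\tau$-advances obtained by convex combinations of $\tilde{\mathcal F}$-elements and closure in the $C^\infty$ topology. Granting this, for any target $y\in M$ and $t>0$, strong controllability of $\mathcal F$ produces an $\mathcal F$-trajectory from $x$ to $y$ of total time $s\leq t$, and a short forward-time loop in $\tilde{\mathcal F}$ based at $(y,s)$ of $\tau$-length exactly $t-s$ — with horizontal part contractible because $I(\mathcal F)(y)=T_yM$ — delivers the point $(y,t)$, proving exact controllability.

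The main obstacle is this last padding step: converting the infinitesimal zero-time-ideal information into honest forward-time trajectories with controlled arrival time. Analyticity of $X_0,\dots,X_r$ is essential — the analytic orbit theorem ensures that the tangential span of $\mathrm{LS}(\tilde{\mathcal F})$ coincides with the tangent space of the closure of the accessible set, and the convex-cone structure plus pushforward-invariance in Proposition~\ref{prop:LS-properties} let one iteratively realize prescribed zero-time motions in $\mathcal F$ as limits of finite concatenations of forward-in-time pieces in $\tilde{\mathcal F}$ whose net $\tau$-advance is prescribed. Once this approximation is established, combining it with the strong controllability on the initial leg yields $\mathcal A_x^t(\mathcal F)=M$ for every $x\in M$ and $t>0$, completing the proof.
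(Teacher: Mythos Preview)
The paper does not supply its own proof of this statement; it is quoted with citation to Jurdjevic's monograph and used as a black box in the proof of Corollary~\ref{cor:sufficient-control}. So there is no in-paper argument to compare against. That said, your time-augmentation strategy is precisely the approach Jurdjevic takes, so in that sense you are reconstructing the cited proof rather than inventing an alternative.

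Your first two paragraphs are correct: the lift to $\tilde M=M\times\mathbb R$ with $\tilde{\mathcal F}=\{Y+\partial_\tau:Y\in\mathcal F\}$ is the right reduction, and the identification of the horizontal part of $\mathrm{Lie}(\tilde{\mathcal F})(x,\tau)$ with $I(\mathcal F)(x)$ is exactly what the zero-time ideal is designed to capture. The full-rank conclusion for $\mathrm{Lie}(\tilde{\mathcal F})$ then follows as you say.

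The gap is in the padding step, and you correctly flag it as the main obstacle. As written, the claim that one can produce a forward-time $\tilde{\mathcal F}$-loop at $(y,s)$ of prescribed $\tau$-length $t-s$ returning to $y$ is exactly the assertion $y\in\mathcal A_y^{t-s}(\mathcal F)$, which is a special case of the exact controllability you are trying to prove. The hypothesis $I(\mathcal F)(y)=T_yM$ is infinitesimal and does not by itself deliver such loops; converting it into an honest return requires either an implicit-function argument on the endpoint map $(t_1,\dots,t_n)\mapsto e^{t_nY_n}\cdots e^{t_1Y_1}y$ subject to the constraint $\sum t_i=t-s$, or Krener's accessibility theorem applied to $\tilde{\mathcal F}$ to show that $\mathcal A_x^t(\mathcal F)$ is open (not merely nonempty-interior) and that its closure coincides with that of $\mathcal A_x^{\leq t}(\mathcal F)$. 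Your final paragraph gestures at the analytic orbit theorem and the Lie-saturate machinery, which is indeed where the work lies, but the argument as sketched does not close the loop between ``$\mathrm{LS}(\tilde{\mathcal F})$ has full rank'' and ``every slice $M\times\{t\}$ is reached.''
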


We combine these results into a practical criterion for our system $\mathcal{F}_0$. Let $\mathcal{S}_1 = \{X_k, [X_0, X_k] : 1 \le k \le r \}$. Recall from Section~\ref{sec:Hypoellipticity} (specifically Definition~\ref{def:parabolic_hormander}) that the restricted parabolic Hörmander condition requires $\mathrm{Lie}(\mathcal{S}_1)(x) = T_xM$. Note that $\mathrm{Lie}(\mathcal{S}_1) = \mathrm{Lie}(\mathcal{X}, [X_0, \mathcal{X}])$.

\begin{corollary}\label{cor:sufficient-control}
Assume the vector fields $\{X_0, \dots, X_r\}$ are analytic. If
\begin{enumerate}
    \item the restricted parabolic Hörmander condition holds: $\mathrm{Lie}(\mathcal{X}, [X_0, \mathcal{X}])(x) = T_xM$ for all $x \in M$ and
    \item $\mathrm{Lie}(\mathcal{X}, [X_0, \mathcal{X}]) \subseteq \mathrm{LS}(\mathcal{F}_0)$,
\end{enumerate}
then the system $\mathcal{F}_0$ is exactly controllable.
\end{corollary}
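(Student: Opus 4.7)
The plan is to verify the two hypotheses of Theorem~\ref{thm:strong-implies-exact-via-ideal}, which states that strong controllability of $\mathcal{F}_0$ combined with the pointwise equality $I(\mathcal{F}_0)(x) = \mathrm{Lie}(\mathcal{F}_0)(x)$ together imply exact controllability. Both hypotheses will follow almost immediately from the two assumptions of the corollary plus the structural results about the Lie saturate and the zero-time ideal already established above.

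For the first hypothesis, strong controllability, I would invoke Theorem~\ref{thm:LS-strong-control}, which reduces the task to showing that $\mathrm{LS}(\mathcal{F}_0)(x) = T_x M$ for every $x \in M$. Assumption (2) of the corollary asserts $\mathrm{Lie}(\mathcal{X},[X_0,\mathcal{X}]) \subseteq \mathrm{LS}(\mathcal{F}_0)$. Evaluating at $x$ and applying the Hörmander condition from assumption (1), we get
\[
\mathrm{LS}(\mathcal{F}_0)(x) \supseteq \mathrm{Lie}(\mathcal{X},[X_0,\mathcal{X}])(x) = T_x M,
\]
so the reverse inclusion being automatic, equality holds.

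For the second hypothesis, the key observation is Lemma~\ref{lem:zero-ideal-relation}, which identifies $I(\mathcal{F}_0)$ with $\mathrm{Lie}(\mathcal{S})$, where $\mathcal{S} = \{\mathrm{ad}_{X_0}^k X_j : 1 \leq j \leq r,\, k \geq 0\}$. Since $\mathcal{S}$ contains $\mathcal{X}$ (the $k=0$ terms) and $[X_0,\mathcal{X}]$ (the $k=1$ terms), one has
\[
I(\mathcal{F}_0) = \mathrm{Lie}(\mathcal{S}) \supseteq \mathrm{Lie}(\mathcal{X},[X_0,\mathcal{X}]).
\]
Combining this with the Hörmander hypothesis (1) gives $I(\mathcal{F}_0)(x) \supseteq T_x M$, and since $I(\mathcal{F}_0) \subseteq \mathrm{Lie}(\mathcal{F}_0)$ always, we conclude $I(\mathcal{F}_0)(x) = \mathrm{Lie}(\mathcal{F}_0)(x) = T_x M$ for every $x$.

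With both hypotheses verified, Theorem~\ref{thm:strong-implies-exact-via-ideal} yields exact controllability of $\mathcal{F}_0$. There is no substantive obstacle in this argument — all the conceptual and technical work was front-loaded into the Lie saturate/zero-time-ideal machinery (Theorems~\ref{thm:LS-strong-control} and \ref{thm:strong-implies-exact-via-ideal}) and the algebraic identification in Lemma~\ref{lem:zero-ideal-relation}. The only thing to take care of is making explicit that the $k=0,1$ generators of $\mathcal{S}$ already exhaust the brackets appearing in the restricted Hörmander condition, so that hypothesis (1) directly feeds both hypotheses of Theorem~\ref{thm:strong-implies-exact-via-ideal}.
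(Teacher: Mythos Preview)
Your proposal is correct and follows essentially the same approach as the paper's proof: both verify strong controllability via Theorem~\ref{thm:LS-strong-control} using assumptions (1) and (2), establish the zero-time ideal condition $I(\mathcal{F}_0)(x) = \mathrm{Lie}(\mathcal{F}_0)(x)$ from the inclusion $\mathrm{Lie}(\mathcal{X},[X_0,\mathcal{X}]) \subseteq I(\mathcal{F}_0)$, and then apply Theorem~\ref{thm:strong-implies-exact-via-ideal}. The only cosmetic difference is that you route the inclusion through Lemma~\ref{lem:zero-ideal-relation} explicitly, whereas the paper states $\mathrm{Lie}(\mathcal{S}_1) \subseteq I(\mathcal{F}_0)$ directly.
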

\begin{proof}
Assumption~(1) implies $\mathrm{Lie}(\mathcal{S}_1)(x) = T_x M$. Since $\mathrm{Lie}(\mathcal{S}_1) \subseteq I(\mathcal{F}_0) \subseteq \mathrm{Lie}(\mathcal{F}_0)$ we have that $I(\mathcal{F}_0)(x) = \mathrm{Lie}(\mathcal{F}_0)(x) = T_x M$.
Therefore assumption~(2) combined with Assumption~(1) implies $\mathrm{LS}(\mathcal{F}_0)(x) = T_x M$. By Theorem~\ref{thm:LS-strong-control}, the system $\mathcal{F}_0$ is strongly controllable.
Since $\mathcal{F}_0$ consists of analytic fields, is strongly controllable, and satisfies $I(\mathcal{F}_0)(x) = \mathrm{Lie}(\mathcal{F}_0)(x) = \mathrm{Lie}(\mathcal{S}_1)(x) = T_xM$, Theorem~\ref{thm:strong-implies-exact-via-ideal} implies that $\mathcal{F}_0$ is exactly controllable.
\end{proof}

\subsection{Irreducibility under Cancellation Condition}

We now state and prove the main result, providing sufficient conditions for the irreducibility of the SDE \eqref{eq:sde-appendix}{}.

\begin{proposition}\label{prop:irreducibility-general}
Let $\{X_0, \dots, X_r\}$ be analytic, complete vector fields on $M$ such that the SDE \eqref{eq:sde-appendix}{} has a global flow. Assume:
\begin{enumerate}
    \item The restricted parabolic Hörmander condition holds: $\mathrm{Lie}(\mathcal{X}, [X_0, \mathcal{X}])(x) = T_xM$ for all $x \in M$ (Definition~\ref{def:parabolic_hormander} in Section~\ref{sec:Hypoellipticity}).
    \item The cancellation condition\footnote{The cancellation condition in assumption~(2) can be slightly generalized. It is sufficient for the condition $\mathrm{ad}(Y_k)^2 X_0 = 0$ to hold for \textit{some} set of vector fields $\{Y_1, \dots, Y_r\}$ that spans $\mathcal{X} = \mathrm{span}\{X_1, \dots, X_r\}$. The proof proceeds by showing $\pm [Y_k, X_0] \in \mathrm{LS}(\mathcal{F}_0)$ for this spanning set, which implies $[X_0, \mathcal{X}] \subseteq \mathrm{LS}(\mathcal{F}_0)$ as required.} holds: $\mathrm{ad}(X_k)^2 X_0 = [X_k, [X_k, X_0]] = 0$ for all $k=1,\dots,r$.
\end{enumerate}
Then the SDE \eqref{eq:sde-appendix}{} is topologically irreducible.

\end{proposition}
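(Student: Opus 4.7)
The plan is to invoke the Stroock--Varadhan support theorem (Theorem \ref{thm:support}) and the controllability criterion of Corollary \ref{cor:sufficient-control}. Assumption (1) is exactly condition (1) of that corollary, so the remaining task is to verify that $\mathrm{Lie}(\mathcal{X}, [X_0, \mathcal{X}]) \subseteq \mathrm{LS}(\mathcal{F}_0)$. By the subalgebra-generation property (Proposition \ref{prop:LS-properties}(2)), this will follow once I produce the full linear subspace $\mathcal{X} + [X_0, \mathcal{X}]$ inside $\mathrm{LS}(\mathcal{F}_0)$; and since $\mathrm{LS}(\mathcal{F}_0)$ is a closed convex cone, it suffices to exhibit $\pm X_k$ and $\pm [X_0, X_k]$ in $\mathrm{LS}(\mathcal{F}_0)$ for each $k = 1, \ldots, r$.

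First I will obtain $\pm X_k \in \mathrm{LS}(\mathcal{F}_0)$ by a standard time-rescaling argument: for each integer $n \geq 1$, the vector field $X_0 \pm n X_k$ lies in $\mathcal{F}_0 \subseteq \mathrm{LS}(\mathcal{F}_0)$, hence so does the positive-cone element $\tfrac{1}{n}X_0 \pm X_k$; sending $n \to \infty$ and using the closedness of $\mathrm{LS}(\mathcal{F}_0)$ yields $\pm X_k \in \mathrm{LS}(\mathcal{F}_0)$.

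The essential step will be extracting $\pm [X_0, X_k]$, and this is where the cancellation hypothesis enters. Because $\pm X_k \in \mathrm{LS}(\mathcal{F}_0)$ and $X_0 + X_k \in \mathcal{F}_0$, the flow-invariance property (Proposition \ref{prop:LS-properties}(3)) guarantees $(e^{\alpha X_k})_\sharp (X_0 + X_k) \in \mathrm{LS}(\mathcal{F}_0)$ for every $\alpha \in \mathbb{R}$. Writing the pushforward as $e^{-\alpha \mathrm{ad}(X_k)}$ and exploiting $\mathrm{ad}(X_k)^2 X_0 = 0$ to truncate the Taylor series after the linear term, together with $(e^{\alpha X_k})_\sharp X_k = X_k$, I get
\begin{align*}
(e^{\alpha X_k})_\sharp (X_0 + X_k) = X_0 + \alpha [X_0, X_k] + X_k.
\end{align*}
Rescaling by the positive factor $1/\alpha$ for $\alpha > 0$ and letting $\alpha \to +\infty$ yields $[X_0, X_k] \in \mathrm{LS}(\mathcal{F}_0)$ by closedness; rerunning with $\alpha < 0$ and rescaling by $1/|\alpha|$ gives $-[X_0, X_k]$.

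The main obstacle is the asymmetry that $\mathrm{LS}(\mathcal{F}_0)$ is only a closed convex cone, not a vector subspace, so one cannot directly subtract to cancel the spurious $X_0$ and $X_k$ summands appearing alongside $\alpha[X_0, X_k]$; this is precisely what the rescale-and-limit maneuver circumvents, and it crucially relies on the quadratic cancellation $\mathrm{ad}(X_k)^2 X_0 = 0$ to terminate the Baker--Campbell--Hausdorff expansion at the linear term. Once both signs of $[X_0, X_k]$ are secured, Proposition \ref{prop:LS-properties}(2) applied to $\mathcal{X} + [X_0, \mathcal{X}] \subseteq \mathrm{LS}(\mathcal{F}_0)$ delivers the required inclusion $\mathrm{Lie}(\mathcal{X}, [X_0, \mathcal{X}]) \subseteq \mathrm{LS}(\mathcal{F}_0)$; Corollary \ref{cor:sufficient-control} then gives exact controllability of $\mathcal{F}_0$, and Theorem \ref{thm:support} concludes topological irreducibility of \eqref{eq:sde-appendix}.
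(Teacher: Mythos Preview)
Your proof is correct and follows essentially the same approach as the paper's: reduce via Theorem~\ref{thm:support} and Corollary~\ref{cor:sufficient-control} to the inclusion $\mathrm{Lie}(\mathcal{X},[X_0,\mathcal{X}])\subseteq\mathrm{LS}(\mathcal{F}_0)$, obtain $\pm X_k$ by rescaling elements of $\mathcal{F}_0$, then use flow-invariance of the Lie saturate together with the cancellation $\mathrm{ad}(X_k)^2 X_0=0$ to truncate the BCH expansion and extract $\pm[X_0,X_k]$ via a further rescale-and-limit. The only cosmetic difference is that the paper pushes forward $X_0\in\mathcal{F}_0$ directly rather than $X_0+X_k$, which spares the extra $X_k$ summand but changes nothing in the limit.
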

\begin{proof}
By Theorem~\ref{thm:support} and Corollary~\ref{cor:sufficient-control}, the proof reduces to demonstrating the inclusion
\(
\mathrm{Lie}(\mathcal{X},[X_0, \mathcal{X}]) \subseteq \mathrm{LS}(\mathcal{F}_0)
\)
under the given assumptions. We use the properties of $\mathrm{LS}(\mathcal{F}_0)$ listed in Proposition~\ref{prop:LS-properties}.

\textit{Step 1: Show $\mathcal{X} \subseteq \mathrm{LS}(\mathcal{F}_0)$.}
For any $X_k \in \mathcal{X}$ and $\alpha \in (0, 1]$, the vector field $Y_\alpha = X_0+\alpha^{-1}X_k$ belongs to $\mathcal{F}_0$, and thus $Y_\alpha \in \mathrm{LS}(\mathcal{F}_0)$. By convexity (Proposition~\ref{prop:LS-properties}.1), the scaled field $\alpha Y_\alpha = \alpha X_0 + X_k$ also lies in $\mathrm{LS}(\mathcal{F}_0)$. Since $\mathrm{LS}(\mathcal{F}_0)$ is closed (Proposition~\ref{prop:LS-properties}.1), we can take the limit as $\alpha \to 0^+$:
\begin{align*}
X_k = \lim_{\alpha\to 0^+} (\alpha X_0 + X_k) \in \mathrm{LS}(\mathcal{F}_0).
\end{align*}
As $\mathrm{LS}(\mathcal{F}_0)$ is convex and closed, it contains the linear span $\mathcal{X} = \mathrm{span}\{X_1, \dots, X_r\}$. We can ensure $\pm X_k \in \mathrm{LS}(\mathcal{F}_0)$, hence the vector space $\mathcal{X} \subseteq \mathrm{LS}(\mathcal{F}_0)$.

\textit{Step 2: Show $[X_0, \mathcal{X}] \subseteq \mathrm{LS}(\mathcal{F}_0)$.}
Let $k \in \{1, \dots, r\}$. From Step~1, we know $\pm X_k \in \mathrm{LS}(\mathcal{F}_0)$. Since $X_0 \in \mathcal{F}_0$, we have $X_0 \in \mathrm{LS}(\mathcal{F}_0)$. By Proposition~\ref{prop:LS-properties}.3 (invariance under flows), the pushforward $(e^{t X_k})_\sharp X_0$ belongs to $\mathrm{LS}(\mathcal{F}_0)$ for all $t \in \mathbb{R}$.
The Baker-Campbell-Hausdorff formula for the pushforward, truncated by the cancellation condition $\mathrm{ad}(X_k)^2 X_0 = 0$, gives:
\begin{align*}
Y(t) := (e^{t X_k})_\sharp X_0 = X_0 + t [X_k, X_0].
\end{align*}
Thus, $Y(t) \in \mathrm{LS}(\mathcal{F}_0)$ for all $t \in \mathbb{R}$.
Since $X_k \in \mathrm{LS}(\mathcal{F}_0)$ and $-X_k \in \mathrm{LS}(\mathcal{F}_0)$, their convex combination $0 = \frac{1}{2}X_k + \frac{1}{2}(-X_k)$ is in $\mathrm{LS}(\mathcal{F}_0)$ by Proposition~\ref{prop:LS-properties}.1 (Convexity).
Now, for any $t \ne 0$ and any $\alpha \in (0, 1]$, consider
\[ Z_\alpha(t) := \alpha Y(t) = \alpha X_0 + \alpha t [X_k, X_0]. \]
This $Z_\alpha(t)$ belongs to $\mathrm{LS}(\mathcal{F}_0)$ by the cone property of $\mathrm{LS}(\mathcal{F}_0)$ (Proposition~\ref{prop:LS-properties}.1).
Let $t=1/\alpha$. Then for $\alpha \in (0, 1]$,
\[ Z_\alpha(1/\alpha) = \alpha X_0 + [X_k, X_0] \in \mathrm{LS}(\mathcal{F}_0). \]
Since $\mathrm{LS}(\mathcal{F}_0)$ is closed (Proposition~\ref{prop:LS-properties}.1), we can take the limit as $\alpha \to 0^+$:
\[ \lim_{\alpha \to 0^+} Z_\alpha(1/\alpha) = \lim_{\alpha \to 0^+} (\alpha X_0 + [X_k, X_0]) = [X_k, X_0] \in \mathrm{LS}(\mathcal{F}_0). \]
Similarly, starting with $Y(-t) = X_0 - t [X_k, X_0] \in \mathrm{LS}(\mathcal{F}_0)$ for $t>0$, we can conclude by the same argument that  $-[X_k, X_0] \in \mathrm{LS}(\mathcal{F}_0)$.

Thus, we have shown that $\pm [X_k, X_0] \in \mathrm{LS}(\mathcal{F}_0)$ for each $k=1, \dots, r$.
As argued in Step~1, since $\mathrm{LS}(\mathcal{F}_0)$ is convex and closed and contains $\pm [X_k, X_0]$, it must contain the vector space $\mathrm{span}\{[X_k, X_0]\}$.
Therefore, the vector space generated by all such brackets, $[X_0, \mathcal{X}] = \mathrm{span}\{[X_0, X_k] \mid k=1,\dots,r\}$, is contained in $\mathrm{LS}(\mathcal{F}_0)$.

\textit{Step 3: Conclude $\mathrm{Lie}(\mathcal{X},[X_0, \mathcal{X}]) \subseteq \mathrm{LS}(\mathcal{F}_0)$.}
From Step~1, the vector space $\mathcal{X} \subseteq \mathrm{LS}(\mathcal{F}_0)$. From Step~2, the vector space $[X_0, \mathcal{X}] \subseteq \mathrm{LS}(\mathcal{F}_0)$. Let $V = \mathcal{X} \oplus [X_0, \mathcal{X}]$. Since $V$ is a vector subspace contained in $\mathrm{LS}(\mathcal{F}_0)$, Proposition~\ref{prop:LS-properties}.2 implies that the Lie algebra generated by $V$ is also contained in the saturate:
\[
\mathrm{Lie}(V) = \mathrm{Lie}(\mathcal{X}, [X_0, \mathcal{X}]) \subseteq \mathrm{LS}(\mathcal{F}_0).
\]
This establishes Assumption~(2) of Corollary~\ref{cor:sufficient-control} and therefore completes the proof.
\end{proof}

\section{Computer Assisted Proof for Algebraic Generation}\label{app:CAP}
This appendix provides the detailed proof that the Lie algebra generated by the matrices $\{M_k : k \in I\}$ is $\mathfrak{sl}(H_I^\perp)$ for $N=3K$ with $K \ge 3$. Recall $I = \{3, 6, \dots, N\}$ and $T = \Z_N \setminus I$. The dimension of $H_I^\perp$ is $n = |T| = N - K = 2K$.

\subsection{Matrix Representation and Re-indexing}

Let $M_k = DB(e_k)|_{H_I^\perp}$ be the restriction of the linearization $DB(e_k)$ to the transverse space $H_I^\perp$. We compute its matrix elements $(M_k)_{\ell, m}$ with respect to the standard basis $\{e_j\}_{j \in T}$. Recall $(DB(u)v)_\ell = (v_{\ell+1} - v_{\ell-2})u_{\ell-1} + (u_{\ell+1} - u_{\ell-2})v_{\ell-1}$. Setting $u=e_k$ ($k \in I$) and $v=e_m$ ($m \in T$), the $\ell$-th component is
\[
(DB(e_k)e_m)_\ell = ((e_m)_{\ell+1} - (e_m)_{\ell-2})(e_k)_{\ell-1} + ((e_k)_{\ell+1} - (e_k)_{\ell-2})(e_m)_{\ell-1}.
\]
 The matrix element $(M_k)_{\ell, m}$ is the coefficient of $e_\ell$ ($\ell \in T$) in $DB(e_k)e_m$:
\begin{align*}
(M_k)_{\ell, m} &= {(\delta_{m, \ell+1} - \delta_{m, \ell-2})}\delta_{k, \ell-1} + {(\delta_{k, \ell+1} - \delta_{k, \ell-2})}\delta_{m, \ell-1} \\
&= \delta_{k, \ell-1}\delta_{m, \ell+1} - \delta_{k, \ell-1}\delta_{m, \ell-2} + \delta_{k, \ell+1}\delta_{m, \ell-1} - \delta_{k, \ell-2}\delta_{m, \ell-1}.
\end{align*}
Here, all indices are modulo $N$. Let $E_{\ell, m}$ denote the elementary matrix with a 1 at position $(\ell, m)$ and 0 elsewhere. Then $M_k$ is a sum of at most four elementary matrices:
\begin{equation}\label{eq:Mk_elementary_app_restructured}
M_k = E_{k+1, k+2} - E_{k+1, k-1} + E_{k-1, k-2} - E_{k+2, k+1}.
\end{equation}
(Indices $\ell, m$ must be in $T$. Since $k \in I$, $k\pm 1, k\pm 2 \in T$, ensuring these $E_{\ell,m}$ are well-defined within the matrix space for $H_I^\perp$).

For computational convenience and to simplify the description of symmetries, we re-index the transverse basis ${\{e_j\}}_{j \in T}$ using the natural ordering:
\[
T = \{1, 2, 4, 5, \ldots, N-2, N-1\} \quad \rightarrow \quad \{1, 2, 3, 4, \ldots, 2K-1, 2K\}.
\]
From now on, we identify $H_I^\perp \simeq \R^{2K}$ and view $M_k$ as a $2K \times 2K$ matrix acting on $\R^{2K}$. We denote the elementary matrices in this re-indexed space also by $E_{i,j}$ for $i,j \in \{1, \dots, 2K\}$.

\subsection{Shift Invariance and Finite Truncation}

\begin{lemma}[Shift Invariance]\label{lem:shift_invariance_cap}
Let $P$ be the permutation matrix implementing the index shift $j \mapsto j-2 \pmod{2K}$ in the re-indexed space $\R^{2K}$. Then, for $k \in I$, we have $M_{k+3} = P M_k P^{-1}$. Consequently, the Lie algebra $\mathfrak{g} = \mathrm{Lie}(\{M_k : k \in I\})$ is invariant under conjugation by $P$: if $A \in \mathfrak{g}$, then $PAP^{-1} \in \mathfrak{g}$.
\end{lemma}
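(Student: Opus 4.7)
The plan is to verify the conjugation identity $M_{k+3} = P M_k P^{-1}$ by direct bookkeeping with the explicit elementary-matrix formula \eqref{eq:Mk_elementary_app_restructured}, and then observe that the Lie-algebra invariance is a general consequence of the fact that conjugation by an invertible matrix is a Lie-algebra automorphism.

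First I would make the re-indexing bijection $\phi : T \to \{1,\dots,2K\}$ completely explicit: writing $j \in T$ as $j = 3m + r$ with $r \in \{1,2\}$, one has $\phi(j) = 2m + r$. The key arithmetic observation is that under $\phi$, the natural cyclic shift $j \mapsto j+3$ on original indices (which is the generator of the $I$-shift) translates into the cyclic shift $i \mapsto i + 2 \pmod{2K}$ on re-indexed positions, because $\phi(j+3) = \phi(j) + 2$. Fixing the convention $P e_i = e_{i+2}$ (so that $P$ realizes this $+2$ shift on the re-indexed basis), the corresponding action on elementary matrices is the standard permutation identity $P E_{i,j} P^{-1} = E_{\sigma(i),\sigma(j)}$ with $\sigma(i) = i+2 \pmod{2K}$.

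Next I would carry out the direct check. For $k = 3m \in I$, the four neighbors $k-2,k-1,k+1,k+2$ re-index to $2m-1,2m,2m+1,2m+2$, so \eqref{eq:Mk_elementary_app_restructured} in re-indexed coordinates reads
\[
M_k = E_{2m+1,2m+2} - E_{2m+1,2m} + E_{2m,2m-1} - E_{2m+2,2m+1}.
\]
Applying conjugation by $P$ adds $2$ to every row/column index, producing exactly the four elementary matrices that the same formula gives for $k+3 = 3(m+1)$, i.e.\ $M_{k+3}$. All of this is modulo $2K$, and it matches the cyclic wraparound on $I$ modulo $N = 3K$ because $\phi$ intertwines the two cyclic structures.

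Finally, for the Lie-algebra invariance: conjugation $A \mapsto PAP^{-1}$ is an associative-algebra automorphism of $M_{2K}(\mathbb{R})$, hence in particular a Lie-algebra automorphism, so it commutes with iterated bracketing. Since the identity $M_{k+3} = P M_k P^{-1}$ shows that conjugation by $P$ permutes the generating set $\{M_k : k \in I\}$ cyclically, it maps $\mathfrak{g} = \mathrm{Lie}(\{M_k\})$ into itself, giving the stated invariance. The only mildly tricky point is keeping the two cyclic moduli ($N$ and $2K$) straight under the re-indexing; there is no substantive obstacle beyond that bookkeeping.
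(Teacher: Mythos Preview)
Your proof is correct and matches the paper's approach: the paper's proof sketch simply invokes the translation invariance of $B$ (a $+3$ shift on original indices becomes a $\pm 2$ shift after re-indexing), and you make this concrete via the elementary-matrix formula. The only wrinkle is that your convention $P e_i = e_{i+2}$ is the inverse of the lemma's stated shift $j \mapsto j-2$, but this is harmless since conjugation by either $P$ or $P^{-1}$ permutes the generating set $\{M_k\}$ cyclically and the Lie-algebra invariance follows either way.
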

\begin{proof}[Proof Sketch]
This follows from the structure of the bilinear form $B(u,v)_j = (u_{j+1} - u_{j-2})v_{j-1}$ and the definition of $M_k$. A shift $k \mapsto k+3$ in the first argument of $DB(e_k)e_m$ corresponds to shifting all indices in the calculation by 3. When restricted to the transverse indices $T$ and re-indexed to $\{1, \dots, 2K\}$, this corresponds to the permutation $P$ acting by conjugation.
\end{proof}

The matrix $M_k$ has a finite support window relative to the index $k$. Iterated brackets $[M_k, M_l]$ have growing but bounded support windows for fixed bracket depth. This means that generating specific elementary matrices $E_{i,j}$ with small indices $i,j$ using brackets of $M_3, M_6, M_9$ depends only on a local block of indices. Provided $N$ is large enough -- for our purposes, $N \geq 15, 2K \ge 10$ suffices --  this calculation is independent of the exact value of $N$.

\subsection{Computer-Assisted Generation Result}

Using symbolic computation (Sympy) with exact rational arithmetic for $N=15$, we compute iterated Lie brackets of $M_3, M_6, M_9$ up to depth $5$. Let $\mathcal{B}$ be the set of all generated matrices. We then form a matrix $S$ whose rows are vectorized versions of matrices in $\mathcal{B}$. Computing the reduced row echelon form of $S$ allows us to identify the elementary matrices in the span of $\mathcal{B}$.
The computation verifies the following:
\begin{proposition}[CAP Result]\label{prop:cap_result}
For $N=15$ ($2K=10$), the Lie algebra $\mathfrak{g} = \mathrm{Lie}(\{M_k : k \in I\})$ contains the elementary matrices $E_{3,2}$, $E_{4,3}$, and $E_{5,4}$ (using the re-indexed basis $\{1, \dots, {\color{blue} 9,10  } \}$).
\end{proposition}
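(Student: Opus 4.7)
The plan is to reduce the statement to a finite, explicit linear-algebra computation in $\mathrm{End}(\R^{10})$ and to verify that computation in exact rational arithmetic. First, I would write down the five generators $M_3, M_6, M_9, M_{12}, M_{15}$ as sparse $10 \times 10$ integer matrices using the elementary-matrix formula \eqref{eq:Mk_elementary_app_restructured} (with indices reduced modulo $N=15$) followed by the re-indexing $T \to \{1,\dots,10\}$. Each $M_k$ has at most four nonzero entries in $\{\pm 1\}$, so this step is unambiguous and can be hard-coded.

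Next, I would generate a sufficient pool of Lie brackets. Set $G = \{M_3, M_6, M_9, M_{12}, M_{15}\}$, initialize $\mathcal{B}_0 = G$, and recursively define
\[
\mathcal{B}_{d+1} = \mathcal{B}_d \cup \{\,[A,B] : A \in \mathcal{B}_d,\ B \in G\,\}.
\]
In parallel with this enumeration, I would maintain a growing row-reduced basis for the rational span $V_d = \mathrm{span}_\Q(\mathcal{B}_d) \subset \Q^{100}$, obtained by flattening each matrix into a coordinate vector (with, say, the basis $E_{i,j}$ ordered lexicographically) and feeding new vectors through an exact-arithmetic Gaussian elimination routine, discarding any vector that turns out to be linearly dependent on what has already been accepted. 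Membership of a target elementary matrix $E_{i,j}$ in $V_d$ is then decided by checking whether its coordinate vector reduces to zero against the current echelon form; if it does, back-substitution in the echelon data yields an explicit rational combination of iterated brackets equal to $E_{i,j}$, which can be verified independently by a second pass of matrix arithmetic. For the three targets $E_{3,2}$, $E_{4,3}$, $E_{5,4}$, it suffices to carry this out up to $d = 5$.

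The principal obstacle is combinatorial: naively $|\mathcal{B}_5|$ is on the order of $5^5 \cdot 5 = 15625$ bracket expressions, many of which are zero, proportional, or redundant modulo the Jacobi identity. To keep the computation tractable I would (i) deduplicate on the fly by hashing the flattened integer coordinates, (ii) discard vanishing brackets immediately, and (iii) accept a newly computed bracket into $\mathcal{B}_d$ only if it strictly enlarges the current echelon basis, so that at most $\dim \mathfrak{sl}_{10} = 99$ linearly independent vectors are ever stored. Correctness rests on working over $\Q$ throughout: any use of floating-point would only certify an approximate rank. Finally, to make the verification auditable, the output should include (a) the ordered list of bracket words that were kept, and (b) for each of $E_{3,2}, E_{4,3}, E_{5,4}$, the explicit rational combination of those words, so that a reviewer can reproduce the final equality by direct matrix multiplication without re-running the search.
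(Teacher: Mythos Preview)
Your proposal is correct and follows essentially the same computer-assisted approach as the paper: exact-arithmetic symbolic computation of iterated brackets up to depth $5$, vectorization, and row-reduction to test span membership of the three target elementary matrices. The one difference worth flagging is that the paper brackets only $M_3, M_6, M_9$ rather than all five generators; this restriction is deliberate, since brackets of these three up to depth $5$ do not wrap around the cyclic indexing when $N \geq 15$, which is exactly what allows the subsequent finite-truncation argument to extend the conclusion from $N=15$ to all $N \geq 15$. Your use of $M_{12}, M_{15}$ is harmless for the proposition as stated (which concerns $N=15$ only) but would not transfer.
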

The verification code is available as an IPython notebook in the public GitHub repository \cite{L96CAPGithub}. It can easily be run in a local environment with the required packages installed or directly online via Google Colaboratory \cite{L96CAPColab}. Due to the finite truncation argument, the CAP argument holds for all $N \ge 15$.
\subsection{Generation of \texorpdfstring{$\mathfrak{sl}(H_I^\perp)$}{sl(H_I^perp}}

We now combine the CAP result with the shift invariance to show that $\mathfrak{g}$ contains a known generating set for $\mathfrak{sl}_{2K}(\mathbb{R})$.

\begin{lemma}[Generation of $\mathcal{G}$]\label{lem:gen_G_cap}
Let $\mathfrak{g} = \mathrm{Lie}(\{M_k : k \in I\})$ for $N \ge 9$. Then $\mathfrak{g}$ contains the set
\[
\mathcal{G} = \{E_{j+1, j} : j=1, \dots, 2K-1\} \cup \{E_{1, 2K}\}.
\]
\end{lemma}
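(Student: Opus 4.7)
The plan is to combine the computer-assisted base case (Proposition \ref{prop:cap_result}) with the shift invariance from Lemma \ref{lem:shift_invariance_cap} to cover the full cyclic set $\mathcal{G}$. First I would pin down the action of the conjugation $A \mapsto P A P^{-1}$ on elementary matrices. A short direct calculation using \eqref{eq:Mk_elementary_app_restructured} and the re-indexing $\phi: T \to \{1,\ldots,2K\}$ shows that $P$ implements a cyclic shift by $2$ (mod $2K$) on the re-indexed basis, so that $P E_{i,j} P^{-1} = E_{i+2,\, j+2}$ with all indices reduced modulo $2K$. Since $\mathfrak{g}$ is $P$-conjugation invariant, any subdiagonal elementary $E_{j+1,j} \in \mathfrak{g}$ produces an entire orbit $\{E_{j+1+2m,\, j+2m} : m \in \Z\}$ inside $\mathfrak{g}$.

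Next I would enumerate the two orbits. Starting from $E_{3,2} \in \mathfrak{g}$ (from the CAP), the orbit under $P$-conjugation is $\{E_{j+1,j} : j \in \{2,4,\ldots,2K\}\}$, where $j=2K$ yields $E_{2K+1,2K} = E_{1,2K}$ after reduction mod $2K$; this accounts for all even $j$ and the single ``wrap-around'' generator. Starting from $E_{4,3} \in \mathfrak{g}$, the orbit is $\{E_{j+1,j} : j \in \{1,3,5,\ldots,2K-1\}\}$, which accounts for all odd $j$. Together these two orbits exhaust all $2K$ elements of $\mathcal{G}$, completing the argument for $N \geq 15$.

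For the remaining sizes $N = 9$ and $N = 12$, the shift-invariance argument still applies, but the CAP as written uses a local block that requires $2K \geq 10$. Here I would instead verify the three base elementary matrices (or, equivalently, any pair of generators seeding both parity orbits) by a finite symbolic calculation at $N=9$ and $N=12$ directly, as alluded to in Remark \ref{rmk:introCAP1}. The Lie algebra is finite-dimensional and the bracket tree is small in these cases, so this is a routine rigorous computation. Once the base generators are in hand, the orbit argument above carries over without change.

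The main obstacle — already resolved by the structure of the CAP output — is ensuring that the generators obtained at depth $\leq 5$ seed \emph{both} parity classes of the $P$-orbit, so that the two orbits together cover every $j \in \{1,\ldots,2K\}$. Having $E_{3,2}$ and $E_{4,3}$ in $\mathfrak{g}$ is precisely what makes this work, while the third generator $E_{5,4}$ from Proposition \ref{prop:cap_result} is redundant for the present lemma but is useful as a consistency check and for generating the remaining off-diagonal matrices needed elsewhere in the proof of Proposition \ref{prop:slT_generation}.
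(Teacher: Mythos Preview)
Your proposal is correct and follows essentially the same approach as the paper: combine the CAP base case with the $P$-conjugation invariance to propagate the subdiagonal generators around the cycle, handling $N=9,12$ separately by direct computation. Two minor remarks: your formula $P E_{i,j} P^{-1} = E_{i+2,j+2}$ has the sign reversed relative to the paper's convention (Lemma \ref{lem:shift_invariance_cap} defines $P$ as the shift $j\mapsto j-2$, giving $P E_{i,j} P^{-1} = E_{i-2,j-2}$), but since $\mathfrak{g}$ is invariant under both $P$ and $P^{-1}$ the orbit argument is unaffected; and your observation that $E_{5,4}$ is redundant is correct --- the paper's proof uses all three CAP generators, but $E_{5,4}$ already lies in the $P$-orbit of $E_{3,2}$.
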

\begin{proof}
The cases $N = 9, 12$ can be treated by direct computation, either computer-assisted or by hand. These cases are omitted, and from here on we assume $N \geq 15$.

By Lemma \ref{lem:shift_invariance_cap}, the Lie algebra $\mathfrak{g} = \mathrm{Lie}(\{M_k : k \in I\})$ is invariant under conjugation by the shift permutation $P$ (index map $j \mapsto j-2 \pmod{2K}$) and its inverse $P^{-1}$ (index map $j \mapsto j+2 \pmod{2K}$). That is, if $A \in \mathfrak{g}$, then $PAP^{-1} \in \mathfrak{g}$ and $P^{-1}AP \in \mathfrak{g}$.
The action of these conjugations on an elementary matrix $E_{i,j}$ is given by:
\begin{align}
P E_{i,j} P^{-1} &= E_{i-2, j-2} \label{eq:conj_P} \\
P^{-1} E_{i,j} P &= E_{i+2, j+2} \label{eq:conj_Pinv}
\end{align}
where all indices are interpreted modulo $2K$.

We assume the result from the computer-assisted proof (Proposition \ref{prop:cap_result}), which states that for $N \ge 15$, the set $\{E_{3,2}, E_{4,3}, E_{5,4}\}$ is contained in $\mathfrak{g}$.

We first generate the ``wrap-around'' elements $E_{2,1}$ and $E_{1, 2K}$. Applying~\eqref{eq:conj_P}:
\begin{align*}
E_{2,1} &= P E_{4,3} P^{-1} \in \mathfrak{g} \\
E_{1, 2K} &= P E_{3,2} P^{-1} \in \mathfrak{g}
\end{align*}
Since $E_{5,4} \in \mathfrak{g}$, we can generate the remaining sub-diagonal elements $E_{j+1, j}$ by repeatedly applying conjugation by $P^{-1}$ according to~\eqref{eq:conj_Pinv}:
\begin{align*}
E_{7,6} &= P^{-1} E_{5,4} P \in \mathfrak{g} \\
E_{9,8} &= P^{-1} E_{7,6} P = {(P^{-1})}^2 E_{5,4} P^2 \in \mathfrak{g} \\
&\vdots \\
E_{j+1, j} &= {(P^{-1})}^{(j-5)/2} E_{5,4} P^{(j-5)/2} \in \mathfrak{g} \quad \text{for odd } j \ge 5.
\end{align*}
Similarly, starting from $E_{4,3} \in \mathfrak{g}$:
\begin{align*}
E_{6,5} &= P^{-1} E_{4,3} P \in \mathfrak{g} \\
E_{8,7} &= P^{-1} E_{6,5} P = {(P^{-1})}^2 E_{4,3} P^2 \in \mathfrak{g} \\
&\vdots \\
E_{j+1, j} &= {(P^{-1})}^{(j-4)/2} E_{4,3} P^{(j-4)/2} \in \mathfrak{g} \quad \text{for even } j \ge 4.
\end{align*}
Combining the known elements $\{E_{2,1}, E_{3,2}, E_{4,3}\}$ with those generated above for $j \ge 4$, we have shown that all $E_{j+1, j}$ for $j=1, \dots, 2K-1$ are in $\mathfrak{g}$.
Since we also showed $E_{1, 2K} \in \mathfrak{g}$, the entire set
\[
\mathcal{G} = \{E_{j+1, j} : j=1, \dots, 2K-1\} \cup \{E_{1, 2K}\}
\]
is contained in $\mathfrak{g}$.
\end{proof}

\begin{proposition}[Standard Generating Set for $\mathfrak{sl}_{n}$]\label{prop:sl_gen_set}
The set $\mathcal{G} = \{E_{j+1, j} : j=1, \dots, n-1\} \cup \{E_{1, n}\}$ is a generating set for the special linear Lie algebra $\mathfrak{sl}_{n}(\mathbb{R})$ for $n \ge 2$.
\end{proposition}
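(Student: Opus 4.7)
The plan is to show by a sequence of elementary bracket computations that $\mathrm{Lie}(\mathcal{G})$ contains every off-diagonal matrix unit $E_{i,j}$ with $i\ne j$; the identity $[E_{i,j},E_{j,i}]=E_{i,i}-E_{j,j}$ will then produce a basis for the traceless Cartan, so all of $\mathfrak{sl}_n(\mathbb{R})$ is generated. The reverse containment $\mathrm{Lie}(\mathcal{G})\subseteq\mathfrak{sl}_n(\mathbb{R})$ is automatic, since every element of $\mathcal{G}$ is traceless and tracelessness is preserved under commutators. The only computational tool required is the product rule $E_{i,j}E_{k,l}=\delta_{j,k}E_{i,l}$, which yields $[E_{i,j},E_{k,l}]=\delta_{j,k}E_{i,l}-\delta_{l,i}E_{k,j}$.

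I would execute this in three steps. \textbf{Step 1 (strictly lower triangle).} From $[E_{i,i-1},E_{i+1,i}]=-E_{i+1,i-1}$, an induction on the distance $i-j\ge 1$ yields every $E_{i,j}$ with $i>j$: if $E_{i-1,j}\in\mathrm{Lie}(\mathcal{G})$ at distance $k$ from the diagonal, then $[E_{i,i-1},E_{i-1,j}]=E_{i,j}$ supplies the distance-$(k+1)$ element. \textbf{Step 2 (first row).} For $2\le j\le n-1$ the bracket
\[
[E_{1,n},E_{n,j}]=E_{1,j}
\]
pairs the corner generator $E_{1,n}\in\mathcal{G}$ with an element $E_{n,j}$ obtained in Step 1, which together with $E_{1,n}$ itself exhausts the first row $\{E_{1,j}:j\ne 1\}$. \textbf{Step 3 (remaining upper triangle).} For $i>1$ and $j\notin\{1,i\}$, the identity $[E_{i,1},E_{1,j}]=E_{i,j}$ combines a first-column element from Step 1 with a first-row element from Step 2, producing every remaining off-diagonal matrix unit. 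Once all off-diagonal units are present, the Cartan identity gives the $n-1$ independent traceless diagonal matrices $E_{i,i}-E_{j,j}$, which together span the Cartan subalgebra and complete the generation.

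There is no substantive obstacle in this argument; it is entirely bookkeeping. Minor care is needed to avoid degenerate indices in Step 2 (the restriction $2\le j\le n-1$ excludes both the case $j=1$, which would return a Cartan element rather than a first-row unit, and the case $j=n$, for which $E_{n,n}$ is not at hand) and to handle the base case $n=2$ separately: there $\mathcal{G}=\{E_{2,1},E_{1,2}\}$ and the single bracket $[E_{2,1},E_{1,2}]=E_{2,2}-E_{1,1}$ immediately spans $\mathfrak{sl}_2(\mathbb{R})$.
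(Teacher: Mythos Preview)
Your argument is correct. The bracket identity $[E_{i,j},E_{k,l}]=\delta_{j,k}E_{i,l}-\delta_{l,i}E_{k,j}$ is all that is needed, and your three-step cascade (lower triangle, first row, remaining upper triangle, then Cartan) cleanly exhausts a basis of $\mathfrak{sl}_n(\mathbb{R})$.

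The paper does not give a proof at all; it simply cites the result as standard, referring to Jacobson's \emph{Lie Algebras}. So your contribution here is strictly more than what the paper provides: you supply a self-contained elementary verification where the paper defers to the literature. There is nothing to compare at the level of strategy, since the paper offers none.
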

\begin{proof}
This is a standard result in the theory of Lie algebras, see e.g.,~\cite[Chapter VIII, Section 4, Theorem 9]{Jacobson1962lie}.
\end{proof}

\begin{proof}[Proof of Proposition~\ref{prop:slT_generation}]
Let $\mathfrak{g} = \mathrm{Lie}(\{M_k : k \in I\})$. By Proposition \ref{prop:cap_result} and Lemma \ref{lem:gen_G_cap}, we know that $\mathcal{G} \subset \mathfrak{g}$, where $\mathcal{G}$ is the generating set defined in Lemma \ref{lem:gen_G_cap} with $n=2K$. By Proposition \ref{prop:sl_gen_set}, the Lie algebra generated by $\mathcal{G}$ is $\mathfrak{sl}_{2K}(\mathbb{R})$. Therefore, $\mathfrak{sl}_{2K}(\mathbb{R}) = \mathrm{Lie}(\mathcal{G}) \subseteq \mathfrak{g}$.
Furthermore, each $M_k$ is traceless (as can be verified from \eqref{eq:Mk_elementary_app_restructured}), so the generated Lie algebra $\mathfrak{g}$ must be a subalgebra of $\mathfrak{sl}_{2K}(\mathbb{R})$.
Combining $\mathfrak{sl}_{2K}(\mathbb{R}) \subseteq \mathfrak{g}$ and $\mathfrak{g} \subseteq \mathfrak{sl}_{2K}(\mathbb{R})$, we conclude that $\mathfrak{g} = \mathfrak{sl}_{2K}(\mathbb{R}) \simeq \mathfrak{sl}(H_I^\perp)$.
\end{proof}

\phantomsection
\addcontentsline{toc}{section}{References}
\bibliographystyle{abbrv}
\bibliography{bibliography}

\end{document}